\def\thm@space@setup{\thm@preskip=4pt
\thm@postskip=2pt}
\newtheorem{theorem}{Theorem}[section]
\newtheorem{corollary}[theorem]{Corollary}
\newtheorem{proposition}[theorem]{Proposition}
\newtheorem{lemma}[theorem]{Lemma}
\theoremstyle{definition}
\newtheorem{definition}[theorem]{Definition}
\newtheorem{assumption}[theorem]{Assumption}
\newtheorem{example}[theorem]{Example}
\newtheorem{remark}[theorem]{Remark}
\newcommand{\NN}{{\mathbb N}}
\newcommand{\cZ}{{\mathcal Z}}
\newcommand{\cX}{{\mathcal X}}
\newcommand{\cW}{{\mathcal W}}
\newcommand{\cU}{{\mathcal U}}
\newcommand{\cO}{{\mathcal O}}
\newcommand{\cM}{{\mathcal M}}
\newcommand{\cI}{{\mathcal I}}
\newcommand{\cC}{{\mathcal C}}
\newcommand{\cB}{{\mathcal B}}
\newcommand{\cA}{{\mathcal A}}
\newcommand{\cJ}{{\mathcal J}}
\newcommand{\goodquotient}{\mathbin{
  \mathchoice{\left/\mkern-6mu\right/}
    {/\mkern-5mu/}
    {/\mkern-5mu/}
    {/\mkern-5mu/}}}
\newcommand{\Hom}{\mathrm{Hom}}
\newcommand{\Spec}{\mathrm{Spec}\,}
\newcommand{\Sym}{\textrm{Sym}}
\newcommand{\Mor}{\textrm{Mor}}
\newcommand{\Set}{\mathbf{Set}}
\newcommand{\Sch}{\mathbf{Sch}}
\newcommand{\Group}{\mathbf{G}}%
\newcommand{\Groupop}{\mathbf{G}^{\mathrm{op}}}%
\newcommand{\Gprod}{\Group\times \Groupop}%
\newcommand{\Groupconn}{\mathbf{G}^{\circ}}%
\newcommand{\Nroup}{\mathbf{N}}%
\newcommand{\Nred}{\Nroup_{\mathrm{red}}^{\circ}}%
\newcommand{\Gbar}{\overline{\mathbf{G}}}%
\newcommand{\Nbar}{\overline{\mathbf{N}}}%
\newcommand{\Fbar}{\overline{\mathbf{F}}}%
\newcommand{\Gmult}{\mathbb{G}_m}%
\newcommand{\kk}{k}%
\newcommand{\kkbar}{\overline{k}}%
\newcommand{\varX}{X}%
\newcommand{\kSch}{\Sch_{\kk}}%
\newcommand{\Dfunctor}[1]{\mathcal{D}_{#1}}%
\newcommand{\DX}{\Dfunctor{\varX}}%
\newcommand{\Xplus}{\varX^+}%
\newcommand{\Yplus}{Y^+}%
\newcommand{\fplus}{f^+}%
\newcommand{\Ffunctor}[1]{{#1}^{\Group}}%
\newcommand{\Hfunctor}[1]{\widehat{\mathcal{D}}_{#1}}%
\newcommand{\ione}[1]{i_{#1}}%
\newcommand{\ioneX}{\ione{\varX}}
\newcommand{\iinfty}[1]{\pi_{#1}}%
\newcommand{\iinftyX}{\iinfty{\varX}}%
\newcommand{\isection}[1]{s_{#1}}%
\newcommand{\isectionX}{\isection{\varX}}%
\newcommand{\BBname}{Bia{\l}ynicki-Birula}%
\DeclareMathOperator{\Maps}{Maps}
\DeclareMathOperator{\Irr}{Irr}
\DeclareMathOperator{\colim}{colim}
\DeclareMathOperator{\charr}{char}%
\DeclareMathOperator{\id}{id}%
\DeclareMathOperator{\Hilb}{Hilb}%
\newcommand{\into}{\hookrightarrow}%
\newcommand{\onto}{\twoheadrightarrow}%
\newcommand{\sigmabar}{\overline{\sigma}}%
\newcommand{\GroupGlobalSects}{H^0(\Group, \cO_{\Group})}%
\newcommand{\mubar}{\overline{\mu}}%
\begin{document}

\title{Białynicki-Birula decomposition for reductive groups}
\author{Joachim Jelisiejew}
\thanks{\emph{Corresponding author.} Institute of Mathematics, Polish Academy of
    Sciences ({\'S}niadeckich 8, 00-656 Warsaw), \url{jjelisiejew@impan.pl},
    partially supported by Polish National Science
Center, project 2017/26/D/ST1/00755.}

    \author{{\L{}}ukasz Sienkiewicz}
    \thanks{Faculty of Mathematics, Informatics and Mechanics, University of
        Warsaw (Banacha 2, 02-097 Warsaw),
        \url{lusiek@mimuw.edu.pl}, supported by Polish National Science Center project 2013/08/A/ST1/00804.}

        \keywords{\BBname{} decomposition, reductive monoids, reductive group actions}
        \subjclass[2010]{14L30, 14D22, 20G05, 20M32}

\dedicatory{To professor Piotr Grzeszczuk, for his 60th birthday}
\begin{abstract}
    We generalize the \BBname{} decomposition from actions of $\Gmult$ on smooth
    varieties to actions of linearly reductive group $\Group$ on finite type schemes and
    algebraic spaces. We also provide a relative version and briefly discuss
    the case of algebraic stacks.

    We define the \BBname{} decomposition
    functorially: for a fixed $\Group$-scheme $X$ and a monoid
    $\Gbar$ which partially compactifies $\Group$, the BB decomposition parameterizes
    $\Group$-schemes over $X$ for which the $\Group$-action extends to the
    $\Gbar$-action. The freedom of choice of $\Gbar$ makes the theory richer
    than the $\Gmult$-case.
\end{abstract}
\maketitle

\tableofcontents
\section{Introduction}

    The classical \BBname{}
    decomposition~\cite{BialynickiBirula__decomposition, Carrell_survey_on_BB}
    is a smooth variety $\Xplus\to \varX$ obtained from a smooth $\Gmult$-variety
    $\varX$.
    Drinfeld~\cite{Drinfeld} observed that it admits a functorial
    description
    \begin{equation}\label{eq:definitionFunctorial:new}
        \Xplus(S) = \left\{ \varphi\colon \mathbb{A}^1 \times S \to \varX\ |\
        \varphi \mbox{ is $\Gmult$-equivariant} \right\},
    \end{equation}
    where $\Gmult$ acts on $\mathbb{A}^1$ naturally and on $S$ trivially.
    The morphism $\Xplus\to \varX$ comes from restriction $\varphi\mapsto
    (\varphi_{|1 \times S} \colon S\to \varX)$. Reading this backwards, an
    $S$-point of $\Xplus$ is an $S$-point of $\varX$ together with its limit
    under the $\Gmult$-action.  One of the main results of~\cite{Drinfeld} is that a
    scheme $\Xplus$ given by~\eqref{eq:definitionFunctorial:new} exists for
    all (not necessarily smooth) schemes $\varX$ of finite type.
    See~\cite{Haines_Richarz, Richarz, vanDerBergh} for some of the applications of this
    result.

    The space $\Xplus$ defined in~\eqref{eq:definitionFunctorial:new} is an important
    tool for analysis of highly singular spaces, such as moduli spaces. For
    example, $\Xplus$ and the morphism $\Xplus\to \varX$ is
    used in~\cite{Jelisiejew__Elementary} to analyse $\varX = \Hilb_d
    \mathbb{A}^n$. In this setting, the space $\Xplus$ has a serious drawback: its
    definition takes into account only a fixed $\Gmult$-action on $\varX$,
    while the automorphism group of $\varX$ is usually larger (for
    $\Hilb_d \mathbb{A}^n$ we have for example the action of $\mathrm{GL}_n$).

    The main aim of this paper is to generalize the results of~\cite{Drinfeld,
    Jelisiejew__Elementary} by replacing $\Gmult$ with an arbitrary linearly
    reductive affine group $\Group$. The functorial
    description~\eqref{eq:definitionFunctorial:new} readily generalizes once
    we understand what should be put in place of $\mathbb{A}^1$. It turns out that
    a suitable replacement is a \emph{linearly reductive monoid} $\Gbar$, i.e., an affine
    variety with multiplication $\Gbar \times \Gbar\to \Gbar$ and a unit, such
    that $\Group \subset \Gbar$ is the dense submonoid consisting of invertible
    elements.

    For $\Group= \Gmult$ there are only two non-trivial linearly reductive
    monoids $\Gbar$: the compactification of $\Gmult$ at zero or
    infinity (both are isomorphic to $\mathbb{A}^1$ but have different
    $\Group$-actions). More generally, for $\Group = \Gmult^n$
    the possible $\Gbar$'s are exactly $n$-dimensional affine toric varieties.
    Another important example in characteristic zero is $\Group =
    \mathbb{GL}_n$ and $\Gbar = \mathbb{M}_{n\times n}$.
	The theory of linearly reductive monoids is developed, e.g.,
	in~\cite{Putcha__Linear_algebraic_Monoids, Renner__Linear_algebraic_monoids,
    Vinberg__on_reductive_algebraic_semigroups, Brion__On_algebraic_semigroups_and_monoids}.
    In general, there are significantly more $\Gbar$'s than just toric
    varieties, see Example~\ref{ex:compactificationsOfGLtwo}. However,
    there are no nontrivial $\Gbar$'s for semisimple $\Group$,
    see e.g.~\cite[Exercise~5, p.~41]{Renner__Linear_algebraic_monoids}.

    Now we explicitly define the \BBname{} decomposition.  Let $\kk$ be a
    field and
    $\varX$ be a $\kk$-scheme with a $\Group$-action $\sigma\colon \Group \times
    \varX\to \varX$. The \emph{\BBname{} decomposition} for $(\varX,
    \sigma, \Gbar)$ is the functor $\DX = \Dfunctor{\varX, \Gbar}\colon
    \kSch^{\mathrm{op}}\to \Set$ defined by
    \begin{equation}\label{eq:BBdefinition}
        \Dfunctor{X}(S) = \left\{ \varphi\colon \Gbar \times S\to \varX\ |\
            \mbox{$\varphi$ is $\Group$-equivariant}\right\},
    \end{equation}
    where $\Group$ acts on $\Gbar$ by left multiplication, acts trivially on $S$
    and acts on $\varX$ via $\sigma$.

    Our first result is that under very  weak assumptions the \BBname{}
    decomposition exists: the functor $\DX$ is represented by a scheme locally
    of finite type. In particular we have no smoothness, normality
    etc.~assumptions for $\varX$. Below we \emph{assume that $\Group$ is connected and that $\Gbar$ has a zero}, i.e., there is $0\in
    \Gbar(\kk)$ such that $0\cdot \Gbar = \Gbar \cdot 0 = 0$. For example,
    $\mathbb{M}_{n\times n}$ has a zero, which is the zero matrix; an affine toric
    variety has a zero iff the corresponding semigroup has no nontrivial
    units. If the field $\kk$ is perfect (e.g., $\kk$ has characteristic zero
    or is
    algebraically closed or is finite), then every monoid $\Gbar$ which is
    normal (as variety) has a submonoid $\Nbar$ with zero and with connected
    unit group such that $\Dfunctor{\varX, \Gbar} = \Dfunctor{\varX, \Nbar}$
    for every $\Group$-scheme $\varX$, see
    Theorem~\ref{ref:reductionToMonoidWithZero:thm}. Hence,
    if $\kk$ is perfect, the assumption that $\Group$ is connected and $\Gbar$ has a zero is always
    satisfied up to a replacement of $\Gbar$ by $\Nbar$.

    For each equivariant family $\varphi\colon \Gbar \times S\to \varX$ as
    in~\eqref{eq:definitionFunctorial:new} we have two natural restrictions. If we
    think of $\Dfunctor{X}$ as parameterizing points of $\varX$ with a limit,
    then those restrictions are called:
    \begin{itemize}
        \item \emph{Forgetting about the limit}. We restrict $\varphi$ to
            $\varphi_{|1 \times S}\colon S\to \varX$. The transformation
            $\varphi\mapsto \varphi_{|1 \times S}$ induces a map $\ioneX\colon
            \Dfunctor{X}\to \varX$,
        \item \emph{Restricting to the limit}. We restrict $\varphi$ to $\varphi_{|0\times S}\colon
            S\to \varX$. The family $\varphi_{|0 \times S}$ is equivariant,
            hence the image lies in $\varX^{\Group}$. Thus we obtain a map
            $\iinftyX\colon \Dfunctor{X}\to \varX^{\Group}$.
    \end{itemize}
    In the special case when $\varX$ is smooth and $\Gbar = \mathbb{A}^1$ the map
    $\ioneX$ is the immersion of the \BBname{} cells into $\varX$, whereas
    $\iinftyX$ is the contraction of cells to the fixed locus.

    Figure~\ref{eq:diagramWhatIsGoingOn} illustrates $\ioneX$ and $\iinftyX$. We
    start with an equivariant $\varphi\colon \Gbar \times S \to \varX$,
    as in the middle.  It corresponds to a family $\ioneX(\varphi)\colon S\to
    \varX$ of points of $\varX$, see the right picture, such that we may consider the
    orbits and uniformly take their limits, to obtain a fixed scheme
    $\iinftyX(\varphi)\colon S\to \varX^{\Group}$,
    as in the left picture.
    \begin{figure}[h]
        \[\resizebox{0.95\textwidth}{!}{
                \begin{tikzpicture}
                    \draw plot [smooth cycle] coordinates {(0, 0) (2, 1.4) (6,
                        2) (8,
                    -1) (3.5, -2)};
                    \draw[line width=0.5mm] plot [smooth] coordinates
                    {(3, 1) (3.7, -0.2) (5.5, -1)};
                    \draw[blue] plot [smooth] coordinates {(5.3,
                    1.3) (3.2, 0.7) (1.5, -0.4)};
                    \begin{scope}[shift={(0.2, -0.2)}]
                        \draw[blue] plot [smooth] coordinates {(5.3,
                        1.3) (3.2, 0.7) (1.5, -0.4)};
                    \end{scope};
                    \begin{scope}[shift={(0.4, -0.4)}]
                        \draw[blue] plot [smooth] coordinates {(5.3,
                        1.3) (3.2, 0.7) (1.5, -0.4)};
                    \end{scope};
                    \begin{scope}[shift={(0.7, -0.6)}]
                        \draw[blue] plot [smooth] coordinates {(5.3,
                        1.3) (3.2, 0.7) (1.5, -0.4)};
                    \end{scope};
                    \begin{scope}[shift={(0.9, -0.8)}]
                        \draw[blue] plot [smooth] coordinates {(5.3,
                        1.3) (3.2, 0.7) (1.5, -0.4)};
                    \end{scope};
                    \begin{scope}[shift={(1.1, -1)}]
                        \draw[blue] plot [smooth] coordinates {(5.3,
                        1.3) (3.2, 0.7) (1.5, -0.4)};
                    \end{scope};
                    \begin{scope}[shift={(1.5, -1.1)}]
                        \draw[blue] plot [smooth] coordinates {(5.3,
                        1.3) (3.2, 0.7) (1.5, -0.4)};
                    \end{scope};
                    \begin{scope}[shift={(1.8, -1.2)}]
                        \draw[blue] plot [smooth] coordinates {(5.3,
                        1.3) (3.2, 0.7) (1.5, -0.4)};
                    \end{scope};
                    \begin{scope}[shift={(2, -1.3)}]
                        \draw[blue] plot [smooth] coordinates {(5.3,
                        1.3) (3.2, 0.7) (1.5, -0.4)};
                    \end{scope};
                    \begin{scope}[shift={(2.2, -1.4)}]
                        \draw[blue] plot [smooth] coordinates {(5.3,
                        1.3) (3.2, 0.7) (1.5, -0.4)};
                    \end{scope};

                    \draw[blue, line width=0.7mm] plot [smooth] coordinates
                    {(4.5, 1.4) (5.2, 0.2) (5.9, 0.2) (6.9, -0.3)};

                    \path[->,line width=1pt] (9,0) edge (11,0);
                    \node (v1) at (10, 0.5) {{\Huge $\ione{\varX}$}};
                    \begin{scope}[shift={(12, 0)}]
                        \draw plot [smooth cycle] coordinates {(0, 0) (2, 1.4) (6,
                            2) (8,
                        -1) (3.5, -2)};
                        \draw[line width=0.5mm] plot [smooth] coordinates {(3,
                        1) (3.7, -0.2) (5.5, -1)};
                        \node at (4.5, 1) {{\Huge $\ioneX(\varphi)$}};
                    \end{scope}

                    \path[->,line width=1pt] (-1,0) edge (-3,0);
                    \node (v2) at (-2, 0.5) {{\Huge $\iinfty{\varX}$}};
                    \begin{scope}[shift={(-12, 0)}]
                        \draw plot [smooth cycle] coordinates {(0, 0) (2, 1.4) (6,
                            2) (8,
                        -1) (3.5, -2)};
                        \draw[blue, line width=0.7mm] plot [smooth] coordinates
                        {(4.5, 1.4) (5.2, 0.2) (5.9, 0.2) (6.9, -0.3)};
                        \node[blue] at (6.9, -0.8) {{\Huge $\iinftyX(\varphi)$}};
                    \end{scope}
            \end{tikzpicture}}\]
            \caption{The natural maps associated to $\Dfunctor{X}$.}\label{eq:diagramWhatIsGoingOn}
        \end{figure}

    The monoid $\Gbar$ naturally acts on $\Dfunctor{X}$: on $S$-points the
    action is $(g\cdot \varphi)(h, s) :=
    \varphi(gh, s)$. If we think of $\Dfunctor{X}$ as parameterizing
    $\Group$-orbits  along with the limit, then this action is just the
    compactification of the $\Group$-action on orbits.
    \begin{theorem}[Existence of \BBname{} decompositions]\label{ref:introRepresentability:thm}
        Let $\varX$ be a $\Group$-scheme locally of finite type over
        $\kk$. Then the functor $\Dfunctor{X}$ defined
        by~\eqref{eq:BBdefinition} is represented by a scheme $\Xplus$ locally
        of finite type over $\kk$.
        Moreover, the scheme $\Xplus$ has a natural $\Gbar$-action. The map
        $\iinftyX\colon \Xplus\to \varX^{\Group}$ is \emph{affine} of finite type and
        equivariant.
    \end{theorem}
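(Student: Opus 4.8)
The plan is to reduce the representability of $\DX$ to the case where $\varX$ is affine, to solve that case by linear algebra exploiting the linear reductivity of $\Group$, and then to glue the affine answers over the fixed locus $\varX^{\Group}$. First I would record two structural facts that will organize everything: the locus $\varX^{\Group}$ is a closed subscheme of $\varX$, locally of finite type, and the forgetful map $\ioneX\colon \DX\to\varX$ is a monomorphism, since an equivariant $\varphi$ is determined by $\varphi_{|1\times S}$ (as $\Group\subseteq\Gbar$ is schematically dense, this persists after the flat base change by $S$, and $\varX$ is locally separated).

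\textbf{Affine case.} Suppose $\varX=\Spec A$ is affine. I would choose a $\Group$-equivariant closed embedding $\varX\into V$ into a finite-dimensional $\Group$-representation $V$, possible because $A$ is a finitely generated comodule algebra and $\Group$ is linearly reductive. For the target $V$ itself, an equivariant morphism $\Gbar\times S\to V$ is exactly a $\Group$-invariant element of $V\otimes\cO(\Gbar)\otimes\cO(S)$, and since $\Group$ is linearly reductive invariants commute with the trivial-action base change by $\cO(S)$, so that
\[
    \Dfunctor{V}(S)=\bigl(V\otimes\cO(\Gbar)\bigr)^{\Group}\otimes_{\kk}\cO(S).
\]
The essential finiteness is that $W:=\bigl(V\otimes\cO(\Gbar)\bigr)^{\Group}$ is finite-dimensional: by Peter--Weyl each irreducible occurs in $\cO(\Group)$, under left translation, with finite multiplicity, and $\cO(\Gbar)\subseteq\cO(\Group)$ is a left $\Group$-subrepresentation, so multiplicities only drop; as $V$ involves finitely many irreducibles, $W$ is finite-dimensional. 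Hence $\Dfunctor{V}$ is represented by the affine space $\Spec\Sym(W^{*})$. Writing $\varX=V(I)$, a map factors through $\varX$ iff the pullback of $I$ vanishes; expanding pullbacks in a $\kk$-basis of the free module $\cO(\Gbar)$ turns this into the vanishing of coordinate functions, exhibiting $\DX\subseteq\Dfunctor{V}$ as a closed subfunctor. Thus $\Xplus$ is an affine scheme of finite type over $\kk$.

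\textbf{The monoid action and $\iinftyX$.} Right translation of $\Gbar$ on itself commutes with left translation, so it acts on $W$ and equips $\Xplus$ with a $\Gbar$-action; on $S$-points this is $(g\cdot\varphi)(h,s)=\varphi(gh,s)$, as required. Evaluation at $0\in\Gbar$ is $\Group$-equivariant, hence factors through the fixed locus as $\iinftyX\colon\Xplus\to\varX^{\Group}$; in the affine case both source and target are affine of finite type, so $\iinftyX$ is affine of finite type, and its equivariance (with $\Gbar$ acting trivially on $\varX^{\Group}$) follows from $\Gbar\cdot 0=0$.

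\textbf{Globalization and the main obstacle.} For general $\varX$ I would glue over $\varX^{\Group}$: cover it by affine opens $V_i$ and set $\cD^{(i)}:=\iinftyX^{-1}(V_i)$, the subfunctor of those $\varphi$ with $\varphi_{|0\times S}$ landing in $V_i$. Each $\cD^{(i)}$ is an open subfunctor, the $\cD^{(i)}$ cover $\DX$, and once each is represented by an affine scheme over $V_i$, Zariski gluing produces $\Xplus$ with all the asserted properties. The hard part is precisely representing $\cD^{(i)}$, because $\varX$ need not admit $\Group$- or even $\Gmult$-invariant affine opens around the fixed points, so one cannot just intersect with an invariant chart, and the image of an orbit family can leave any given affine open. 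This is exactly where the hypothesis that $\Gbar$ has a zero (together with $\Group$ connected) enters: the zero provides a central one-parameter subgroup $\Gmult\subseteq\Group\subseteq\Gbar$ contracting all of $\Gbar$ to $0$, so that for $\varphi\in\cD^{(i)}(S)$ and any $g\in\Gbar$ one has $t\cdot\varphi(g,s)=\varphi(tg,s)\to\varphi(0,s)\in V_i$ as $t\to0$. Thus the whole family is attracted by the central $\Gmult$ into an affine neighbourhood of $V_i$, and reducing to this attractor brings us back to the affine case and yields affineness of $\iinftyX$ over $V_i$. Making this reduction precise---pinning down the correct affine chart, checking its compatibility with the full $\Group$-action, and verifying functoriality in $S$---is the main technical point of the argument.
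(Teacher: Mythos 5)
Your affine case is essentially the paper's (Proposition~\ref{ref:representabilityForAffine:prop} phrased via an equivariant embedding into a representation rather than via the ideal of outsider representations), and your identification of the main obstacle is exactly right. But the globalization step contains a genuine gap: representing $\cD^{(i)}=\iinftyX^{-1}(V_i)$ is the whole theorem, and the ``attractor'' heuristic you offer in its place does not work. First, the contracting one-parameter subgroup (the Kempf torus of Theorem~\ref{pointedness}) is in general \emph{not} central in $\Group$ --- already for $\Group=\kk^*\times\mathbf{SL}_2$ and the monoids of Example~\ref{ex:compactificationsOfGLtwo} the central torus acts with weight zero on some isotypic components of $H^0(\Gbar,\cO_{\Gbar})$, so it does not contract $\Gbar$ to $0$. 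Second, and more seriously, even granting that the image of a family $\varphi$ is ``attracted'' to $\varphi(0\times S)\subset V_i$, there is in general no affine open of $\varX$ (let alone a $\Group$-stable one) containing a neighbourhood of that fixed locus into which to reduce: the nodal curve $\mathbb{P}^1/(0=\infty)$ with its $\Gmult$-action already has a fixed point with no affine $\Gmult$-stable neighbourhood, yet its \BBname{} decomposition exists and equals $\mathbb{A}^1$. So ``reducing to this attractor brings us back to the affine case'' is precisely the step that fails, and you acknowledge as much by deferring it as ``the main technical point.''

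The paper's proof supplies two ingredients that are absent from your proposal and that cannot be bypassed by the contraction heuristic. (1) It replaces $\DX$ by the formal functor $\Hfunctor{X}$ of~\eqref{eq:definitionFormal}, defined on the infinitesimal neighbourhoods $\Gbar_n$ of $0_{\Gbar}$; the images of such families land in the thickenings $X_n$ of $\varX^{\Group}$, where the $\Group$-action is topologically trivial and hence \emph{every} Zariski open is stable, so the affine case applies and each $\Hfunctor{X_n}$ is a closed subscheme $Z_n\subset X_n$. The tower $\{Z_n\}$ is then algebraized by the stabilization of isotypic components under the Kempf grading (Lemma~\ref{ref:stability:lem} and Theorem~\ref{ref:algebraizationOfFormalSchemes:thm}), giving representability of $\Hfunctor{X}$ with $\iinftyX$ affine of finite type. (2) To show $\DX\to\Hfunctor{X}$ is an isomorphism --- i.e.\ that formal families algebraize to genuine maps from $\Gbar\times S$ --- the paper invokes the Alper--Hall--Rydh theorem (Theorem~\ref{ref:etalestableneighbourhoods:thm}) producing affine $\Group$-stable \emph{\'etale} neighbourhoods, together with fpqc descent. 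If you want to complete your argument you will need some substitute for both steps; the Zariski gluing over $\varX^{\Group}$ that frames your plan only becomes available \emph{after} one knows $\iinftyX$ is affine, which is itself a consequence of this machinery.
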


    For $\varX$ smooth, $\Group = \Gmult$ and $\Gbar$
    equal to $\Gmult$ compactified at zero (resp.~infinity)
    the resulting $\Xplus$ is the positive (resp.~negative)
    classical BB decomposition of $\varX$, as in~\cite{BialynickiBirula__decomposition}.
    While we formulated Theorem~\ref{ref:introRepresentability:thm} for
    schemes, the same theorem holds for algebraic spaces $\varX$, see
    Theorem~\ref{ref:Representability:thm}. In particular, if $\varX$ is an
    algebraic space with $\varX^{\Group}$ a scheme, then $\Xplus$ is a scheme.

    \bigskip{}
    Let us discuss the consequences of
    Theorem~\ref{ref:introRepresentability:thm} to the structure of
    $\Gbar$-schemes. We begin with a surprising observation that the action of $\Gbar$ is very well-behaved
    under minimal assumptions:
    \begin{corollary}\label{ref:GstableNeighbourhoods:cor}
        Let $\varX$ be a $\Gbar$-scheme locally
        of finite type over $\kk$. Then every point of $\varX$ has an
        affine $\Gbar$-stable neighbourhood.
    \end{corollary}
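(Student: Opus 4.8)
The plan is to build, directly from the $\Gbar$-action, a single $\Gbar$-invariant affine morphism onto the fixed locus; its fibres over affine opens will then be the required neighbourhoods. Write $a\colon \Gbar\times \varX\to \varX$ for the action and define $\pi\colon \varX\to \varX$ on points by $\pi(x) := 0\cdot x$, i.e.\ the restriction of $a$ to $\{0\}\times \varX$. Since $\Gbar\cdot 0 = 0$, for every $h\in\Group$ we get $h\cdot(0\cdot x) = (h\cdot 0)\cdot x = 0\cdot x$, so $\pi$ factors through the fixed subscheme $\varX^{\Group}$. The key formal property is $\Gbar$-invariance: as $0\cdot g = 0$ in $\Gbar$, associativity of the action gives $\pi(g\cdot x) = 0\cdot(g\cdot x) = (0\cdot g)\cdot x = 0\cdot x = \pi(x)$ for all $g\in\Gbar$. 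Consequently $\pi^{-1}(V)$ is automatically $\Gbar$-stable for every open $V\subseteq \varX^{\Group}$.

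Next I would identify $\pi$ with a map produced by Theorem~\ref{ref:introRepresentability:thm}. The action defines a natural transformation $s\colon \varX\to \Xplus$ carrying an $S$-point $x$ to the family $(g,t)\mapsto g\cdot x(t)$, which is $\Group$-equivariant because $(hg)\cdot x = h\cdot(g\cdot x)$. By construction $\ioneX\circ s=\id_{\varX}$ and $\iinftyX\circ s=\pi$. I would then show that $s$ is an isomorphism: for any equivariant $\varphi\colon \Gbar\times S\to \varX$ the two morphisms $\varphi$ and $(g,t)\mapsto g\cdot\varphi(1,t)$ agree on the schematically dense open $\Group\times S$, hence on all of $\Gbar\times S$, so $\varphi$ is recovered from $\ioneX(\varphi)$. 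Under the resulting identification $\varX\cong\Xplus$ the map $\pi$ becomes exactly $\iinftyX$, which Theorem~\ref{ref:introRepresentability:thm} guarantees is affine of finite type.

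Granting that $\pi$ is affine, the corollary is immediate. For $x\in\varX$ choose an affine open $V\subseteq\varX^{\Group}$ with $\pi(x)\in V$; then $U:=\pi^{-1}(V)$ is open and contains $x$, is affine because $\pi$ is an affine morphism, and is $\Gbar$-stable by the invariance of $\pi$ established in the first paragraph. This is the desired affine $\Gbar$-stable neighbourhood.

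The step I expect to be the main obstacle is showing that $s$ is an isomorphism, equivalently that $\pi$ is affine. The density argument requires $\varX$ to be separated, so that two morphisms coinciding on the dense open $\Group\times S$ are forced to be equal. To remove this hypothesis for $\varX$ merely locally of finite type, I would test affineness locally on $\varX^{\Group}$: since each $\pi^{-1}(V)$ over an affine open $V\subseteq\varX^{\Group}$ is $\Gbar$-stable, the problem reduces to proving that $\pi^{-1}(V)\to V$ is affine, where the separatedness of the base $V$ should suffice to recover the identification with $\iinftyX$ even when $\varX$ itself is not separated.
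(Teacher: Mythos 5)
Your overall strategy is exactly the paper's: identify $\varX$ with $\Xplus$ via the action, so that the multiplication-by-zero map $\pi$ becomes $\iinftyX$, and then pull back affine opens of $\varX^{\Group}$ along the affine equivariant morphism $\iinftyX$ from Theorem~\ref{ref:introRepresentability:thm}. The paper does this in two lines by citing Proposition~\ref{ref:isomorphism:prop} for the isomorphism $\varX\cong\Xplus$; you re-derive that proposition inline, and that is where your argument has a genuine gap.

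The problem is the non-separated case, which you correctly flag as the main obstacle but do not resolve. To show that an arbitrary equivariant family $\varphi\colon\Gbar\times S\to\varX$ equals $\varphi'(g,t)=g\cdot\varphi(1,t)$, agreement on the schematically dense open $\Group\times S$ forces equality only when the \emph{target} $\varX$ is separated. Your proposed fix --- localizing over an affine open $V\subseteq\varX^{\Group}$ and invoking ``separatedness of the base $V$'' --- does not help: the relevant target is $\pi^{-1}(V)$, an open subscheme of $\varX$, and separatedness of $V$ does not make $\pi^{-1}(V)$ separated (nor can you assume it is affine, since that is what you are trying to prove). The paper's Proposition~\ref{ref:isomorphism:prop} closes this gap with a different argument: the equalizer $E$ of $\varphi$ and $\varphi'$ is a $\Group$-stable locally closed subscheme containing the schematically dense $\Group\times S$, hence is open; to see that $E$ exhausts all points one restricts to each fiber $\Gbar\times\{p_2\}$ and applies Lemma~\ref{ref:isomorphismGbarAgreementTopological:lem}, which uses the zero of $\Gbar$ and Proposition~\ref{ref:neighbourhoodsofzero:prop} to show that two $\Group$-equivariant maps $\Gbar\to\varX$ agreeing on $\Group$ must agree everywhere. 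Some argument of this kind (or an explicit reduction to the separated case) is needed to make your proof complete at the stated level of generality.
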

    \begin{proof}
        Note that $\varX=\Xplus$ by Proposition~\ref{ref:isomorphism:prop}.
        By Theorem~\ref{ref:introRepresentability:thm} the projection
        $\iinftyX$ is $\Gbar$-equivariant and affine. The required
        neighbourhood of $x\in \varX$ is the  preimage of any
        affine open neighbourhood of $\iinftyX(x)\in \varX^{\Group}$.
    \end{proof}
    This is striking, since for an arbitrary, even normal, $\Group$-scheme
    $\varX$ the existence of affine $\Group$-stable neighbourhoods is subtle,
    see Brion~\cite{Brion__linearization}. As an example of non-normal scheme: the
    node of the nodal curve $C = \mathbb{P}^1/(0 = \infty)$ with its natural $\Gmult$-action does not
    admit an affine $\Gmult$-stable neighbourhood; here $C^+$ is equal to
    $\mathbb{A}^1$.

    Now we describe $\Xplus$ for an affine $\varX$. Recall that every
    irreducible finite dimensional $\Group$-representation appears in
    $H^0(\Group, \cO_{\Group})$. An irreducible $\Group$-representation $V$ is
    called a \emph{$\Gbar$-representation} if it appears in $H^0(\Gbar,
    \cO_{\Gbar})$. Otherwise it is called an \emph{outsider} representation.
    The $\Group$-action on $V$ extends to a $\Gbar$-action precisely when $V$
    is a $\Gbar$-representation, see Lemma~\ref{ref:mainRepr:lem}.
    \begin{example}
        If $\Group = \Spec \kk[M]$ is a torus and $\Gbar = \Spec \kk[S]$ is
        affine toric, then the $\Gbar$-representations correspond to the
        characters from $S$, while the outsider representations correspond to
        characters from $M
        \setminus S$. More generally, at least when $\Gbar$ is normal, the
        highest weights of $\Gbar$-representations span a cone inside the Weyl
        chamber~\cite[Theorem~5.10]{Renner__Linear_algebraic_monoids}, cf.~Example~\ref{ex:compactificationsOfGLtwo}.
    \end{example}
    \begin{proposition}[affine case, see
        Proposition~\ref{ref:representabilityForAffine:prop}]\label{ref:affineRepresentability:prop}
        If $\varX = \Spec(A)$ is an affine $\Group$-scheme, then $\ioneX\colon\Xplus \to \varX$ is a closed
        immersion, whose ideal is generated by all outsider representations in
        $A$.
    \end{proposition}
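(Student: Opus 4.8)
The plan is to compute the functor $\Dfunctor{X}$ directly on affine test schemes and identify it with $\Hom_{\kk}(A/I,-)$, where $I$ is the ideal generated by the outsider isotypic components of $A$. Since $\Dfunctor{X}$ is represented by a scheme $\Xplus$ by Theorem~\ref{ref:introRepresentability:thm}, and both $\Dfunctor{X}$ and $h_{\Spec(A/I)} = \Hom_{\kk}(A/I,-)$ are Zariski sheaves (so each is determined by its values on affine schemes), it suffices to treat $S = \Spec R$ affine and produce a natural isomorphism under which $\ioneX$ becomes the transformation induced by the quotient $A \onto A/I$.

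First I would unwind the functor on $S=\Spec R$. As $\varX$ is affine, a morphism $\varphi\colon \Gbar\times S\to \varX$ is the same as a $\kk$-algebra map $\varphi^*\colon A\to \cO(\Gbar)\otimes_{\kk} R$, and $\Group$-equivariance translates into compatibility with the coactions. Evaluating at the unit $1\in\Gbar$ gives $x:=\ioneX(\varphi)$ with $x^* = (\mathrm{ev}_1\otimes\id_R)\circ\varphi^*\colon A\to R$. Because $\Group$ is dense in the reduced variety $\Gbar$, the restriction $\cO(\Gbar)\into \cO(\Group)$ is injective, and since $R$ is flat over the field $\kk$ the map $\cO(\Gbar)\otimes_{\kk} R\into \cO(\Group)\otimes_{\kk} R$ stays injective. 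Equivariance then forces $\varphi^*$ to agree, after this inclusion, with the orbit coaction $o_x^* := (\id_{\cO(\Group)}\otimes x^*)\circ\rho$, where $\rho\colon A\to \cO(\Group)\otimes A$ is the coaction defining $\sigma$. In particular $\varphi$ is uniquely determined by $x$, so $\ioneX$ is a monomorphism, and the content is to decide for which $x$ one has $o_x^*(a)\in \cO(\Gbar)\otimes R$ for every $a\in A$.

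Next I would decompose by linear reductivity: $A=\bigoplus_V A_V$ into isotypic components, whence $o_x^*(A_V)\subseteq \cO(\Group)_V\otimes R$, where $\cO(\Group)_V$ is the $V$-isotypic component under left translation. The key structural input (Lemma~\ref{ref:mainRepr:lem}, together with cosemisimplicity of $\cO(\Group)$) is that $\cO(\Gbar)=\bigoplus_{V\ \Gbar\text{-rep}}\cO(\Group)_V$, i.e.\ each block $\cO(\Group)_V$ lies in $\cO(\Gbar)$ \emph{in its entirety or not at all}. Thus for $V$ a $\Gbar$-representation there is no condition, while for an outsider $V$ the requirement is $o_x^*(a)=0$ for all $a\in A_V$. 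Writing $\rho$ on $A_V$ in terms of the matrix coefficients of $V$, which are linearly independent in $\cO(\Group)$, I would show $o_x^*|_{A_V}=0$ if and only if $x^*|_{A_V}=0$; this upgrades the pointwise extendability condition to the vanishing of $x^*$ on the \emph{whole} outsider component, and is the technical heart of the argument. Conversely, if $x^*|_{A_V}=0$ for all outsider $V$, the same computation shows $o_x^*$ factors through $\cO(\Gbar)\otimes R$, and a short diagram chase (again using injectivity of $\cO(\Gbar)\into\cO(\Group)$) confirms the resulting $\varphi$ is genuinely $\Group$-equivariant.

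Finally I would assemble the identification. Let $J=\bigoplus_{V\ \text{outsider}} A_V$; it is $\Group$-stable, so the ideal $I=(J)$ it generates is $\Group$-stable and $A/I$ is a $\Group$-algebra. Since $x^*$ is a ring map, $x^*|_J=0\iff x^*|_I=0\iff x^*$ factors through $A/I$. Combined with the preceding step this gives $\Dfunctor{X}(\Spec R)=\Hom_{\kk}(A/I,R)$ naturally in $R$, with $\ioneX$ corresponding to precomposition with $A\onto A/I$. Hence $\Xplus\cong\Spec(A/I)$ and $\ioneX$ is the closed immersion cut out by $I$. I expect the main obstacle to be the isotypic analysis of the third paragraph: proving that extendability of the orbit map along an outsider component forces that entire irreducible component (and hence the generated ideal) to vanish, rather than merely some proper subrepresentation.
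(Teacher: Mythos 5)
Your argument is correct and is essentially the paper's proof of Proposition~\ref{ref:representabilityForAffine:prop} written dually, in coaction language: both hinge on the block decomposition of $H^0(\Group,\cO_{\Group})$ into $(\Gprod)$-isotypic pieces with $H^0(\Gbar,\cO_{\Gbar})$ a sub-sum of whole blocks (so any equivariant map into $H^0(\Gbar,\cO_{\Gbar})\otimes R$ kills the outsider components of $A$), on Lemma~\ref{ref:mainRepr:lem} to see that the action extends on $\Spec(A/I)$, and on density of $\Group$ in $\Gbar$ for uniqueness and equivariance of the extension. One caveat: you should not invoke Theorem~\ref{ref:introRepresentability:thm} for representability of $\Dfunctor{X}$ --- in the paper's architecture that theorem is \emph{deduced from} the affine case, so the citation is circular; it is also unnecessary, since $\Dfunctor{X}$ is a Zariski sheaf and your computation on affine $S$ already exhibits $\Spec(A/I)$ as the representing object.
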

    For \emph{smooth} $\varX$ (of course, not necessarily affine) we recover an exact analogue of the classical \BBname{}
    decomposition plus the obtained fiber bundles admit $\Gbar$-actions, and
    in particular they are $\Group$-affine fiber bundles.
    \begin{theorem}\label{ref:ABBdecomposition:thm:intro}
        \def\comp{F}%
        Let $\varX$ be a smooth $\Group$-variety and $\Gbar$ be a linearly reductive monoid with
        zero.  Then the variety $\varX^{\Group}$ is smooth; let $\comp_1,
        \ldots ,\comp_r$ the be its connected components.  Let $\Xplus$ be the
        \BBname{} decomposition of $\varX$. Then
        \begin{enumerate}
            \item\label{item:ABBfirst} The scheme $\Xplus$ is a smooth
                variety and its connected components $\Xplus_1,
                \ldots , \Xplus_r$ are given by $\Xplus_i = \iinftyX^{-1}(\comp_i)$.
                In other words $\Xplus_i \subset \Xplus$ is the locus with limit point in
                $\comp_i$.
            \item\label{item:ABBsecond} For each $i$ the morphism
                $\iinftyX\colon \Xplus_i\to \comp_i$ is an affine
                fiber bundle (with a $\Gbar$-action fiber-wise) and the morphism $\ioneX\colon \Xplus_i \to
                \varX_i$ is a locally closed immersion.
        \end{enumerate}
    \end{theorem}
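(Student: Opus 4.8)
The plan is to first establish smoothness of $\varX^{\Group}$, then reduce the analysis of $\Xplus$ to a linear model along the fixed locus, and finally read off both the affine fiber bundle structure and the immersion property from the affine case (Proposition~\ref{ref:affineRepresentability:prop}).

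\emph{Smoothness of $\varX^{\Group}$.} I would prove this by the infinitesimal lifting criterion. Given a square-zero extension $S_0\into S$ with ideal $I$ and a $\Group$-fixed $S_0$-point $x_0$ of $\varX$, smoothness of $\varX$ gives a (not necessarily fixed) lift $S\to \varX$, and the set of all lifts is a torsor under $x_0^*T_{\varX}\otimes I$. The group $\Group$ acts on this torsor, and a $\Group$-fixed lift exists precisely when an obstruction class in the rational cohomology $H^1(\Group, x_0^*T_{\varX}\otimes I)$ vanishes; since $\Group$ is linearly reductive this group is zero. Hence $\varX^{\Group}$ is formally smooth, so smooth, with $T_x\varX^{\Group}=(T_x\varX)^{\Group}$. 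In particular the normal bundle $N=\bigl(T_{\varX}|_{\varX^{\Group}}\bigr)/T_{\varX^{\Group}}$ is a $\Group$-equivariant vector bundle on which $\Group$ acts fiberwise and trivially on the base.

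\emph{Linearization and the linear computation.} Because $\iinftyX$ is affine by Theorem~\ref{ref:introRepresentability:thm}, it suffices to understand $\Xplus$ étale-locally over $\varX^{\Group}$. Here I would invoke an equivariant étale slice: étale-locally over a component $F=F_i$, the $\Group$-scheme $\varX$ is $\Group$-equivariantly isomorphic to an open neighbourhood of the zero section in the total space of $N|_F$, with $\Group$ acting linearly on fibers. By linear reductivity $N|_F=\bigoplus_\lambda N_\lambda$ splits into isotypic sub-bundles; let $N^+=\bigoplus_\lambda N_\lambda$ be the sum over those $\lambda$ that are $\Gbar$-representations in the sense of Lemma~\ref{ref:mainRepr:lem}. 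On a single representation $V$, Proposition~\ref{ref:affineRepresentability:prop} identifies $\ioneX\colon V^+\into V$ with the closed subscheme cut out by the outsider representations, so $V^+=\bigoplus_{\lambda\in\Gbar}V_\lambda$ and $\iinftyX\colon V^+\to V^{\Group}$ is the linear projection, a trivial affine-space bundle carrying the fiberwise $\Gbar$-action.

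\emph{Conclusion.} Running this computation in families over $F$ shows that $\iinftyX\colon \Xplus_i\to F_i$ is a Zariski-locally trivial affine fiber bundle modeled on $N^+|_{F_i}$, equipped fiberwise with the $\Gbar$-action, and in particular smooth. Since $F_i$ is connected and the fibers are affine spaces, $\Xplus_i=\iinftyX^{-1}(F_i)$ is connected; and as $\Xplus=\bigsqcup_i\iinftyX^{-1}(F_i)$ decomposes over the clopen components $F_i$ of $\varX^{\Group}$, these are exactly the connected components of the smooth scheme $\Xplus$, proving the first claim and the first half of the second. For the statement that $\ioneX\colon \Xplus_i\to \varX$ is a locally closed immersion, I would translate the slice charts by the $\Group$-action (which acts by automorphisms of $\varX$) to cover the whole attracting set $\ioneX(\Xplus_i)$; on each such chart $\ioneX$ restricts to $\id\times(V^+\into V)$, a closed immersion by Proposition~\ref{ref:affineRepresentability:prop}, and since being a locally closed immersion is étale-local on the target these glue to the desired immersion.

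\emph{Main obstacle.} The crux is the equivariant linearization along the fixed locus and its compatibility with $\Xplus$: one must produce an étale slice valid for linearly reductive $\Group$ in arbitrary characteristic, and promote the fiberwise identification $V^+=\bigoplus_{\lambda\in\Gbar}V_\lambda$ to genuine local triviality of $\iinftyX$ as an affine bundle over $F_i$, rather than a mere computation of fibers. Controlling the $\Gbar$-equivariant splitting of $N$ in families, checking that the induced transition maps are affine, and covering the full attracting set by $\Group$-translated slices (so as to upgrade the fiberwise closed immersion to a global locally closed immersion for $\ioneX$) is where the real work lies.
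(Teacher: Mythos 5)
Your opening steps are fine: the $H^1(\Group,-)$-vanishing argument for smoothness of $\varX^{\Group}$ works (the paper simply cites this as classical), and the reduction to a linear model along the fixed locus is close in spirit to how the paper proves smoothness of $\Xplus$ (Theorem~\ref{ref:relativesmoothness:thm} reduces, via Theorem~\ref{ref:etalestableneighbourhoods:thm}, to an equivariant \'etale map to the tangent representation). But note two things. First, you invoke a stronger slice statement than is available: an equivariant isomorphism, \'etale-locally over $F_i$, with a neighbourhood of the zero section of $N|_{F_i}$ is a Luna-type theorem that the paper never proves or cites; it only constructs equivariant \'etale maps to an affine space. Second, and this is the gap you yourself flag without resolving, the \'etale-local fiber computation does not yield that $\iinftyX$ is a Zariski-locally trivial affine fiber bundle. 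The paper gets this by a completely different mechanism: it first proves $\iinftyX$ is \emph{smooth} (the non-smooth locus is closed and $\Gbar$-stable, and multiplication by $0_{\Gbar}$ would push any of its points into the section $\isectionX(\varX^{\Group})$, along which $\iinftyX$ is smooth --- Corollary~\ref{ref:affineBundle:cor}), and then uses a Kempf torus to give the pushforward algebra an $\NN$-grading and applies Lemma~\ref{ref:conesWhichAreSmoothAreAffineSpaces:lem}, ``smooth graded cones are affine bundles''. That graded Nakayama-type lemma is where Zariski-local triviality actually comes from; nothing in your slice computation replaces it.

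The more serious gap is your gluing principle for $\ioneX$: being a locally closed immersion is \emph{not} \'etale-local on the target, and the paper itself exhibits the counterexample. For the nodal curve $C = \mathbb{P}^1/(0 = \infty)$ with its $\Gmult$-action, $C^+ = \mathbb{A}^1 \to C$ is bijective but not an immersion (a surjective immersion onto the reduced curve $C$ would be an isomorphism, impossible since $C$ is singular). Yet \'etale-locally on $C$ it \emph{is} a closed immersion: away from the node it is an isomorphism, and over an equivariant affine \'etale neighbourhood $W \to C$ of the node (which exists by Theorem~\ref{ref:etalestableneighbourhoods:thm}) the base change of $C^+\to C$ is $W^+ \to W$ by Theorem~\ref{ref:etale:thm} together with Theorem~\ref{ref:algebraizationABB}, and this is a closed immersion by Proposition~\ref{ref:representabilityForAffine:prop}. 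This is exactly the content of the remark following Proposition~\ref{ref:openimmersions:prop}: $\ioneX$ is always \'etale-locally a closed immersion, but need not be an immersion. So smoothness must enter the immersion argument through \emph{Zariski}-local data, not \'etale charts: the paper uses that a smooth variety is normal, hence by Trautman's Sumihiro-type theorem~\cite{Trautman} every fixed point has a Zariski-open affine $\Group$-stable neighbourhood, and then runs a cancellation-and-irreducibility argument (Proposition~\ref{ref:ioneLocallyClosed:prop}), finally using that connected and irreducible components of a smooth variety coincide. As written, your final gluing step is unsound and cannot be repaired without such a Zariski-local input.
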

    A basic application of
    Theorem~\ref{ref:ABBdecomposition:thm:intro} is presented in
    Example~\ref{ex:BBdecompositionForHilbA2}.

    For singular $\varX$, the map $\iinftyX$ induces a bijection between connected
    components of $\Xplus$ and $\varX^{\Group}$. Of course, for singular
    $\varX$ the scheme $\Xplus$ is usually singular. See
    Examples~\ref{ex:actionGoesRight}-\ref{ex:actionGoesWrong}.

        To compare $\varX$ and $\Xplus$ note that the map
        $\iinftyX\colon \Xplus \to \varX^{\Group}$ has a natural section
        $\isectionX\colon \varX^{\Group} \to \Xplus$ which is defined by
        sending $\varphi'\colon S\to \varX^{\Group} \subset \varX$ to
        $\varphi' \circ pr_2\colon \Gbar \times S \to \varX$. Intuitively,
        this section is the embedding of the fixed point locus into $\Xplus$;
        obviously the trivial $\Group$-action on $\varX^{\Group}$ compactifies
        to a trivial $\Gbar$-action.

        Fix a fixed point $x\in \varX^{\Group}(\kk)$ and let $x' :=
        \isectionX(x)\in \Xplus(\kk)$. Applying
        Proposition~\ref{ref:affineRepresentability:prop} to $\cO_{X,
        x}/\mathfrak{m}_x^2$ we see that the
        cotangent map $T_{\varX, x}^{\vee} \to T_{\Xplus, x'}^{\vee}$ is surjective
        and its kernel consists of outsider representations in $T_{\varX,
        x}^{\vee}$. In the special case when there are no such
        representations, the map $\iinftyX$
        is in fact an open immersion as described in the following
        Proposition~\ref{ref:comparison:intro:prop}.
        \begin{proposition}\label{ref:comparison:intro:prop}
            Let $\varX$ be separated and locally of finite type.
            Let $x\in \varX^{\Group}$ be a fixed $\kk$-point such that the
            cotangent space $T_{\varX,
            x}^{\vee}$ has no outsider representations. Then the map $\ioneX\colon \Xplus \to
            \varX$ is an open embedding near $x' = \isectionX(x)\in \Xplus$.
            More precisely, there exists an affine open
            $\Gbar$-stable neighbourhood
            $U$ of $x'$ such that $(\ioneX)_{|U}\colon U\to \varX$ is an open
            embedding. In particular, $x\in \varX$ has an affine open $\Gbar$-stable
            neighbourhood.
            Conversely, if a fixed point $x$ has an affine open $\Gbar$-stable neighbourhood
            in $\varX$, then $T_{\varX, x}^{\vee}$ has no outsider representations.
        \end{proposition}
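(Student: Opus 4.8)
The plan is to treat the converse first, since it isolates the representation-theoretic principle that also drives the forward direction. The key fact is that, because $\Gbar$ is a monoid, $H^0(\Gbar, \cO_{\Gbar})$ is a \emph{subalgebra} of $H^0(\Group, \cO_{\Group})$; consequently the class of $\Gbar$-representations is closed under tensor products, and hence under symmetric powers. For the converse, let $V = \Spec A$ be an affine open $\Gbar$-stable neighbourhood of $x$. Then $A$ is a $\Gbar$-module, so by Lemma~\ref{ref:mainRepr:lem} every irreducible $\Group$-constituent of $A$ is a $\Gbar$-representation, i.e. $A$ has no outsiders. Since the cotangent space $T_{\varX, x}^{\vee} = \mathfrak{m}_x/\mathfrak{m}_x^2$ is a $\Group$-subquotient of $A$, its irreducible constituents also occur in $A$ and are therefore $\Gbar$-representations, so $T_{\varX,x}^{\vee}$ has no outsiders.

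For the forward direction, I would start from the observation that $x' = \isectionX(x)$ is a $\Gbar$-fixed point of the $\Gbar$-scheme $\Xplus$ (Theorem~\ref{ref:introRepresentability:thm}), so by Corollary~\ref{ref:GstableNeighbourhoods:cor} it has an affine open $\Gbar$-stable neighbourhood $U \subseteq \Xplus$. Since $\ioneX$ is $\Group$-equivariant with $\ioneX(x') = x$, the whole problem reduces to showing that $(\ioneX)_{|U}$ is an open immersion near $x'$; the $\Gbar$-stable refinement of $U$ will be recovered at the end.

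The heart of the argument is to show that $(\ioneX)_{|U}$ induces an \emph{isomorphism} on complete local rings $\widehat{\cO}_{\varX, x} \to \widehat{\cO}_{\Xplus, x'}$. First I would check that the hypothesis propagates from the cotangent space to the whole local ring: choosing, by linear reductivity of $\Group$, a $\Group$-equivariant splitting of $\mathfrak{m}_x \onto T_{\varX,x}^{\vee}$ and extending it multiplicatively yields a $\Group$-equivariant surjection $\widehat{\Sym}(T_{\varX,x}^{\vee}) \onto \widehat{\cO}_{\varX, x}$. Since $T_{\varX,x}^{\vee}$ has no outsiders and $\Gbar$-representations are closed under symmetric powers, the source has no outsiders, hence neither does the quotient $\widehat{\cO}_{\varX,x}$. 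Applying Proposition~\ref{ref:affineRepresentability:prop} to each Artinian quotient $\cO_{\varX,x}/\mathfrak{m}_x^{n}$ --- whose outsider-generated ideal now vanishes --- and passing to the inverse limit identifies $\widehat{\cO}_{\Xplus, x'}$ with $\widehat{\cO}_{\varX,x}$. This inverse-limit identification, i.e. the compatibility of the formation of $\Xplus$ with the infinitesimal thickenings of $x$, is the step I expect to require the most care, and it is the main obstacle.

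Granting the isomorphism on completions, $(\ioneX)_{|U}$ is flat and unramified at $x'$, hence étale there, and by the standard fact that an étale morphism inducing an isomorphism on complete local rings is an open immersion in a Zariski neighbourhood (via Zariski's main theorem), $(\ioneX)_{|U}$ is an open immersion on some open $U' \ni x'$. Transporting the $\Gbar$-action of $U' \subseteq \Xplus$ across the isomorphism $U' \xrightarrow{\sim} \ioneX(U') \subseteq \varX$ exhibits a neighbourhood of $x$ in $\varX$ as a $\Gbar$-scheme; applying Corollary~\ref{ref:GstableNeighbourhoods:cor} there yields an affine open $\Gbar$-stable neighbourhood of $x$, whose preimage is the desired affine open $\Gbar$-stable $U$ with $(\ioneX)_{|U}$ an open immersion. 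In particular $x$ acquires an affine open $\Gbar$-stable neighbourhood in $\varX$, completing the forward direction.
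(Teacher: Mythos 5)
Your converse argument is correct and coincides with the paper's, and the first half of your forward direction --- propagating the no-outsider hypothesis from $T_{\varX,x}^{\vee}$ to every quotient $\cO_{\varX,x}/\mathfrak{m}_x^{i}$ via surjections from symmetric powers --- is exactly the paper's argument. But the forward direction has a genuine gap at its key analytic step: the ``standard fact'' you invoke is false. An \'etale morphism inducing an isomorphism on complete local rings need \emph{not} be an open immersion on any Zariski neighbourhood: the squaring map $\Gmult \to \Gmult$ (over an algebraically closed field of characteristic $\neq 2$) is \'etale and induces isomorphisms on all complete local rings, yet it is injective on no nonempty Zariski-open subset, since any such subset contains some pair $\{u,-u\}$ with $u \neq -u$. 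Zariski's main theorem cannot repair this, because what is missing is precisely local injectivity. The paper supplies it by a separate input: for \emph{separated} $\varX$ the map $\ioneX$ is a monomorphism of functors (Lemma~\ref{ref:monomorphism:lem}), and an \'etale monomorphism is an open immersion \cite[Tag~025G]{stacks_project}. Note that this is exactly where the separatedness hypothesis of the proposition enters; your proof never uses separatedness, which is a symptom of the gap.

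Two further steps are asserted rather than proved. First, the identification of $\widehat{\cO}_{\Xplus,x'}$ with $\lim_i \cO_{\varX,x}/\mathfrak{m}_x^{i}$, which you yourself flag as ``the main obstacle,'' does not follow from Proposition~\ref{ref:affineRepresentability:prop} alone: that proposition computes the BB decomposition \emph{of the Artinian scheme}, not the completion of $\Xplus$. The missing ingredient is Theorem~\ref{ref:closedImmersions:prop}: each thickening $\Spec(\cO_{\varX,x}/\mathfrak{m}_x^{i})$ is a $\Group$-stable closed subscheme of $\varX$, so its BB functor equals the fiber product $\Xplus \times_{\varX} \Spec(\cO_{\varX,x}/\mathfrak{m}_x^{i})$; having no outsiders, it is a $\Gbar$-scheme, hence equal to its own BB decomposition (Proposition~\ref{ref:isomorphism:prop}), and since $\ioneX$ is a monomorphism its fiber over $x$ is supported at $x'$ alone, giving $\cO_{\Xplus,x'}/\mathfrak{m}_x^{i}\cO_{\Xplus,x'} \simeq \cO_{\varX,x}/\mathfrak{m}_x^{i}$ compatibly in $i$. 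Second, your final step of ``transporting the $\Gbar$-action across $U' \xrightarrow{\sim} \ioneX(U')$'' requires $U'$ to be $\Gbar$-stable, which an open $\Group$-stable neighbourhood of $x'$ need not be (compare $\Gmult \subset \mathbb{A}^1$: open, $\Gmult$-stable, not $\mathbb{A}^1$-stable). The paper's fix is different: with $U$ the open-immersion locus, set $V = \isectionX^{-1}(U)$, show $\iinftyX^{-1}(V) \subseteq U$ by applying Proposition~\ref{ref:neighbourhoodsofzero:prop} along $\Gbar$-orbits, and take $U' = \iinftyX^{-1}(V')$ for an affine open $V' \subseteq V$; this $U'$ is open, affine (because $\iinftyX$ is affine), $\Gbar$-stable, and contained in $U$, which completes the argument without any transport of actions.
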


        Proposition~\ref{ref:comparison:intro:prop} has the
        following consequence, which is of independent interest:
        if $x$ lies in $\varX^{\Group}$ and there exists a linearly reductive monoid $\Gbar$ with zero
        such that $T_{X, x}$ has no outsider representations (for $\Gbar$),
        then there exists a $\Group$-stable affine open neighbourhood of $x$.
        Proposition~\ref{ref:comparison:intro:prop} is also motivated by the
        results of~\cite{Jelisiejew__Elementary}, in particular
        by~\cite[Proposition~1.5]{Jelisiejew__Elementary} and we plan to
        investigate its consequences for Hilbert and Quot schemes.

        \bigskip

        The functorial description of $\Xplus$ implies that all the above and
        similar results have relative counterparts. Let $f\colon X\to Y$ be an
        equivariant morphism of $\Group$ schemes and $\fplus\colon \Xplus\to
        \Yplus$ be the induced morphism of \BBname{} decompositions. We list
        several properties of $f$ which extend to $\fplus$.
        \begin{enumerate}
            \item\label{it:intro:fst} if $f$ is smooth, then also $\fplus$ is smooth,
                Theorem~\ref{ref:relativesmoothness:thm}. In particular, if
                $\Group$ acts trivially on $Y$, then $\fplus\colon \Xplus\to
                \Yplus = Y$ is a family of \BBname{} decompositions as in
                Theorem~\ref{ref:ABBdecomposition:thm:intro}.
            \item if $f$ is \'etale, then we have cartesian squares
                \begin{equation}
                    \begin{tikzcd}
                        \Xplus \arrow[r, "\iinfty{X}"]\arrow[d, "\fplus"] & X^{\Group} \arrow[r]\arrow[d] & X\arrow[d, "f"]\\
                       \Yplus \arrow[r, "\iinfty{Y}"] & Y^{\Group} \arrow[r] &
                       Y
                    \end{tikzcd}
                \end{equation}
                by Theorem~\ref{ref:etale:thm}. In particular, $\fplus$ is a
                base change of $f$, hence is \'etale.
            \item as a special case, if $f$ is an open immersion, then $\fplus$ is an open
                immersion, Proposition~\ref{ref:openimmersions:prop}.
            \item if $f$ is a closed immersion, then $\fplus$ is a closed
                immersion, Proposition~\ref{ref:closedImmersions:prop}.
            \item\label{it:intro:last} if $f$ is formally \'etale (resp.~formally unramified), then
                $\fplus$ is also formally \'etale (resp.~formally unramified),
                Proposition~\ref{ref:etaleFormally:prop}.
        \end{enumerate}

Let us explain the idea of proof of
Theorem~\ref{ref:introRepresentability:thm}.  The case of affine $\varX$ is
discussed in Proposition~\ref{ref:affineRepresentability:prop} above. In the general
case, the main obstacle is topological: it is the lack of $\Group$-stable affine neighbourhoods.
For normal $\varX$ a Sumihiro-type result~\cite[Theorem~4.1]{Trautman} asserts
that every fixed point $x\in \varX^{\Group}$ has an affine $\Group$-stable
neighbourhood, however for general $\varX$ or for algebraic spaces this is no
longer true.

The solution is to look at the formal geometry of
$\varX$ along $\varX^{\Group}$ and of $\Gbar$ along $0\in \Gbar(\kk)$. Since
$\varX^{\Group} \subset \varX$ has a trivial $\Group$-action, also its
$n$-th thickening, denoted $\varX_n$, has a $\Group$-action (which is
non-trivial for $n > 0$).
Similarly, let $\Gbar_{n}$ be the $n$-th infinitesimal neighbourhood of
 $0_{\Gbar}\in \Gbar$. Then $\Gbar_n$ has an action of $\Group$ by left
 multiplication, so we may consider equivariant families $\Gbar_n \times S \to
 \varX$. The scheme-theoretic image of each such family lies in $X_n$.
The neighbourhoods $\{\Gbar_n\}_{n\in \NN}$ form an ascending family of closed
$\Group$-stable subschemes of $\Gbar$. We define a \emph{formal} version of the
\BBname{} functor $\Hfunctor{X}\colon \kSch^{\mathrm{op}}\to \Set$ as
\begin{equation}\label{eq:definitionFormal}
    \Hfunctor{X}(Z)=\left\{(\varphi_n)_{n\in \NN} \ |\ \varphi_n\colon \Gbar_n \times Z\to X,\,
        \mbox{ a $\Group$-equivariant morphism and }
        (\varphi_{n+1})_{|\Gbar_{n} \times Z}
        = \varphi_n\mbox{ for all }n\right\}.
\end{equation}
Informally, $\Hfunctor{X}(Z)$ parameterizes equivariant families defined on
the formal neighbourhood of $0_{\Gbar}\in \Gbar$.
Note that every $\Group$-equivariant morphism $\Gbar \times Z\to \varX$
restricts to $\Gbar_n\times Z\to \varX$ for all $n$. Hence, we obtain a
\emph{formalization} map $\Dfunctor{X}\to \Hfunctor{X}$.

\begin{theorem}[Algebraization of the \BBname{} functor]\label{ref:algebraizationABB}
    Let $\varX$ be a $\Group$-scheme locally of finite type over $\kk$. Then the restriction $\Dfunctor{X}\to \Hfunctor{X}$ is an
    isomorphism.
\end{theorem}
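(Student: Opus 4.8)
The plan is to reduce to the affine case, solve it by representation theory, and then globalize using that the formal functor $\HX$ is local along $X^{\Group}$ while scheme-theoretic density of the thickenings controls the actual families. Throughout set $A=H^0(\Gbar,\cO_{\Gbar})$ and $I=\mathfrak m_{0}\subseteq A$, so that $\Gbar_n=\Spec(A/I^{n+1})$. First I record the structural input coming from the zero of $\Gbar$. Restriction to the dense $\Group\subseteq\Gbar$ gives a $\Group$-equivariant injection $A\into H^0(\Group,\cO_{\Group})$ for the left-translation action; since $H^0(\Group,\cO_{\Group})$ contains each irreducible with finite multiplicity (linear reductivity), every isotypic component $A_{(\lambda)}$ is finite dimensional. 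Moreover a central one-parameter subgroup contracting $\Gbar$ to $0$ equips $A$ with a grading $A=\bigoplus_{d\ge 0}A_d$, $A_0=\kk$, whose irrelevant ideal is $I=A_{>0}$; hence $I^{n+1}\subseteq A_{\ge n+1}$, and $I$ is $\Group$-stable (as $0$ is fixed), so the isotypic decomposition descends to every $A/I^{n+1}$. The core lemma is then immediate: each $A_{(\lambda)}$ lies in degrees $\le D(\lambda)$, so $A_{(\lambda)}\cap I^{n+1}=0$ for $n\ge D(\lambda)$, giving $A_{(\lambda)}\xrightarrow{\sim}(A/I^{n+1})_{(\lambda)}$ for all large $n$. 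Since both functors are Zariski sheaves in $S$, I may take $S=\Spec R$ affine; because $A\otimes_{\kk}R=\bigoplus_d(A_d\otimes_{\kk}R)$ is a genuine direct sum, I also obtain $\bigcap_n\bigl(I^{n+1}(A\otimes R)\bigr)=0$ for every $R$.

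Next I treat $X=\Spec B$ affine. An $S$-point of $\DX$ is a $\Group$-equivariant $\kk$-algebra map $\theta\colon B\to A\otimes_{\kk}R$, an $S$-point of $\HX$ is a compatible system of $\Group$-equivariant algebra maps $\theta_n\colon B\to (A/I^{n+1})\otimes_{\kk}R$, and the transformation is reduction modulo $I^{n+1}$. Decomposing $B=\bigoplus_\lambda B_{(\lambda)}$ and using equivariance, $\theta$ sends $B_{(\lambda)}$ into $A_{(\lambda)}\otimes R$ and $\theta_n$ into $(A/I^{n+1})_{(\lambda)}\otimes R$. Injectivity follows because, for $n\ge D(\lambda)$, the finite dimensional $A_{(\lambda)}\otimes R$ injects into $(A/I^{n+1})\otimes R$, so two such $\theta$ agreeing modulo all $I^{n+1}$ agree on each $B_{(\lambda)}$. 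For surjectivity I use the stabilization isomorphism to define $\theta$ on each $B_{(\lambda)}$ from $\theta_n$ with any $n\ge D(\lambda)$ (independent of $n$ by compatibility) and assemble a $\Group$-equivariant linear map $\theta\colon B\to A\otimes R$ with $\theta\bmod I^{n+1}=\theta_n$ for all $n$. That $\theta$ is a unital ring map is checked on the defects $\theta(1)-1$ and $\theta(bb')-\theta(b)\theta(b')$: each lies in a finite dimensional sum of isotypic components and maps to $0$ in every $A/I^{n+1}$, hence vanishes by the core lemma.

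For the globalization I first note that $\HX$ depends only on the formal completion of $X$ along $X^{\Group}$: every $\varphi_n$ factors through the $n$-th thickening $X_n$ of $X^{\Group}$, on which $\Group$ acts topologically trivially, so each open $U_0\subseteq X^{\Group}$ determines a $\Group$-stable open $U_n\subseteq X_n$. Fixing an affine open cover $\{U^{(i)}\}$ of $X$ with traces $U_0^{(i)}=U^{(i)}\cap X^{\Group}$, and letting $S_i$ be the preimage of $U_0^{(i)}$ under the limit map of a given formal family, the family restricts over $S_i$ to an element of $\HX$ for the affine $U^{(i)}$, which algebraizes by the affine case to $\psi^{(i)}\colon\Gbar\times S_i\to U^{(i)}\subseteq X$. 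The $\psi^{(i)}$ glue, and injectivity holds, by a single density argument: since $\bigcup_n\Gbar_n\times S$ is scheme-theoretically dense in $\Gbar\times S$ and $X$ is separated (so the diagonal is closed), any two equivariant families into $X$ with the same formal restriction coincide. This yields $\DX\cong\HX$ for separated $X$, and the general case of schemes and of algebraic spaces follows by the usual dévissage, the diagonal remaining a locally closed, resp.\ representable, immersion.

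The main obstacle is exactly this globalization. The functor $\HX$ is manifestly local along $X^{\Group}$, but a section of $\DX$ is an actual equivariant map out of $\Gbar\times S$ whose image—an orbit closure—need not be affine and, for non-normal $X$ or for algebraic spaces, need not admit any $\Group$-stable affine neighbourhood, the phenomenon highlighted after Corollary~\ref{ref:GstableNeighbourhoods:cor}. The content of the theorem is that one never needs such a neighbourhood: the grading coming from the zero of $\Gbar$ makes $\bigcap_nI^{n+1}=0$ hold after any base change, so the thickenings are scheme-theoretically dense, and formal data both determine a family (giving injectivity and the gluing) and, isotypic component by component, reconstruct it (giving surjectivity). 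I expect the only genuinely delicate bookkeeping to be the reduction of the non-separated and algebraic-space cases to the separated one, together with the verification that the central contracting subgroup—hence the grading with $A_0=\kk$—is available for every linearly reductive $\Gbar$ with zero.
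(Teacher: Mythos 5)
Your affine case and your injectivity argument are essentially sound, and close to the paper's Lemma~\ref{ref:agreementForAffine:lem:newversion} and Lemma~\ref{ref:injectivityOfFormal:lem} (note only that the Kempf one-parameter subgroup need not be central, and that producing it requires first passing to a perfect field). The gap is in the globalization, and it is exactly the point you dismiss at the end. You restrict a formal family over $S_i$ so that it lands in the thickenings of $U_0^{(i)}$ inside an arbitrary affine open $U^{(i)}\subseteq X$, and then invoke ``the affine case'' to algebraize it to $\psi^{(i)}\colon\Gbar\times S_i\to U^{(i)}$. But your affine case applies only to affine \emph{$\Group$-schemes}: it reconstructs an equivariant algebra homomorphism $B\to A\otimes R$ isotypic component by isotypic component, which requires the coordinate ring of the target to carry a $\Group$-action. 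A general affine open $U^{(i)}$ is not $\Group$-stable, so there is no such $B$ and no equivariant map $\Gbar\times S_i\to U^{(i)}$ to construct. The failure is not merely formal: for the nodal curve $C=\mathbb{P}^1/(0=\infty)$ with its $\Gmult$-action, a point $x$ off the node $p$ has limit $p$, and the unique algebraization $\varphi\colon\mathbb{A}^1\to C$ of its formal family has image $\{p\}\cup\Gmult x=C$, which is contained in no affine open of $C$; yet the formal family itself lands in arbitrarily small neighbourhoods of $p$. So the maps $\psi^{(i)}$ you propose to glue do not exist.

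The claim that ``one never needs such a neighbourhood'' is therefore backwards: scheme-theoretic density of the thickenings gives \emph{uniqueness} of an algebraization, but says nothing about \emph{existence}, and existence is exactly where equivariant affine charts are needed. This is why the paper's proof of Theorem~\ref{ref:algebraizationABB} goes through the Alper--Hall--Rydh theorem (Theorem~\ref{ref:etalestableneighbourhoods:thm}): over $\kkbar$ one finds a $\Group$-equivariant \'etale cover $W\to X_{\kkbar}$ by affine $\Group$-schemes covering the fixed locus, applies the affine case to $W$, transports the conclusion along the cartesian squares of Theorem~\ref{ref:etale:thm}, and finishes by fpqc descent; this is also the only place where the hypothesis that $X$ be locally of finite type enters. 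To repair your argument you would have to replace your Zariski cover by such an equivariant \'etale cover and supply the descent step.
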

To prove Theorem~\ref{ref:introRepresentability:thm}, we first show that $\Hfunctor{X}$ is representable and that
$\Hfunctor{X} \to \varX^{\Group}$ is affine of finite type, see
Theorem~\ref{ref:representabilityformal:thm}. For this part the assumptions on
$\varX$ are minimal, we do not even assume locally finite type. Second,
Theorem~\ref{ref:algebraizationABB} asserts that $\Dfunctor{X}\to
\Hfunctor{X}$ is an isomorphism; here we do need that $\varX$ is locally of
finite type.

The proof of representability of $\Hfunctor{X}$ is quite intuitive: we prove
that each $\Hfunctor{X_n}$ is represented by a closed subscheme $Z_n \subset
X_n$. Then $Z_{n+1} \cap X_n = Z_n$ and so we obtain a formal scheme over
$Z_0$.
\[
    \begin{tikzcd}
        Z_0 \arrow[rd]\arrow[r, hook] & Z_1 \arrow[d]\arrow[r, hook] & Z_2
        \arrow[ld]\arrow[r, hook] &  \ldots\arrow[lld]\\
        & Z_0
    \end{tikzcd}
\]
Each projection $\iinfty{Z_n}\colon Z_n \to Z_0$ is affine, thus $Z_n=
\Spec_{Z_0}(\cA_n)$. A stabilization
results shows that for each fixed $\Group$-representation $V_{\lambda}$ the
map $\cA_n[\lambda] \to \cA_m[\lambda]$ is an
isomorphism for $n \geq m\gg 0$, where $\cA_n[\lambda]$ is the isotypic
component of $V_{\lambda}$ in $\cA_n$. Thus we
construct a sheaf of algebras $\cA$ on $Z_0$ by setting $\cA[\lambda] :=
\cA_n[\lambda]$ for each $\lambda$ and $n = n_{\lambda}\gg0$. The scheme $Z =
\Spec_{Z_0}(\cA)$ is the object representing $\Hfunctor{X}$.
We stress that $Z$ is effectively computed directly for an explicitly given $X$.

The proof of Theorem~\ref{ref:algebraizationABB} uses a recent deep and
elegant result of Alper-Hall-Rydh, which asserts that in \'etale topology and
over $\kkbar$ every fixed point has an affine $\Group$-stable neighbourhood.
This result requires $\varX$ to be locally of finite type and this is why
this assumption appears also in Theorem~\ref{ref:introRepresentability:thm}.
\begin{theorem}[{\cite[Theorem~2.6]{AHR}}]\label{ref:etalestableneighbourhoods:thm}
Let $X$ be a quasi-separated algebraic space, locally of finite type over an
algebraically closed field $\kk$, with an action of an affine algebraic group
$\Group$ over $\kk$. Suppose that $x$ is a $\kk$-rational point of $X$ with a
linearly reductive stabilizer $\Group_x$. Then there exists an affine scheme
$W$ together with action of $\Group$ and a $\Group$-equivariant {\'e}tale
neighbourhood $W\to X$ of $x$.
\end{theorem}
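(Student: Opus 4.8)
The plan is to reduce the statement to its stacky incarnation and then to invoke the local structure theory for algebraic stacks. Concretely, I would first show that the assertion is equivalent to the statement that the quotient stack $\cX := [X/\Group]$ admits, near the image of $x$, an \'etale neighbourhood of the form $[\Spec A/\Group_x]$. Indeed, suppose such a neighbourhood $\cW = [\Spec A/\Group_x]\to \cX$ has been produced. Pulling back along the canonical atlas $X\to \cX$, which is a $\Group$-torsor, yields $W := \cW\times_{\cX} X$. Then $W\to \cX$ is again a $\Group$-torsor, so $W$ carries a $\Group$-action, and $W\to X$ is the base change of the \'etale map $\cW\to \cX$, hence a $\Group$-equivariant \'etale neighbourhood through $x$. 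Moreover $W = \Group\times^{\Group_x}\Spec A$ is affine: the right $\Group_x$-action on $\Group\times \Spec A$ is free and $\Group_x$ is linearly reductive, so the quotient is the affine GIT quotient, and equivalently $\Group/\Group_x = \Spec \cO(\Group)^{\Group_x}$ is affine. Thus the whole problem reduces to constructing the \'etale slice for $\cX$.

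Next I would build this slice formally. Let $\cG_x = B\Group_x$ denote the residual gerbe of $\cX$ at $x$, and let $\cX_n$ be its $n$-th infinitesimal thickening inside $\cX$. The key point is that, because $\Group_x$ is linearly reductive, the functor of $\Group_x$-invariants is exact, so the relevant deformation theory is unobstructed: one can inductively lift a fixed isomorphism on the central fibre through each square-zero extension $\cX_n\into \cX_{n+1}$, since the obstruction and the ambiguity live in cohomology groups that vanish by linear reductivity of $\Group_x$. This produces a compatible system of maps, that is, a morphism $\widehat{\cX}_x\to \cX$ from the formal completion, together with an identification of $\widehat{\cX}_x$ with the completion of a linear model $[\Spec \Sym(\cN)/\Group_x]$, where $\cN$ is the tangent representation $T_{\cX, x}$ viewed as a $\Group_x$-representation.

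The heart of the argument, and the step I expect to be the main obstacle, is \emph{algebraization}: upgrading this formal slice to an honest \'etale neighbourhood. Here I would rely on two deep inputs. The first is coherent completeness of quotients by linearly reductive groups (Hall--Rydh): the stack $[\Spec A/\Group_x]$ is coherently complete along the closed substack supported at the central fibre, so a coherent sheaf on it is recovered from its restrictions to the thickenings. The second is coherent Tannaka duality for algebraic stacks (Hall--Rydh): morphisms into a quasi-separated, locally finite-type stack with affine stabilisers correspond to symmetric monoidal right-exact functors on categories of coherent sheaves. Combining these, the compatible system $\cX_n\to \cX$ assembles into an actual morphism $[\Spec \widehat{A}/\Group_x]\to \cX$ out of the completed slice. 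Finally I would descend this from the complete local ring to a finite-type neighbourhood by Artin approximation, and verify \'etaleness at $x$ by checking that the induced map on the tangent--obstruction theory is an isomorphism, which holds by construction because the linear model $\cN$ was chosen transverse to the orbit. Spreading the approximated solution out over an \'etale neighbourhood of the base point then produces the desired $[\Spec A/\Group_x]\to \cX$, and the reduction of the first paragraph converts this into the affine equivariant \'etale neighbourhood $W\to X$.
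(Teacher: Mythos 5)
This statement is not proved in the paper at all: it is quoted verbatim as \cite[Theorem~2.6]{AHR} and used as a black-box input (the authors only remark that for normal varieties it follows from Sumihiro's results via \cite{Trautman}, and for quasi-projective $\varX$ from \cite{Brion__linearization}). So there is no internal proof to compare against; what you have written is, in effect, an outline of the Alper--Hall--Rydh proof itself. As such an outline it is faithful and essentially correct: the reduction to producing an \'etale neighbourhood $[\Spec A/\Group_x]\to[X/\Group]$ and pulling back along the $\Group$-torsor $X\to[X/\Group]$ is exactly how AHR deduce their Theorem~2.6 from their local structure theorem, and your three main inputs --- unobstructed deformation along the thickenings of the residual gerbe $B\Group_x$ (using vanishing of higher cohomology of coherent sheaves on $B\Group_x$ by linear reductivity), coherent completeness of $[\Spec \widehat{A}/\Group_x]$ together with Tannaka duality to algebraize the formal morphism, and Artin approximation plus a tangent-space check to descend to a finite-type \'etale neighbourhood --- are precisely the ingredients of their argument. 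Two points deserve more care than your sketch gives them: the affineness of $W=\Group\times^{\Group_x}\Spec A$ rests on the fact that $\Group/\Group_x$ is affine for a linearly reductive subgroup of an affine group and that the free $\Group_x$-quotient of an affine scheme is an affine geometric quotient (a Luna-type statement, true but not formal); and Tannaka duality requires $[X/\Group]$ to be quasi-separated with affine stabilizers, which is where the quasi-separatedness hypothesis on $X$ and affineness of $\Group$ enter. Be aware, though, that your write-up cannot serve as a self-contained proof: it leans on coherent completeness and coherent Tannaka duality, which are themselves the deep content of the AHR/Hall--Rydh papers, so in substance you have reproduced the citation rather than replaced it.
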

Note that this theorem is \emph{not} Luna's \'etale slice theorem: in
Theorem~\ref{ref:etalestableneighbourhoods:thm} the emphasis is on obtaining
an affine $\Group$-stable {\'e}tale neighbourhood, whereas Luna's \'etale slice theorem is usually
formulated for an affine $\Group$-scheme.
We note that for a normal variety $\varX$,
Theorem~\ref{ref:etalestableneighbourhoods:thm} follows from Sumihiro's
results~\cite{Sumihiro__equivariant}, see e.g.~\cite[Theorem, p.~54]{Trautman}.
We also note that for
quasi-projective variety $\varX$, Theorem~\ref{ref:etalestableneighbourhoods:thm}
follows from~\cite[Theorem~1.1(ii)]{Brion__linearization}.

There is an alternative proof of representability of $\Dfunctor{\cX}$. This
proof naturally extends to the case when $\cX$ is an algebraic stack locally
of finite type over $\kk$ and with an affine diagonal. We only briefly sketch
the idea of this approach below. For general information on stacks and on group actions on
stacks, we refer to~\cite{Olsson, Romagny__group_actions}.
This approach is rather abstract and gives no a priori information on the
geometry of $\cX^+$.
To begin with, in \cite[Theorem~1.6]{Halpern_Preygel} and
independently in~\cite[Theorem~2.22]{AHR} algebraicity of certain mapping
stacks is proven. In particular, it follows that the mapping stack
$\Maps([\Gbar/\Group], [\cX/\Group])$ is algebraic and locally of
finite type. The same is true for
$\cM := \Maps_{[\star/\Group]}([\Gbar/\Group], [\cX/\Group])$, where
$\star = \Spec \kk$ with trivial action. Now, $\cM$
parameterizes morphisms over $[\star/\Group]$. For each such morphism, by base
change $\star\to [\star/\Group]$ we obtain an equivariant morphism $\Gbar \to
\cX$. Hence $\cX^{+} := \Maps([\Gbar/\Group], [\cX/\Group])$ satisfies a universal
property analogous to~\eqref{eq:definitionFunctorial:new}. If $\cX$ is an
algebraic space, then the inertia groups of $\cX^+$ are trivial, hence it is
an algebraic space. If $\cX$ is a scheme of finite type, then $\cX^+$ is
\emph{quasi-compact}: by Sumihiro's result already $\widetilde{\cX}^+$ is quasi-compact, where
$\widetilde{\cX}^+$ is the \BBname{} decomposition of the normalization
$\widetilde{\cX}$ of $\cX$ with respect to the action of the normalization
$\widetilde{\Group}$ of $\Gbar$. Hence the morphism $\cX^{+} \to \cX$ is quasi-finite and
so $\cX^+$ is a scheme of finite type over $\kk$~\cite[Proposition~7.2.10]{Olsson}.
A much related and streamlined argument appears also in~\cite[Section~5.12]{AHR}, where Alper, Hall,
and Rydh recover Drinfeld's result as a consequence of existence of
equivariant Hom stacks \cite[Corollary~2.23]{AHR}.

The contribution of this work, as we see it, is the extension of \BBname{} decomposition
to actions of groups other than $\Gmult$ and setting its basic properties,
including representability.
All the above statements (apart from
Theorem~\ref{ref:etalestableneighbourhoods:thm}, of course) are, as far as we
know, novel and so are the properties~\eqref{it:intro:fst}-\eqref{it:intro:last} of
$f^+\colon \varX^+\to Y^+$. The proof of
Theorem~\ref{ref:introRepresentability:thm} differs from Drinfeld's proof even
in the $\Gmult$-case (and we have slightly weaker assumptions: \emph{locally
of finite type} instead of \emph{finite type}) and is more explicit than the
methods applying~\cite[Corollary~2.23]{AHR} or~\cite{Halpern_Preygel}.
The idea of formal \BBname{} functors is, in the $\Gmult$-case, due to
Drinfeld~\cite{Drinfeld}. The proof of
Theorem~\ref{ref:ABBdecomposition:thm:intro} is different from the original
proof of~\cite{BialynickiBirula__decomposition} and by affiness of $\Xplus\to
\varX^{\Group}$ reduces to the fact that smooth cones are necessarily affine
spaces. It would be interesting to see whether, perhaps by completely
different means, one could extend the above results to nonconnected groups or
reductive groups in characteristic $p$.

Now we briefly discuss the contents of the paper. In Section~\ref{sec:prelims}
and Section~\ref{sec:reductiveMonoids} we prove the necessary prerequisites on
algebraic groups and monoids. In particular, we show how to reduce to the case
where $\Gbar$ has a zero, see Theorem~\ref{ref:reductionToMonoidWithZero:thm}.
In Section~\ref{sec:actionsOnAffine} we deal with the (almost trivial case)
where $\varX$ is affine or covered by open affine $\Group$-stable
neighbourhoods. In Section~\ref{sec:BBproperties} we prove most of the
properties of $\fplus$ listed above. These proofs are used in
Section~\ref{sec:algebraization} to prove the main results.
Section~\ref{sec:smoothnessAndRelative} deals with the case of smooth $\varX$
or more generally a smooth morphism $X\to Y$.

\section{Preliminaries}\label{sec:prelims}

    Throughout when speaking about a group action, we mean a left action. We refer
    the reader e.g.~to \cite{Brion__Introduction} for a short introduction to algebraic
    groups actions over $\mathbb{C}$. In
	positive characteristic or over non-algebraically closed field, most questions become subtler,
	see~\cite{Jantzen__Representations}. However, the only groups in positive
	characteristic which we analyse here are tori and their finite extensions.

    We work over a field $\kk$. We do not assume that $\kk$ is algebraically
    closed or that it has characteristic zero. This lack of assumptions
    forces us to be careful with definitions, so we define most of standard
    concepts. By default the considered representations are rational, i.e.,
    unions of finite dimensional subrepresentations. We say that a group
    $\Group$ is \emph{(geometrically) linearly reductive} if every short exact sequence of
    $\Group_{\kkbar}$-representations splits. It is easily seen that if
    $\Group$ is geometrically linearly reductive, then every short exact
    sequence of $\Group$-representations also splits.  The
    group $\Group$ has a $\kk$-rational point $1_{\Group}$, hence it is connected if
    and only if it is geometrically connected. We will usually omit the
    word ``geometrically'' and speak about linearly reductive groups and
    connected groups.
    For $\kk$ algebraically closed of characteristic
    zero, these definitions agree with the usual ones.

    By a \emph{variety} we mean a reduced, separated, finite type scheme (which
    might be disconnected or reducible). A \emph{linear algebraic monoid} is
    an affine variety $\Gbar$ together with an associative multiplication $\mubar\colon
    \Gbar \times \Gbar \to \Gbar$ and a two-sided identity $1_{\Gbar}\in \Gbar(\kk)$. For an
    introduction to algebraic monoids, we refer to
    \cite{Putcha__Linear_algebraic_Monoids, Renner__Linear_algebraic_monoids}.
    We say that $\Gbar$ is \emph{normal} if every its connected component is
    an irreducible normal variety.
    The \emph{unit group} of a monoid is the open subset consisting of all
    invertible elements, denoted $\Gbar^{\times}$,
    see~\cite[Proposition~3.12]{Renner__Linear_algebraic_monoids}. It has a natural structure
    of an algebraic group and $\Gbar^{\times} \to \Gbar$ is a homomorphism of
    monoids.
    \begin{definition}
        Let $\Group$ be a linearly reductive group.
        A \emph{linearly reductive monoid} with unit group $\Group$ is an algebraic monoid
        $\Gbar$ whose unit group $\Gbar^{\times}  \simeq \Group$ is dense in $\Gbar$.
    \end{definition}

    \begin{example}
        Consider the monoid $\mathbb{A}^2$ with multiplication $(x_1,
        y_1)\cdot (x_2, y_2) = (x_1x_2, y_1y_2)$. Then the closed subscheme
        \[
            \Gbar = \{(x, x)\ |\ x\in \mathbb{A}^1\} \cup \left\{ (x, 0)\ |\ x\in
                \mathbb{A}^1 \right\}
        \]
        is a submonoid with $\Gbar^{\times} =
            \left\{ (x, x)\ |\ x\in \mathbb{A}^1 \setminus \{0\} \right\}
            \simeq
            \Gmult$. Hence $\Gbar^{\times} \subset \Gbar$ is reductive, but not dense.
    \end{example}
    Note that in the literature, usually a reductive monoid is
    connected~\cite[Definition~3.1]{Renner__Linear_algebraic_monoids}.
    Starting from Section~\ref{sec:BBproperties}, we will work with connected $\Gbar$,
    however for the time being we do \emph{not} impose any connectedness
    assumptions. Indeed, the aim of Section~\ref{sec:reductiveMonoids} is to
    show how to reduce, under mild assumptions, to the connected case.
    \begin{example}[{\cite[p.~158]{Vinberg__on_reductive_algebraic_semigroups}}]\label{ex:compactificationsOfGLtwo}
        Let $\Group = \kk^* \times \mathbf{SL}_2$. Then there are countably
        many non-isomorphic normal $\Gbar$'s. They are constructed as follows: the
        character lattice of the torus of $\Group$ is identified with
        $\mathbb{Q}e_1 + \mathbb{Q}e_2$, where $e_2$ comes from the torus of
        $\mathbf{SL}_2$, so that the Weyl chamber is $C := \{(e_1, e_2)\ |\ e_2\geq
        0\}$. For
        every $a\in \mathbb{Q}$ we consider the positive halfplane $K =
        \left\{ (e_1, e_2)\ |\ e_2\geq ae_1 \right\}$ and the intersection
        $K\cap C$. Then the subspace $A \subset H^0(\Group, \cO_{\Group})$
        spanned by all subrepresentations with highest weights in $K\cap C$ is a
        subring and $\Gbar = \Spec(A)$ is a normal linearly reductive monoid with unit
        group $\Group$.
    \end{example}
    An \emph{action} of a monoid $\Gbar$ is a morphism $\sigma\colon \Gbar
    \times \varX \to \varX$ satisfying the usual associativity and unitality
    conditions. A (rational)
    \emph{representation} of $\Gbar$ is a (rational) representation $V$ of
    $\Group$ such that the action map $\Group \times V\to V$ extends to
    $\Gbar \times V \to V$.

    For our considerations, it is essential to understand the
    structure of $\Group$-subrepresentations of $H^0(\Group,
    \cO_{\Group})$. This structure is well-known, but we recall it briefly,
    following~\cite{Brion__Introduction}.
    Assume that $\kk$ is algebraically closed and fix an irreducible
    $\Group$-representation $V$. Then $V^{\vee} = \Spec \Sym V$ and
    so
    \[
        \Hom(V, \GroupGlobalSects)^{\Group} \simeq
        \Maps^{\Group}(\Group, V^{\vee})  \simeq V^{\vee}.
    \]
    Moreover, $\GroupGlobalSects$ is a completely reducible
    $\Group$-representation, so
    \begin{equation}\label{eq:decomposition}
        \GroupGlobalSects  \simeq  \bigoplus_{V} V \otimes \Hom(V, \GroupGlobalSects)^{\Group} \simeq  \bigoplus_{V} V \otimes V^{\vee}
    \end{equation}
    where the summation goes over the set of isomorphism classes of
    irreducible $\Group$-representations.
    On the one hand, we have a canonical action of $\Gprod$ on $\Group$ where $(g_1, g_2)\in
    \Group(\kk) \times \Groupop(\kk)$ sends $g\in \Group(\kk)$ to $g_1 g g_2$.
    This induces a $(\Gprod)$-representation structure on $H^0(\Group,
    \cO_{\Group})$. On the other hand, we have a $(\Gprod)$-action on each $V
    \otimes V^{\vee}$, where $\Group$ acts on $V$ and $\Groupop$ acts on
    $V^*$. We leave to the reader checking that the
    isomorphism~\eqref{eq:decomposition} is $(\Gprod)$-equivariant.
    In particular, the isotypic component of an irreducible representation $V$
    is a \emph{simple} $(\Gprod)$-representation.

    We summarize the discussion with the following
    Proposition~\ref{ref:isotypiccomponents:prop}. In its statement we do
    \emph{not} assume that $\kk$ is algebraically closed.
    \begin{proposition}\label{ref:isotypiccomponents:prop}
        The $(\Gprod)$-representation $\GroupGlobalSects$ is a direct sum of
        pairwise non-isomorphic simple $(\Gprod)$-representations
        $V_{\lambda}$. Each of those $V_{\lambda}$ is equal to the isotypic component
        in $\GroupGlobalSects$ of a unique irreducible $\Group$-representation.
        Moreover, each $V_{\lambda}$ satisfies $\mu^{\#}(V_{\lambda}) \subset
        V_{\lambda} \otimes V_{\lambda}$.
    \end{proposition}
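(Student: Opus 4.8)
The plan is to work directly over $\kk$, without passing to $\kkbar$, by regarding $A := \GroupGlobalSects$ as a rational $(\Gprod)$-representation via the two commuting translation actions, and to extract all three assertions from the isotypic decomposition for the left $\Group$-action together with a single input from Frobenius reciprocity. I would first isolate the following elementary but decisive observation about the comultiplication $\mu^{\#}\colon A\to A\otimes A$. For a linear subspace $B\subseteq A$, stability under the left (respectively right) translation action is equivalent to the inclusion $\mu^{\#}(B)\subseteq A\otimes B$ (respectively $\mu^{\#}(B)\subseteq B\otimes A$); and for subspaces one always has $(A\otimes B)\cap(B\otimes A)=B\otimes B$, as one sees by splitting off a complement $A=B\oplus C$ and expanding the fourfold decomposition of $A\otimes A$. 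Hence $B$ is a $(\Gprod)$-subrepresentation if and only if $\mu^{\#}(B)\subseteq B\otimes B$. In particular the last assertion $\mu^{\#}(V_\lambda)\subseteq V_\lambda\otimes V_\lambda$ is \emph{not} an extra condition to be verified by hand: it is automatic once each $V_\lambda$ is known to be $(\Gprod)$-stable.

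Next I would produce the decomposition. Since $\Group$ is linearly reductive, $A$ is semisimple for the left action, so $A=\bigoplus_W V_W$, where $V_W$ denotes the $W$-isotypic component and $W$ runs over the irreducible $\Group$-representations. Because the right translation action commutes with the left one, it acts by left-$\Group$-equivariant automorphisms, and these preserve isotypic components; thus every $V_W$ is $(\Gprod)$-stable, and $A=\bigoplus_W V_W$ is already a decomposition into $(\Gprod)$-subrepresentations. Restricting back to the left action recovers $W$ from $V_W$, so the $V_W$ for distinct $W$ are pairwise non-isomorphic as $(\Gprod)$-representations, and each is by construction the isotypic component of a \emph{unique} irreducible $\Group$-representation.

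It remains to show that each $V_W$ is a \emph{simple} $(\Gprod)$-representation, and this is the main point — indeed the only step where the possibly non-closed field $\kk$, and the attendant division algebra $\mathrm{End}_\Group(W)$, can intervene. The key input is the algebraic Frobenius reciprocity $\Hom_\Group(W,A)\cong W^{\vee}$, valid over any field: a left-equivariant map $W\to A$ is recovered from its value at $1_{\Group}$, and under this identification the right translation action on $A$ induces the contragredient $\Group$-action on $W^{\vee}$. Now let $0\neq U\subseteq V_W$ be a $(\Gprod)$-subrepresentation. As a left-$\Group$-subrepresentation it is again $W$-isotypic, and its multiplicity space $\Hom_\Group(W,U)$ is a nonzero right-$\Group$-subrepresentation of $\Hom_\Group(W,V_W)=\Hom_\Group(W,A)\cong W^{\vee}$. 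Since $W^{\vee}$ is an irreducible $\Group$-representation, this forces $\Hom_\Group(W,U)=\Hom_\Group(W,V_W)$, and because a $W$-isotypic module is determined by its multiplicity space we conclude $U=V_W$. The division algebra acting on the multiplicity space needs no separate attention: $\Group$-irreducibility of $W^{\vee}$ already rules out proper nonzero subobjects.

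Putting the pieces together, the subrepresentations $V_W$ are the $V_\lambda$ of the statement: they are simple by the previous paragraph, pairwise non-isomorphic and each an isotypic component of a unique irreducible $\Group$-representation by the second paragraph, and they satisfy $\mu^{\#}(V_\lambda)\subseteq V_\lambda\otimes V_\lambda$ by the first. The hardest step is the simplicity of the isotypic components over a general field; everything else is formal. (Alternatively one could descend the algebraically closed computation recorded before the statement along the absolute Galois group, grouping the summands $V\otimes V^{\vee}$ into Galois orbits, but the uniform argument above avoids this and makes the comultiplication property transparent.)
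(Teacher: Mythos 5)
Your argument is correct, and for the substantive part of the statement it takes a genuinely different route from the paper. On the final assertion the two proofs coincide exactly: the paper also deduces $\mu^{\#}(V_{\lambda})\subset V_{\lambda}\otimes V_{\lambda}$ purely from left- and right-stability via the identity $(V\otimes B)\cap(B\otimes V)=V\otimes V$, so your observation that this clause is automatic is precisely the paper's argument. The divergence is in how simplicity of the isotypic components is obtained. The paper base changes to $\kkbar$, identifies $\GroupGlobalSects$ with $\bigoplus_V V\otimes V^{\vee}$ using $\Hom(V,\GroupGlobalSects)^{\Group}\simeq V^{\vee}$, notes that each summand $V\otimes V^{\vee}$ is a simple $(\Gprod)$-module over the algebraically closed field, and leaves the descent of these conclusions back to $\kk$ implicit in the phrase ``after base change to $\kkbar$''; that descent is not completely trivial (one way is to observe that multiplicity-freeness over $\kkbar$ forces the relevant endomorphism algebra over $\kk$ to be commutative, whence the isotypic components over $\kk$ are simple). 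You instead work directly over $\kk$: the left-isotypic decomposition is automatically $(\Gprod)$-stable, and simplicity of $V_W$ follows from $\Hom_{\Group}(W,A)\simeq W^{\vee}$ together with irreducibility of $W^{\vee}$ under the right-translation action, which disposes of the division algebra $\mathrm{End}_{\Group}(W)$ without ever naming it. What your approach buys is a self-contained, descent-free proof over an arbitrary field; what the paper's buys is the explicit matrix-coefficient picture $V\otimes V^{\vee}$, which it also uses for exposition. Both are valid, and yours is, if anything, the more complete writeup of the step the paper compresses.
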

    \begin{proof}
        All but the last statement follows from the discussion above, after
        base change to $\kkbar$. Let us prove the last statement.
        For clarity, let $B = \GroupGlobalSects$.
        Fix a $(\Gprod)$-submodule $V \subset B$. Then
        $V$ is a left and right subrepresentation of $B$,
        hence $\mu^{\#}(V) \subset (V \otimes B) \cap (B \otimes V) =
        V\otimes V$.
    \end{proof}
    Here, the most important fact is that each $(\Gprod)$-representation
    appears with multiplicity \emph{one}. This will be used in particular in
    Section~\ref{sec:actionsOnAffine}.

    Finally, recall~\cite[\S(9.5)]{WedhornAG} that an open immersion $f\colon X\to Y$ is \emph{schematically
    dense} if $\cO_{Y} \to f_*\cO_X$ is an injective map of sheaves. If $X\to
    Y$ is schematically dense and factors as $X\to U\to Y$ where $U \subset Y$
    is open, then also $X\to U$ is schematically dense.

    \section{Linearly reductive monoids}\label{sec:reductiveMonoids}
    We work over an arbitrary field $\kk$.
    Fix a linearly reductive group $\Group$ over $\kk$.
    We do not assume that $\Group$ is connected.
    The following standard fact would be useful in our
    considerations.
    \begin{lemma}\label{com1}
        Let $X$ be an affine $\kk$-scheme with a $\Group$-action. Suppose that
        $F_1$ and $F_2$ are disjoint, $\Group$-stable closed subsets of
        $X$. Then there exists a $\Group$-invariant $f\in H^0(X,\cO_{X})$ such that $f(x)=1$ for $x\in F_1$ and $f(x)=0$ for $x\in F_2$.
    \end{lemma}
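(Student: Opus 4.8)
The plan is to use the linear reductivity of $\Group$ to produce the desired invariant function by averaging (or rather by projecting onto invariants) a function that already separates $F_1$ and $F_2$ ignoring the group action. Let $A = H^0(X, \cO_X)$. Since $F_1$ and $F_2$ are disjoint closed subsets of the affine scheme $X$, their defining ideals satisfy $I(F_1) + I(F_2) = A$ (the two closed subsets do not meet, so the ideals are comaximal). Hence I can write $1 = a_1 + a_2$ with $a_i \in I(F_i)$, and then $g := a_2$ is a regular function with $g \equiv 1$ on $F_1$ and $g \equiv 0$ on $F_2$. This $g$ need not be $\Group$-invariant, so the key is to replace it by its invariant part.

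The second step is to exploit that $\Group$ is linearly reductive, so the inclusion $A^{\Group} \into A$ of the invariants is split by a $\Group$-equivariant \emph{Reynolds operator} $R\colon A \to A^{\Group}$ which is the projection onto the isotypic component of the trivial representation and restricts to the identity on $A^{\Group}$. Concretely, $R$ is obtained by decomposing the rational $\Group$-representation $A$ into isotypic components and taking the trivial-isotypic projection; linear reductivity guarantees this decomposition and makes $R$ a well-defined $A^{\Group}$-linear map. I would then set $f := R(g)$, which is by construction $\Group$-invariant, i.e.\ $f \in A^{\Group} = H^0(X,\cO_X)^{\Group}$.

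It remains to check that $f$ still separates the two closed sets, and this is where the $\Group$-stability hypothesis on $F_1$ and $F_2$ enters. The crucial point is that $R$ can be computed \emph{pointwise along a $\Group$-orbit}: since $f = R(g)$ is the invariant projection, its value at a point $x$ equals the average (over the orbit) of the values of $g$, and more precisely $f$ is the unique invariant function agreeing with the invariant projection of $g_{|\overline{\Group\cdot x}}$. For $x \in F_1$, the whole orbit closure $\overline{\Group \cdot x}$ lies in $F_1$ (as $F_1$ is $\Group$-stable and closed), and on $F_1$ we have $g \equiv 1$; since the constant function $1$ is already invariant, $R$ fixes it and $f(x) = 1$. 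Symmetrically, for $x \in F_2$ the orbit stays in $F_2$ where $g \equiv 0$, so $f(x) = 0$. Formally, I would phrase this by restricting to the $\Group$-stable closed subschemes: the restriction maps $A \to H^0(F_i, \cO_{F_i})$ are $\Group$-equivariant, hence commute with the respective Reynolds operators, and the invariant projection of the constant $1$ (resp.\ $0$) is again $1$ (resp.\ $0$).

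The main obstacle is making the pointwise evaluation argument rigorous without assuming $\kk$ algebraically closed or the orbits closed: I cannot literally ``average over orbit points.'' The clean way around this is to work entirely with the functoriality of the Reynolds operator $R$ with respect to the $\Group$-equivariant surjections $A \onto H^0(F_i, \cO_{F_i})$, so that the computation of $f_{|F_i}$ reduces to computing the invariant projection of $g_{|F_i}$, which is a constant. This sidesteps any geometric-point reasoning and uses only that the Reynolds operator is natural in $\Group$-equivariant algebra maps and acts as the identity on constants. I expect the comaximality step and the existence of $R$ to be routine given linear reductivity; the only subtlety worth spelling out is this naturality, which guarantees that invariant projection commutes with restriction to the $\Group$-stable closed sets.
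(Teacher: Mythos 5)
Your proof is correct and is essentially the paper's argument in expanded form: the paper simply notes that $H^0(X,\cO_X)^{\Group}\to H^0(F_1,\cO_{F_1})^{\Group}\oplus H^0(F_2,\cO_{F_2})^{\Group}$ is onto, which is exactly the combination of your Chinese-remainder surjection $A\onto H^0(F_1,\cO_{F_1})\oplus H^0(F_2,\cO_{F_2})$ with the exactness of $(-)^{\Group}$ that you implement via the Reynolds operator and its naturality. No gap; the informal ``averaging over orbits'' digression is unnecessary since, as you note, the naturality of $R$ with respect to the equivariant restriction maps already does the job.
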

    \begin{proof}
    The map $H^0(X, \cO_{X})^{\Group} \to H^0(F_1 \cup
        F_2, \cO_{F_1\cup F_2})^{\Group} = H^0(F_1, \cO_{F_1})^{\Group}
        \oplus H^0(F_2, \cO_{F_2})^{\Group}$ is onto.
    \end{proof}

    \begin{theorem}\label{basechange}
        Let $\Group$ be a linearly reductive group over $\kk$ and $L$ be a
        field extension of $\kk$. Then $\Group_L$ is a linearly reductive group
        over $L$.
    \end{theorem}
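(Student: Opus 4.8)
\emph{The plan} is to encode linear reductivity in a single piece of data---a normalized invariant integral (Reynolds operator) on the coordinate ring---that manifestly survives base change. This sidesteps the main difficulty, namely that a representation of $\Group_L$ need not be the base change of a representation of $\Group$, so one cannot argue one representation at a time.

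\emph{Reduction to algebraically closed fields.} By definition $\Group_L$ is linearly reductive precisely when every short exact sequence of $(\Group_L)_{\overline{L}}$-representations splits, and by transitivity of base change $(\Group_L)_{\overline{L}} = \Group_{\overline{L}}$. Choosing an embedding $\kkbar \into \overline{L}$ over $\kk$ (the algebraic closure of $\kk$ inside $\overline{L}$) gives $\Group_{\overline{L}} = (\Group_{\kkbar})_{\overline{L}}$, and $\Group_{\kkbar}$ is linearly reductive because $\Group$ is. Hence it suffices to prove the following: if $H$ is a linearly reductive group over an algebraically closed field $k$ and $K$ is a field extension of $k$, then $H_K$ is linearly reductive.

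\emph{The Reynolds operator and its base change.} Let $A = H^0(H, \cO_H)$ with its left-translation $H$-action, and view the constants $k\cdot 1 \subset A$ as the trivial subrepresentation. Splitting the short exact sequence $0 \to k \to A \to A/k \to 0$ of $H$-representations---possible since $H$ is linearly reductive---produces an $H$-equivariant retraction $R\colon A \to k$ with $R(1) = 1$, i.e.\ a normalized invariant integral; concretely $R$ is the projection of $A$ onto its trivial isotypic component, cf.\ Proposition~\ref{ref:isotypiccomponents:prop} and~\eqref{eq:decomposition}. Now set $A_K = A \otimes_k K = H^0(H_K, \cO_{H_K})$ and $R_K = R \otimes_k \mathrm{id}_K \colon A_K \to K$. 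The comultiplication, unit and $H$-action on $A_K$ are the base changes of those on $A$, and both the invariance of $R$ and the normalization $R(1)=1$ are $k$-linear conditions; therefore $R_K$ is again an $H_K$-equivariant retraction of $K \into A_K$ with $R_K(1) = 1$.

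\emph{Conclusion.} A normalized invariant integral forces complete reducibility by the classical averaging argument: for any $H_K$-representation $V$ with subrepresentation $W$, one averages an arbitrary $K$-linear projection $V \to W$ against $R_K$ using the coaction of $A_K$ to obtain an $H_K$-equivariant projection, which splits $W \into V$. Applying this with $K = \overline{L}$ shows that every short exact sequence of $\Group_{\overline{L}}$-representations splits, i.e.\ $\Group_L$ is linearly reductive. \emph{The only real obstacle} is the one flagged at the outset---representations over $\overline{L}$ do not descend to $\kkbar$---and it is precisely dissolved by transporting the base-change-stable datum $R$ rather than individual representations.
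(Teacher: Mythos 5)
Your proof is correct, and it takes a genuinely different route from the paper's. The paper disposes of the statement by classification: in characteristic zero it appeals to the classification of reductive groups over $\kkbar$, and in positive characteristic to Nagata's theorem (Remark~\ref{rmk:positivejunkyard}), which pins $\Group_{\kkbar}$ down as an extension of a torus by a finite group of order prime to the characteristic --- a structure visibly stable under extension of the base field. You instead encode linear reductivity in a single piece of linear-algebraic data, the normalized invariant integral $R\colon H^0(H,\cO_H)\to k$, note that it base-changes for free, and recover complete reducibility over the larger field by averaging. Your opening reduction to algebraically closed fields is also sound, given the paper's convention that linear reductivity is tested after passing to $\kkbar$. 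The trade-off: the paper's argument is shorter on the page because the heavy lifting is outsourced to structure theorems it needs anyway elsewhere, whereas yours is uniform in the characteristic, avoids classification entirely, and in fact proves the stronger, purely Hopf-algebraic statement that cosemisimplicity is stable under arbitrary base field extension (no smoothness or connectedness of $\Group$ enters). The one ingredient you leave implicit is the exact comodule-theoretic form of the averaging formula, which uses the antipode and the invariance of $R$; this is standard, and since your $R$ is the projection onto the trivial isotypic component in the $(\Gprod)$-equivariant decomposition~\eqref{eq:decomposition}, it is a two-sided normalized integral, so the averaging argument goes through without further comment.
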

    \begin{proof}
        In characteristic zero case this follows from the classification of
        reductive groups over $\kkbar$, while in positive characteristic it
        follows from Remark~\ref{rmk:positivejunkyard}
        below.
    \end{proof}

    \begin{remark}\label{rmk:positivejunkyard}
        Suppose $\charr \kk > 0$. This, by Nagata's result (\cite{Nagata}, see
        also~\cite{Nagata__userfriendly}), forces
        $\Group_{\kkbar}$ to be an extension of torus by a finite group of
        order non-divisible by $\charr \kk$. Let $\Groupconn \subset \Group$
        be the connected component of the identity. Then $\Groupconn_{\kkbar}$
        is a torus, hence it is of multiplicative type in the language
        of~\cite[Appendix~B]{Conrad}, see~\cite[Lemma~B.1.5]{Conrad}.
        Let $H \subset \Group$ be an algebraic subgroup. Then
        $H^{\circ}_{\kkbar} := H_{\kkbar}\cap
        \Groupconn_{\kkbar}$ is of multiplicative type
        by~\cite[Corollary~B.3.3]{Conrad}, so it is linearly
        reductive. Moreover, $H_{\kkbar}/H_{\kkbar}^{\circ} \into
        (\Group/\Groupconn)_{\kkbar}$ is finite of order non-divisible by $p$,
        hence is linearly reductive by Maschke's theorem.
        Thus, $H$ is an extension of linearly reductive groups,
        hence it is linearly reductive.
    \end{remark}

    \medskip
    We now argue that a linearly reductive monoid $\Gbar$ is the right
    analogue of $\mathbb{A}^1$. To do this, we introduce the notion of
    partial equivariant compactification. It captures
    the intuition of $\Gbar$ as an equivariant compactification of $\Group$.
    This notion is equivalent to the subsequent notion of a linearly reductive
    monoid, which will be used throughout the article.
    \begin{definition}
        Let $X$ be a \emph{finite type, affine} scheme over $\kk$ equipped with a schematically dense,
        open immersion $i\colon\Group\to X$. We say that $X$ is
        an \emph{equivariant partial compactification} of $\Group$ if it is
        equipped with left and right actions of $\Group$, denoted $\mu_l\colon \Group \times \varX\to
        \varX$ and $\mu_r\colon \varX \times \Group\to \varX$ such that $i$ is
        equivariant with respect to both of them.
    \end{definition}

    \begin{proposition}\label{ref:compactificationImpliesMonoid:prop}
        Let $X$ be an equivariant partial compactification of $\Group$. Then
        there exists a unique structure of a linearly reductive monoid
        on $X$ such that the inclusion $i\colon \Group \to X$ is a
        homomorphism of algebraic monoids.
        Moreover, $X^{\times} = i(\Group)$ for this monoid structure.
    \end{proposition}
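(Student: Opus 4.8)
The plan is to transport the whole problem to coordinate algebras and to exploit the multiplicity-one structure established in Proposition~\ref{ref:isotypiccomponents:prop}. Write $A = H^0(X, \cO_X)$ and $B = \GroupGlobalSects$. Since $i$ is a schematically dense open immersion, the restriction $i^{\#}\colon A \to B$ is an injective ring homomorphism, realizing $A$ as a subalgebra of $B$; and since $i$ is equivariant for both $\mu_l$ and $\mu_r$, this inclusion is a map of $(\Gprod)$-representations. I will then decompose $B = \bigoplus_{\lambda} V_{\lambda}$ into pairwise non-isomorphic simple $(\Gprod)$-representations as in Proposition~\ref{ref:isotypiccomponents:prop}. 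Because $\Gprod$ is linearly reductive, the subrepresentation $A \subseteq B$ is completely reducible, and multiplicity one forces $A = \bigoplus_{\lambda \in S} V_{\lambda}$ for some subset $S$ of the index set (any simple summand of $A$ maps isomorphically to exactly one $V_{\lambda}$ and to zero elsewhere).

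The crucial step is to show that the comultiplication $\mu^{\#}\colon B \to B \otimes B$ dual to the group law restricts to $A$. Here I will invoke the last assertion of Proposition~\ref{ref:isotypiccomponents:prop}, namely $\mu^{\#}(V_{\lambda}) \subseteq V_{\lambda} \otimes V_{\lambda}$: for $\lambda \in S$ this gives $\mu^{\#}(V_{\lambda}) \subseteq A \otimes A$, where over a field I identify $A \otimes_{\kk} A$ with $H^0(X \times X, \cO_{X\times X})$. Hence $\mu^{\#}(A) \subseteq A \otimes A$, and the corestriction $\mubar^{\#}\colon A \to A \otimes A$ is a homomorphism of $\kk$-algebras; dualizing yields a morphism $\mubar\colon X \times X \to X$. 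Associativity of $\mubar$ will follow by restricting the coassociativity identity for $\mu^{\#}$, and restricting the counit $\epsilon\colon B \to \kk$ (evaluation at $1_{\Group}$) to $A$ produces the two-sided identity $1_X := i(1_{\Group})$. By construction $\mubar \circ (i \times i) = i \circ m$, so $i$ is a homomorphism of monoids.

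For uniqueness I will note that $i \times i\colon \Group \times \Group \to X \times X$ is again a schematically dense open immersion, since over a field $A \otimes_{\kk} A \to B \otimes_{\kk} B$ stays injective by flatness. As $X$ is affine, hence separated, any two monoid multiplications making $i$ a homomorphism agree on the schematically dense open $\Group \times \Group$ and therefore coincide. The same density-and-separatedness principle, applied to $\Group \times X$ and to $X \times \Group$, shows in passing that $\mubar$ restricts along $i$ to the prescribed actions $\mu_l$ and $\mu_r$.

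It remains to identify the unit group. Since $i$ is a monoid homomorphism and each $g\in\Group$ satisfies $\mubar(i(g), i(g^{-1})) = i(1_{\Group}) = 1_X$, the image $i(\Group)$ consists of invertible elements, so $i(\Group) \subseteq X^{\times}$; moreover $i(\Group)$ is an open subgroup of the algebraic group $X^{\times}$, which is open in $X$ by \cite[Proposition~3.12]{Renner__Linear_algebraic_monoids}. An open subgroup of an algebraic group is closed, being the complement of the open union of its nontrivial cosets, so $i(\Group)$ is closed in $X^{\times}$. On the other hand $i(\Group)$ is dense in $X$, hence its closure in the open subset $X^{\times}$ equals $X^{\times}$; being both closed and dense in $X^{\times}$, it follows that $i(\Group) = X^{\times}$. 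This exhibits $\Group \simeq X^{\times}$ as a dense unit group, so $X$ is a linearly reductive monoid. I expect the main obstacle to be the verification that the comultiplication genuinely descends to $A$: everything hinges on the inclusion $\mu^{\#}(V_{\lambda})\subseteq V_{\lambda}\otimes V_{\lambda}$ from Proposition~\ref{ref:isotypiccomponents:prop}, without which $\mu^{\#}(A)$ need not land in $A \otimes A$.
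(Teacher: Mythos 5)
Your proof is correct, and it takes a genuinely different route from the paper's. Both arguments come down to showing that the comultiplication $\mu^{\#}\colon B \to B\otimes B$ carries $A = H^0(X,\cO_X)$ into $A\otimes A$. The paper does this with no representation theory at all: it observes that the square of vector spaces with corners $A\otimes A$, $A\otimes B$, $B\otimes A$, $B\otimes B$ is cartesian (equivalently, $(A\otimes B)\cap(B\otimes A) = A\otimes A$ inside $B\otimes B$), hence the dual square of affine schemes is cocartesian, so the two given actions $\mu_l$ and $\mu_r$, which agree on $\Group\times\Group$, glue to a multiplication $X\times X\to X$. You instead pass through the isotypic decomposition: $A$ is a $(\Gprod)$-subrepresentation of $B$, multiplicity one forces $A = \bigoplus_{\lambda\in S}V_{\lambda}$, and the containment $\mu^{\#}(V_{\lambda})\subseteq V_{\lambda}\otimes V_{\lambda}$ from Proposition~\ref{ref:isotypiccomponents:prop} does the rest. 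This is valid, but note what each approach buys. Your route consumes complete reducibility, so linear reductivity of $\Group$ is essential to your construction step, whereas the paper's gluing argument works verbatim for an arbitrary affine algebraic group; reductivity enters the paper's proof only through the definition of ``linearly reductive monoid''. Moreover, your detour through the decomposition is avoidable even within your own framework: equivariance of $i$ for $\mu_l$ and $\mu_r$ translates directly into $\mu^{\#}(A)\subseteq B\otimes A$ and $\mu^{\#}(A)\subseteq A\otimes B$, and the intersection argument proving the last claim of Proposition~\ref{ref:isotypiccomponents:prop} applies verbatim to the submodule $A$, with no semisimplicity needed --- which is, in dual form, exactly the paper's cartesian square. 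The remaining steps (density plus separatedness for associativity, uniqueness, and compatibility with $\mu_l$, $\mu_r$; and the unit-group identification, where your ``an open subgroup is closed'' replaces the paper's ``the image of a homomorphism of algebraic groups is closed'') are essentially the same in both proofs and are correct as you state them.
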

    \begin{proof}
        Consider the following diagram induced by
        $i^{\#}$:
        \[
        \begin{tikzpicture}
            [description/.style={fill=white,inner sep=2pt}]
            \matrix (m) [matrix of math nodes, row sep=3em, column sep=2em,text height=1.5ex, text depth=0.25ex]
            { H^0(X,\cO_{X}) \otimes_{\kk} H^0(X,\cO_{X})&  &   H^0(X,\cO_{X}) \otimes_{\kk} H^0(\Group,\cO_{\Group})      \\
            H^0(\Group,\cO_{\Group}) \otimes_{\kk} H^0(X,\cO_{X})&   &
            H^0(\Group,\cO_{\Group})\otimes_{\kk}
            H^0(\Group,\cO_{\Group}).   \\} ;
            \path[->,font=\scriptsize]
            (m-1-1) edge node[above] {$1_{H^0(X,\cO_{X}) }\otimes_{\kk}i^{\#}$} (m-1-3)
            (m-2-1) edge node[above] {$1_{H^0(\Group,\cO_{\Group})}\otimes_{\kk}i^{\#}$} (m-2-3)
            (m-1-1) edge node[left] {$i^{\#}\otimes_{\kk}1_{H^0(X,\cO_{X})}$} (m-2-1)
            (m-1-3) edge node[right] {$i^{\#}\otimes_{\kk}1_{H^0(\Group,\cO_{\Group})}$} (m-2-3);
        \end{tikzpicture}
        \]
        It is a cartesian diagram of $\kk$-vector spaces. Therefore, the
        following diagram is cocartesian
        \begin{center}
            \begin{tikzpicture}
                [description/.style={fill=white,inner sep=2pt}]
                \matrix (m) [matrix of math nodes, row sep=3em, column sep=2em,text height=1.5ex, text depth=0.25ex]
                { \Group \times \Group&  &    \Group \times X           \\
                X\times \Group&   &       X\times X.   \\} ;
                \path[->,font=\scriptsize]
                (m-1-1) edge node[above] {$ $} (m-1-3)
                (m-2-1) edge node[above] {$ $} (m-2-3)
                (m-1-1) edge node[left] {$ $} (m-2-1)
                (m-1-3) edge node[right] {$ $} (m-2-3);
            \end{tikzpicture}
        \end{center}
        in the category of affine $\kk$-schemes. Hence, the morphisms $\mu_l\colon \Group \times \varX\to\varX$ and
        $\mu_r\colon \varX \times \Group \to \varX$ induce a morphism $\mu\colon
        X \times X \to X$, which restricts to the multiplication $\Group \times \Group \to
        \Group$. Since $\Group \times \Group$ is dense in $X \times X$ the
        map $\mu$ is associative and uniquely determined. Then
        the inclusion $j\colon \Group \to X^{\times}$
        is an open immersion. But it is also is a homomorphism of algebraic groups, hence
        its image is closed. By assumption the image of $j$ is dense. Summing
        up, $j$ is an open immersion with dense open-closed image. Thus $j$ is
        an isomorphism.
    \end{proof}

    We give some examples of partial equivariant compactifications.

    \begin{example}\label{ex:toric}
        Let $\Group = \Gmult^{\times n}$ be a split $n$-torus.
        Then the equivariant partial compactifications of $\Group$
        coincide with affine toric varieties with torus
        $\Group$.
    \end{example}

    \begin{example}\label{ex:matrices}
        Let $n$ be a positive integer and $\Group=\mathrm{GL}_n$ be the
        general linear group over $\kk$ of characteristic zero. Then the
        vector space $\mathbb{M}_n$ of
        $n\times n$ matrices over $\kk$ has a structure of an algebraic monoid
        $\kk$-scheme with respect to multiplication of matrices. The canonical
        open immersion $\mathrm{GL}_n\to \mathbb{M}_n$ is a morphism of monoid
        schemes over $\kk$ and the singleton of the zero matrix in
        $\mathbb{M}_n$ is the smallest nonempty, closed and
        $\mathrm{GL}_n$-stable subset of $\mathbb{M}_n$.
    \end{example}

    We now analyse the orbit structure on an algebraic monoid.
    \begin{lemma}\label{ref:smallestorbit:lem}
        Let $\Gbar$ be a linearly reductive monoid.
        There is a unique closed left $\Group$-orbit $\Fbar$ in $\Gbar$. In
        particular it is the intersection of all nonempty closed $\Group$-stable
        subschemes of $\Gbar$. It is also equal to the unique closed right $\Group$-orbit.
    \end{lemma}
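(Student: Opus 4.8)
The plan is to produce a smallest nonempty closed left-$\Group$-stable subset of $\Gbar$ first, and only afterwards recognize it as an orbit. Since $\Gbar$ is of finite type over $\kk$ it is Noetherian, so among the nonempty closed left-$\Group$-stable subsets there is at least one that is minimal with respect to inclusion. The heart of the matter is uniqueness of such a minimal subset. Suppose $F_1 \neq F_2$ were two minimal ones. Their intersection $F_1 \cap F_2$ is again closed and left-$\Group$-stable, so minimality forces it to be empty; thus $F_1$ and $F_2$ are disjoint. As $\Gbar$ is affine, Lemma~\ref{com1} then supplies a left-$\Group$-invariant function $f \in H^0(\Gbar, \cO_{\Gbar})$ with $f \equiv 1$ on $F_1$ and $f \equiv 0$ on $F_2$. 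The key observation is that no such nonconstant invariant function exists: a left-$\Group$-invariant regular function satisfies $f(gh) = f(h)$ for all $g, h \in \Group$, and setting $h = 1_{\Group}$ shows $f$ is constant on $\Group$; since $\Group$ is dense in $\Gbar$ and $\Gbar$ is reduced, $f$ is constant on all of $\Gbar$. This contradicts $f|_{F_1} = 1 \neq 0 = f|_{F_2}$, so the minimal subset $\Fbar$ is unique. By minimality it is contained in every nonempty closed left-$\Group$-stable subscheme, which identifies it with their intersection.

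Next I would check that $\Fbar$ is a single closed orbit. For a point $x \in \Fbar$, the orbit closure $\overline{\Group\cdot x}$ is a nonempty closed left-$\Group$-stable subset contained in $\Fbar$, hence equals $\Fbar$ by minimality; since the boundary $\overline{\Group\cdot x}\setminus \Group\cdot x$ is a closed stable subset of strictly smaller dimension, minimality again forces it to be empty, so $\Group\cdot x = \Fbar$ is closed. (Over a non-algebraically closed $\kk$ one passes to $\kkbar$ for the dimension count and descends, using that $\Fbar$ is defined over $\kk$.) Conversely any closed left orbit is a nonempty closed stable subset, hence contains $\Fbar$, and two orbits are disjoint or equal; so the closed left orbit is unique and equals $\Fbar$.

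Finally, to match the left and right pictures I would exploit that left and right multiplications commute. For $g \in \Group$ the translate $\Fbar\cdot g$ is nonempty, closed and left-$\Group$-stable, so $\Fbar \subseteq \Fbar\cdot g$ by minimality; applying this to $g^{-1}$ as well yields $\Fbar\cdot g = \Fbar$, so $\Fbar$ is right-$\Group$-stable. Running the whole argument for right multiplication produces the unique smallest nonempty closed right-$\Group$-stable subset $\Fbar'$, the unique closed right orbit, which by the symmetric computation is also left-$\Group$-stable. Minimality on each side then gives $\Fbar \subseteq \Fbar'$ and $\Fbar' \subseteq \Fbar$, whence $\Fbar = \Fbar'$. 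I expect the delicate points to be the orbit-theoretic statements over a possibly non-algebraically closed field, but the conceptual crux is the observation that density of $\Group$ forces every one-sided invariant regular function to be constant, which is exactly what converts the separation of Lemma~\ref{com1} into the desired uniqueness.
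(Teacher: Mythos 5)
Your proof is correct and takes essentially the same route as the paper's: in both, the crux is Lemma~\ref{com1} combined with the observation that left-$\Group$-invariant regular functions on $\Gbar$ are constant because they restrict to invariant functions on the dense unit group, and the same right-translation trick (a translate of the minimal/unique closed set is again one, hence equal to it) yields the agreement of the left and right orbits. The differences are organizational rather than substantive: you make existence explicit via Noetherian minimality and then identify the minimal stable set as a single closed orbit, while the paper argues directly with closed orbits and handles general $\kk$ in the left/right comparison via the closed, surjective, bi-equivariant projection $\Gbar_{\kkbar}\to\Gbar$ --- which is the precise mechanism behind the descent step you only gesture at.
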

    \begin{proof}
        Suppose there are two disjoint closed orbits in $\Gbar$, call them
        $O_1, O_2$. By Lemma~\ref{com1} there exists an $\Group$-invariant
        function $f$ on $X$ such that $f(O_1) = \{0\}$ and $f(O_2) = \{1\}$.
        But $H^0(\Gbar, \cO_{\Gbar})^{\Group} \subset H^0(\Group,
        \cO_{\Group})^{\Group} = \kk$, so the only invariant functions are
        constant. This is a contradiction. Thus there exists a unique closed
        left orbit $\Fbar_l$. Similarly, we see that there is a unique right $\Group$-orbit $\Fbar_r$, which is the intersection of all nonempty closed right $\Group$-equivariant subschemes.

        Assume that $\kk$ is algebraically closed. For every $g\in \Group(\kk)$ the set $\Fbar_l\cdot g$ is a left $\Group$-orbit of the same dimension as $\Fbar_l$ and hence $\Fbar_l=\Fbar_l\cdot g$ by uniqueness. Therefore, $\Fbar_l$ is right $\Group$-stable. This proves that $\Fbar_r\subseteq \Fbar_l$. By symmetry, $\Fbar_l\subseteq \Fbar_r$ and thus $\Fbar_l=\Fbar_r$. Now assume that $\kk$ is arbitrary. Pick the canonical morphism $p\colon \Gbar_{\kkbar}\to \Gbar$. Note that $p$ is a closed, surjective map which is both left and right equivariant. Hence it sends closed left orbits to closed left orbits (similarly for right orbits). Thus $\Fbar_l=\Fbar_r$.
\end{proof}
    In Example~\ref{ex:matrices} the unique closed orbit is just a point: the
    zero matrix. However, in general, even in the setup of
    Example~\ref{ex:toric}, it may happen that the orbit is not a point.
    A natural arithmetic question arises: does $\Fbar$ contain a
    $\kk$-point? Let us make a formal definition.
    \begin{definition}
        Let $\Gbar$ be a linearly linearly reductive monoid over $\kk$. We say that $\Gbar$
        \emph{is pointed} if and only if $\Fbar$ has a $\kk$-rational point. We say that
        $\Gbar$ is \emph{a monoid with zero} if there exists a $\kk$-point $0_G\in \Gbar$,
        such that $\Gbar\cdot 0_G = 0_G$. Then necessarily $\{0_G\} = \Fbar$, so
        $\Gbar$ is pointed.
    \end{definition}

    Now we make a small detour to discuss the important notions of Kempf torus
    and line, which we use in Theorem~\ref{pointedness} below to show that every
    $\Gbar$ over a perfect field is pointed.

    \newcommand{\oneparam}{\mathbb{G}_{m, \kk}}%
    \begin{theorem}\label{pointedness}
        Assume that $\kk$ is a perfect field and let $\Gbar$ be a linearly
        reductive monoid over $\kk$. Then there exists a one-parameter subgroup
        $\oneparam = \Spec \kk[t^{\pm 1}]$ and a homomorphism $\oneparam\to
        \Group$, which extends to a homomorphism of monoids $\varphi\colon
        \mathbb{A}^1_{\kk} = \Spec \kk[t]\to \Gbar$ such that $\varphi(1) =
        1_{\Group}$ and $\varphi(0)\in \Fbar$.
        In particular $\Gbar$ is pointed.
    \end{theorem}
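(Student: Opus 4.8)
The plan is to reduce everything to the construction of a single one-parameter subgroup $\lambda\colon \oneparam\to\Group$, defined over $\kk$, whose limit $\lim_{t\to 0}\lambda(t)$ exists in $\Gbar$ and lies in $\Fbar$. Granting such a $\lambda$, the rest is formal. That the limit ``exists in the affine scheme $\Gbar$'' means precisely that the orbit morphism $\oneparam\to\Gbar$, $t\mapsto\lambda(t)$, extends to a $\kk$-morphism $\varphi\colon\mathbb{A}^1_{\kk}\to\Gbar$ (from the multiplicative monoid $\mathbb{A}^1_\kk$). To see that $\varphi$ is a homomorphism of monoids, compare the two morphisms $\mathbb{A}^1_\kk\times\mathbb{A}^1_\kk\to\Gbar$ obtained by multiplying first and applying $\varphi$, versus applying $\varphi\times\varphi$ and then $\mubar$: they agree on $\oneparam\times\oneparam$, since there $\varphi=\lambda$ is a group homomorphism into $\Gbar$. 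As $\oneparam\times\oneparam$ is schematically dense in the reduced scheme $\mathbb{A}^1_\kk\times\mathbb{A}^1_\kk$ and $\Gbar$ is separated, the two morphisms coincide, so $\varphi$ is a monoid homomorphism with $\varphi(1)=\lambda(1)=1_{\Group}$. Finally $\varphi(0)$ is the image of the $\kk$-point $0\in\mathbb{A}^1(\kk)$, hence a $\kk$-rational point of $\Gbar$; it lands in the closed orbit $\Fbar$, which is defined over $\kk$ by Lemma~\ref{ref:smallestorbit:lem}. Thus $\Gbar$ is pointed, and the whole content becomes the existence of $\lambda$ over $\kk$.

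First I would establish existence of a driving one-parameter subgroup after base change to $\kkbar$. By Theorem~\ref{basechange} the group $\Group_{\kkbar}$ is linearly reductive, so its identity component $\Groupconn_{\kkbar}$ is connected reductive. Consider the left $\Groupconn_{\kkbar}$-action on the affine scheme $\Gbar_{\kkbar}$ and the point $1_{\Group}$; its orbit is the dense unit group, so $\overline{\Groupconn_{\kkbar}\cdot 1}$ is the identity component of $\Gbar_{\kkbar}$. The closure of an orbit of a connected reductive group acting on an affine scheme contains a unique closed orbit, into which the starting point can be driven by a cocharacter (the Hilbert--Mumford closed-orbit criterion). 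Any closed $\Groupconn_{\kkbar}$-orbit $O$ has $\Group_{\kkbar}$-saturation equal to a finite union of translates of $O$ by elements of $\Group_{\kkbar}(\kkbar)$, hence is closed and $\Group_{\kkbar}$-stable; it must therefore be the unique closed left $\Group_{\kkbar}$-orbit $\Fbar_{\kkbar}$ of Lemma~\ref{ref:smallestorbit:lem}, so $O\subseteq\Fbar_{\kkbar}$. This produces a cocharacter $\mu\colon\oneparam\to\Groupconn_{\kkbar}$ with $\lim_{t\to0}\mu(t)\in\Fbar_{\kkbar}$.

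The hard part, and the only genuine difficulty, is that $\mu$ is constructed over $\kkbar$ and is far from unique, so a priori it does not descend to $\kk$; resolving this is exactly the purpose of the Kempf torus and line. Among all cocharacters driving $1$ into $\Fbar$, the optimal ones (minimizing the normalized Kempf length) form a single conjugacy class under a canonically attached parabolic $P$, and the optimal ray in $X_*(T)\otimes\mathbb{Q}$ --- the Kempf line, sitting in the Kempf torus $T\subset P$ --- is uniquely determined by the $\kk$-rational data $(\Group,\Gbar,\Fbar,1_{\Group})$. Uniqueness forces $P$, $T$ and the Kempf line to be stable under $\mathrm{Gal}(\kkbar/\kk)$, and since $\kk$ is perfect a Galois-stable parabolic, a maximal torus inside it, and the rational generator $\nu$ of the Kempf line are all defined over $\kk$. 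Taking $\lambda := N\nu$ for a suitable positive integer $N$ gives an honest one-parameter subgroup $\lambda\colon\oneparam\to\Group$ over $\kk$; since $\lambda(t)=\nu(t^{N})$ merely reparametrizes $\nu$ by $t\mapsto t^{N}$, it has the same limit, so $\lim_{t\to0}\lambda(t)\in\Fbar$. Combined with the first paragraph, this yields the desired $\varphi$ and proves the theorem. I expect that all the force of the hypothesis that $\kk$ is perfect is spent precisely in this descent step, whereas over $\kkbar$ the statement is essentially the Hilbert--Mumford criterion.
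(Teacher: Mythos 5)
Your proposal is correct and follows essentially the same route as the paper: reduce to finding a one-parameter subgroup over $\kk$ driving $1_{\Group}$ into the closed orbit $\Fbar$, obtain it from the Hilbert--Mumford criterion over perfect fields, and deduce the monoid-homomorphism property of the extension $\mathbb{A}^1_\kk\to\Gbar$ from schematic density of $\oneparam$ and separatedness. The only difference is that the paper simply cites Kempf's rational Hilbert--Mumford criterion \cite[Corollary~4.3]{Kempf_Hilbert_Mumford_perfect_field} for the descent to $\kk$, whereas you sketch its proof via the uniqueness and Galois-stability of the optimal destabilizing class.
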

    \begin{proof}
        \def\Gzero{\Group^0}%
        Since $\Group_{\kkbar}$ is linearly reductive, it is reductive, i.e., it
        has no non-zero unipotent normal subgroups. Hence
        also $\Group$ and its connected component of identity $\Gzero$ are reductive. Choose an embedding $\Gbar$ into a
        $\Group$-affine space $V$. Clearly the closure of the $\Group$-orbit of
        $1_{\Group}\in V$ contains $\Fbar$. Since $\Fbar$ is a $\Group$-orbit,
        also the closure of $\Gzero$ intersects $\Fbar$.
        By the Hilbert-Mumford criterion for perfect fields,
        see~\cite[Corollary~4.3]{Kempf_Hilbert_Mumford_perfect_field}, there exist
        a one-parameter subgroup $\oneparam$ \emph{over $\kk$} and
        $\varphi\colon \oneparam \to \Gzero$
        such that $\varphi(1) = 1_{\Group}$ and $\varphi(0)\in \Fbar$. Since
        $\varphi_{|\oneparam}$ is group homomorphism, $\varphi$ is a
        homomorphism of monoids.
    \end{proof}
    \begin{definition}
        Any $\oneparam \to \Group$ as in Theorem~\ref{pointedness} is called a
        \emph{Kempf's torus} and its compactification $\mathbb{A}^1_{\kk}\to
        \Gbar$ is called a \emph{Kempf's line}.
    \end{definition}
    For further use the following Corollary~\ref{ref:KempfTorus:cor} is
    important. It asserts that $H^0(\Gbar, \cO_{\Gbar})$ is
    \emph{non-negatively} graded for $\Gbar$ with zero. The non-negativity will be used in
    particular when proving the classical version of the \BBname{}
    decomposition.

    It is classical that a $\Gmult$-action on $\Spec(A)$ is equivalent to a
    $\mathbb{Z}$-grading on $A$. The same argument shows that an
    $\mathbb{A}^1$-action on $\Spec(A)$ is equivalent to an
    $\mathbb{N}$-grading.
    \begin{corollary}\label{ref:KempfTorusInvariants:cor}
        Let $\kk$ be perfect and let $\Gbar$ be a monoid with zero and with
        unit group $\Group$.
        Then any Kempf's torus
        $\oneparam \to \Group$ satisfies $\Gbar^{\oneparam} =
        0_{\Gbar}$. In particular, $H^0(\Gbar, \cO_{\Gbar})_0 = \kk$ in the
        induced $\mathbb{N}$-grading.
    \end{corollary}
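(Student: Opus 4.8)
The plan is to exploit the extra structure coming from the Kempf line $\varphi\colon \mathbb{A}^1_{\kk}\to \Gbar$ of Theorem~\ref{pointedness}, namely that the $\oneparam$-action used to grade $\Gbar$ extends to an action of the multiplicative monoid $\mathbb{A}^1_{\kk}$. Concretely, I let $\oneparam$ act on $\Gbar$ by $t\cdot g := \varphi(t)\,g$; since $\varphi$ is a homomorphism of monoids, this action extends to $\mathbb{A}^1_{\kk}\times \Gbar\to \Gbar$, so by the discussion preceding the corollary (the equivalence between $\mathbb{A}^1$-actions and $\NN$-gradings) the induced grading on $A := H^0(\Gbar, \cO_{\Gbar})$ is non-negative, $A = \bigoplus_{n\geq 0}A_n$, with $A_0 = A^{\oneparam}$.

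First I would prove the ``in particular'' clause $A_0 = \kk$, since the fixed-locus statement then follows. Fix an invariant $a\in A_0$. For every geometric point $g$ of $\Gbar$ the regular function $t\mapsto a(\varphi(t)\,g)$ on $\mathbb{A}^1_{\kk}$ is constant on the dense open $\oneparam$ (it equals $a(g)$ by invariance), hence constant on all of $\mathbb{A}^1_{\kk}$. Evaluating at $t=0$ and using $\varphi(0)\in \Fbar = \{0_{\Gbar}\}$ together with the absorbing identity $0_{\Gbar}\cdot\Gbar = 0_{\Gbar}$ (which holds because $\{0_{\Gbar}\}$ is also the closed right orbit by Lemma~\ref{ref:smallestorbit:lem}), I obtain $a(g) = a(0_{\Gbar}\cdot g) = a(0_{\Gbar})$. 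Thus $a - a(0_{\Gbar})$ vanishes at every point of the reduced scheme $\Gbar$, so $a = a(0_{\Gbar})\in \kk$ and therefore $A_0 = \kk$.

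For the fixed locus itself, I recall that for the $\oneparam$-action the fixed subscheme $\Gbar^{\oneparam}$ equals $\Spec(A/I)$, where $I$ is the ideal generated by all homogeneous elements of nonzero degree. Because the grading is non-negative we have $I = A_+ := \bigoplus_{n\geq 1}A_n$, which is already an ideal, whence $A/I = A_0 = \kk$ by the previous step. This exhibits $\Gbar^{\oneparam} = \Spec\kk$, a single reduced point; it is supported at $0_{\Gbar}$ since $A_+\subseteq \mathfrak{m}_{0_{\Gbar}}$ (equivalently, $0_{\Gbar}$ is $\oneparam$-fixed), so $\Gbar^{\oneparam} = \{0_{\Gbar}\}$ as claimed.

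The only genuinely delicate point is the passage from ``$a(g) = a(0_{\Gbar})$ for all geometric points $g$'' to ``$a$ is constant'': here I would base change to $\kkbar$ to guarantee enough points, invoke that $\Gbar$ is a variety (hence reduced), so that a function vanishing at all $\kkbar$-points is zero, and then descend the resulting equality $a = a(0_{\Gbar})$ back to $\kk$. The rest is bookkeeping: keeping the left/right conventions straight so that $\varphi(0) = 0_{\Gbar}$ absorbs on the correct side for the chosen action, and identifying $A_0$ with the $\oneparam$-invariants and $I$ with $A_+$.
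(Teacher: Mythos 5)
Correct, and essentially the paper's own argument: both proofs hinge on using the Kempf line $\varphi\colon \mathbb{A}^1_{\kk}\to\Gbar$ to extend the torus action to the monoid action $\Phi(t,g)=\varphi(t)g$ of $\mathbb{A}^1$ and then evaluating at $t=0$, where the absorbing property of $0_{\Gbar}$ collapses everything to the zero. The only divergence is bookkeeping: the paper argues functorially on $S$-points of the fixed locus, obtaining the scheme-theoretic equality $\Gbar^{\Gmult}=0_{\Gbar}$ first (with $H^0(\Gbar,\cO_{\Gbar})_0=\kk$ as a consequence), whereas you prove $H^0(\Gbar,\cO_{\Gbar})_0=\kk$ first on geometric points (using reducedness of $\Gbar$ and perfectness of $\kk$) and then recover the fixed locus from the graded description $V(A_+)$ --- both orders are valid.
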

    \begin{proof}
        Fix $\mathbb{G} = \oneparam$ and the
        homomorphism of monoids $\varphi\colon \mathbb{A}^1 \to \Gbar$ as in
        Theorem~\ref{pointedness}. Then $\varphi(0) = 0_{\Gbar}$. Moreover,
        \[
            \begin{tikzcd}
                \Phi\colon \mathbb{A}^1 \times \Gbar \arrow[r, "\varphi \times
                \id_{\Gbar}"] &\Gbar \times \Gbar \arrow[r, "\mu_{\Gbar}"]
                &\Gbar
            \end{tikzcd}
        \]
        is an $\mathbb{A}^1$-action on $\Gbar$. Pick an affine scheme $S$ and an
        $S$-point $g\in \Gbar^{\mathbb{G}}$. Then $\Phi(0, g) = \Phi(1, g)$ as
        $g$ is in the invariant locus. On the other hand $\Phi(0, g) =
        0_{\Gbar}\cdot g = 0_{\Gbar}$, as $0_{\Gbar}$ is an ideal of $\Gbar$.
        Thus $\Gbar^{\mathbb{G}} = 0_{\Gbar}$ as schemes.
    \end{proof}
    \begin{corollary}[grading induced by Kempf's torus]\label{ref:KempfTorus:cor}
        Let $\kk$ be perfect and $\Gbar$ be a linearly reductive monoid with
        zero. Let $W = \Spec B$ be an affine $\kk$-scheme with
        a $\Gbar$-action. Fix any Kempf's torus $\Gmult \to \Group$ and the
        induced grading $B = \bigoplus_{i\geq 0} B_i$.  Then $W^{\Group} =
        W^{\Gmult}$, so the ideal of $W^{\Group} \subset W$ is $B_+ :=
        \bigoplus_{i\geq 1} B_{i}$. In particular, $\bigcap_{n\in \NN} I(W^{\Group}
        \subset W)^n = 0$.
    \end{corollary}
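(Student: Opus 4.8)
The plan is to reduce the whole statement to the scheme-theoretic equality $W^{\Group} = W^{\Gmult}$; granting this, both the description of the ideal of $W^{\Group}$ and the intersection assertion are formal consequences of the elementary theory of $\NN$-graded rings. By Theorem~\ref{pointedness} together with the monoid-with-zero hypothesis (so that $\Fbar = \{0_{\Gbar}\}$, as recorded in Corollary~\ref{ref:KempfTorusInvariants:cor}), the chosen Kempf's torus extends to a homomorphism of monoids $\varphi\colon \mathbb{A}^1 \to \Gbar$ with $\varphi(1) = 1_{\Group}$ and $\varphi(0) = 0_{\Gbar}$. Composing $\varphi$ with the $\Gbar$-action yields an $\mathbb{A}^1$-action $\Phi$ on $W$ whose restriction to $\Gmult$ is exactly the action inducing the grading $B = \bigoplus_{i\geq 0} B_i$.

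First I would record the trivial inclusion: since $\Gmult \to \Group$ is a homomorphism, every $\Group$-fixed point is fixed by $\Gmult$, so $W^{\Group} \subseteq W^{\Gmult}$ as closed subschemes of $W$.

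The content lies in the reverse inclusion $W^{\Gmult} \subseteq W^{\Group}$, which I would obtain by studying the contraction morphism $\rho := \Phi(0, -)\colon W \to W$, $w \mapsto 0_{\Gbar}\cdot w$. It has two properties. On the one hand $\rho$ factors through $W^{\Group}$: for any $g\in \Group$ one has $g\cdot(0_{\Gbar}\cdot w) = (g\cdot_{\Gbar} 0_{\Gbar})\cdot w = 0_{\Gbar}\cdot w$, since $0_{\Gbar}$ is absorbing, so every point of the image of $\rho$ is $\Group$-fixed. On the other hand $\rho$ restricts to the identity on $W^{\Gmult}$: on a $\Gmult$-fixed point the $\mathbb{A}^1$-orbit map is constant, whence $\Phi(0, w) = \Phi(1, w) = w$. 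Combining the two, the inclusion $W^{\Gmult} \hookrightarrow W$ coincides with $\rho|_{W^{\Gmult}}$ and therefore factors through $W^{\Group}$, giving $W^{\Gmult}\subseteq W^{\Group}$.

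The one delicate point is the claim that the $\mathbb{A}^1$-orbit map is constant on $W^{\Gmult}$, and this is where I expect the (mild) main obstacle to lie. I would argue it by separatedness and schematic density: the two morphisms $\mathbb{A}^1\times W^{\Gmult}\to W$ given by $(t,w)\mapsto \Phi(t,w)$ and $(t,w)\mapsto w$ agree after restriction to $\Gmult \times W^{\Gmult}$ by the definition of $\Gmult$-fixedness; their equalizer is closed because $W$ is affine, hence separated; and $\Gmult\times W^{\Gmult}$ is schematically dense in $\mathbb{A}^1\times W^{\Gmult}$, being the base change of the schematically dense inclusion $\kk[t]\hookrightarrow \kk[t,t^{-1}]$ along the flat $\kk$-algebra $B_0$. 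Hence the equalizer is everything, and evaluating at $t=0$ gives $\Phi(0,w)=w$ on $W^{\Gmult}$. Once $W^{\Group} = W^{\Gmult}$ is established, the standard computation for an $\NN$-graded ring shows the ideal of the $\Gmult$-fixed locus is $B_+ = \bigoplus_{i\geq 1}B_i$, hence so is the ideal of $W^{\Group}$; and since every element of $B$ is a finite sum of homogeneous components, $\bigcap_{n\in\NN} B_+^n \subseteq \bigcap_{n\in\NN}\bigoplus_{i\geq n}B_i = 0$, which is the final assertion.
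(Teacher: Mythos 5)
Your proof is correct, and for the one substantive step it takes a genuinely different route from the paper. Both arguments share the same skeleton: the inclusion $W^{\Group}\subseteq W^{\Gmult}$ is formal, the whole content is the reverse inclusion, and once $W^{\Group}=W^{\Gmult}$ is known the statements about $B_+$ and $\bigcap_n B_+^n=0$ are elementary graded-ring facts. For the key inclusion $W^{\Gmult}\subseteq W^{\Group}$ the paper restricts to $W^{\Gmult}$ (so that $B=B_0$) and computes with the coaction: equivariance forces $\Delta(B)\subset H^0(\Gbar,\cO_{\Gbar})_0\otimes B$, and the identity $H^0(\Gbar,\cO_{\Gbar})_0=\kk$ from Corollary~\ref{ref:KempfTorusInvariants:cor} then shows the $\Gbar$-action factors through $0_{\Gbar}\times W$, hence is trivial. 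You instead argue geometrically with the contraction $\rho=0_{\Gbar}\cdot(-)$: its image is $\Group$-fixed because $0_{\Gbar}$ is absorbing, and it restricts to the identity on $W^{\Gmult}$ by the separatedness-plus-schematic-density argument applied to the Kempf line. Your route needs only the existence of the Kempf line with $\varphi(0)=0_{\Gbar}$ (Theorem~\ref{pointedness}) and not the computation of the degree-zero part of $H^0(\Gbar,\cO_{\Gbar})$, so it is slightly more economical in its inputs; the paper's coaction formulation has the advantage of matching the isotypic-component bookkeeping that is reused later (e.g.\ in Lemma~\ref{ref:stability:lem}). One cosmetic slip: in the density step you base-change $\kk[t]\hookrightarrow\kk[t,t^{-1}]$ along ``the flat $\kk$-algebra $B_0$'', but at that point the relevant ring is the coordinate ring of $W^{\Gmult}$, which you have not yet identified with $B/B_+$; this is harmless since any $\kk$-algebra is flat over the field $\kk$.
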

    \begin{proof}
        Let $J \subset B$ be the ideal of $W^{\Group}$. Since $\Group$
        acts trivially on $W^{\Group}$, the weights of the Kempf's torus are
        zero on $B/J$. Thus $B_+ \subset J$. By restricting to
        $W^{\Gmult}$, we may assume $B_+ = 0$, i.e., $B =B_0$.
        The
        coaction $\Delta\colon B\to H^0(\Gbar, \cO_{\Gbar}) \otimes B$ is
        $\Gbar$-equivariant, hence also $\mathbb{A}^1$-equivariant. Thus
        $\Delta(B) \subset H^0(\Gbar, \cO_{\Gbar})_0 \otimes B$. By
        Corollary~\ref{ref:KempfTorusInvariants:cor}, we have $H^0(\Gbar, \cO_{\Gbar})_0 \otimes B =
        \kk\otimes B$. This shows that the action $\Gbar \times W\to W$ factors as
        \[
            \begin{tikzcd}
                \Gbar \times W \arrow[r, "pr_1 \times \id_W"] & 0_{\Gbar}
                \times W \arrow[r, ] & W,
            \end{tikzcd}
        \]
        thus $W = W^{\Group}$.
    \end{proof}

    We now give a characterization of pointed reductive monoids in terms of
    subgroups.
    \begin{proposition}\label{ref:Nroupdefinition:prop}
        Let $\Gbar$ be a linearly reductive monoid over $\kk$. Then $\Gbar$ is pointed if
        and only if there exist a normal subgroup scheme $\Nroup\subset \Group$,
        possibly non-reduced, and
        an isomorphism $\Fbar  \simeq \Group/\Nroup$ of left
        $\Group$-varieties. If this holds, then $\Nroup$ is the stabilizer of
        any $\kk$-rational point of $\Fbar$.
    \end{proposition}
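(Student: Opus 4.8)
The plan is to prove the two implications separately; essentially all the content sits in the forward direction. For ``$\Leftarrow$'', suppose $\Fbar \simeq \Group/\Nroup$ as left $\Group$-varieties. Then the identity coset is a $\kk$-rational point of $\Group/\Nroup$, so $\Fbar(\kk)\neq\emptyset$ and $\Gbar$ is pointed by definition.

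For ``$\Rightarrow$'', fix $x_0 \in \Fbar(\kk)$ and let $\Nroup := \mathrm{Stab}_{\Group}(x_0)$ be the scheme-theoretic stabilizer of $x_0$ for the left $\Group$-action, that is, the fibre over $x_0$ of the left orbit map $\alpha\colon \Group \to \Gbar$, $g \mapsto g\cdot x_0$. Since $\Gbar$ is separated and $x_0$ is a $\kk$-point, $\Nroup$ is a closed subgroup scheme of $\Group$, possibly non-reduced when $\charr\kk>0$. By Lemma~\ref{ref:smallestorbit:lem}, $\Fbar$ is precisely the left orbit of $x_0$, so the orbit map identifies $\Fbar$ with the homogeneous space $\Group/\Nroup$; this standard description of an orbit is exactly where the non-reduced structure of $\Nroup$ may appear. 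It remains to prove that $\Nroup$ is normal and to identify the stabilizer of an arbitrary $\kk$-point.

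The crux is normality, and the idea is to exploit the other half of Lemma~\ref{ref:smallestorbit:lem}: $\Fbar$ is \emph{also} the unique closed right orbit, so $\Fbar = x_0\cdot\Group$ and the right orbit map $\beta\colon \Group \to \Fbar$, $k \mapsto x_0 k$, is faithfully flat (it presents $\Fbar$ as a right homogeneous space). I would verify normality functorially in order to accommodate the non-reduced case. Fix a $\kk$-algebra $R$ and elements $g\in\Group(R)$, $h\in\Nroup(R)$; the goal is $ghg^{-1}\in\Nroup(R)$, equivalently $h\cdot(g^{-1}x_0)=g^{-1}x_0$. Now $g^{-1}x_0\in\Fbar(R)$ because $\Fbar$ is left $\Group$-stable, and by faithful flatness of $\beta$ there is an fppf cover $R\to R'$ and $k\in\Group(R')$ with $g^{-1}x_0=x_0k$ in $\Gbar(R')$. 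The key cancellation $h\cdot(x_0k)=(h\cdot x_0)k=x_0k$, which uses $h\cdot x_0=x_0$ and the right-invertibility of $k$, then yields $h\cdot(g^{-1}x_0)=g^{-1}x_0$ over $R'$. As $\Gbar$ is separated, this equality of $R'$-points descends to $R$, so $ghg^{-1}\in\Nroup(R)$. Hence $g\Nroup g^{-1}\subseteq\Nroup$ for all $g$, i.e.\ $\Nroup\trianglelefteq\Group$.

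Finally, since $\Nroup$ is normal the quotient $\Group/\Nroup$ is a group acting simply transitively on $\Fbar\simeq\Group/\Nroup$ by left translation; consequently the stabilizer of every point of $\Fbar$, in particular of every $\kk$-rational one, equals $\ker(\Group\to\Group/\Nroup)=\Nroup$, which is the last assertion. I expect the genuinely delicate point to be the bookkeeping in the normality step: the cancellation is transparent on geometric points, but to cover non-reduced $\Nroup$ one must phrase the stabilizers as subgroup functors and descend the relevant equality of points along the faithfully flat map $\beta$, rather than merely comparing $\kkbar$-points.
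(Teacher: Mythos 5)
Your proof is correct and follows essentially the same route as the paper: take $\Nroup$ to be the stabilizer of a $\kk$-point $x_0\in\Fbar$, identify $\Fbar\simeq\Group/\Nroup$ via the left orbit map, and deduce normality from the fact (Lemma~\ref{ref:smallestorbit:lem}) that $\Fbar$ is also the unique closed right orbit, via the cancellation $h(x_0k)=(hx_0)k=x_0k$. The only real difference is that you carry out the normality check functorially on $R$-points with fppf descent along the right orbit map, whereas the paper reduces to $\kkbar$-points using smoothness of $\Group$ in characteristic zero (and commutativity in positive characteristic); your version treats the possibly non-reduced case uniformly, at the price of having to know that the right orbit map $\Group\to\Fbar$ is faithfully flat.
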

    \begin{proof}
        If $\varphi\colon \Group/\Nroup \to \Fbar$ is an isomorphism then
        $\varphi(1_{G})$ is the $\kk$-point of $\Fbar$. Conversely, having
        $x\in \Fbar(\kk)$ we can restrict the multiplication on $\Gbar$ to
        $f\colon\Group \times \{x\} \to \Fbar$. The map $f$ is equivariant and surjective, hence
        it induces an isomorphism $\Fbar \simeq \Group/\Nroup$, where $\Nroup$
        is the stabilizer of $x$. It remains to prove that $\Nroup$ is normal.
        If $\charr \kk > 0$, then $\Nroup$ is normal, since $\Group$ is
        commutative by assumption. If $\charr \kk = 0$, then $\Group$ is smooth,
        hence it is enough to prove that $\Nroup(\kkbar) \subset
        \Group(\kkbar)$ is normal. Thus we assume that $\kk$ is algebraically
        closed. By Lemma~\ref{ref:smallestorbit:lem} we
        have that $\Fbar$ is the (unique) left $\Group$-orbit
        of minimal dimension and that it is also a right
        $\Group$-orbit. Hence, for every $y\in \Fbar(\kk)$ and
        $n\in \Nroup(\kk)$ we have an element $g\in \Group(\kk)$ such
        that $y = xg$ and thus $ny = n(xg) = (nx)g = xg = y$. Therefore,
        $\Nroup$ is the kernel of $\Group\to
        \mathrm{Aut}(\Fbar)$ so it is normal.
    \end{proof}

    In the remaining part of this section we will prove that the \BBname{}
    functor for a normal, pointed, reductive monoid is isomorphic to a
    \BBname{} functor for a certain submonoid with zero, whose unit group is
    connected. This motivates the restriction to such monoids made in the
    introduction.
    \begin{assumption}\label{assumptions:forInduction}
        For the remaining part of the present section, we assume that $\Gbar$ is
        a normal (as a variety), pointed, reductive monoid.
    \end{assumption}
    We stress that Assumption~\ref{assumptions:forInduction} does not include connectedness, we merely require that
    each connected component of $\Gbar$ is irreducible and normal.

    Let $\Nroup$ be the stabilizer of any $\kk$-rational point of $\Fbar$, as
    in Proposition~\ref{ref:Nroupdefinition:prop}. Let $\Nred \subset \Nroup$
    be the connected component of identity in the reduction of $\Nroup$.
    \begin{lemma}\label{ref:Nlinred:lem}
        The group $\Nred$ is geometrically linearly reductive. Also the
        functor $(-)^{\Nroup}$ is exact.
    \end{lemma}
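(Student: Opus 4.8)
The plan is to deduce both assertions from a single claim: that $\Nroup$, and hence its subgroup $\Nred$, is geometrically linearly reductive. Recall that exactness of $(-)^{\Nroup}$ is equivalent to linear reductivity of $\Nroup$ (if every short exact sequence of $\Nroup$-representations splits then invariants preserve exactness, and conversely vanishing of the first cohomology forces splitting); since invariants commute with the flat base change along $\kkbar/\kk$, it suffices to argue geometrically. The geometric input I would use is that $\Fbar$ is \emph{affine}: by Lemma~\ref{ref:smallestorbit:lem} and Proposition~\ref{ref:Nroupdefinition:prop} the orbit $\Fbar\simeq \Group/\Nroup$ is a closed subscheme of the affine monoid $\Gbar$, so the homogeneous space $\Group/\Nroup$ is affine.

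The argument then splits on the characteristic. When $\charr \kk>0$ the affineness of $\Fbar$ is not even needed: Remark~\ref{rmk:positivejunkyard} shows that \emph{every} algebraic subgroup scheme of $\Group$ is linearly reductive, since over $\kkbar$ its identity component is of multiplicative type (a subgroup scheme of a torus) and its component group is finite of order prime to $\charr\kk$. Applying this to the two subgroup schemes $\Nroup\subset \Group$ and $\Nred\subset \Group$ yields exactness of $(-)^{\Nroup}$ and linear reductivity of $\Nred$ simultaneously. The point here is that the cited structural result applies to arbitrary subgroup schemes, so it covers the possibly non-reduced $\Nroup$.

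When $\charr \kk=0$ every group scheme is smooth, so $\Nroup$ is reduced and $\Nred=\Nroup^{\circ}$. Here I would invoke Matsushima's criterion: as $\Group$ is reductive and the quotient $\Group/\Nroup\simeq\Fbar$ is affine, the subgroup $\Nroup$ is reductive, hence linearly reductive, and its identity component $\Nred$ is reductive as well. To handle a non-closed base field one first passes to $\kkbar$, where $\Group_{\kkbar}/\Nroup_{\kkbar}$ is still affine (base change of an affine scheme) and $\Group_{\kkbar}$ is still reductive by Theorem~\ref{basechange}, so Matsushima applies directly and descends.

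The only genuine difficulty is the characteristic-zero step. Unlike in positive characteristic, subgroups of a reductive group need not be reductive, so linear reductivity of $\Nroup$ cannot be read off the structure of $\Group$ and must instead be extracted from the geometry of the closed orbit. This is precisely where the affineness of $\Fbar$ enters, through Matsushima's criterion; the remainder of the proof is formal.
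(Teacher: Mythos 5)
Your proposal is correct and takes essentially the same route as the paper: in positive characteristic both assertions are deduced from Remark~\ref{rmk:positivejunkyard}, and in characteristic zero the paper likewise invokes Matsushima's theorem (citing it alongside the alternative observation that $\Nroup$ and $\Nred$ are reductive \emph{normal} subgroups of the reductive group $\Group$). Your explicit geometric input --- that $\Group/\Nroup\simeq\Fbar$ is affine because it is closed in the affine monoid $\Gbar$ --- is precisely what makes the Matsushima route work, so nothing is missing.
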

    \begin{proof}
        In positive characteristic both assertions follow from classification,
        Remark~\ref{rmk:positivejunkyard}. In characteristic zero $\Nroup$ is
        reduced and both $\Nroup$ and $\Nred$ are reductive normal subgroups of a reductive group
        $\Group$, or by Matsushima's theorem, see
        e.g.~\cite{Matsushima__affine_quotients,
        Cline_Parshall_Scott__affine_quotients}.
    \end{proof}

    The immersions $\Fbar \subset
    \Gbar$ and $\Group \subset \Gbar$ induce the following morphisms of quotients:
    \newcommand{\pibar}{\overline{\pi}}%
    \begin{equation}\label{eq:diagramquotients}
            \begin{tikzcd}
                 \Fbar \arrow[r, "c"]\arrow[d] & \Gbar\arrow[d, "\pibar"] &
                 \Group\arrow[l, "i"']\arrow[d]\\
                 \Fbar\goodquotient \Nroup \arrow[r]& \Gbar\goodquotient\Nroup
                 & \arrow[l]\Group/\Nroup.
            \end{tikzcd}
    \end{equation}
    Since $\Nroup$ acts trivially on $\Fbar$, the map $\Fbar\to \Fbar \goodquotient
    \Nroup$ is an isomorphism.
    \begin{proposition}\label{ref:isoOnQuotients:prop} The morphisms
        $\Fbar\goodquotient\Nroup\to \Gbar\goodquotient\Nroup$ and
        $\Group/\Nroup\to \Gbar\goodquotient\Nroup$ are isomorphisms.
    \end{proposition}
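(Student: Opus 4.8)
The plan is to reduce both isomorphism claims to a single computation of the invariant ring $B^{\Nroup}$, where $B = H^0(\Gbar,\cO_{\Gbar})$ and $\Nroup \subset \Group$ acts by left multiplication, so that $\Gbar \goodquotient \Nroup = \Spec B^{\Nroup}$ and the two maps of the proposition are induced on coordinate rings by restriction along $\Group \into \Gbar$ and along $\Fbar \into \Gbar$ followed by passing to invariants. First I would record the $(\Gprod)$-equivariant description $B = \bigoplus_{V} V\otimes V^{\vee}$, the sum running over the irreducible $\Group$-representations $V$ that are $\Gbar$-representations. Indeed $\Group \into \Gbar$ is schematically dense, so restriction embeds $B$ as a $(\Gprod)$-subrepresentation of $\GroupGlobalSects$; since by Proposition~\ref{ref:isotypiccomponents:prop} the latter is a direct sum of pairwise non-isomorphic simple $(\Gprod)$-representations $V\otimes V^{\vee}$ (see~\eqref{eq:decomposition}), any subrepresentation is automatically the sum of a subset of these, namely those $V$ occurring in $B$, i.e.\ the $\Gbar$-representations.

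Next I would use that $\Nroup$ is normal in $\Group$ (Proposition~\ref{ref:Nroupdefinition:prop}). For an irreducible $V$ the invariant subspace $V^{\Nroup}$ is then $\Group$-stable, hence equals $0$ or $V$, and $V^{\Nroup}=V$ precisely when $V$ is inflated from $\Group/\Nroup$; write $S$ for this set of representations. Taking $\Nroup$-invariants on the left tensor factor gives $B^{\Nroup} = \bigoplus_{V\in S,\ V\ \Gbar\text{-rep}} V\otimes V^{\vee}$, while $H^0(\Group/\Nroup) = \GroupGlobalSects^{\Nroup} = \bigoplus_{V\in S} V\otimes V^{\vee}$. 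Moreover, since $\Nroup$ acts trivially on $\Fbar$ and $\Fbar \simeq \Group/\Nroup$, also $H^0(\Fbar\goodquotient\Nroup)=H^0(\Fbar)=\bigoplus_{V\in S} V\otimes V^{\vee}$. Thus all three rings are governed by the same list $S$, and the whole proposition comes down to the inclusion $S \subseteq \{\Gbar\text{-representations}\}$: once this is known, $B^{\Nroup}=\bigoplus_{V\in S}V\otimes V^{\vee}$ coincides with both $H^0(\Group/\Nroup)$ and $H^0(\Fbar)$.

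The crux — and the step I expect to be the main obstacle — is therefore to prove $S\subseteq\{\Gbar\text{-representations}\}$, i.e.\ that every $V$ trivial on $\Nroup$ already extends to $\Gbar$. For this I would use that $\Fbar\into\Gbar$ is a closed immersion into an affine scheme, so the restriction $B\onto H^0(\Fbar)$ is surjective; it is $(\Gprod)$-equivariant with target $\bigoplus_{V\in S}V\otimes V^{\vee}$, so by multiplicity one each $V\in S$ must already appear in $B$, giving $S\subseteq\{\Gbar\text{-representations}\}$. With this in hand, exactness of $(-)^{\Nroup}$ (Lemma~\ref{ref:Nlinred:lem}) makes $B^{\Nroup}\onto H^0(\Fbar)$ a surjection between objects with identical isotypic decompositions, hence (multiplicity one again) an isomorphism; and $B^{\Nroup}\into H^0(\Group/\Nroup)$, being an injection between these same two rings, is likewise an isomorphism. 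The remaining care is purely bookkeeping: ensuring that the left and right $\Group$-actions and the normality of $\Nroup$ are applied to the correct tensor factor throughout.
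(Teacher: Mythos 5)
Your proof is correct and follows essentially the same route as the paper: both arguments rest on the multiplicity-one decomposition of $\GroupGlobalSects$ from Proposition~\ref{ref:isotypiccomponents:prop}, the surjectivity of restriction to the closed subscheme $\Fbar$, the injectivity of restriction to the dense open $\Group$, and the identification $H^0(\Fbar,\cO_{\Fbar})\simeq H^0(\Group/\Nroup,\cO_{\Group/\Nroup})$. The paper packages this as a dimension sandwich $\dim V_{\Fbar}\leq\dim V_{\Gbar}\leq\dim V_{\Group}$ with equal finite endpoints on each isotypic component, whereas you unpack the same inequality by explicitly naming the set $S$ of representations inflated from $\Group/\Nroup$ and showing $S$ consists of $\Gbar$-representations; the content is the same.
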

    \begin{proof}
        \def\Nredloc{\Nroup}%
        By
        Lemma~\ref{ref:Nlinred:lem}, the functor $(-)^{\Nredloc}$ is exact.
        The morphisms $c$, $i$ induce, respectively, a surjection $c^{\#}\colon H^0(\Gbar,
        \cO_{\Gbar})^{\Nredloc} \to H^0(\Fbar, \cO_{\Fbar})^{\Nredloc}$ and an
        injection $i^{\#}\colon H^0(\Gbar, \cO_{\Gbar})^{\Nredloc}\to H^0(\Group,
        \cO_{\Group})^{\Nredloc}$, both
        left $\Group$-equivariant.
        Let $V$ be an irreducible $\Group$-representation and $V_{\Fbar}$, $V_{\Gbar}$,
        $V_{\Group}$ be the $V$-isotypic components of $H^0(\Fbar,
        \cO_{\Fbar})^{\Nredloc}$, $H^0(\Gbar, \cO_{\Gbar})^{\Nredloc}$ and $H^0(\Group,
        \cO_{\Group})^{\Nredloc}$ respectively. Since $c^{\#}$ is surjective and
        $i^{\#}$ is injective, we have inequalities
        $\dim V_{\Fbar} \leq \dim
        V_{\Gbar} \leq \dim V_{\Group}$.
        Proposition~\ref{ref:isotypiccomponents:prop} implies that $\dim
        V_{\Group} \leq \infty$.
        But $\Nredloc$ acts trivially on $\Fbar \simeq
        \Group/\Nredloc$, so
        \[
            H^0(\Fbar, \cO_{\Fbar})^{\Nredloc} = H^0(\Fbar, \cO_{\Fbar}) =
            H^0(\Group/\Nredloc, \cO_{\Group/\Nredloc})\simeq H^0(\Group,
            \cO_{\Group})^{\Nredloc}
        \]
        as $\Group$-representations.
        Hence $\dim V_{\Group} = \dim V_{\Fbar}$, thus both $c^{\#}$ and
        $i^{\#}$
        are isomorphisms when restricted to the isotypic component of $V$.
        Since $V$ was arbitrary, both $c^{\#}$ and $i^{\#}$ are isomorphisms.
    \end{proof}
    The following key Proposition~\ref{ref:Nproperties:prop} is proven
    in~\cite[Proposition~12]{Brion__On_algebraic_semigroups_and_monoids}. We
    reproduce it here both for completeness and to show that the proof holds
    in our generality (in principle,~\cite{Brion__On_algebraic_semigroups_and_monoids} works
    over algebraically closed field, however the proof there does not use this
    assumption).
    \begin{proposition}\label{ref:Nproperties:prop}
        The group scheme $\Nroup$ is smooth and connected, hence $\Nroup =
        \Nred$.
    \end{proposition}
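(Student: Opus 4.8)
The plan is to realise $\Gbar$ as a fibre bundle over the minimal orbit $\Fbar$ whose fibre is a reductive monoid \emph{with zero} and unit group $\Nroup$, and then to read off smoothness and connectedness of $\Nroup$ from normality of that fibre. Throughout I may assume $\kk=\kkbar$: smoothness is geometric, $\Nroup$ is connected if and only if it is geometrically connected since it carries the $\kk$-point $1_{\Group}$, and the hypotheses of Assumption~\ref{assumptions:forInduction} persist after base change by Theorem~\ref{basechange}. Recall also that $\Group$ is smooth: in characteristic zero a linearly reductive group is reductive, hence smooth, and in positive characteristic $\Group$ is a torus extended by a finite group of order prime to $\charr\kk$ by Remark~\ref{rmk:positivejunkyard}, hence again smooth.

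First I would build the retraction. By Proposition~\ref{ref:Nroupdefinition:prop} the subgroup $\Nroup$ is normal, so $\Hroup:=\Group/\Nroup$ is an algebraic group, and by Proposition~\ref{ref:isoOnQuotients:prop} the quotient fits into $\pibar\colon\Gbar\to\Gbar\goodquotient\Nroup\cong\Hroup$, restricting on the dense unit group to the homomorphism $\Group\to\Hroup$. Since $\pibar$ and the two multiplications agree on the dense subset $\Group\times\Group$ and $\Hroup$ is separated, $\pibar$ is a homomorphism of monoids; as $\Hroup$ is a group and $\Fbar\hookrightarrow\Gbar\xrightarrow{\pibar}\Hroup$ is an isomorphism, $\pibar$ is a $\Group$-equivariant retraction of $\Gbar$ onto $\Fbar\cong\Hroup$. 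Because $\Hroup$ is homogeneous under the transitive left $\Group$-action, this retraction exhibits $\Gbar$ as the associated bundle $\Group\times^{\Nroup}M_0$, where $M_0:=\pibar^{-1}(1_{\Hroup})$ is the fibre over the base point $x\in\Fbar(\kkbar)$ (the point with stabilizer $\Nroup$). Scheme-theoretically $M_0\cap\Group=\ker(\Group\to\Hroup)=\Nroup$, so $M_0$ is a closed submonoid with unit group $\Nroup$; the point $x$ is a two-sided zero of $M_0$, since $xm,\,mx\in\Fbar\cap M_0=\{x\}$ for every $m\in M_0$ (here $\Fbar$ is the minimal ideal); and $\Nroup$ is dense in $M_0$ because $\Group$ is dense in $\Gbar=\Group\times^{\Nroup}M_0$. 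Thus $M_0$ is a reductive monoid with zero and unit group $\Nroup$.

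Next I would settle smoothness of $\Nroup$, which is the crux. In characteristic zero it is automatic, as $\Nroup$ is a group scheme over a field of characteristic zero. In characteristic $p$ the identity component $\Groupconn$ is a torus, so the normal subgroup scheme $\Nroup^{\circ}$ is of multiplicative type (as a subgroup scheme of a torus); its smoothness is equivalent to its character group being $p$-torsion-free, and this is exactly what normality of $\Gbar$ provides. Concretely, via the toric dictionary of Example~\ref{ex:toric}, normality of $\Gbar$ means the associated affine semigroup is saturated, and saturation forces the character lattice of the stabilizer $\Nroup^{\circ}$ of the closed orbit to be a direct summand, hence torsion-free; therefore $\Nroup^{\circ}$ is a subtorus, and $\Nroup$, being an extension of a finite group of order prime to $p$ by $\Nroup^{\circ}$, is smooth.

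With $\Nroup$ smooth, the torsor $a\colon\Group\times M_0\to\Gbar=\Group\times^{\Nroup}M_0$ is smooth and surjective, so normality of $\Gbar$ ascends to give that $\Group\times M_0$ is normal, and descent along the faithfully flat projection $\Group\times M_0\to M_0$ shows $M_0$ is normal. Connectedness then follows: a normal scheme has pairwise disjoint irreducible components, the dense unit group $\Nroup$ is a finite disjoint union of cosets $\Nroup^{\circ}n_i$, and the components of $M_0$ are their closures $\overline{\Nroup^{\circ}n_i}$. The zero $x$ lies in some $\overline{\Nroup^{\circ}n_{i_0}}$, and since $\overline{\Nroup^{\circ}n_i}=\overline{\Nroup^{\circ}n_{i_0}}\cdot(n_{i_0}^{-1}n_i)$ while $x$ is a zero, $x=x\cdot(n_{i_0}^{-1}n_i)$ lies in \emph{every} component; disjointness of the components of the normal $M_0$ forces a single component, so $M_0$ is irreducible and its dense open unit group $\Nroup$ is connected. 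Hence $\Nroup$ is smooth and connected, so $\Nroup=\Nred$. The main obstacle, and the reason the steps must be taken in this order, is precisely the transfer of normality from $\Gbar$ to the fibre $M_0$: it is circular unless $\Nroup$ is already known to be smooth, because only then is the torsor $a$ smooth so that normality ascends. This is why smoothness must be established first and independently, its genuine content being the characteristic-$p$ saturation argument identifying normality of $\Gbar$ with $p$-torsion-freeness of the stabilizer's character lattice.
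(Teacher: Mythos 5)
Your argument is correct in outline but takes a genuinely different route from the paper's. The paper handles all characteristics uniformly and avoids toric geometry entirely: it forms the closure $Z'$ of $\Group$ in $\Gbar\times_{\Group/\Nroup}(\Group/\Nred)$, notes that $Z'\to\Gbar$ is finite and birational, hence an isomorphism because $\Gbar$ is normal, and thereby extends $\Group\to\Group/\Nred$ to a monoid homomorphism $\varphi\colon\Gbar\to\Group/\Nred$; evaluating $\varphi(n)\varphi(x)=\varphi(nx)=\varphi(x)$ at a point $x\in\Fbar(\kk)$ and cancelling the invertible element $\varphi(x)$ gives $\Nroup\subset\Nred$ in one stroke, with reducedness and connectedness falling out together. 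Your route proves smoothness first (Cartier in characteristic zero, a multiplicative-type/saturation argument in characteristic $p$), uses it to push normality from $\Gbar$ down to the fibre $M_0=\Nbar$ along the now-smooth torsor, and then deduces connectedness from the two-sided zero of $M_0$ lying in every irreducible component of a normal scheme; that last step is sound (and is essentially how the paper later proves connectedness of $\Nbar$ in Proposition~\ref{ref:isoOnMonoids:prop}). You gain a concrete structural picture of the fibration; you pay with a case split and a dependence on toric combinatorics.

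The one step that needs shoring up is the characteristic-$p$ smoothness argument. You invoke ``the toric dictionary of Example~\ref{ex:toric}'' for $\Gbar$ itself, but $\Gbar$ is a toric variety only when $\Group$ is a split torus; in characteristic $p$ the group $\Group$ is in general an extension of a finite \'etale group by a torus, so $\Gbar_{\kkbar}$ is disconnected and only the component $\overline{\Groupconn_{\kkbar}}$ of the identity is affine toric. You must route the argument through it: the set $\Fbar\cap\overline{\Groupconn_{\kkbar}}$ is a nonempty closed $\Groupconn$-stable subset, hence contains the unique closed $\Groupconn$-orbit of that toric variety; the $\Groupconn$-stabilizer of a point there equals $\Nroup_{\kkbar}\cap\Groupconn_{\kkbar}$ (all points of $\Fbar$ have stabilizer $\Nroup$ by normality, Proposition~\ref{ref:Nroupdefinition:prop}); and for $\Spec\kkbar[\sigma^\vee\cap M]$ the stabilizer of the closed orbit has character group $M/(\sigma^\perp\cap M)$, torsion-free because $\sigma^\perp\cap M$ is a saturated sublattice --- this is precisely where normality of $\Gbar$ enters, and the paper's example $xy^n=z^n$ shows the failure for a non-saturated semigroup. (Your phrase ``character lattice of the stabilizer \ldots a direct summand'' should be: the characters vanishing on the stabilizer form a saturated, hence direct-summand, sublattice, so the character group of the stabilizer, a quotient of $M$, is torsion-free.) With that reduction written out, and with the small verification that $\Fbar$ is a two-sided ideal (so that $x$ really is a zero of $M_0$: one has $\Fbar\cdot\Gbar\subset\overline{\Fbar\cdot\Group}=\Fbar$ by density and continuity), your proof is complete.
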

    \begin{proof}
        We want to prove that $\Nred = \Nroup$. By
        Proposition~\ref{ref:isoOnQuotients:prop} the inclusion $\Group/\Nroup
        \to\Gbar\goodquotient \Nroup$ is an isomorphism.
        Consider the natural map $\Group/\Nred \to \Group
        /\Nroup$ and the associated cartesian diagram
        \begin{equation}\label{eq:diagramNproperties}
            \begin{tikzcd}
                Z \arrow[r]\arrow[d] &
                \Gbar \arrow[d, "\pibar"]\\
                \Group/\Nred\arrow[r] &
                \Group/\Nroup
            \end{tikzcd}
        \end{equation}
        Let $\Group\into Z$ be the embedding obtained from the natural maps
        $\Group \into \Gbar$ and $\Group \to \Gbar/\Nred$. Let $Z'$ be the
        closure of $\Group$ in $Z$.
        The surjection $\Group/\Nroup \to \Group/\Nred$ is a finite map, hence
        also $Z\to \Gbar$ is finite, so $Z' \to \Gbar$ is finite. The latter
        map is also
        birational, hence is an isomorphism, as $\Gbar$ is normal. We obtain a
        map
        \[
            \varphi\colon \Gbar \to Z' \to \Group/\Nred.
        \]
        By construction, all maps in~\eqref{eq:diagramNproperties} are
        homomorphisms of monoids. In particular, $\varphi$ is such. Fix any
        $\kk$-scheme $S$ and $n\in\Nroup(S)$. Let $x\in \Fbar(\kk)$, which
        exists by pointedness of $\Gbar$. Then $\Nroup$ stabilizes $x$. View
        $x$ as a constant $S$-point of $\Fbar$.
        Then $\varphi(n)\varphi(x) = \varphi(nx) = \varphi(x)$.
        But $\varphi(x)$ is invertible in the group $(\Group/\Nred)(S)$, so
        $\varphi(n) = 1_{\Group/\Nred(S)}$.
        Since $\varphi$ restricts to the canonical surjection $\Group \to
        \Group/\Nred$, we have $\varphi(n) = n\in
        (\Group/\Nred)(S)$.
        Thus, $n\in \Nred(S)$. As $n$ was an arbitrary $S$-point, we
        have $\Nroup \subset \Nred$. This proves that $\Nroup$ is reduced and
        connected, hence smooth connected.
    \end{proof}

    \begin{remark}
        In the setting of Proposition~\ref{ref:Nproperties:prop}, both
        connectedness and
        reducedness of $\Nroup$ are nontrivial, as exhibited
        in~\cite[Example~3, p.~23]{Brion__On_algebraic_semigroups_and_monoids}. We recall this example briefly. Let
        \[
            \Gbar = \left\{ (x, y, z)\in \mathbb{A}^3\ |\ xy^n = z^n, x\neq 0
        \right\}.
        \]
        This is a monoid via coordinatewise multiplication. Its
        unit group is
        \[
            \Group = \left\{ (x, y, z)\in \Gbar \ |\ z\neq
            0\right\},
        \]
        which is isomorphic to $\Gmult^{2}$ via projection from
        the first coordinate. We have $\Fbar = V(y, z)\subset \Gbar$ with
        $\kk$-point $(1, 0, 0)$ and so $\Nroup = \left\{
        (y, z)\ |\ y^n = z^n\right\}  \simeq \Gmult \times \mu_n$, which
        is disconnected (unless $n$ is a power of $\charr(\kk)$) and, for
        positive $\charr(\kk)$ dividing $n$, non-reduced.
    \end{remark}
    \begin{corollary}\label{ref:fibration:cor}
        The map $\pibar\colon \Gbar\to \Group/\Nroup$ is a flat surjective morphism of
        algebraic monoids.
    \end{corollary}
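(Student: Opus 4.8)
The plan is to treat the three assertions separately---that $\pibar$ is a homomorphism of monoids, that it is surjective, and that it is flat---with flatness being the only substantial point. Throughout I work under Assumption~\ref{assumptions:forInduction} and I freely use that, by Proposition~\ref{ref:Nproperties:prop}, the group $\Nroup=\Nred$ is smooth, connected and normal in $\Group$.

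First I would fix the identification and the monoid structure. By Proposition~\ref{ref:isoOnQuotients:prop} the inclusion $\Group\into\Gbar$ induces an isomorphism $\Group/\Nroup\xrightarrow{\sim}\Gbar\goodquotient\Nroup$, so I regard $\pibar$ as a morphism $\Gbar\to\Group/\Nroup$. Since $\Nroup$ is normal, the target $\Group/\Nroup$ is an algebraic group, hence a monoid. Both $\pibar$ and the monoid homomorphism $\varphi$ produced in the proof of Proposition~\ref{ref:Nproperties:prop} restrict on the dense open unit group $\Group\subset\Gbar$ to the quotient homomorphism $\Group\to\Group/\Nroup$; as $\Gbar$ is reduced and $\Group/\Nroup$ is separated, two morphisms agreeing on the topologically dense $\Group$ must coincide, so $\pibar=\varphi$ is a homomorphism of monoids. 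Surjectivity is then immediate: $\pibar_{|\Group}$ is the faithfully flat quotient $\Group\to\Group/\Nroup$, which is already surjective, hence so is $\pibar$.

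It remains to prove flatness, which is the heart of the statement. I would exploit that $\pibar$ is equivariant for the left $\Group$-action (left multiplication on $\Gbar$, left translation on $\Group/\Nroup$) and that $\Group$ acts transitively on the homogeneous space $\Group/\Nroup$. Let $q\colon\Group\to\Group/\Nroup$ be the quotient map; since $\Nroup$ is smooth, $q$ is smooth and surjective, in particular faithfully flat of finite presentation. The key step is to trivialize the pullback of $\pibar$ along $q$. Writing $F:=\pibar^{-1}(\pibar(1_{\Group}))$ for the fibre over the image of the identity (which contains $1_{\Group}$, hence is nonempty), I would check that
\[
    \Group\times F\longrightarrow \Group\times_{\Group/\Nroup}\Gbar,\qquad (g,a)\longmapsto (g,\,g\cdot a),
\]
is an isomorphism, with inverse $(g,a)\mapsto(g,\,g^{-1}\cdot a)$. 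This is exactly where equivariance enters: for $a\in F$ one has $\pibar(g\cdot a)=g\cdot\pibar(a)=g\cdot\pibar(1_{\Group})=q(g)$, which verifies the fibre-product constraint and keeps $g^{-1}\cdot a$ inside $F$. Under this isomorphism the pullback of $\pibar$ along $q$ becomes the projection $\Group\times F\to\Group$, which is flat as a base change of $F\to\Spec\kk$. Thus $\pibar$ becomes flat after the faithfully flat base change $q$, and since flatness descends along fppf morphisms, $\pibar$ itself is flat.

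The main obstacle is precisely this flatness. Because $\Gbar$ is only assumed normal---not Cohen--Macaulay---miracle flatness does not apply directly, and because the right $\Nroup$-action on $\Gbar$ is not free (it fixes $\Fbar$) one cannot invoke a free-quotient argument either. The homogeneity of the base $\Group/\Nroup$, together with the $\Group$-equivariance of $\pibar$ and fppf descent, is what circumvents both difficulties; the one point I would take care to justify is that $q$ is genuinely fppf, which rests on the smoothness of $\Nroup$ established in Proposition~\ref{ref:Nproperties:prop}.
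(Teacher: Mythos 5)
Your proof is correct, and for the only substantive assertion --- flatness --- it takes a genuinely different route from the paper. The paper's proof observes that $\overline{\pi}$ is $\Group$-equivariant, surjective, and that its target $\Group/\Nroup\simeq\Fbar$ is reduced with a transitive $\Group$-action; generic flatness then produces a nonempty open subset of the base over which $\overline{\pi}$ is flat, and translating this open by the transitive action shows flatness everywhere. You instead pull back along the fppf quotient $q\colon\Group\to\Group/\Nroup$, identify $\Group\times_{\Group/\Nroup}\Gbar$ with $\Group\times F$ (your fibre $F$ over the identity coset is exactly the submonoid $\Nbar$ of \eqref{eq:Nbardef}), and descend flatness of the resulting projection. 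Your trivialization is precisely the cartesian square \eqref{eq:fppftrivialization}, which the paper verifies by the same functor-of-points computation $(g,\bar h)\mapsto(g,g^{-1}\bar h)$ in the proof of Proposition~\ref{ref:isoOnMonoids:prop} --- and crucially verifies it there \emph{without} using Corollary~\ref{ref:fibration:cor}, so your argument is not circular even though that proposition formally appears after the corollary. The trade-off: the paper's argument is shorter and defers the local triviality statement to where it is needed, while yours avoids invoking generic flatness (and hence the reducedness of the target) and in effect proves the stronger statement that $\overline{\pi}$ is an fppf-locally trivial fibration with fibre $\Nbar$, anticipating part of Proposition~\ref{ref:isoOnMonoids:prop}. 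Your treatment of the homomorphism and surjectivity claims coincides with the paper's density argument.
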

    \begin{proof}
        Clearly $\pibar_{|\Group}$ is a homomorphism of algebraic monoids (in fact,
        groups). Since $\Group \subset \Gbar$ is dense, also $\pibar$ is a
        homomorphism of algebraic monoids. It is surjective, $\Group$-equivariant, and
        the target $\Group/\Nroup = \Group/\Nred  \simeq \Fbar$ is reduced
        with transitive $\Group$-action. Thus, $\pibar$ is flat by generic
        flatness.
    \end{proof}

    We now construct a submonoid $\Nbar
    \subset \Gbar$ which has a zero and is such that the \BBname{} functors of
    $\Gbar$ and $\Nbar$ are isomorphic. This construction is called
    \emph{induction} and is performed in greater generality
    in~\cite[\S3.2]{Brion__On_algebraic_semigroups_and_monoids}.
    From Proposition~\ref{ref:isoOnQuotients:prop}, we have a morphism
    \[\pibar\colon \Gbar \to \Gbar\goodquotient \Nroup  \simeq \Group/\Nroup.\]
    We will prove that
    $\Gbar$ as a $\Group$-variety is a semidirect product of the fiber of
    $\pibar$ and its target.
    We define
    \begin{equation}\label{eq:Nbardef}
        \Nbar := \pibar^{-1}(1_{\Group/\Nroup}).
    \end{equation}
    \newcommand{\Ndiag}{\Nroup^{\mathrm{diag}}}%
    Note that $\Nbar \cap \Group = \Nroup$ and that $\Nbar$ is a submonoid of
    $\Gbar$. Eventually, we will see that  $\Nbar$
    is simply the
    closure of $\Nroup$ in $\Gbar$.
    We have a natural left
    $\Group$-equivariant morphism
    \begin{equation}\label{eq:actiononNbar}
        j\colon \Group \times \Nbar \to \Gbar,
    \end{equation}
    given on points by $(g, \bar{n})\mapsto g\bar{n}$. ($j$ is usually \emph{not} a homomorphism
    of monoids as $\Nbar$ is not necessarily central in $\Gbar$).
    We also have an embedding $e\colon \Nroup \into \Group \times \Nbar$, given on points
    by $n\to (n^{-1}, n)$. Let $\Ndiag = e(\Nroup)$. Then $\Ndiag$ acts
    on $\Group \times \Nbar$ by the rule $n\cdot (g, \bar{n}) = (gn^{-1},
    n\bar{n})$ and the morphism~\eqref{eq:actiononNbar} factors
    through the quotient and gives a left $\Group$-equivariant morphism of
    schemes over $\Group/\Nroup$:
    \begin{equation}\label{eq:Nbarisomorphism}
        \begin{tikzcd}
            (\Group \times \Nbar)/\Ndiag \arrow[rr, "\bar{j}"]\arrow[rd, "pr_1"] &&\Gbar\arrow[ld, "\pibar"]\\
            & \Group/\Nroup  &
        \end{tikzcd}
    \end{equation}
    \begin{proposition}\label{ref:isoOnMonoids:prop}
        The morphism $\bar{j}$ from Diagram~\eqref{eq:Nbarisomorphism} is an
        isomorphism. In particular
        \[
            \Gbar  \simeq (\Group \times \Nbar)/\Ndiag
        \]
        as left $\Group$-varieties. Moreover, $\Nbar$ is reduced and
        irreducible, normal, linearly reductive monoid with zero.
    \end{proposition}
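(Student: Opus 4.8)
The plan is to show first that $\bar j$ is an isomorphism by faithfully flat descent along the torsor $q\colon \Group\to\Group/\Nroup$, and then to read off all the properties of $\Nbar$ from those of $\Gbar$ through the resulting fibre-bundle description. Both $(\Group\times\Nbar)/\Ndiag$ and $\Gbar$ are schemes over $\Group/\Nroup$ (via $pr_1$ and $\pibar$), and $\bar j$ is a morphism over $\Group/\Nroup$, so I would exploit that being an isomorphism is fppf-local on the base $\Group/\Nroup$. Since $\Nroup=\Nred$ is smooth by Proposition~\ref{ref:Nproperties:prop}, the map $q$ is an $\Nroup$-torsor, hence an fppf cover. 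The source $(\Group\times\Nbar)/\Ndiag$ is the contracted product $\Group\times^{\Nroup}\Nbar$ attached to this torsor and the left $\Nroup$-action on $\Nbar$, so its pullback along $q$ is canonically trivial, $\Group\times_{\Group/\Nroup}\bigl((\Group\times\Nbar)/\Ndiag\bigr)\simeq \Group\times\Nbar$, while the pullback of the target is $\Group\times_{\Group/\Nroup}\Gbar=\{(g,x)\mid g\Nroup=\pibar(x)\}$. Under these identifications the base change of $\bar j$ is $(g,\bar n)\mapsto(g,g\bar n)$, an isomorphism with explicit inverse $(g,x)\mapsto(g,g^{-1}x)$; here $g^{-1}x\in\Nbar$ because $\pibar(g^{-1}x)=g^{-1}\pibar(x)=g^{-1}(g\Nroup)=1_{\Group/\Nroup}$. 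Descent then gives that $\bar j$ itself is an isomorphism, i.e.\ $\Gbar\simeq(\Group\times\Nbar)/\Ndiag$ as left $\Group$-varieties.

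Next I would record the monoid structure. As $\Nbar=\pibar^{-1}(1_{\Group/\Nroup})$ is the fibre over the identity of the monoid homomorphism $\pibar$ (Corollary~\ref{ref:fibration:cor}), it is a closed, hence affine, submonoid of $\Gbar$. Its unit group is $\Nroup$: anything invertible in $\Nbar$ is invertible in $\Gbar$, so lies in $\Gbar^{\times}=\Group$ and therefore in $\Nbar\cap\Group=\Nroup$, while conversely $\Nroup$ is a subgroup of $\Nbar$. For the zero, let $x_0\in\Fbar(\kk)$ be the chosen point with stabilizer $\Nroup$; by Proposition~\ref{ref:isoOnQuotients:prop} it is the point of $\Fbar\simeq\Group/\Nroup$ over $1_{\Group/\Nroup}$, so $\pibar(x_0)=1$ and $x_0\in\Nbar$. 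Because $\Fbar=\Group x_0$ is closed and $\Group$ is dense in $\Gbar$, one has $\Gbar x_0\subseteq\Fbar$; hence for $\bar n\in\Nbar$ the point $\bar n x_0$ lies in $\Fbar$ and satisfies $\pibar(\bar n x_0)=\pibar(\bar n)\pibar(x_0)=1$, so $\bar n x_0\in\Fbar\cap\pibar^{-1}(1)=\{x_0\}$. Thus $\Nbar\cdot x_0=x_0$, and symmetrically $x_0\cdot\Nbar=x_0$, so $x_0$ is a zero of $\Nbar$.

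Finally, the geometric properties. The key point is that $\Nroup=\Nbar\cap\Group$ is open in $\Nbar$ (as $\Group$ is open in $\Gbar$) and \emph{dense}: the dense open $\Group\subset\Gbar$ pulls back to a dense open subset along the flat surjection $\Group\times_{\Group/\Nroup}\Gbar\to\Gbar$ (base change of $q$), and under the trivialization $\Group\times_{\Group/\Nroup}\Gbar\simeq\Group\times\Nbar$ this subset is exactly $\Group\times\Nroup$; since $\overline{\Group\times\Nroup}=\Group\times\overline{\Nroup}$, density forces $\overline{\Nroup}=\Nbar$. As $\Nroup=\Nred$ is smooth and connected (Proposition~\ref{ref:Nproperties:prop}), hence irreducible, its closure $\Nbar$ is irreducible; in particular $\Nbar^{\times}=\Nroup$ is dense, so $\Nbar$ is a linearly reductive monoid whose unit group $\Nroup$ is linearly reductive by Lemma~\ref{ref:Nlinred:lem}. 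For normality I would use smooth ascent and faithfully flat descent: the smooth surjection $\Group\times_{\Group/\Nroup}\Gbar\to\Gbar$ has normal target $\Gbar$, so $\Group\times_{\Group/\Nroup}\Gbar\simeq\Group\times\Nbar$ is normal, and then $pr_2\colon\Group\times\Nbar\to\Nbar$ is faithfully flat (as $\Group\to\Spec\kk$ is), along which normality (and hence reducedness) descends, giving $\Nbar$ normal. The hard part is the descent identification for $\bar j$: one must correctly recognize $(\Group\times\Nbar)/\Ndiag$ as the contracted product and compute its pullback along the torsor $q$. Once this trivialization is in hand, both the isomorphism $\bar j$ and the transfer of irreducibility and normality from $\Gbar$ to $\Nbar$ follow formally.
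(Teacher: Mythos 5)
Your argument follows essentially the same route as the paper's: both hinge on identifying $\Group\times_{\Group/\Nroup}\Gbar$ with $\Group\times\Nbar$ via $(g,\bar h)\mapsto(g,g^{-1}\bar h)$ and then using faithful flatness together with smoothness of $\Nroup$ (Proposition~\ref{ref:Nproperties:prop}). The paper checks the cartesian square by hand and then exhibits $\Gbar$ \emph{directly} as the quotient of $\Group\times\Nbar$, namely as the pullback of the fppf torsor $\Group\to\Group/\Nroup$ along the faithfully flat $\overline{\pi}$; you instead construct $(\Group\times\Nbar)/\Nroup^{\mathrm{diag}}$ as a contracted product, trivialize it over the torsor, and descend the isomorphism. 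These are the same computation read in opposite directions; the paper's order has the small advantage of not requiring a separate justification that the quotient exists and that its formation commutes with base change along $q$ (a point you should at least acknowledge). Your verification of the unit group and of the zero is a welcome addition that the paper's proof leaves implicit, and deriving irreducibility from $\Nbar=\overline{\Nroup}$ is a harmless variant of the paper's argument that $\Nbar\goodquotient\Nroup$ is a point.

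One genuine (though easily repaired) slip: the claim $\overline{\pi}(x_0)=1_{\Group/\Nroup}$ for the \emph{chosen} basepoint $x_0\in\Fbar(\kk)$ does not follow from Proposition~\ref{ref:isoOnQuotients:prop}. That proposition identifies $\Gbar\goodquotient\Nroup$ with $\Group/\Nroup$ via the inclusion $\Group\subset\Gbar$, whereas the identification $\Fbar\simeq\Group/\Nroup$ sending $x_0$ to the identity is the orbit map $g\mapsto gx_0$; the two differ by right translation by $\overline{\pi}(x_0)$, which need not be trivial. For instance, for $\Gbar=\mathbb{A}^1\times\Gmult$ with $\Group=\Gmult^2$, one has $\Fbar=\{0\}\times\Gmult$ and any $x_0=(0,c)$ has stabilizer $\Nroup=\Gmult\times\{1\}$, but $\overline{\pi}(x_0)=c$ may differ from $1$. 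The fix is immediate: since $\overline{\pi}|_{\Fbar}$ is an isomorphism onto $\Group/\Nroup$, the intersection $\Fbar\cap\Nbar=\Fbar\cap\overline{\pi}^{-1}(1)$ is a single $\kk$-rational point $z_0$ (whose stabilizer is still $\Nroup$, as $\Nroup$ is normal and fixes all of $\Fbar$), and your computation $\Nbar\cdot z_0\subseteq\Fbar\cap\overline{\pi}^{-1}(1)=\{z_0\}$ goes through verbatim with $z_0$ in place of $x_0$.
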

    \begin{proof}
        Let us step back and consider first the diagram
        \begin{equation}\label{eq:fppftrivialization}
            \begin{tikzcd}
                \Group \times \Nbar\arrow[r, "j"]\arrow[d, "pr_1"] & \Gbar\arrow[d, "\pibar"]\\
                \Group \arrow[r, "/\Nroup"] & \Group/\Nroup
            \end{tikzcd}
        \end{equation}
        We claim that~\eqref{eq:fppftrivialization} is cartesian. This we
        check directly. Namely, let $\alpha\colon Z\to \Gbar$ and $\beta\colon
        Z\to \Group$ be two morphisms, which agree on $\Group/\Nroup$. Let
        $\gamma\colon Z\to \Gbar$ be defined as $\gamma(z) = \left(\beta(z)\right)^{-1}\cdot\alpha(z)$. This is a morphism such
        that $\pibar
        \circ \gamma$ is the constant map $1_{\Group/\Nroup}$. Hence we obtain
        $\gamma\colon Z\to \Nbar$ and thus $(\beta, \gamma)\colon Z\to \Group
        \times \Nbar$, which proves that the diagram is cartesian.
        We note that the isomorphism $\Group
        \times_{\Group/\Nroup} \Gbar\to \Group \times \Nbar$ on the level of
        points is
        \begin{equation}\label{eq:Naction}
            (g, \bar{h}) \mapsto (g, g^{-1}\bar{h}).
        \end{equation}
        The pullback $\Group\times_{\Group/\Nroup} \Gbar$ has a pulled-back
        $\Nroup$-action: the group $\Nroup$ acts on the first coordinate.
        By~\eqref{eq:Naction} this translates into the $\Ndiag$-action on
        $\Group \times \Nbar$. Moreover, $j$ is a pullback of $(-/\Nroup)$, hence
        is smooth by
        Proposition~\ref{ref:Nproperties:prop}. Thus, $\Group \times \Nbar$ is
        a normal scheme (i.e., every its connected component is an irreducible normal
        variety), hence also $\Nbar$ is reduced and a normal
        scheme \cite[Tags~034E, 034F]{stacks_project}.
        Moreover, $(-)^{\Nroup}$ is exact by Lemma~\ref{ref:Nlinred:lem}, so
        the square~\eqref{eq:functorialQuotient} below is cartesian
        \begin{equation}\label{eq:functorialQuotient}
            \begin{tikzcd}
                \Nbar \arrow[r, hook]\arrow[d, "\pibar_{|\Nbar}"] & \Gbar\arrow[d, "\pibar"]\\
                \Nbar\goodquotient\Nroup \arrow[r, hook] & \Gbar\goodquotient
                \Nroup.
            \end{tikzcd}
        \end{equation}
        Hence $\Nbar\goodquotient \Nroup = \pibar(\Nbar) = \{1_{\Group}\}$
        is a point. The group $\Nroup$ is connected by
        Proposition~\ref{ref:Nproperties:prop}, hence acts on each component
        of $\Nbar$ separately, so the quotient has at least as many components
        as $\Nbar$. But the quotient is a point, hence $\Nbar$ is connected.

        Now, let us look at the cartesian square~\eqref{eq:fppftrivialization}
        from different point of view. The map $\pibar$ is faithfully flat by
        Corollary~\ref{ref:fibration:cor}. Therefore, $\Group \times \Nbar  \to \Gbar$ is a
        pullback of the quotient $\Group \to \Group/\Nroup$ by a faithfully
        flat map $\pibar$, hence is a quotient by the induced action of $\Nroup$
        itself. Thus $(\Group \times \Nbar)/\Ndiag \to \Gbar$ is an
        isomorphism.
    \end{proof}

    The following theorem allows us to reduce the considerations of \BBname{}
    functors from normal, pointed linearly reductive monoid to the case of
    linearly reductive monoids \emph{with zero and with connected unit
    group}.  It summarizes the constructions made above. We restate the
    assumptions listed in~\ref{assumptions:forInduction} in the theorem.
    \begin{theorem}\label{ref:reductionToMonoidWithZero:thm}
        Let $\Group$ be a linearly reductive group and $\Gbar$
        be an algebraic monoid with unit group $\Group$, which is normal as a
        variety. Let $\Fbar$ be the
        intersection of all closed left $\Group$-stable subschemes. Assume that $\Fbar(\kk)$ is
        non-empty (this is automatic for $\kk$ perfect). Let $\Nroup$ be the
        stabilizer of a point in $\Fbar(\kk)$ and let $\Nbar$ be its closure
        in $\Gbar$.
        Then
        \begin{enumerate}
            \item\label{eq:assertionsN} the group $\Nroup$ is linearly reductive and
                geometrically connected.
            \item\label{eq:assertionsNbar} $\Nbar$ is an linearly reductive monoid with zero and with unit group
                $\Nroup$. Moreover, $\Nbar$ is normal and irreducible.
            \item\label{eq:assertionBBequivalence} for every scheme $\varX$
                with $\Group$-action, the restriction of families
                \begin{equation}\label{eq:BBfunctorAgreeOne}
                    \left( \varphi\colon \Gbar \times S \to \varX\right) \mapsto
                    \left(\varphi_{|\Nbar}\colon \Nbar \times S\to \varX\right).
                \end{equation}
                gives an \emph{isomorphism} of functors
                $\Dfunctor{\varX, \Gbar} \to \Dfunctor{\varX, \Nbar}$. This
                isomorphism is functorial in $\varX$.
        \end{enumerate}
    \end{theorem}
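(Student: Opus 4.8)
The plan is to read off parts~\eqref{eq:assertionsN} and~\eqref{eq:assertionsNbar} from the results already established in this section and to concentrate the real effort on the functorial statement~\eqref{eq:assertionBBequivalence}. For~\eqref{eq:assertionsN}, Proposition~\ref{ref:Nproperties:prop} gives $\Nroup = \Nred$, so $\Nroup$ is smooth and connected; as $1_{\Group}\in \Nroup(\kk)$, connectedness is automatically geometric, and linear reductivity of $\Nred$ is exactly Lemma~\ref{ref:Nlinred:lem}. For~\eqref{eq:assertionsNbar}, Proposition~\ref{ref:isoOnMonoids:prop} already records that $\Nbar$ is a reduced, irreducible, normal, linearly reductive monoid with zero, so I only need to identify its unit group. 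Since $\Group\subset \Gbar$ is open, $\Nroup = \Nbar\cap\Group$ is open and hence dense in the irreducible scheme $\Nbar$, so $\Nbar$ is the closure of $\Nroup$; every unit of $\Nbar$ is a unit of the ambient monoid $\Gbar$, so it lies in $\Group$ and therefore in $\Nbar\cap\Group = \Nroup$, while conversely $\Nroup$ is a subgroup of $\Nbar$. Thus $\Nbar^{\times} = \Nroup$.

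The core of the theorem is~\eqref{eq:assertionBBequivalence}. First I would check that the restriction~\eqref{eq:BBfunctorAgreeOne} lands in $\Dfunctor{\varX, \Nbar}$: the submonoid $\Nbar$ is stable under left multiplication by $\Nroup = \Nbar^{\times}$, and a $\Group$-equivariant $\varphi$ is \emph{a fortiori} $\Nroup$-equivariant, so $\varphi_{|\Nbar\times S}$ is an $\Nroup$-equivariant family. For bijectivity I would exploit the isomorphism $\Gbar\simeq (\Group\times\Nbar)/\Ndiag$ of Proposition~\ref{ref:isoOnMonoids:prop}. There the morphism $j\colon \Group\times\Nbar\to\Gbar$ is faithfully flat and presents $\Gbar$ as the quotient of $\Group\times\Nbar$ by the free action of $\Ndiag\simeq\Nroup$, namely $n\cdot(g,\bar n) = (gn^{-1}, n\bar n)$; that is, $j$ is an $\Nroup$-torsor. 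Base changing along $S$ keeps $j\times\id_S$ an $\Nroup$-torsor, so by fppf descent a morphism $\varphi\colon\Gbar\times S\to\varX$ is the same datum as a $\Ndiag$-invariant morphism $F\colon \Group\times\Nbar\times S\to\varX$, and under this identification the left-multiplication $\Group$-equivariance of $\varphi$ matches $\Group$-equivariance of $F$ in the $\Group$-variable (these $\Group$- and $\Ndiag$-actions commute and correspond under $j$).

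Finally I would unwind the dictionary. $\Group$-equivariance forces $F(g,\bar n,s) = g\cdot F(1,\bar n,s)$, so $F$ is determined by $\psi := F_{|\{1\}\times\Nbar\times S}$, and any $\psi$ extends back by this formula; substituting $g=n^{-1}$ into the invariance relation $F(gn^{-1}, n\bar n, s) = F(g,\bar n, s)$ and applying $\Group$-equivariance converts $\Ndiag$-invariance of $F$ into the identity $\psi(n\bar n, s) = n\cdot\psi(\bar n, s)$, i.e.\ exactly $\Nroup$-equivariance of $\psi$. This is the one step that genuinely uses the equivariance hypothesis and requires keeping careful track of the twisted $\Ndiag$-action, so I expect it to be the main technical obstacle. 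The resulting assignments $\varphi\mapsto\psi$ and $\psi\mapsto F\mapsto\varphi$ are mutually inverse, and because $\bar j$ sends $[1,\bar n]$ to $\bar n$, the map $\varphi\mapsto\psi$ is precisely the restriction~\eqref{eq:BBfunctorAgreeOne}. Since every construction is a pullback along fixed morphisms, it commutes with postcomposition by an equivariant $\varX\to\varX'$, yielding functoriality in $\varX$ and completing the proof.
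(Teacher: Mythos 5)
Your proposal is correct and follows essentially the same route as the paper: parts~\eqref{eq:assertionsN} and~\eqref{eq:assertionsNbar} are delegated to Lemma~\ref{ref:Nlinred:lem} and Propositions~\ref{ref:Nproperties:prop} and~\ref{ref:isoOnMonoids:prop}, and part~\eqref{eq:assertionBBequivalence} is proved via the isomorphism $\Gbar \simeq (\Group\times\Nbar)/\Ndiag$, with the inverse of restriction given by $\psi\mapsto\bigl((g,\bar n,s)\mapsto g\cdot\psi(\bar n,s)\bigr)$ descended along the $\Ndiag$-quotient. Your write-up merely makes the descent step and the identification $\Nbar^{\times}=\Nroup$ more explicit than the paper does.
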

    \begin{proof}
        Assertion~\eqref{eq:assertionsN} is proved in
        Lemma~\ref{ref:Nlinred:lem} and
        Proposition~\ref{ref:Nproperties:prop}. Assertion~\eqref{eq:assertionsNbar}
        is proved in Proposition~\ref{ref:isoOnMonoids:prop}. It remains to prove
        Assertion~\eqref{eq:assertionBBequivalence}. In down-to-earth terms, we
        provide an equivalence between $\Gbar$-equivariant and
        $\Nbar$-equivariant families on $\varX$.

        The passage from $\Gbar$ to $\Nbar$ is just the
        restriction~\eqref{eq:BBfunctorAgreeOne}.
        In the passage from $\Nbar$ to $\Gbar$, the key point is the
        isomorphism of $\Group$-schemes $\Gbar  \simeq (\Group \times
        \Nbar)/\Nroup$, which is proved in Proposition~\ref{ref:isoOnMonoids:prop}.
        Let $\sigma\colon \Gbar \times \varX\to \varX$ be the action and $p\colon \Group
        \times \Nbar \to \Gbar$ be the quotient map.
        For a family $\psi\colon \Nbar \times S \to \varX$ we fix
        $\widetilde{\psi}\colon \Group \times \Nbar \times S \to \varX$, which
        is the composition
        \[
            \begin{tikzcd}
                \Group \times \Nbar \times S \arrow[r, "(\id_{\Group}{,} \psi)"] & \Group \times \varX \arrow[r, "\sigma"] & \varX.
            \end{tikzcd}
        \]
        But $\psi$ is $\Nroup$-equivariant, which translates into the fact that
        $\widetilde{\psi}$ factors through $p\colon \Group
        \times \Nbar \to \Gbar$. Thus $\widetilde{\psi}$ induces $\hat{\psi}\colon \Gbar \times S \to
        \varX$. Then $\hat{\psi}$ is the requested $\Gbar$-equivariant family.
        The described operations are clearly functorial in $S$ and inverse to
        each other, thus they give an isomorphism $\Dfunctor{\varX, \Gbar}
        \to \Dfunctor{\varX, \Nbar}$.
    \end{proof}

    For further use we note the following property of linearly reductive monoids with zero.
    \begin{lemma}\label{ref:zerobasechange:lem}
        Let $\Gbar$ be a linearly reductive monoid over $\kk$ with zero and $L \supset
        \kk$ be a field. Then $\Gbar_{L}$ is also a reductive monoid with zero
        and in fact $0_{\Gbar_{L}}$ is the preimage of $0_{\Gbar}$ under the
        natural projection.
    \end{lemma}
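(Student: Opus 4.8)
The plan is to deduce every part of the statement from flat base change along $\Spec L\to \Spec \kk$, reducing each assertion either to a property already proved over $\kk$ or to Theorem~\ref{basechange}. Write $p\colon \Gbar_L = \Gbar\times_{\Spec\kk}\Spec L\to \Gbar$ for the projection. First I would base change the multiplication $\mubar$ and the unit $1_{\Gbar}$ to get $\mubar_L$ and $1_{\Gbar_L}$; associativity and unitality are equalities of morphisms over $\kk$, and since $L/\kk$ is faithfully flat the base-change functor is exact and preserves such equalities. Thus $\Gbar_L$ is an algebraic monoid.

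Next I would produce the zero. Set $0_{\Gbar_L} := 0_{\Gbar}\times_{\Spec\kk}\Spec L$, an $L$-point of $\Gbar_L$. The defining identities $\Gbar\cdot 0_{\Gbar} = 0_{\Gbar} = 0_{\Gbar}\cdot \Gbar$ say that the two morphisms $\Gbar\to \Gbar$ given by left and right multiplication by $0_{\Gbar}$ are both the constant map to $0_{\Gbar}$; base changing these equalities gives $\Gbar_L\cdot 0_{\Gbar_L} = 0_{\Gbar_L} = 0_{\Gbar_L}\cdot \Gbar_L$, so $0_{\Gbar_L}$ is a zero. Hence $\Gbar_L$ is a monoid with zero, and $\{0_{\Gbar_L}\} = \Fbar_L$ automatically by the definition of a monoid with zero. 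For the identification with the preimage, note that $0_{\Gbar}\colon \Spec\kk\to \Gbar$ is a closed immersion (a section of the separated structure morphism), and computing the fiber product gives $p^{-1}(0_{\Gbar}) = \Spec\kk\times_{\Gbar}\Gbar_L = \Spec\kk\times_{\Spec\kk}\Spec L = \Spec L$, which sits inside $\Gbar_L$ exactly as $0_{\Gbar_L}$.

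The essential step, and the one I expect to require the most care, is checking that forming the unit group commutes with base change, i.e.\ that $(\Gbar_L)^{\times} = (\Gbar^{\times})_L = \Group_L$. I would argue functorially: the unit group of an algebraic monoid $M$ represents $R\mapsto \{g\in M(R)\ |\ g\ \text{is invertible in}\ M(R)\}$, where uniqueness of inverses in a monoid turns the closed condition $gh = hg = 1$ into an open subscheme (Renner's Proposition~3.12). For every $L$-algebra $R$ one has $\Gbar_L(R) = \Gbar(R)$ compatibly with multiplication, so invertibility of an $R$-point is the same condition whether read in $\Gbar$ or in $\Gbar_L$; this yields the claimed equality of open subschemes of $\Gbar_L$.

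Finally I would assemble the conclusion. By Theorem~\ref{basechange} the group $\Group_L = (\Gbar_L)^{\times}$ is linearly reductive over $L$. For density, the open immersion $\Group\into \Gbar$ is schematically dense (its image is dense in the reduced variety $\Gbar$, so a section vanishing on it vanishes, whence $\cO_{\Gbar}\to i_*\cO_{\Group}$ is injective), and schematic density is preserved by the faithfully flat base change $\Spec L\to \Spec\kk$: flat base change gives $p^*i_*\cO_{\Group} = (i_L)_*\cO_{\Group_L}$, and exactness of $p^*$ keeps the map $\cO_{\Gbar_L}\to (i_L)_*\cO_{\Group_L}$ injective. Hence $(\Gbar_L)^{\times} = \Group_L$ is a dense, linearly reductive unit group, so $\Gbar_L$ is a linearly reductive monoid with zero, as claimed. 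Everything apart from the unit-group base change of the previous paragraph is formal flat base change.
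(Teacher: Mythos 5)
Your argument is correct, and it overlaps with the paper's in its reliance on Theorem~\ref{basechange} for linear reductivity of $\Group_L$, but it identifies the zero by a different mechanism. The paper applies Lemma~\ref{ref:smallestorbit:lem} to $\Gbar_L$: the preimage of the $\kk$-rational point $0_{\Gbar}$ is a single closed, invariant $L$-point, hence is the unique closed orbit and therefore (by density of $\Group_L$) the zero. You instead base-change the defining identities $\Gbar\cdot 0_{\Gbar}=0_{\Gbar}=0_{\Gbar}\cdot\Gbar$ directly, which is purely formal and bypasses the closed-orbit lemma entirely; you also compute the preimage as a fiber product rather than arguing pointwise. In addition, you make explicit two steps the paper leaves implicit, namely that formation of the unit group commutes with the field extension (via the functor of invertible points and uniqueness of inverses) and that schematic density of $\Group\into\Gbar$ is preserved under the flat base change $\Spec L\to\Spec\kk$; these are exactly the facts needed to justify the paper's opening assertion that ``$\Gbar_L$ is reductive,'' so your write-up is, if anything, more complete. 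Both routes are elementary and of comparable length; the paper's buys brevity by reusing Lemma~\ref{ref:smallestorbit:lem}, while yours is self-contained at this point of the text.
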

    \begin{proof}
        By Theorem~\ref{basechange} the group $\Group_L$ is linearly
        reductive, so $\Gbar_{L}$ is reductive. We apply Lemma~\ref{ref:smallestorbit:lem} to
        $\Gbar_{L}$ and deduce that the intersection of all non-empty
        $\Group_L$-stable closed subsets is non-empty.
        Let $\pi\colon \Gbar_{L} \to \Gbar$ be the projection. Since
        $0_{\Group}$ is $\kk$-rational, its preimage under $\Gbar_L\to \Gbar$
        is equal to a single $L$-rational point $o_L$, which is closed and
        invariant. Hence $o_L$ is the zero of $\Gbar_L$.
    \end{proof}

    \section{Group and monoid actions on affine
    schemes}\label{sec:actionsOnAffine}

        Let $\Gbar$ be a linearly reductive monoid. In this section we
        analyse the actions of $\Group$ and $\Gbar$ on affine schemes and
        schemes with $\Group$-stable affine charts. All results are very
        general; we do not need or require that $\Group$ be connected, $\Gbar$
        have zero etc.

        \begin{corollary}\label{ref:decompositionOfGlobalSectionsGbar:cor}
            As a $(\Gprod)$-module, the space $H^0(\Gbar, \cO_{\Gbar})$ decomposes
            into pairwise non-isomorphic simple $(\Gprod)$-modules. In
            particular $H^0(\Gbar, \cO_{\Gbar}) \subset H^0(\Group,
            \cO_{\Group})$ is a $(\Gprod)$-subrepresentation.
        \end{corollary}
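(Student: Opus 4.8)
The plan is to pull back the decomposition of $M := \GroupGlobalSects$ from Proposition~\ref{ref:isotypiccomponents:prop} along the restriction map associated to the dense open immersion $i\colon \Group \into \Gbar$.

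First I would check that the restriction homomorphism $i^{\#}\colon H^0(\Gbar, \cO_{\Gbar}) \to M$ is both injective and $(\Gprod)$-equivariant. Injectivity is a schematic-density statement: by definition of a linearly reductive monoid the unit group $\Group$ is open and dense in $\Gbar$, while $\Gbar$ is a variety and hence reduced, so any global section of $\cO_{\Gbar}$ vanishing on $\Group$ vanishes identically. Equivariance holds because the $(\Gprod)$-action $(g_1, g_2)\cdot x = g_1 x g_2$ on $\Gbar$ (given by the monoid multiplication) preserves the unit group and restricts there to the standard $(\Gprod)$-action on $\Group$; thus $i$, and with it $i^{\#}$, is equivariant. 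This already yields the ``in particular'' clause: via $i^{\#}$ we regard $H^0(\Gbar, \cO_{\Gbar})$ as a $(\Gprod)$-subrepresentation $W \subset M$.

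Next I would exploit multiplicity-one. By Proposition~\ref{ref:isotypiccomponents:prop} we have $M = \bigoplus_{\lambda} V_{\lambda}$ with the $V_{\lambda}$ pairwise non-isomorphic simple $(\Gprod)$-modules, so the $V_{\lambda}$-isotypic component of $M$ is precisely $M_{(V_{\lambda})} = V_{\lambda}$. A subobject of a semisimple object is semisimple (indeed $\mathrm{soc}(W) = W \cap \mathrm{soc}(M) = W$), so $W = \bigoplus_{S} W_{(S)}$ splits into isotypic components indexed by isomorphism classes $S$ of simple $(\Gprod)$-modules, and for a submodule one has $W_{(S)} = W \cap M_{(S)}$. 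This vanishes unless $S \simeq V_{\lambda}$ for some $\lambda$, in which case $W_{(V_{\lambda})} = W \cap V_{\lambda}$ is a submodule of the simple module $V_{\lambda}$, hence is $0$ or all of $V_{\lambda}$. Therefore $W = \bigoplus_{\lambda \in \Lambda'} V_{\lambda}$, where $\Lambda' = \{\lambda : W \cap V_{\lambda} \neq 0\}$, which is exactly the asserted decomposition into pairwise non-isomorphic simple $(\Gprod)$-modules.

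Everything reduces to Proposition~\ref{ref:isotypiccomponents:prop}, so the argument is short; the points deserving a word are the schematic-density step for injectivity of $i^{\#}$ and the standard fact that a subrepresentation of a semisimple rational representation is semisimple with isotypic components computed by intersection. The only genuine care needed is the bookkeeping with infinite-dimensional isotypic components, which one may always defuse by reducing to finite-dimensional subrepresentations: every vector of $W$ lies in a finite-dimensional $(\Gprod)$-subrepresentation, whose image sits inside a finite sub-sum of the $V_{\lambda}$, and multiplicity-freeness forces that image to be a sub-sum of the simple summands.
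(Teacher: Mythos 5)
Your proposal is correct and follows essentially the same route as the paper, whose proof consists of the single observation that $\Group \into \Gbar$ is $(\Gprod)$-equivariant; you have simply spelled out the details the paper leaves implicit (injectivity of $i^{\#}$ via density and reducedness, and the fact that a subrepresentation of a multiplicity-free semisimple $(\Gprod)$-module is a sub-sum of its simple summands, which is exactly what Proposition~\ref{ref:isotypiccomponents:prop} is invoked for). No gaps.
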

        \begin{proof}
            The morphism $\Group \into\Gbar$ is $(\Gprod)$-equivariant.
        \end{proof}

        \begin{definition}
            Let $V$ be an irreducible $\Group$-representation. We say that $V$
            is a \emph{$\Gbar$-representation} if and only if $V$ appears as a
            subrepresentation of $H^0(\Gbar, \cO_{\Gbar})$. Otherwise, we say
            that $V$ is an \emph{outsider representation}.
        \end{definition}
        The definition is justified by the following lemmata.
        \begin{lemma}\label{ref:GbarSubreprsLieInGbar:lem}
            Let $V \subset H^0(\Group, \cO_{\Group})$ be an irreducible
            $\Group$-subrepresentation which is a $\Gbar$-representation.
            Then $V \subset H^0(\Gbar, \cO_{\Gbar}) \subset H^0(\Group,
            \cO_{\Group})$.
        \end{lemma}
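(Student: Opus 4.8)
The plan is to exploit the multiplicity-free structure of $\GroupGlobalSects$ as a $(\Gprod)$-representation, which turns the statement into a formal observation about isotypic components. The two inputs I would use are Proposition~\ref{ref:isotypiccomponents:prop} and Corollary~\ref{ref:decompositionOfGlobalSectionsGbar:cor}; since the former is stated without assuming $\kk$ algebraically closed, no base change is needed.

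First I would recall from Proposition~\ref{ref:isotypiccomponents:prop} that $\GroupGlobalSects = \bigoplus_{\lambda} V_{\lambda}$, where the $V_{\lambda}$ are pairwise non-isomorphic simple $(\Gprod)$-representations and each $V_{\lambda}$ is the full isotypic component, for the left $\Group$-action, of a unique irreducible $\Group$-representation. Let $V_{\lambda_0}$ be the summand corresponding to the isomorphism type of the given $V$. Since $V$ is an irreducible left $\Group$-subrepresentation of $\GroupGlobalSects$ of that type, it lies inside its own isotypic component, so $V \subset V_{\lambda_0}$.

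Next I would invoke Corollary~\ref{ref:decompositionOfGlobalSectionsGbar:cor}, which gives that $H^0(\Gbar, \cO_{\Gbar})$ is a $(\Gprod)$-subrepresentation of $\GroupGlobalSects$. Because the ambient module is completely reducible (by linear reductivity of $\Group$) and multiplicity-free, every $(\Gprod)$-subrepresentation is the direct sum of exactly those simple constituents $V_{\lambda}$ it contains; in particular $H^0(\Gbar, \cO_{\Gbar}) = \bigoplus_{\lambda \in \Lambda} V_{\lambda}$ for some index set $\Lambda$, and for each $\lambda$ the intersection $H^0(\Gbar, \cO_{\Gbar}) \cap V_{\lambda}$ is either $0$ or all of $V_{\lambda}$.

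The decisive step is then to show $\lambda_0 \in \Lambda$. By hypothesis $V$ is a $\Gbar$-representation, i.e.\ $V$ appears as a left $\Group$-subrepresentation of $H^0(\Gbar, \cO_{\Gbar})$; the $V$-isotypic part of $H^0(\Gbar, \cO_{\Gbar})$ is exactly $H^0(\Gbar, \cO_{\Gbar}) \cap V_{\lambda_0}$, which is therefore nonzero and hence equals $V_{\lambda_0}$ by the all-or-nothing alternative above. Thus $V_{\lambda_0} \subset H^0(\Gbar, \cO_{\Gbar})$, and combining with $V \subset V_{\lambda_0}$ yields $V \subset H^0(\Gbar, \cO_{\Gbar})$, as required. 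The only point demanding care — and the closest thing to an obstacle — is reconciling the two formulations ``$V$ appears as a subrepresentation'' and ``the whole isotypic component $V_{\lambda_0}$ is contained''; this reconciliation is precisely what the multiplicity-one property from Proposition~\ref{ref:isotypiccomponents:prop} supplies, so no genuine difficulty remains.
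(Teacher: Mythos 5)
Your proof is correct and follows essentially the same route as the paper's: both identify $V$ inside its isotypic component, which is a simple $(\Gprod)$-module by Proposition~\ref{ref:isotypiccomponents:prop}, and then use that its intersection with the $(\Gprod)$-subrepresentation $H^0(\Gbar,\cO_{\Gbar})$ (Corollary~\ref{ref:decompositionOfGlobalSectionsGbar:cor}) is nonzero, hence everything. The paper phrases the ``all-or-nothing'' step simply as simplicity of the isotypic component $W$, but the content is identical.
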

        \begin{proof}
            Let $0\neq v\in V$ and let $W \subset H^0(\Group, \cO_{\Group})$
            be an irreducible $(\Gprod)$-representation containing $v$. Then
            $V \subset W$.  By
            Proposition~\ref{ref:isotypiccomponents:prop}, $W$ is the
            isotypic component of the representation $V$.
            Since $V$ is a $\Gbar$-representation, there exists an irreducible
            subrepresentation $V' \subset H^0(\Gbar, \cO_{\Gbar})$ isomorphic
            to $V$. Thus $W\cap H^0(\Gbar, \cO_{\Gbar}) \neq 0$. But $W$ is
            simple, and $H^0(\Gbar, \cO_{\Gbar})$ is a
            $(\Gprod)$-subrepresentation by
            Corollary~\ref{ref:decompositionOfGlobalSectionsGbar:cor}. Thus
            $W$ is contained in $H^0(\Gbar, \cO_{\Gbar})$ and in particular $V$ is
            contained there.
        \end{proof}
        \begin{lemma}\label{ref:mainRepr:lem}
            Let $V$ be a representation of $\Group$. Then the following are
            equivalent:
            \begin{enumerate}
                \item\label{it:ReprsGroupOne} the representation $\Group \times V\to V$ extends to
                    $\Gbar \times V\to V$.
                \item\label{it:ReprsGroupTwo} The coaction $V\to
                    H^0(\Group,\cO_{\Group})\otimes V$ factorizes as
                    $V\to H^0(\Gbar,\cO_{\Gbar})\otimes V\into
                    H^0(\Group,\cO_{\Group})\otimes V$.
                \item\label{it:ReprsGroupThree} The representation $V$ contains no outsider
                    representations.
            \end{enumerate}
        \end{lemma}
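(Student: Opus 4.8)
The plan is to translate everything into the language of comodules and then read off the three conditions from the isotypic decomposition of $R := H^0(\Group,\cO_{\Group})$. Write $B := H^0(\Gbar,\cO_{\Gbar})$. Since $\Group\into\Gbar$ is a homomorphism of monoids, the restriction $\iota := i^{\#}\colon B\to R$ is an injective homomorphism of bialgebras: injectivity and the fact that $B$ is a $(\Gprod)$-subrepresentation are Corollary~\ref{ref:decompositionOfGlobalSectionsGbar:cor}, while compatibility with comultiplication and counit is just functoriality of $H^0(-,\cO)$ applied to the monoid homomorphism $i$. In particular, a $\Group$-representation on $V$ is the same datum as an $R$-comodule $\rho\colon V\to R\otimes V$, and extending the action to a linear $\Gbar$-action is the same as promoting $\rho$ to a $B$-comodule, i.e.\ to a coaction $\bar\rho\colon V\to B\otimes V$ with $(\iota\otimes\id)\circ\bar\rho=\rho$ and compatible with the comultiplication $\Delta_B$ of $B$.

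First I would prove the equivalence of (1) and (2). The implication (1)$\Rightarrow$(2) is immediate: a $\Gbar$-action gives a $B$-coaction $\bar\rho$ whose composition with $\iota\otimes\id$ is $\rho$, so $\rho$ factors through $B\otimes V$. For (2)$\Rightarrow$(1) I would take the factorization $\rho=(\iota\otimes\id)\circ\bar\rho$, which is unique since $\iota\otimes\id$ is injective (tensoring an injection over a field), and check that $\bar\rho$ is \emph{automatically} a $B$-comodule. The counit axiom for $\bar\rho$ follows from that for $\rho$ since $\iota$ is injective and the counit of $B$ restricts that of $R$. For coassociativity I would compare $(\Delta_B\otimes\id)\circ\bar\rho$ and $(\id\otimes\bar\rho)\circ\bar\rho$ after applying the injection $\iota\otimes\iota\otimes\id\colon B\otimes B\otimes V\into R\otimes R\otimes V$: using $\Delta_R\circ\iota=(\iota\otimes\iota)\circ\Delta_B$, the two sides push forward to $(\Delta_R\otimes\id)\circ\rho$ and $(\id\otimes\rho)\circ\rho$, which agree by coassociativity of $\rho$; injectivity then yields the identity already in $B\otimes B\otimes V$. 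Thus $\bar\rho$ defines a genuine $\Gbar$-action extending the original one.

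Finally I would prove the equivalence of (2) and (3) by decomposing into isotypic components. Since $\Group$ is linearly reductive, $V=\bigoplus_{\lambda}V[\lambda]$, and the coaction respects this, sending $V[\lambda]$ into $C_\lambda\otimes V[\lambda]$ where $C_\lambda\subset R$ is the coefficient space of the type $\lambda$; hence it suffices to treat $V$ irreducible of type $\lambda$. For such $V$ the image of $\rho$ lies in $C_\lambda\otimes V$, and running over a basis of $V$ together with the dual basis of $V^{\vee}$ shows that the first tensor factors occurring in $\rho(V)$ span all of $C_\lambda$. By Proposition~\ref{ref:isotypiccomponents:prop}, $C_\lambda$ is exactly the isotypic component $V_\lambda$, a \emph{simple} $(\Gprod)$-module. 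Consequently $\rho(V)\subset B\otimes V$ if and only if $V_\lambda\subset B$; since $B$ is a $(\Gprod)$-subrepresentation and $V_\lambda$ is simple, this is an all-or-nothing condition, and by Lemma~\ref{ref:GbarSubreprsLieInGbar:lem} it holds precisely when $\lambda$ is a $\Gbar$-representation. Summing over $\lambda$, condition (2) holds iff every irreducible type occurring in $V$ is a $\Gbar$-representation, which is exactly the absence of outsiders, i.e.\ condition (3).

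The step I expect to require the most care is the coassociativity verification in (2)$\Rightarrow$(1): one must confirm that merely factoring the $\Group$-coaction through $B\otimes V$ already forces compatibility with the monoid comultiplication, with no extra condition. This is precisely where the bialgebra identity $\Delta_R\circ\iota=(\iota\otimes\iota)\circ\Delta_B$ and the injectivity of $\iota\otimes\iota\otimes\id$ are indispensable. The only other subtlety is the bookkeeping of left and right actions when identifying the coefficient space $C_\lambda$ with the simple bimodule $V_\lambda$; the multiplicity-one statement of Proposition~\ref{ref:isotypiccomponents:prop} removes any ambiguity, since it guarantees that $C_\lambda$ cannot split off across several isotypic pieces, so containment in the subrepresentation $B$ is equivalent to $V$ being a $\Gbar$-representation.
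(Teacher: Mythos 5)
Your proof is correct and follows essentially the same route as the paper's: both directions ultimately rest on Proposition~\ref{ref:isotypiccomponents:prop} (each isotypic component of $H^0(\Group,\cO_{\Group})$ is a \emph{simple} $(\Gprod)$-module) together with Lemma~\ref{ref:GbarSubreprsLieInGbar:lem}. The differences are presentational only: you spell out the coassociativity check for (1)$\Leftrightarrow$(2), which the paper dismisses as straightforward, and you organize (2)$\Leftrightarrow$(3) via coefficient spaces of the isotypic components of $V$ rather than via the equivariant embedding of $V$ into $H^0(\Gbar,\cO_{\Gbar})\otimes V$ with trivial action on the second factor.
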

        \begin{proof}
            \def\Wbar{\overline{W}}%
            The equivalence of~\eqref{it:ReprsGroupOne}
            and~\eqref{it:ReprsGroupTwo} is straightforward.
            Let $W = H^0(\Group, \cO_{\Group}) \otimes V$ and $\Wbar :=
            H^0(\Gbar,\cO_{\Gbar})\otimes V$. Then $\Wbar \subset W$ are
            $\Group$-representations, where $\Group$ acts on
            $H^0(\Gbar,\cO_{\Gbar}) \subset H^0(\Group, \cO_{\Group})$ naturally and on $V$ \emph{trivially}.
            Assume that \eqref{it:ReprsGroupOne} and \eqref{it:ReprsGroupTwo}
            hold.
            The (extended) action map $\Gbar \times V\to V$ is
            $\Group$-equivariant, where $\Group$ acts on $\Gbar \times V$ by
            left multiplication on $\Group$ and trivial action on $V$. Hence
            the (extended) coaction $\Delta\colon V\to
            H^0(\Gbar,\cO_{\Gbar})\otimes V = \Wbar$ is $\Group$-equivariant.
            Moreover, $\Delta$ is injective as $1_{\Gbar} \times V\to V$ is an
            isomorphism. Thus $V$ is isomorphic to a subrepresentation of
            $\Wbar$. Since $\Wbar$ has no outsider representations, also $V$ has none,
            which proves~\eqref{it:ReprsGroupThree}.
            Assume~\eqref{it:ReprsGroupThree}. Consider the coaction map
            $\Delta\colon V\to H^0(\Group,\cO_{\Group})\otimes V = W  \simeq
            H^0(\Group,\cO_{\Group})^{\dim V}$. By
            Lemma~\eqref{ref:GbarSubreprsLieInGbar:lem} we have $\Delta(V)
            \subset H^0(\Gbar,\cO_{\Gbar})\otimes V$, which
            proves~\eqref{it:ReprsGroupOne} and concludes the proof.
        \end{proof}

        \begin{proposition}\label{ref:representabilityForAffine:prop}
            Let $\varX = \Spec(A)$ be an affine $\kk$-scheme with a
            $\Group$-action $\sigma\colon \Group \times \varX\to \varX$. Then
            the functor $\DX$ is represented by a closed subscheme
            $\ioneX\colon \Xplus\into \varX$
            defined by the ideal generated by all outsider representations in
            $A$.
        \end{proposition}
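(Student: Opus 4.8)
The plan is to exhibit the closed subscheme $\Xplus := V(I)\into \varX$, where $I\subset A$ is the ideal generated by all outsider representations, as the object representing $\DX$, with $\ioneX$ the restriction to $1\times S$. Since $\DX$ is a Zariski sheaf on $\kSch^{\mathrm{op}}$, it suffices to produce a bijection $\DX(S)\simeq \Hom_{\kk}(A/I,R)$ natural in affine $S=\Spec R$, the general case following by the sheaf property. Throughout I write $B:=H^0(\Gbar,\cO_{\Gbar})$, regarded as a $(\Gprod)$-subrepresentation of $\GroupGlobalSects$ through the schematically dense inclusion $\Group\into\Gbar$ (Corollary~\ref{ref:decompositionOfGlobalSectionsGbar:cor}); in particular the restriction $r\colon B\to \GroupGlobalSects$ is injective, and stays injective after $-\otimes_{\kk}R$ because $\kk$ is a field. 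Let $\sigma^{\#}\colon A\to \GroupGlobalSects\otimes A$ denote the coaction, and $a^{\#}\colon B\to \GroupGlobalSects\otimes B$ the left-multiplication coaction.

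First I would set up the dictionary: an equivariant $\varphi\colon \Gbar\times S\to\varX$ is the same datum as a ring map $\Phi\colon A\to B\otimes R$ satisfying the equivariance identity $(\id\otimes\Phi)\circ\sigma^{\#}=(a^{\#}\otimes\id_R)\circ\Phi$. Composing this identity with evaluation $\operatorname{ev}_1$ at $1_{\Gbar}$ in the $B$-factor and using $(\id\otimes\operatorname{ev}_1)\circ a^{\#}=r$ (counitality), one obtains the formula $(r\otimes\id_R)\circ\Phi=(\id\otimes f)\circ\sigma^{\#}=:\theta_f$, where $f:=\ioneX(\varphi)=(\operatorname{ev}_1\otimes\id_R)\circ\Phi$ is the restriction to $1\times S$. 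As $r\otimes\id_R$ is injective, $\Phi$ — and hence $\varphi$ — is determined by $f$, so $\ioneX$ is a monomorphism of functors. Conversely, given $f\colon A\to R$, the associated orbit-map homomorphism $\theta_f\colon A\to \GroupGlobalSects\otimes R$ lifts to a ring map $\Phi:=(r\otimes\id_R)^{-1}\circ\theta_f$ into $B\otimes R$ \emph{exactly when} $\theta_f(A)\subseteq B\otimes R$; in that case $\Phi$ is automatically equivariant, since its restriction to the schematically dense $\Group\times S$ is the (tautologically equivariant) orbit map and $\varX$ is separated.

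It then remains to identify the condition $\theta_f(A)\subseteq B\otimes R$ with $f(I)=0$. Decompose $A=\bigoplus_{\lambda}A[\lambda]$ into $\Group$-isotypic components; equivariance of $\sigma^{\#}$ carries $A[\lambda]$ into $\GroupGlobalSects[\lambda]\otimes A$, so for $v\in A[\lambda]$ the element $\theta_f(v)$ lies in $\GroupGlobalSects[\lambda]\otimes R$. For outsider $\lambda$ one has $\GroupGlobalSects[\lambda]\cap B=0$ (Proposition~\ref{ref:isotypiccomponents:prop} together with the definition of outsider), whence $\theta_f(A)\subseteq B\otimes R$ forces $\theta_f(v)=0$ and therefore $f(v)=\operatorname{ev}_1(\theta_f(v))=0$; thus $f(I)=0$. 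For the converse, the point is that $I$ is $\Group$-stable, so $A/I$ is a $\Group$-algebra, and by exactness of the isotypic decomposition (linear reductivity of $\Group$) each outsider component of $A/I$ is the image of the corresponding outsider component of $A$, hence vanishes. So $A/I$ has no outsider representations, and Lemma~\ref{ref:mainRepr:lem} makes its coaction factor through $B\otimes(A/I)$; feeding $f=\bar f\circ\pi$ through this factorization yields $\theta_f(A)\subseteq B\otimes R$. Combining both directions gives the natural bijection and the representability by $V(I)$.

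I expect the main obstacle to be this last step: the ideal $I$ is defined as generated by outsider \emph{subrepresentations} and thus a priori contains many products, so the crux is verifying that the quotient $A/I$ has \emph{no} outsider representations at all, so that Lemma~\ref{ref:mainRepr:lem} applies cleanly. This is precisely where connectedness plays no role but linear reductivity of $\Group$ (exactness of $(-)[\lambda]$ and of the surjection $A\onto A/I$ on each isotypic piece) is indispensable. By contrast, the determinacy and the existence of $\Phi$ in the first two paragraphs are purely formal, resting only on schematic density of $\Group\into\Gbar$ and separatedness of $\varX$.
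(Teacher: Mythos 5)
Your proof is correct and follows essentially the same route as the paper's: both rest on the same two facts --- that outsider isotypic components of $A$ are killed by any equivariant family because $H^0(\Gbar,\cO_{\Gbar})\otimes R$ contains only $\Gbar$-representations, and that $A/I$ contains no outsider representations so Lemma~\ref{ref:mainRepr:lem} extends its coaction to $\Gbar$ --- together with schematic density of $\Group\subset\Gbar$ and separatedness to pin down uniqueness. The only difference is presentational: the paper exhibits the universal family geometrically as the extended action $\sigmabar_Z\colon\Gbar\times Z\to Z$ and shows every family is its pullback along $f_1$, whereas you encode the same content dually as a natural bijection $\DX(\Spec R)\simeq\Hom_{\kk}(A/I,R)$ at the level of coactions.
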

        \begin{proof}
            Let $Z$ be the closed subscheme defined by the ideal generated by all outsider
            representations.
            Let $S = \Spec(C)$ be any affine scheme and $f\colon \Gbar \times
            S\to \varX$ be an equivariant morphism.
            The corresponding $f^{\#}\colon A \to H^0(\Gbar, \cO_{\Gbar})
            \otimes C$ is a morphism of $\Group$-representations, where
            $\Group$ acts usually on $H^0(\Gbar, \cO_{\Gbar})$ and trivially
            on $C$. Clearly $H^0(\Gbar, \cO_{\Gbar})\otimes C$ contains only $\Gbar$-representations, hence
            all outsider representations lie in $\ker f^{\#}$. Thus $f$
            factors through $Z$ and we obtain $f\colon\Gbar \times S\to Z$. The algebra $A' = H^0(Z,
            \cO_{Z})$ by construction contains no outsider representations,
            hence by
            Lemma~\ref{ref:mainRepr:lem} the action on $Z$ extends to
            $\sigmabar_Z\colon \Gbar
            \times Z\to Z$. Consider Diagram~\eqref{eq:extendingAction} below.
            \begin{equation}\label{eq:extendingAction}
                \begin{tikzcd}
                    \Group \times \Gbar \times S \arrow[r, hook]\arrow[d,
                    "\id_{\Group} \times f"] & \Gbar \times \Gbar \times S \arrow[r, "\mu \times
                    \id_S"]\arrow[d, "\id_{\Gbar} \times f"] &
                    \Gbar \times S \arrow[d, "f"]\\
                    \Group \times Z \arrow[r, hook] & \Gbar \times Z \arrow[r, "\sigmabar_Z"] & Z
                \end{tikzcd}
            \end{equation}
            Its outer square (made using leftmost and rightmost columns) is
            commutative. But $\Group$ is dense in $\Gbar$, therefore also the
            rightmost square is commutative as $Z$ is separated. By restricting this square, we
            obtain the following
            Diagram~\eqref{eq:representingAction}, where $f_1\colon S\to Z$ is
            equal to $f(1_{\Gbar}, -)$.
            \begin{equation}\label{eq:representingAction}
                \begin{tikzcd}
                    \Gbar \times 1_{\Gbar} \times S \arrow[r, "\id"]\arrow[d,
                    "\id_{\Gbar} \times {f_1}"'] &
                    \Gbar \times S \arrow[d, "f"]\\
                    \Gbar \times Z \arrow[r, "\sigmabar_Z"] & Z.
                \end{tikzcd}
            \end{equation}
            This shows that $f = \sigmabar_Z \circ (\id_{\Gbar} \times f_1)$,
            so $f$ is a pullback of $\sigmabar\colon \Gbar \times
            Z\to Z$ via $f_1$. Moreover $f_1$ is uniquely determined by $f$,
            hence $(Z, \sigmabar_Z)$ is the universal object representing
            $\DX$.
        \end{proof}

        \begin{example}
            If $\Group = \Spec \kk[t^{\pm 1}]$, $\Gbar = \Spec \kk[t]$ and $\varX = \Spec A$, then
            $\Group$-action on $\varX$ is equivalent to a $\mathbb{Z}$-grading: $A =
            \bigoplus_{i=-\infty}^{\infty} A_i$. Then $\DX$ is represented by the
            closed subscheme $\Spec (A/A_{<0}\cdot A)$.
        \end{example}

        \begin{example}\label{ex:actionGoesRight}
            Let $\varX = \Spec \kk[x, y]/xy$ and consider the
            $\Gmult$-action on $\mathbb{A}^2 = \Spec \kk[x, y]$ given by
            weights $(-1, 1)$. Let $\Gbar = \mathbb{A}^1$. Then $\varX$ is $\Gmult$-stable and $\Xplus
            =\Spec \kk[y]$ is smooth, even though $\varX$ is
            not smooth.
        \end{example}
        \begin{example}\label{ex:actionGoesWrong}
            Let $\varX = \Spec \kk[x, y, z]/(xy - z^2)$. Consider the
            $\Gmult$-action on $\mathbb{A}^3 = \Spec \kk[x, y, z]$ given by
            weights $(-1, 1, 0)$. Let $\Gbar = \mathbb{A}^1$. Then $\varX$ is a $\Gmult$-stable and
            $\varX^{\Gmult} = \kk[z]/z^2$, $\Xplus = \Spec \kk[y, z]/z^2$ are
            both-nonreduced, while $\iinftyX$ is smooth.
        \end{example}

        \section{Properties of \BBname{} functors}\label{sec:BBproperties}

        In this section $\Gbar$ is linearly reductive monoid with
        connected unit group
        $\Group$ and with zero and $\varX$ is a $\Group$-scheme. Recall the
        definition of the \BBname{} functor from~\eqref{eq:BBdefinition}.
        We have already proven in Proposition~\ref{ref:representabilityForAffine:prop} that $\DX$ is
        representable for $\varX$ affine; in fact it is a
        closed subscheme. In the next section will prove that $\DX$ is
        representable for a wider class of schemes. For this we need to investigate the properties of
        $\DX$ and this is the aim of the present section.

        First, we investigate how do \BBname{} functors behave with respect to
        $\Group$-equivariant open and closed immersions.
        The intuitive idea is simple (and correct):
        \begin{itemize}
            \item if $U \subset \varX$ is a
                $\Group$-stable open subscheme and $x\in \varX(\kk)$, then $\Gbar x
                \subset U$ if and only if $0_{\Gbar} \cdot x\in U$.
            \item if $Z \subset
                \varX$ is a $\Group$-stable closed subscheme and $x\in
                \varX(\kk)$, then $\Gbar x \subset Z$
                if and only if $x\in Z(\kk)$.
        \end{itemize}
        We now justify the above claims.
        \begin{proposition}\label{ref:neighbourhoodsofzero:prop}
            Let $\kk \subset L$ be a field and $o_L$ be a $L$-rational
            point of $\Gbar_L=\Gbar\times_{\Spec k}\Spec L$ lying over
            $0_{\Gbar}\in \Gbar$. Then $\Gbar_L$ is the unique open
            $\Group_{L}$-stable subset of $\Gbar_L$ that contains $o_L$.
        \end{proposition}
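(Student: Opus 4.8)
The plan is to show that the complement of any such open set is empty, exploiting that in a monoid with zero the point $0$ sits inside \emph{every} nonempty closed $\Group$-stable subscheme. First I would record, via Lemma~\ref{ref:zerobasechange:lem}, that after base change $\Gbar_L$ is again a linearly reductive monoid with zero and that the point $o_L$ --- being the preimage of the $\kk$-rational point $0_{\Gbar}$ --- is exactly the zero $0_{\Gbar_L}$ of $\Gbar_L$. That $\Group_L$ is still linearly reductive, which is what makes $\Gbar_L$ a linearly reductive monoid, is Theorem~\ref{basechange}.

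Next I would apply Lemma~\ref{ref:smallestorbit:lem} to the monoid $\Gbar_L$: it provides a \emph{unique} closed left $\Group_L$-orbit $\Fbar_L$, which moreover is the intersection of all nonempty closed $\Group_L$-stable subschemes. Because $o_L$ is a zero we have $\Gbar_L \cdot o_L = o_L$, so the singleton $\{o_L\}$ is an $L$-rational, hence closed, $\Group_L$-stable subscheme consisting of a single orbit. By the uniqueness in Lemma~\ref{ref:smallestorbit:lem} this forces $\Fbar_L = \{o_L\}$.

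Finally, let $U \subseteq \Gbar_L$ be an open $\Group_L$-stable subset with $o_L \in U$, and set $F := \Gbar_L \setminus U$, a closed $\Group_L$-stable subset. If $F$ were nonempty, then Lemma~\ref{ref:smallestorbit:lem} would give $\{o_L\} = \Fbar_L \subseteq F$, contradicting $o_L \in U$. Hence $F = \emptyset$ and $U = \Gbar_L$, as claimed.

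The argument is essentially formal once the base-change statements are in place, so I do not expect a genuine obstacle. The only points that need care are arithmetic rather than geometric: that $\Group_L$ remains linearly reductive and that the fiber over $0_{\Gbar}$ stays a single reduced $L$-point over a possibly imperfect or non-closed field $L$. Both are already supplied by Theorem~\ref{basechange} and Lemma~\ref{ref:zerobasechange:lem}, which is why I would lean on those rather than argue directly over $L$ (e.g.\ via a Kempf line, which would require $\kk$ perfect and is unavailable in this generality).
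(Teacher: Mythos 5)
Your proof is correct and follows essentially the same route as the paper: reduce via Lemma~\ref{ref:zerobasechange:lem} (and Theorem~\ref{basechange}) to the fact that $o_L$ is the zero of $\Gbar_L$ and hence lies in every nonempty closed $\Group_L$-stable subset, then observe that the complement of $U$ is such a subset missing $o_L$ and so must be empty. You merely unpack the intermediate appeal to Lemma~\ref{ref:smallestorbit:lem} that the paper leaves implicit inside its citation of Lemma~\ref{ref:zerobasechange:lem}.
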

        \begin{proof}
            By Lemma~\ref{ref:zerobasechange:lem} the point $o_L$ is the
            intersection of all non-empty closed $\Group_L$-stable subsets of
            $\Gbar_L$.
            Let $o_L\in U \subset \Gbar_{L}$ be an open $\Group_{L}$-stable subset of
            $\Gbar_L$. Then $Z = \Gbar_{L} \setminus U$ is closed and
            $\Group_{L}$-stable, but does not contain $o_{L}$. Hence $Z$ is
            empty, so $U = \Gbar_{L}$.
        \end{proof}

        \begin{proposition}[open immersions]\label{ref:openimmersionscartesian:prop}
            Let $i\colon U\into \varX$ be a  $\Group$-stable open
            subscheme. Then the diagram
            \[
                \begin{tikzcd}
                    \Dfunctor{U} \arrow[r, "\iinfty{U}"]\arrow[d,
                    "\Dfunctor{i}"'] &
                    U^{\Group} \arrow[d, hook, "i_{|U^{\Group}}"] \\
                    \Dfunctor{X} \arrow[r, "\iinftyX"] &
                    \varX^{\Group}
                \end{tikzcd}
            \]
            is cartesian.
        \end{proposition}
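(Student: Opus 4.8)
The plan is to verify the cartesian property directly on $S$-points, i.e.\ to show that for every $\kk$-scheme $S$ the natural map
\[
\Dfunctor{U}(S) \longrightarrow \Dfunctor{X}(S) \times_{\varX^{\Group}(S)} U^{\Group}(S)
\]
is a bijection. This map sends a $\Group$-equivariant family $\psi\colon \Gbar \times S\to U$ to the pair $(i\circ\psi,\ \psi_{|0\times S})$, and the required compatibility $\iinftyX(i\circ\psi) = i_{|U^{\Group}}\circ\iinfty{U}(\psi)$ holds because restriction to $0\times S$ commutes with post-composition by $i$.

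First I would dispose of injectivity: if two families $\psi,\psi'$ have the same image, then in particular $i\circ\psi = i\circ\psi'$, and since $i$ is an open immersion, hence a monomorphism, we conclude $\psi=\psi'$. (Note that the $U^{\Group}$-factor is not even needed for this.) The substance is therefore surjectivity. Given a pair $(\varphi,\psi)$ with $\varphi\colon \Gbar \times S\to \varX$ equivariant, $\psi\colon S\to U^{\Group}$, and $\varphi_{|0\times S} = i_{|U^{\Group}}\circ\psi$, I want to show that $\varphi$ factors (necessarily uniquely) through $U$. I would consider the open subscheme $\varphi^{-1}(U)\subseteq \Gbar \times S$. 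As $\varphi$ is $\Group$-equivariant with $\Group$ acting by left multiplication on $\Gbar$ and trivially on $S$, and as $U$ is $\Group$-stable, the subset $\varphi^{-1}(U)$ is $\Group$-stable; moreover the hypothesis on the limit says precisely that $\{0_{\Gbar}\}\times S\subseteq \varphi^{-1}(U)$.

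To upgrade this to the equality $\varphi^{-1}(U)=\Gbar \times S$ I would argue fiberwise over $S$. For a point $s\in S$ with residue field $L=\kappa(s)$, the $\Group$-action preserves the slice $\Gbar_L = \Gbar \times\{s\}$, so $\varphi^{-1}(U)\cap \Gbar_L$ is an open $\Group_L$-stable subset of $\Gbar_L$; and it contains the point $o_L$ over $0_{\Gbar}$ since $\{0_{\Gbar}\}\times S\subseteq \varphi^{-1}(U)$. By Proposition~\ref{ref:neighbourhoodsofzero:prop} the only such subset is all of $\Gbar_L$. Hence $\varphi^{-1}(U)$ meets every fiber in the whole fiber, so it contains every point of $\Gbar \times S$; being an open subscheme whose underlying space is everything, it equals $\Gbar \times S$. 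Thus $\varphi$ factors as $\widetilde{\varphi}\colon \Gbar \times S\to U$, which is equivariant because $i$ is a monomorphism, and one checks $\iinfty{U}(\widetilde{\varphi})=\psi$ by composing with the monomorphism $i$ and using $\varphi_{|0\times S}=i_{|U^{\Group}}\circ\psi$.

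I expect the fiberwise reduction together with the invocation of Proposition~\ref{ref:neighbourhoodsofzero:prop} to be the crux. The entire content is the topological rigidity that an open $\Group$-stable neighbourhood of the limit point $0_{\Gbar}$ must be all of $\Gbar_L$, which is exactly what converts the pointwise condition ``the limit lands in $U$'' into the global factorization of $\varphi$ through $U$. Everything else is formal bookkeeping with the monomorphism $i$.
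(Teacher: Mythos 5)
Your proof is correct and follows essentially the same route as the paper: both reduce surjectivity to showing that the open $\Group$-stable subset $\varphi^{-1}(U)\subseteq\Gbar\times S$ containing $0_{\Gbar}\times S$ is everything, via Proposition~\ref{ref:neighbourhoodsofzero:prop}. You are in fact slightly more careful than the paper's two-line argument, since you make explicit both the injectivity (via $i$ being a monomorphism) and the fiberwise reduction to residue fields needed to apply that proposition, which is stated only for $\Gbar_L$ over a field.
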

        \begin{proof}
            Consider a scheme $T$ and an element of $\Dfunctor{X}(T) \times_{\varX^{\Group}(T)}
            U^{\Group}(T)$. It corresponds to a morphism
            $f\colon\Gbar \times T\to \varX$ such that $f(0_{\Gbar} \times T) \subset
            U$. The preimage $f^{-1}(U)$ is open,
            $\Group$-stable and contains $0_{\Gbar} \times S$, hence by
            Proposition~\ref{ref:neighbourhoodsofzero:prop} it is equal to
            $\Gbar \times S$. Thus, the above $f$ induces a unique equivariant family $\Gbar
            \times T\to U$, hence an element of $\Dfunctor{U}(T)$.
        \end{proof}

        \begin{proposition}[open immersions II]\label{ref:openimmersions:prop}
            The following assertions hold.
            \begin{enumerate}
                \item\label{it:openone}  Let $i\colon U\into \varX$ be a  $\Group$-stable open
                    subscheme. Then the natural map $\Dfunctor{U}\to
                    \Dfunctor{X}$ is an open immersion.
                \item\label{it:opentwo} For every family $\cU$ of open
                    $\Group$-stable subschemes of $X$ such that
                    \[ X^{\Group}\subseteq \bigcup_{U\in \cU}U \]
                    the corresponding family of open immersions
                    \[
                        \big\{\Dfunctor{U}\to \Dfunctor{X}\big\}_{U\in \cU}
                    \]
                    is an open cover.
                \item\label{it:openthree} If $\varX$ is covered by open affine
                    $\Group$-stable
                    subschemes, then $\iinftyX\colon \DX\to \varX^{\Group}$ is
                    affine. In particular, $\DX$ is represented by a scheme
                    affine over $\varX^{\Group}$.
            \end{enumerate}
        \end{proposition}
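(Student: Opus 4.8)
The plan is to deduce all three assertions from the cartesian square of Proposition~\ref{ref:openimmersionscartesian:prop}, which identifies $\Dfunctor{U}$ with the fiber product $\Dfunctor{X}\times_{\varX^{\Group}}U^{\Group}$ formed along $\iinftyX$ and $i_{|U^{\Group}}$; the whole content is that the behaviour of the \BBname{} functor under $\Group$-stable open immersions is controlled by the elementary behaviour of the fixed loci. For assertion~\eqref{it:openone}, I would first observe that $U^{\Group}=U\cap \varX^{\Group}$ is open in $\varX^{\Group}$, since $U$ is open in $\varX$; thus $i_{|U^{\Group}}\colon U^{\Group}\to \varX^{\Group}$ is an open immersion. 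The cartesian square exhibits $\Dfunctor{U}\to \Dfunctor{X}$ as a base change of $i_{|U^{\Group}}$: for any scheme $T$ and any point $\varphi\in \Dfunctor{X}(T)$, the fiber product $\Dfunctor{U}\times_{\Dfunctor{X}}T$ is the preimage of $U^{\Group}$ under $\iinftyX(\varphi)\colon T\to \varX^{\Group}$, hence an open subscheme of $T$. Therefore $\Dfunctor{U}\to \Dfunctor{X}$ is an open immersion of functors, and this needs no prior knowledge of representability of $\Dfunctor{X}$.

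For assertion~\eqref{it:opentwo}, I would run the same base-change computation against an arbitrary $\varphi\in \Dfunctor{X}(T)$, obtaining that the open subscheme $\Dfunctor{U}\times_{\Dfunctor{X}}T\subseteq T$ equals $\iinftyX(\varphi)^{-1}(U^{\Group})$. Because $\varX^{\Group}\subseteq \bigcup_{U\in\cU}U$, we have $\bigcup_{U}U^{\Group}=\bigcup_{U}(U\cap \varX^{\Group})=\varX^{\Group}$, and since $\iinftyX(\varphi)$ factors through $\varX^{\Group}$ by construction, the opens $\iinftyX(\varphi)^{-1}(U^{\Group})$ cover $T$. This is precisely the statement that $\{\Dfunctor{U}\to \Dfunctor{X}\}_{U\in\cU}$ is an open cover.

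For assertion~\eqref{it:openthree}, I would cover $\varX$ by $\Group$-stable open affines $\{U_{\alpha}\}$; in particular $\varX^{\Group}\subseteq\bigcup_{\alpha}U_{\alpha}$, so \eqref{it:opentwo} makes $\{\Dfunctor{U_{\alpha}}\to \Dfunctor{X}\}$ an open cover. By Proposition~\ref{ref:representabilityForAffine:prop} each $\Dfunctor{U_{\alpha}}$ is a closed subscheme of the affine scheme $U_{\alpha}$, hence is affine, and the structural map $\iinfty{U_{\alpha}}\colon \Dfunctor{U_{\alpha}}\to U_{\alpha}^{\Group}$ is a morphism of affine schemes, thus affine. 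Gluing the representable open subfunctors $\Dfunctor{U_{\alpha}}$ shows that $\Dfunctor{X}$ is represented by a scheme. For affineness of $\iinftyX$, which is local on the target, I would use that $\varX^{\Group}$ is covered by the opens $U_{\alpha}^{\Group}$ and that over each of them the cartesian square of Proposition~\ref{ref:openimmersionscartesian:prop} identifies $\iinftyX$ with the affine morphism $\iinfty{U_{\alpha}}$; hence $\iinftyX$ is affine and $\Dfunctor{X}$ is affine over $\varX^{\Group}$.

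The only point requiring genuine care—the closest thing to an obstacle—is the functorial bookkeeping in \eqref{it:openone} and \eqref{it:opentwo}: one must check that $\Dfunctor{X}\times_{\varX^{\Group}}U^{\Group}$ really is the preimage of $U^{\Group}$ under $\iinftyX$ on arbitrary test schemes, so that the terms \emph{open immersion} and \emph{open cover} are justified before representability of $\Dfunctor{X}$ is known. Once this is granted, \eqref{it:openone} and \eqref{it:opentwo} are formal consequences of the cartesian square, and \eqref{it:openthree} follows from the standard gluing of representable open subfunctors together with the affine case already settled in Proposition~\ref{ref:representabilityForAffine:prop}.
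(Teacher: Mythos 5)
Your proposal is correct and follows essentially the same route as the paper: all three assertions are deduced from the cartesian square of Proposition~\ref{ref:openimmersionscartesian:prop}, with part~\eqref{it:openthree} reduced to the affine case of Proposition~\ref{ref:representabilityForAffine:prop} via the identification $\iinftyX^{-1}(U^{\Group})=\Dfunctor{U}$. The only difference is that you spell out the test-scheme bookkeeping and the gluing of representable open subfunctors, which the paper leaves implicit.
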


        \begin{proof}
            Point~\eqref{it:openone}. This follows from
            Proposition~\ref{ref:openimmersionscartesian:prop}, as
            $\Dfunctor{U}\to \Dfunctor{\varX}$ is a pullback of the open immersion
            $U^{\Group} \into \varX^{\Group}$.
            Point~\eqref{it:opentwo}. This also follows from
            Proposition~\ref{ref:openimmersionscartesian:prop}, as $\coprod
            \Dfunctor{U}$ is the pullback of the open cover
            $\coprod_{U\in \cU} U^{\Group} \to \varX^{\Group}$.
            Point~\eqref{it:openthree}. Let $U$ be an affine open $\Group$-stable
            subscheme of $\varX$. By
            Proposition~\ref{ref:openimmersionscartesian:prop} we have
            $\iinftyX^{-1}(U^{\Group}) = \Dfunctor{U}$. By
            Proposition~\ref{ref:representabilityForAffine:prop},
            $\Dfunctor{U}$ is represented by an affine scheme, hence the
            preimage of affine subscheme $U^{\Group}$ under $\iinftyX$ is
            affine. Since $\varX$ is covered by such $U$, we see that
            $\iinftyX$ is affine.
        \end{proof}
        \begin{remark}
            Proposition~\ref{ref:openimmersions:prop} together with
            Proposition~\ref{ref:representabilityForAffine:prop} and
            Theorem~\ref{ref:etalestableneighbourhoods:thm} show that under
            mild assumptions the map
            $\ioneX\colon \DX \to \varX$ is \'etale-locally a closed
            immersion. This is not always true Zariski-locally, as the case of
            nodal curve with $\Gmult$-action shows.
        \end{remark}

        \begin{theorem}[closed immersions]\label{ref:closedImmersions:prop}
            Let $j:Z\to X$ be a $\Group$-equivariant closed immersion. Then the square of functors
            \begin{center}
                \begin{tikzpicture}
                    [description/.style={fill=white,inner sep=2pt}]
                    \matrix (m) [matrix of math nodes, row sep=3em, column sep=2em,text height=1.5ex, text depth=0.25ex]
                    { \Dfunctor{Z} &  &     Z              \\
                    \Dfunctor{X} &   &                    X    \\} ;
                    \path[->,font=\scriptsize]
                    (m-1-1) edge node[above] {$\ione{Z} $} (m-1-3)
                    (m-1-1) edge node[left] {$\Dfunctor{j}$} (m-2-1)
                    (m-1-3) edge node[auto] {$j$} (m-2-3)
                    (m-2-1) edge node[below] {$\ione{X}$} (m-2-3);
                \end{tikzpicture}
            \end{center}
            is cartesian. In particular, $\Dfunctor{j}:\Dfunctor{Z}\to \Dfunctor{X}$ is a closed immersion of functors.
        \end{theorem}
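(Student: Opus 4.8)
The plan is to unwind the meaning of a cartesian square of functors and reduce the whole statement to a single factorization assertion. Fix a test scheme $S$. An $S$-point of the fiber product $\Dfunctor{X}\times_{X}Z$ (formed along $\ione{X}$ and $j$) is a pair $(\psi, z)$, where $\psi\colon\Gbar\times S\to X$ is a $\Group$-equivariant family and $z\colon S\to Z$ satisfies $j\circ z = \ione{X}(\psi) = \psi_{|1\times S}$. Since $j$ is a monomorphism, producing the unique element of $\Dfunctor{Z}(S)$ lying over $(\psi, z)$ amounts to showing that $\psi$ factors through the closed immersion $j$: the resulting family $\varphi\colon\Gbar\times S\to Z$ is then automatically $\Group$-equivariant, and $\ione{Z}(\varphi) = z$ follows because $j$ is mono. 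Thus the theorem reduces to the following claim: if a $\Group$-equivariant $\psi\colon\Gbar\times S\to X$ has the property that $\psi_{|1\times S}$ factors through the $\Group$-stable closed subscheme $Z$, then $\psi$ factors through $Z$.

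To prove this claim I would form the scheme-theoretic preimage $W := \psi^{-1}(Z)$, a closed subscheme of $\Gbar\times S$. Because $\psi$ is equivariant for the left $\Group$-action on $\Gbar$ and $Z\subset X$ is $\Group$-stable, the identity $\psi\circ a = \sigma\circ(\id_{\Group}\times\psi)$ shows that $W$ is a $\Group$-stable closed subscheme of $\Gbar\times S$, where $a\colon\Group\times(\Gbar\times S)\to\Gbar\times S$ is the action $(g,(h,s))\mapsto(gh,s)$. The hypothesis on $\psi_{|1\times S}$ says precisely that the section $\iota_1\colon S\to\Gbar\times S$, $s\mapsto(1_{\Gbar},s)$, factors through $W$ (it lifts to $z\colon S\to Z$). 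Now the orbit map recovers all of $\Group\times S$: the open immersion $\Group\times S\into\Gbar\times S$, $(g,s)\mapsto(g,s)$, equals $a\circ(\id_{\Group}\times\iota_1)$, and since $\iota_1$ factors through $W$ while $a$ carries $\Group\times W$ into $W$, the composite factors through $W$. Hence $\Group\times S\into\Gbar\times S$ factors through the closed subscheme $W$.

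Finally I would invoke schematic density to pass from $\Group\times S$ to $\Gbar\times S$. As $\Gbar$ is a variety, the dense open immersion $\Group\into\Gbar$ is schematically dense; since every $\kk$-scheme is flat over the field $\kk$, the projection $\Gbar\times S\to\Gbar$ is flat, and formation of the scheme-theoretic closure commutes with flat base change, so $\Group\times S\into\Gbar\times S$ is again schematically dense (concretely, on affine patches the relevant map of global sections is $\cO_{\Gbar}\otimes_{\kk}C\to\cO_{\Group}\otimes_{\kk}C$, injective because $-\otimes_{\kk}C$ is exact). A closed subscheme through which a schematically dense open immersion factors must equal the whole target, so $W = \Gbar\times S$; equivalently $\psi$ factors through $Z$, which is what we needed. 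The one point requiring care — and the step I expect to be the main technical obstacle — is exactly this last \emph{scheme-theoretic} conclusion: topological density of $\Group\times S$ in $\Gbar\times S$ is immediate, but to eliminate nilpotents one must know that schematic density survives the base change $-\times S$, and this is precisely where flatness of $S$ over the field $\kk$ enters.
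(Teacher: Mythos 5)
Your proof is correct and follows essentially the same route as the paper: restrict the family to the unit section, propagate membership in $Z$ over $\Group\times S$ by equivariance and $\Group$-stability, and then extend to all of $\Gbar\times S$ using that $Z$ is closed and $\Group\times S$ is dense in $\Gbar\times S$. The paper's proof is terser and leaves implicit the schematic-density/base-change step that you spell out, but the underlying argument is the same.
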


        \begin{proof}
            Fix a $\kk$-scheme $T$ with morphisms $T\to Z$ and $T\to
            \Dfunctor{X}$ agreeing on $\varX$. This data amount to an
            equivariant family $f\colon \Gbar \times T\to \varX$ such that
            $f(1_{\Group} \times T) \subset Z$. Since $Z$ is
            $\Group$-stable, we have $f(\Group \times T) \subset Z$. Since $Z$ is
            closed, we have $f(\Gbar \times T) \subset Z$, hence a family $f\colon
            \Gbar \times T\to Z$, corresponding to an element of
            $\Dfunctor{Z}(T)$.
        \end{proof}

        We now prove that the \BBname{} decompositions are exactly those
        $\Group$-schemes for which the $\Group$-action extends to a
        $\Gbar$-action. For separated $\Group$-schemes this is fairly trivial.
        For non-separated it requires a little work and the lemma below.

        \begin{lemma}\label{ref:isomorphismGbarAgreementTopological:lem}
            Let $\varX$ be a $\Gbar$-scheme and
            $\varphi_1, \varphi_2\colon\Gbar \to \varX$ be two
            $\Group$-equivariant morphisms. If $\varphi_1$ and $\varphi_2$
            agree on $\Group$, then $\varphi_1 = \varphi_2$.
        \end{lemma}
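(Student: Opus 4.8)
The plan is to prove equality by analysing the equalizer of $\varphi_1$ and $\varphi_2$ and then showing it contains the absorbing point $0_{\Gbar}$, at which stage the $\Gbar$-action (rather than any separatedness of $\varX$) does the real work.

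First I would let $E \subseteq \Gbar$ be the equalizer of $\varphi_1,\varphi_2$, i.e.\ the preimage of the diagonal under $(\varphi_1,\varphi_2)\colon \Gbar \to \varX \times_{\kk} \varX$. Since the diagonal of any scheme is an immersion, $E$ is a locally closed subscheme of $\Gbar$. By hypothesis $\varphi_1,\varphi_2$ agree on $\Group$, so the open immersion $\Group \into \Gbar$ factors through $E$; as $\Group$ is dense in $\Gbar$ the closure of $E$ is all of $\Gbar$, and a locally closed subscheme is open in its closure, so $E$ is in fact an open subscheme containing $\Group$. Next I would check that $E$ is $\Group$-stable: writing $\ell_g$ for left multiplication on $\Gbar$ and $\tau_g = \sigma(g,-)$ for the resulting automorphism of $\varX$, equivariance gives $\varphi_i \circ \ell_g = \tau_g \circ \varphi_i$, so left translation carries $E$ into itself, and the same computation on $T$-points shows the equalizer subfunctor is $\Group$-stable. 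By Proposition~\ref{ref:neighbourhoodsofzero:prop} (equivalently Lemma~\ref{ref:smallestorbit:lem}, since $\Fbar = \{0_{\Gbar}\}$ is the unique minimal nonempty closed $\Group$-stable subscheme) the only open $\Group$-stable subscheme of $\Gbar$ containing $0_{\Gbar}$ is $\Gbar$ itself. Hence it suffices to prove that $0_{\Gbar} \in E$, i.e.\ that $\varphi_1$ and $\varphi_2$ agree in a neighbourhood of $0_{\Gbar}$.

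For the remaining point I would use the $\Gbar$-action essentially. Set $x_* := \varphi_1(1_{\Gbar}) = \varphi_2(1_{\Gbar})$ and let $\omega\colon \Gbar \to \varX$, $\omega(h) = h\cdot x_*$, be the (genuinely $\Gbar$-equivariant) orbit map; by equivariance $\omega$ agrees with both $\varphi_i$ on $\Group$, so it is enough to show each $\varphi_i = \omega$, and for this, by the previous paragraph, that $\varphi_i(0_{\Gbar}) = \omega(0_{\Gbar}) = 0_{\Gbar}\cdot x_*$. Since equality of morphisms may be checked after the faithfully flat base change $\Gbar_{\kkbar} \to \Gbar$, and the hypotheses persist by Theorem~\ref{basechange} and Lemma~\ref{ref:zerobasechange:lem}, I would assume $\kk = \kkbar$ and choose a Kempf line $\lambda\colon \mathbb{A}^1 \to \Gbar$ with $\lambda(1) = 1_{\Group}$ and $\lambda(0) = 0_{\Gbar}$ (Theorem~\ref{pointedness}). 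Pulling the action back along $\lambda$ gives an $\mathbb{A}^1$-action $\Phi$ on $\varX$ with absorbing $0$, and $h_i := \varphi_i \circ \lambda$ is $\Gmult$-equivariant and agrees with the orbit map $t \mapsto \Phi(t,x_*)$ on $\Gmult$. The key is that the contraction $r := \Phi(0,-)\colon \varX \to \varX$ retracts $\varX$ onto the fixed locus and, via the associativity identity $\Phi(s,\Phi(s',-)) = \Phi(ss',-)$ together with $0\cdot t = 0$, pins the value at $t=0$ of any such equivariant extension to the single point $\Phi(0,x_*) = 0_{\Gbar}\cdot x_*$; this value is manifestly independent of $i$, giving $\varphi_1(0_{\Gbar}) = \varphi_2(0_{\Gbar})$ and hence $0_{\Gbar}\in E$.

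The hard part is exactly this last step: for non-separated $\varX$ the naive principle ``two morphisms agreeing on a dense open are equal'' fails, and indeed a $\Gmult$-map $\Gmult \to \varX$ can admit several extensions to $\mathbb{A}^1$ with different limits (as for the line with doubled origin). What rescues the argument is that $\varX$ genuinely carries a $\Gbar$-action: the existence of the everywhere-defined morphism $\sigma$ with absorbing $0_{\Gbar}$ rigidifies the limit at $0_{\Gbar}$, so the two a priori distinct limit points $\varphi_1(0_{\Gbar})$ and $\varphi_2(0_{\Gbar})$ are forced to coincide. I expect the genuine care to lie in promoting this coincidence at the boundary point $0_{\Gbar}$ to the scheme-theoretic statement $0_{\Gbar}\in E$, after which openness of $E$ and Proposition~\ref{ref:neighbourhoodsofzero:prop} finish the proof; this is the step where the $\Gbar$-action, and not the topology of $\varX$, must be invoked.
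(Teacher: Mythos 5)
Your proposal is correct and follows essentially the same route as the paper: show the equalizer $E$ is locally closed, dense, hence open and $\Group$-stable, reduce via Proposition~\ref{ref:neighbourhoodsofzero:prop} to proving $0_{\Gbar}\in E$, and pin down $\varphi_i(0_{\Gbar})$ using the contraction by $0_{\Gbar}$. The only difference is that the paper works directly with $\pi = 0_{\Gbar}\cdot(-)\colon \varX\to\varX^{\Group}$ over $\kk$, observing that $(\pi\circ\varphi_i)^{-1}\bigl(\pi(\varphi_i(1))\bigr)$ is closed, $\Group$-stable and contains $1_{\Group}$, hence equals $\Gbar$, so your detour through $\kkbar$ and a Kempf line is unnecessary.
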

        \begin{proof}
            By assumption of this section $\Gbar$ has a zero. To lighten notation, let $0 := 0_{\Gbar}$.
            Let $y_i = \varphi_i(0)$ for $i=1,2$.
            Let $\pi = 0 \cdot (-)\colon \varX \to \varX^{\Group}$ be
            the multiplication by zero. Note that $j\colon \varX^{\Group} \into \varX$
            is a section of $\pi$.
            Let $x_0 = \pi(\varphi_1(1)) =
            \pi(\varphi_2(1))$. This is a $\kk$-point of a $\kk$-scheme
            $\varX^{\Group}$, hence it is closed. Thus, for $i=1,2$, the
            fiber $(\pi \circ \varphi_i)^{-1}(x_0) \subset
            \Gbar$ is closed, $\Group$-stable, and contains $1_{\Group}$,
            thus $(\pi \circ \varphi_i)^{-1}(x_0) = \Gbar$. In particular,
            $\pi(y_i) = x_0$ for $i=1,2$, so $y_i = (j\circ \pi)(y_i) =
            j(x_0)$ for $i=1,2$. Hence, $y_1 = y_2$.

            Recall that $\Gbar$ is geometrically integral.
            Consider the equalizer $E \subset \Gbar$ of
            $\varphi_1$, $\varphi_2$. It is a locally closed subset. Since $E$
            contains $\Group$, it is dense. Thus $E$ is open. Since $\varphi_i$
            are $\Group$-equivariant, $E$ is $\Group$-stable.
            Since $\varphi_1(0) = \varphi_2(0)$, we have
            $0\in E$. By
            Proposition~\ref{ref:neighbourhoodsofzero:prop}, we have
            $E = \Gbar$, thus $\varphi_1 = \varphi_2$.
        \end{proof}

        \begin{proposition}[BB decomposition for $\Gbar$-scheme $\varX$]\label{ref:isomorphism:prop}
            Let $\varX$ be a $\Group$-scheme. Then $\ioneX\colon
            \Dfunctor{X}\to \varX$ is an isomorphism if and only if the
            $\Group$-action on $\varX$ extends to a $\Gbar$-action.
        \end{proposition}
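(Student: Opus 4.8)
The plan is to prove both implications, constructing a mutually inverse pair of natural transformations in the ``if'' direction and transporting the monoid action in the ``only if'' direction. Assume first that the $\Group$-action $\sigma$ extends to a $\Gbar$-action $\sigmabar\colon\Gbar\times\varX\to\varX$. I would write down the candidate inverse $\Phi$ to $\ioneX$: on $S$-points it sends $x\colon S\to\varX$ to the composite $\Gbar\times S\xrightarrow{\id_{\Gbar}\times x}\Gbar\times\varX\xrightarrow{\sigmabar}\varX$. Associativity of $\sigmabar$ together with $\sigmabar_{|\Group}=\sigma$ shows $\Phi(x)$ is left-$\Group$-equivariant, so $\Phi(x)\in\Dfunctor{X}(S)$, and $\Phi(x)(1,-)=\sigmabar(1,x)=x$ gives $\ioneX\circ\Phi=\id$. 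The content is then that $\ioneX$ is a monomorphism of functors, i.e.\ that an equivariant family $\varphi\colon\Gbar\times S\to\varX$ is determined by $\varphi_{|1\times S}$; granting this, cancelling $\ioneX$ in $\ioneX\Phi\ioneX=\ioneX$ yields $\Phi\circ\ioneX=\id$. By equivariance and $\sigmabar_{|\Group}=\sigma$, two equivariant families agreeing on $1\times S$ already agree on the schematically dense open $\Group\times S\into\Gbar\times S$.

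If $\varX$ is separated this finishes the uniqueness at once: the equalizer of the two families is a closed subscheme containing the schematically dense $\Group\times S$, hence equals $\Gbar\times S$. The non-separated case is the main obstacle, and here I would carry out the relative analogue of Lemma~\ref{ref:isomorphismGbarAgreementTopological:lem}. Since the diagonal of $\varX$ is only an immersion, the equalizer $E\subset\Gbar\times S$ is merely locally closed; but $E$ contains the topologically dense $\Group\times S$, so $\overline{E}=\Gbar\times S$ and a dense locally closed subset is open. Moreover $E$ is $\Group$-stable, as both families are equivariant. The remaining point is that $E$ contains $0_{\Gbar}\times S$: as in Lemma~\ref{ref:isomorphismGbarAgreementTopological:lem}, both families take $\Group$-fixed values at $0_{\Gbar}$, and the contraction $\pi=\sigmabar(0_{\Gbar},-)$ (which is $\Group$-invariant because $0_{\Gbar}$ is absorbing, and for which $\varX^{\Group}\into\varX$ is a section) forces them to agree there.

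To conclude in the non-separated case, I would argue fibrewise. The complement $Z=(\Gbar\times S)\setminus E$ is closed, $\Group$-stable and disjoint from $0_{\Gbar}\times S$; for each $s\in S$ its fibre $Z_s\subset\Gbar_{\kappa(s)}$ is closed and $\Group_{\kappa(s)}$-stable. By Lemma~\ref{ref:zerobasechange:lem} the point $0_{\Gbar}$ base changes to the zero of $\Gbar_{\kappa(s)}$, and Proposition~\ref{ref:neighbourhoodsofzero:prop} shows that any nonempty closed $\Group_{\kappa(s)}$-stable subset of $\Gbar_{\kappa(s)}$ contains that zero. Since $Z_s$ avoids $0_{\Gbar_{\kappa(s)}}$, it must be empty; as this holds for all $s$, we get $Z=\varnothing$, so $E=\Gbar\times S$ and $\varphi$ is recovered from $\varphi_{|1\times S}$. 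This establishes the monomorphism property for every $S$, hence $\ioneX$ is an isomorphism of functors.

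For the converse, assume $\ioneX\colon\Dfunctor{X}\to\varX$ is an isomorphism of functors. I would use the natural $\Gbar$-action on $\Dfunctor{X}$ from the introduction, given on $S$-points by $(g\cdot\varphi)(h,s)=\varphi(gh,s)$. Transporting it along the isomorphism $\ioneX$ produces a natural $\Gbar$-action on $\Hom(-,\varX)$, i.e.\ a natural transformation $h_{\Gbar}\times h_{\varX}\to h_{\varX}$; since $h_{\Gbar}\times h_{\varX}=h_{\Gbar\times\varX}$, Yoneda converts it into a morphism $\Gbar\times\varX\to\varX$ satisfying the monoid-action axioms. Finally I would check that this action extends $\sigma$ by verifying that $\ioneX$ intertwines the $\Group$-action on $\Dfunctor{X}$ (the restriction to $\Group$ of its $\Gbar$-action) with $\sigma$: on $S$-points, $\ioneX(\gamma\cdot\varphi)=(\gamma\cdot\varphi)(1,-)=\varphi(\gamma,-)=\gamma\cdot\varphi(1,-)=\gamma\cdot\ioneX(\varphi)$, using equivariance of $\varphi$ at the last step. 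Hence the transported $\Gbar$-action restricts to $\sigma$ on $\Group$, so $\sigma$ extends to a $\Gbar$-action, as desired.
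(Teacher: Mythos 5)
Your proof is correct and follows essentially the same route as the paper's: the same candidate inverse $\psi\mapsto\bigl((\bar g,s)\mapsto\bar g\psi(s)\bigr)$, the same equalizer-plus-density argument to handle non-separatedness, and the same reduction to fibres over points of $S$, where the zero of $\Gbar$ together with Lemma~\ref{ref:zerobasechange:lem} and Proposition~\ref{ref:neighbourhoodsofzero:prop} finishes the job; the converse is likewise handled by transporting the tautological $\Gbar$-action on $\Dfunctor{X}$. The only points to tighten are that the density arguments should be run \emph{schematically} (not merely topologically) so that one gets $E=\Gbar\times S$ as schemes rather than as sets, and that the agreement of the two families along $0_{\Gbar}\times S$ must itself be checked fibrewise via the closed-point argument of Lemma~\ref{ref:isomorphismGbarAgreementTopological:lem}, since $\varX^{\Group}$ need not be separated and a bare schematic-density appeal is not available there.
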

        \begin{proof}
            The $\Group$-action on $\Dfunctor{X}$ extends to a $\Gbar$-action,
            hence ``only if'' is proved. Suppose that ``if'' holds, i.e., that
            $\varX$ is a $\Gbar$-scheme.
            For a scheme $S$ and $\psi\colon S\to \varX$ we have a unique
            $\Gbar$-equivariant family $\psi'\colon \Gbar \times S\to \varX$
            given by $\psi'(\bar{g}, s) = \bar{g}\psi(s)$.
            The transformation $j\colon\psi\mapsto \psi'$ is the inverse of $\ioneX$.
            Indeed, $\ioneX \circ j = \id$ obviously. Choose any $S$ and a $\Group$-equivariant family $\varphi\colon
            \Gbar \times S\to \varX$. Then $\varphi' = (j \circ
            \ioneX)(\varphi)\colon \Gbar \times S\to \varX$ is a
            $\Group$-equivariant family which agrees with $\varphi$ on $\Group \times S$.
            (if $\varX$ was separated, then we could immediately conclude that
            $\varphi = \varphi'$. The remaining part deals with non-separated
            $\varX$).

            We now prove that $\varphi = \varphi'$.
            Let $E \into \Gbar \times S$ be the equalizer of $\varphi$ and
            $\varphi'$. This is a $\Group$-stable locally closed subscheme, so there exists an
            open $U \subset \Gbar \times S$ and a closed immersion $E\into U$.
            We have $\Group \times S \subset E$. The subset $\Group \times S$
            is schematically-dense in $\Gbar \times S$, hence also in $U$.
            Thus $E$ is also schematically dense in $U$, but it is closed,
            hence $E = U$.

            It remains to prove $E = U$ contains all points of $\Gbar \times S$.
            For this choose a point $p\in \Gbar \times S$ and let $K$ be its
            residue field. After a base change via $\Spec(K)\to \Spec(\kk)$
            we may and will assume that $K = \kk$.
            Then $p = (p_1, p_2)$, where $p_i$ are $\kk$-points of $\Gbar$ and
            $S$ respectively.
            Consider the restrictions of $\varphi$ and $\varphi'$ to $\Gbar \times p_2$.
            By Lemma~\ref{ref:isomorphismGbarAgreementTopological:lem} they
            are equal. Thus $\Gbar \times p_2 \subset E$ and in particular
            $p = (p_1, p_2)\in E$. This shows that $E = \Gbar \times S$, so
            $\varphi = \varphi'$.
        \end{proof}

        Next, we note that $\ioneX$ is a monomorphism for separated $\varX$.
        Later in Proposition~\ref{ref:ioneLocallyClosed:prop} we will see that
        when $\varX$ is normal $\ioneX$ is a closed immersion when restricted
        to each irreducible component. The classical \BBname{} decomposition of
        $\mathbb{P}^1$ with standard $\Gmult$ action shows that one cannot
        hope that $\ioneX$ is an immersion globally. The question whether $\ioneX$ is
        an immersion on each \emph{connected} component is quite subtle,
        see~\cite[Appendix~B]{Drinfeld} for some counter-examples.
        \begin{lemma}\label{ref:monomorphism:lem}
            Let $\varX$ be a separated $\Group$-scheme. For every $\kk$-scheme $T$, the map $\ione{X}(T)\colon
            \Dfunctor{X}(T)\to \Mor(T, X)$ is an injection. Hence $\ione{X}$
            is a monomorphism of functors.
        \end{lemma}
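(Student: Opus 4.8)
The plan is to unwind what injectivity of $\ione{\varX}(T)$ means and reduce it to a density-plus-separatedness argument, in the spirit of Proposition~\ref{ref:isomorphism:prop} but simpler, because separatedness does the topological work for us. Concretely, the map $\ione{\varX}(T)$ sends an equivariant family $\varphi\colon \Gbar \times T\to \varX$ to its restriction $\varphi_{|1_{\Group} \times T}$, so I must show that two equivariant families $\varphi_1, \varphi_2\colon \Gbar \times T\to \varX$ that agree on $1_{\Group} \times T$ are equal.

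First I would upgrade the agreement from $1_{\Group} \times T$ to all of $\Group \times T$ using equivariance: since $\Group$ acts on $\Gbar$ by left multiplication, restricting the equivariance square to $\Group \times 1_{\Group} \times T$ gives $\varphi_i(g, t) = g\cdot \varphi_i(1_{\Group}, t)$ as morphisms $\Group \times T\to \varX$, and the right-hand sides coincide for $i=1,2$ by hypothesis. Hence $\varphi_1$ and $\varphi_2$ restrict to the same morphism on $\Group \times T$.

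Next I would form the equalizer $E \into \Gbar \times T$ of $\varphi_1$ and $\varphi_2$. Because $\varX$ is separated, $E$ is a \emph{closed} subscheme, being the preimage of the diagonal $\varX \into \varX \times \varX$ under $(\varphi_1, \varphi_2)$. By the previous step $E$ contains $\Group \times T$. As already noted in the proof of Proposition~\ref{ref:isomorphism:prop}, $\Group \times T$ is schematically dense in $\Gbar \times T$, being the base change of the schematically dense open immersion $\Group \into \Gbar$ along the flat projection $\Gbar \times T\to \Gbar$. A schematically dense open subscheme factors through no proper closed subscheme: if $\Group \times T \subset E$ with $E$ closed, then the injection $\cO_{\Gbar \times T}\to (pr)_*\cO_{\Group \times T}$ forces $\cO_{\Gbar \times T}\to \cO_E$ to be injective, so $E = \Gbar \times T$. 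Therefore $\varphi_1 = \varphi_2$, which gives injectivity of $\ione{\varX}(T)$ for every $T$, i.e.\ that $\ione{\varX}$ is a monomorphism of functors.

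I do not expect a genuine obstacle here; the only point requiring care is that we have merely a $\Group$-scheme, not a $\Gbar$-scheme, so—unlike in Lemma~\ref{ref:isomorphismGbarAgreementTopological:lem}—the multiplication-by-zero argument is unavailable. This is precisely why separatedness is hypothesized: it replaces that argument by making the equalizer closed, so that schematic density of $\Group \times T$ alone forces equality.
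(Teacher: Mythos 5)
Your proof is correct and follows essentially the same route as the paper's: first use equivariance to see that the restriction of $\varphi$ to $\Group\times T$ is determined by $\alpha=\varphi_{|1_\Group\times T}$, then invoke schematic density of $\Group\times T$ in $\Gbar\times T$ together with separatedness of $\varX$ to conclude that $\varphi$ itself is determined. The paper states this last step in one line, while you spell it out via the closed equalizer; this is just a more explicit rendering of the same argument.
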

        \begin{proof}
            Let $\varphi\colon \Gbar \times T\to \varX$ be a
            $\Group$-equivariant map and $\alpha\colon 1_{\Group} \times T\to
            \varX$ be its restriction. The restriction $\varphi' = \varphi_{\mid \Group \times
            T}\colon \Group \times T\to \varX$ is $\Group$-equivariant, hence
            it factors as
            \[
                \begin{tikzcd}
                    \varphi'\colon \Group \times T \arrow[r, "\id \times
                    \alpha"] & \Group \times \varX \to \varX
                \end{tikzcd}
            \]
            and so it is uniquely determined by $\alpha$. Since $\Group \times
            T$ is schematically dense in $\Gbar \times T$ and $\varX$ is separated, also
            $\varphi$ is uniquely determined by $\alpha$.
        \end{proof}

        \begin{proposition}\label{ref:etaleFormally:prop}
Let $f:X\to Y$ be a $\Group$-equivariant and formally {\'e}tale (resp.~unramified) morphism. Then $\Dfunctor{X}\to \Dfunctor{Y}$ is a formally {\'e}tale (resp.~unramified) morphism of functors.
\end{proposition}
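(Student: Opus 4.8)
The plan is to check the lifting criterion directly on the functor of points, exploiting the elementary observation that multiplying a square-zero extension by an affine scheme that is flat over the field $\kk$ yields again a square-zero extension. Recall that $\Dfunctor{X}\to \Dfunctor{Y}$ is formally unramified (resp.~formally {\'e}tale) precisely when, for every square-zero extension $S\into S'$ of affine $\kk$-schemes, the natural map $\Dfunctor{X}(S')\to \Dfunctor{X}(S)\times_{\Dfunctor{Y}(S)}\Dfunctor{Y}(S')$ is injective (resp.~bijective). Unwinding~\eqref{eq:BBdefinition}, such a lifting datum is a $\Group$-equivariant $\varphi_S\colon \Gbar\times S\to X$ together with a $\Group$-equivariant $\psi_{S'}\colon \Gbar\times S'\to Y$ satisfying $f\circ\varphi_S=\varphi_S'$ where I abbreviate $\varphi_S' := \psi_{S'}|_{\Gbar\times S}$, and a lift is a $\Group$-equivariant $\varphi_{S'}\colon \Gbar\times S'\to X$ with $\varphi_{S'}|_{\Gbar\times S}=\varphi_S$ and $f\circ\varphi_{S'}=\psi_{S'}$. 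Since $\Gbar$ is affine and flat over $\kk$, the closed immersion $\Gbar\times S\into\Gbar\times S'$ is again square-zero; this is the observation that drives everything.

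For uniqueness, which settles the unramified case entirely and the uniqueness half of the {\'e}tale case, I would note that any two equivariant lifts are in particular two morphisms of schemes $\Gbar\times S'\to X$ agreeing on $\Gbar\times S$ and compatible with $\psi_{S'}$ over $Y$; as $f$ is formally unramified (which holds in both cases) and $\Gbar\times S\into\Gbar\times S'$ is square-zero, they must coincide.

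For existence in the {\'e}tale case, I would first apply formal {\'e}taleness of $f$ to the square-zero extension $\Gbar\times S\into\Gbar\times S'$, producing a unique \emph{morphism of schemes} $\varphi_{S'}\colon \Gbar\times S'\to X$ lifting $\varphi_S$ and lying over $\psi_{S'}$. The main obstacle is then to promote $\varphi_{S'}$ to an equivariant morphism, i.e.~to an honest element of $\Dfunctor{X}(S')$, and I would settle this by a second application of the uniqueness above. Writing $a\colon \Group\times\Gbar\times S'\to\Gbar\times S'$ for the action on the source and $\sigma$ for the $\Group$-action on $X$, I would compare the two morphisms $\alpha:=\varphi_{S'}\circ a$ and $\beta:=\sigma\circ(\id_{\Group}\times\varphi_{S'})$ from $\Group\times\Gbar\times S'$ to $X$; equivariance of $\varphi_{S'}$ is exactly $\alpha=\beta$. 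These two morphisms agree after restriction to $\Group\times\Gbar\times S$, because there they factor through the equivariant $\varphi_S$, and they become equal after composition with $f$, because $f\circ\varphi_{S'}=\psi_{S'}$ and both $\psi_{S'}$ and $f$ are equivariant. Since $\Group\times\Gbar$ is affine and flat over $\kk$, the extension $\Group\times\Gbar\times S\into\Group\times\Gbar\times S'$ is again square-zero, so formal unramifiedness of $f$ forces $\alpha=\beta$. Hence $\varphi_{S'}$ is equivariant and provides the required lift.

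I expect the only genuinely non-formal point to be this verification of equivariance of the lift; everything else is bookkeeping with the defining property~\eqref{eq:BBdefinition} and the stability of square-zero extensions under multiplication by the affine schemes $\Gbar$ and $\Group\times\Gbar$.
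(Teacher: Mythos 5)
Your proposal is correct and follows essentially the same route as the paper: both reduce to the observation that $\Gbar\times S\into\Gbar\times S'$ (and likewise $\Group\times\Gbar\times S\into\Group\times\Gbar\times S'$) is again a square-zero extension, lift along $f$, and then deduce equivariance of the lift by comparing the two morphisms $\varphi_{S'}\circ a$ and $\sigma\circ(\id_{\Group}\times\varphi_{S'})$ via the uniqueness of lifts along $f$. No gaps.
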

\begin{proof}
We use the infinitesimal lifting criterion. Consider an affine scheme $Z$ over
$\kk$ and an ideal $I\subseteq H^0(Z,\cO_Z)$ such that $I^2=0$. Let
$i:Z_I\to Z$ be the closed immersion corresponding to the ideal $I$.  Consider
$\Group$-equivariant morphisms
\[
    \left(\varphi:\Gbar\times Z\to Y\right)\in
    \Dfunctor{Y}(Z),\quad\left(\eta:\Gbar\times Z_I\to X\right)\in
    \Dfunctor{X}(Z_I),
\]
such that the Diagram~\eqref{eq:etaleness} is commutative
\begin{equation}\label{eq:etaleness}
\begin{tikzpicture}
[description/.style={fill=white,inner sep=2pt}]
\matrix (m) [matrix of math nodes, row sep=3em, column sep=2em,text height=1.5ex, text depth=0.25ex]
{ \Gbar\times Z_I &  &    X              \\
 \Gbar\times Z &   &        Y               \\} ;
\path[->,font=\scriptsize]
(m-1-1) edge node[above] {$ \eta$} (m-1-3)
(m-1-1) edge node[left] {$1_{\Gbar}\times i$} (m-2-1)
(m-1-3) edge node[auto] {$f$} (m-2-3)
(m-2-1) edge node[below] {$\varphi$} (m-2-3);
\path[densely dotted,->,font=\scriptsize]
(m-2-1) edge node[below] {$\tau$} (m-1-3);
\end{tikzpicture}
\end{equation}
of $\kk$-schemes.
Clearly, $\Gbar \times Z_I \subset \Gbar \times Z$ is a closed immersion given
by the ideal $H^0(\Gbar,\cO_{\Gbar})\otimes I$ which square is equal to
zero. If $f$ is unramified, there exists at most one lift $\tau\colon \Gbar\times
Z\to X$, which ends the proof in this case. Assume that $f$ is {\'e}tale, so
that
there does exist a unique lift $\tau\colon \Gbar\times
Z\to X$ in Diagram~\eqref{eq:etaleness}. We now prove that $\tau$ is
$\Group$-equivariant. Then it gives an element of $\Dfunctor{X}(Z)$, which
restricts to $\varphi$ and $\eta$, thus proving \`etaleness of $\Dfunctor{X}\to
\Dfunctor{Y}$.

To prove that $\tau$ is equivariant, denote by $a_{S}\colon \Group \times S\to
S$ the action of $\Group$ on $S\in \{X, Y, \Gbar\}$ and consider a commutative square
\begin{center}
\begin{tikzpicture}
[description/.style={fill=white,inner sep=2pt}]
\matrix (m) [matrix of math nodes, row sep=3em, column sep=2em,text height=1.5ex, text depth=0.25ex] 
{ \Group\times \Gbar\times Z_I & &  & &&&   X              \\
&&& &&&\\
 \Group\times \Gbar\times Z &   & &   &&&    Y               \\} ;
\path[->,font=\scriptsize]  
(m-1-1) edge node[above] {$ a_X\cdot\left(1_{\Group}\times \eta\right)=\eta\cdot \left(a_{\Gbar}\times 1_{Z_I}\right)$} (m-1-7)
(m-1-1) edge node[left] {$1_{\Group}\times 1_{\Gbar}\times i$} (m-3-1)
(m-1-7) edge node[auto] {$f$} (m-3-7)
(m-3-1) edge node[below] {$a_Y\cdot\left(1_{\Group}\times \varphi\right)=\varphi \cdot \left(a_{\Gbar}\times 1_{Z}\right)$} (m-3-7);
\path[densely dotted,->,font=\scriptsize]
(m-3-1) edge[transform canvas={yshift=0.2ex}] node[above=24pt, left=-50pt] {$a_X\cdot \left(1_{\Group}\times \tau\right)$} (m-1-7)
(m-3-1) edge[transform canvas={yshift=-0.2ex}] node[below=24pt, right=-50pt] {$\tau\cdot \left(a_{\Gbar}\times 1_Z\right)$} (m-1-7);
\end{tikzpicture}
\end{center}
Both $a_X\cdot \left(1_{\Group}\times \tau\right)$ and $\tau\cdot
\left(a_{\Gbar}\times 1_Z\right)$ are lifts along formally {\'e}tale morphism
$f$. Thus they are equal, which exactly means that $\tau$ is
$\Group$-equivariant.
\end{proof}

For \'etale (not only formally \'etale) morphisms $f$ we have a more precise
description of $\Hfunctor{f}$ defined in the introduction, see \eqref{eq:definitionFormal}.
First, we formulate a technical lemma.
\begin{lemma}\label{ref:fixedpointsandetale:lem}
    Let $\Group$ be a geometrically connected algebraic group.
    Let $\varX$ be a $\Group$-scheme, $Y$ be an algebraic space with the trivial
    $\Group$-action and $X\to Y$ be an \'etale $\Group$-equivariant morphism.
    Then the $\Group$-action on $\varX$ is trivial.
\end{lemma}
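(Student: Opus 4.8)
The plan is to prove that the action morphism $\sigma\colon \Group\times\varX\to\varX$ coincides with the second projection $\mathrm{pr}_2\colon\Group\times\varX\to\varX$, which is precisely the assertion that the $\Group$-action on $\varX$ is trivial. The starting observation is that equivariance of $f$ together with the triviality of the $\Group$-action on $Y$ yields $f\circ\sigma=f\circ\mathrm{pr}_2$: indeed, on points $f(\sigma(g,x))=g\cdot f(x)=f(x)=f(\mathrm{pr}_2(g,x))$. Hence $\sigma$ and $\mathrm{pr}_2$ are two lifts, along the \'etale morphism $f$, of one and the same morphism $\Group\times\varX\to Y$, and the whole problem becomes a rigidity statement for such lifts.

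Next I would compare the two lifts via their equalizer. Since $f$ is \'etale, it is in particular unramified, so the relative diagonal $\Delta_f$ is an open immersion; consequently the equalizer $E\subset\Group\times\varX$ of $\sigma$ and $\mathrm{pr}_2$ (namely $E=(\sigma,\mathrm{pr}_2)^{-1}(\Delta_f)$) is an \emph{open} subscheme of $\Group\times\varX$. It contains the identity section $\{1_{\Group}\}\times\varX$ because $\sigma(1_{\Group},x)=x=\mathrm{pr}_2(1_{\Group},x)$. It therefore suffices to show that $E$ contains every point of $\Group\times\varX$: once $E$ is open and set-theoretically everything, we get $E=\Group\times\varX$ as an open subscheme, and then $\sigma=\mathrm{pr}_2$.

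To fill every point I would argue along the geometric fibres of $\mathrm{pr}_2$. Fix $x\in\varX$ and a geometric point $\bar x\colon\Spec\Omega\to\varX$ above it, giving a morphism $\Group_{\Omega}\to\Group\times\varX$, $g\mapsto(g,\bar x)$, which is faithfully flat onto the scheme-theoretic fibre $\Group_{\kappa(x)}$ over $x$; thus every point above $x$ is hit. Composing the orbit map $g\mapsto\sigma(g,\bar x)$ with $f$ gives the constant morphism at $f(\bar x)$ (triviality on $Y$ again), so the orbit map factors through the fibre $F=\varX\times_Y\Spec\Omega$ of $f$ over $f\circ\bar x$. Since $f$ is \'etale, $F$ is \'etale over $\Omega$; being an algebraic space \'etale over an algebraically closed field it is a scheme isomorphic to a disjoint union of copies of $\Spec\Omega$, i.e.\ a discrete scheme. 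As $\Group$ is geometrically connected, $\Group_{\Omega}$ is connected, so the map $\Group_{\Omega}\to F$ is constant; evaluating at $1_{\Group}$ shows its value is $\bar x$. Hence the orbit map and the constant map $\mathrm{pr}_2$ agree on $\Group_{\Omega}$, so $\Group_{\Omega}$ maps into $E$, and therefore the whole fibre over $x$ lies in $E$. As $x$ was arbitrary, $E=\Group\times\varX$, completing the proof.

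The step I expect to be the main obstacle is making the pointwise argument watertight without any separatedness hypothesis on $\varX$ or $Y$ (recall $Y$ is only an algebraic space). The clean way around this is to \emph{avoid} any closedness claim about $E$ and instead combine two structural inputs: that $E$ is open (from unramifiedness of $f$), and that each geometric fibre of $\mathrm{pr}_2$ is connected (from geometric connectedness of $\Group$) while each geometric fibre of $f$ is discrete (from \'etaleness). The delicate bookkeeping is precisely the interplay between these two fibrations over $\varX$ and over $Y$, and the use of faithful flatness of $\Group_{\Omega}\to\Group_{\kappa(x)}$ to guarantee that passing to geometric points does not lose any points of $\Group\times\varX$.
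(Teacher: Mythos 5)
Your argument is correct and is essentially the paper's own proof: both establish $f\circ\sigma=f\circ\mathrm{pr}_2$, deduce set-theoretic triviality of the action from connectedness of $\Group$ together with discreteness of the \'etale fibres of $f$, and then upgrade to scheme-theoretic equality $\sigma=\mathrm{pr}_2$ via the fact that the diagonal of an unramified morphism is an open immersion. The only cosmetic difference is that the paper first replaces $Y$ by an \'etale scheme cover $Y'\to Y$ so as to work with schemes throughout, while you pull the diagonal back to the scheme $\Group\times\varX$ directly, which is equally valid.
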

\begin{proof}
    Choose an \'etale, surjective morphism $Y'\to Y$ from a scheme $Y'$. Equip
    $Y'$ with the trivial $\Group$-action.  Then $X' = \varX \times_{Y} Y'$ is a
    scheme with a natural $\Group$-action and $f'\colon X'\to Y'$ is an \'etale
    $\Group$-equivariant morphism of schemes.

    First, we prove that the $\Group$-action on the set $|X'|$ is trivial. Take any $x\in X'$ and let
    $y = f'(x)\in Y^{\Group}$. Since $f'$ is \'etale, the set $f'^{-1}(y)$ is
    discrete, hence the connected group $\Group$ acts trivially on this set,
    in particular $x\in |X'|^{\Group}$.
    Second, we prove that the scheme-theoretic action is trivial. Consider the
    action map $\sigma\colon \Group \times X' \to X'$ and the
    projection $pr_2\colon \Group \times X'\to X'$. As the action on
    $Y'$ is trivial, we have $f' \circ \sigma =
    f' \circ pr_2$. Therefore, we obtain a map
    \[
        (\sigma, pr_2)\colon \Group \times X'\to X' \times_{Y'} X'.
    \]
    We claim that it factors through the diagonal $\Delta\colon X'\to X'
    \times_{Y'} X'$. The morphism $f'$ is \'etale, hence $\Delta$ is
    an open immersion, so it is enough to check that the factorization exists
    set-theoretically. This follows, because $\Group$ acts trivially on
    $|X'|$. Hence $pr_2 = \sigma$, which concludes the proof that $X' =
    X'^{\Group}$.
    Now, $X'\to X$ is \'etale surjective and $\Group$-equivariant, so also the
    $\Group$-action on $X$ is trivial.
\end{proof}

\begin{theorem}\label{ref:etale:thm}
    Let $\varX$ be a $\Group$-scheme, $Y$ be a $\Group$-algebraic space and
    $f\colon\varX\to Y$ be an \'etale equivariant morphism. Then we have a diagram consisting of two cartesian
    squares
    \[
        \begin{tikzcd}
            \Hfunctor{X} \arrow[r, "\iinfty{X}"]\arrow[d, "\Hfunctor{f}"] & X^{\Group} \arrow[r]\arrow[d] & X\arrow[d, "f"]\\
            \Hfunctor{Y} \arrow[r, "\iinfty{Y}"] & Y^{\Group} \arrow[r] & Y
        \end{tikzcd}
    \]
    In particular, $\Hfunctor{f}\colon \Hfunctor{X}\to \Hfunctor{Y}$ is a
    pullback of $f$, so $\Hfunctor{f}$ is \'etale.
\end{theorem}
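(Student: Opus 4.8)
The plan is to verify the two squares separately, working throughout on the level of functors of points, and then to deduce the final assertion by pasting. The right-hand square only involves the fixed loci and expresses that fixed points behave well under \'etale equivariant morphisms; I would deduce it from Lemma~\ref{ref:fixedpointsandetale:lem}. The left-hand square is the heart of the matter and encodes the infinitesimal lifting of equivariant families through $f$. Once both are cartesian, pasting them along $X^{\Group}$ and $Y^{\Group}$ shows that the outer rectangle with corners $\Hfunctor{X}, X, \Hfunctor{Y}, Y$ is cartesian, so $\Hfunctor{f}$ is the base change of $f$ along the top-then-right composite $\Hfunctor{Y}\to Y$, and therefore \'etale.

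For the right square, I would set $P := X\times_{Y} Y^{\Group}$, i.e.\ the preimage $f^{-1}(Y^{\Group})$, which is a $\Group$-scheme since $f$ is equivariant and $Y^{\Group}\into Y$ is $\Group$-stable. The projection $P\to Y^{\Group}$ is a base change of $f$, hence \'etale and equivariant, and $Y^{\Group}$ carries the trivial $\Group$-action; thus Lemma~\ref{ref:fixedpointsandetale:lem} applies and the $\Group$-action on $P$ is trivial. Consequently the equivariant first projection $P\to X$ is a map out of a trivially acted space, so it factors through $X^{\Group}$ (equivariant maps from trivially acted spaces factor through $X^{\Group}$ by definition). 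Conversely, equivariance of $f$ gives $f(X^{\Group})\subseteq Y^{\Group}$, hence a map $X^{\Group}\to P$. Both $X^{\Group}\to X$ and $P\to X$ are monomorphisms, and each factors through the other over $X$; thus they define the same subobject of $X$ and $X^{\Group}\cong P$, proving the square cartesian.

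For the left square I would compute the fibre product of functors directly. For a $\kk$-scheme $Z$, an element of $(\Hfunctor{Y}\times_{Y^{\Group}} X^{\Group})(Z)$ is a pair consisting of a compatible equivariant family $(\psi_n)\in\Hfunctor{Y}(Z)$ and a section $s\in X^{\Group}(Z)$ with $\psi_0=f\circ s$; the forward map sends $(\varphi_n)\in\Hfunctor{X}(Z)$ to $\big((f\circ\varphi_n),\,\varphi_0\big)$. To build the inverse I would lift $(\psi_n)$ through $f$ inductively: start from $\varphi_0:=s$, which satisfies $f\circ\varphi_0=\psi_0$, and note that each inclusion $\Gbar_n\times Z\into\Gbar_{n+1}\times Z$ is a nilpotent closed immersion. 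Since $f$ is \'etale, there is a unique $\varphi_{n+1}$ with $f\circ\varphi_{n+1}=\psi_{n+1}$ and $\varphi_{n+1}|_{\Gbar_n\times Z}=\varphi_n$ (local lifts on $Z$ glued by their uniqueness). The two assignments are mutually inverse: uniqueness of lifts forces the composite starting and ending in $\Hfunctor{X}(Z)$ to be the identity, and the other composite is the identity by construction.

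The step I expect to be the main obstacle is checking that each lift $\varphi_n$ produced above is genuinely $\Group$-equivariant, so that $(\varphi_n)$ indeed lies in $\Hfunctor{X}(Z)$. Here I would argue exactly as in Proposition~\ref{ref:etaleFormally:prop}: the two morphisms $a_X\circ(1_{\Group}\times\varphi_n)$ and $\varphi_n\circ(a_{\Gbar_n}\times 1_Z)$ from $\Group\times\Gbar_n\times Z$ to $X$ become equal after composing with $f$ (because $\psi_n$ is equivariant), and they already agree on the nilpotent closed subscheme $\Group\times\Gbar_0\times Z$ (because $\varphi_0=s$ is a fixed section, so acting on it is trivial). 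Since $f$ is \'etale, hence formally unramified, lifts along $f$ are unique, so these two morphisms coincide; this is precisely equivariance of $\varphi_n$. With equivariance established the left square is cartesian, and pasting it with the right square shows $\Hfunctor{f}$ is a base change of $f$, hence \'etale.
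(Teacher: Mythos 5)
Your proposal is correct and matches the paper's own proof essentially step for step: the right square is handled by applying Lemma~\ref{ref:fixedpointsandetale:lem} to the pullback $X\times_Y Y^{\Group}$, and the left square by identifying the fibre product of functors with pairs $\big((\psi_n), s\big)$ and constructing the compatible lifts $\varphi_n$ inductively via the unique \'etale lifting along the nilpotent thickenings $\Gbar_n\times Z\into\Gbar_{n+1}\times Z$, with equivariance deduced from uniqueness of lifts exactly as in Proposition~\ref{ref:etaleFormally:prop}. No substantive differences.
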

\begin{proof}
    By assumption (see the beginning of the present section), $\Group$ is
    connected.
    Consider first the closed subscheme $X' = Y^{\Group} \times_Y X \into
    X$, as in
    \begin{equation}\label{eq:pullbackOne}
        \begin{tikzcd}
            X' \arrow[r]\arrow[d, "f'"] & X\arrow[d, "f"]\\
            Y^{\Group} \arrow[r] & Y
        \end{tikzcd}
    \end{equation}
    We claim that $X' = X^{\Group}$.  Clearly, $X^{\Group} \subset X'$, so
    it is enough to prove that the $\Group$ action on $X'$ is trivial. This
    follows from Lemma~\ref{ref:fixedpointsandetale:lem}.
    Let $\iinfty{Y}\colon \Hfunctor{Y}\to Y^{\Group}$ be the natural map and
    complete the cartesian diagram~\eqref{eq:pullbackOne} to
    obtain~\eqref{eq:pullbackTwo}.
    \begin{equation}\label{eq:pullbackTwo}
        \begin{tikzcd}
            X'' \arrow[r]\arrow[d] & X^{\Group} \arrow[r]\arrow[d, "f'"] & X\arrow[d, "f"]\\
            \Hfunctor{Y} \arrow[r] & Y^{\Group} \arrow[r] & Y
        \end{tikzcd}
    \end{equation}
    We claim that $X''$ is isomorphic to $\Hfunctor{X}$.
    Indeed, $X''(S) = \Hfunctor{Y}(S) \times_{Y^{\Group}(S)} X^{\Group}(S)$.
    In plain terms, $\Hfunctor{Y}(S)$ is a coherent sequence of families
    $\varphi_n\colon \Gbar_n \times S\to Y$, $X^{\Group}(S)$ is a family
    $\delta_0\colon\Gbar_0 \times S\to \varX$ and we have $\varphi_0 =
    f' \circ \delta_0$. We now construct a (unique) coherent family
    $\delta_n\colon \Gbar_n \times S\to \varX$. We already have $\delta_0$.
    Suppose that we constructed $\delta_n$. Consider Diagram~\eqref{eq:lifting}.
    \begin{equation}\label{eq:lifting}
        \begin{tikzcd}
            \Gbar_n \times S \arrow[d, hook]\arrow[r, "\delta_{n}"] &\varX
            \arrow[d, "f"]\\
            \Gbar_{n+1} \times S \arrow[r, "\varphi_{n+1}"] & Y.
        \end{tikzcd}
    \end{equation}
    Since $f$ is \'etale, there exists a unique $\delta_{n+1}\colon
    \Gbar_{n+1} \times S\to \varX$ preserving commutativity
    of Diagram~\eqref{eq:lifting}. As in the proof
    of Proposition~\ref{ref:etaleFormally:prop}, uniqueness implies that
    $\delta_{n+1}$ is $\Group$-equivariant.
\end{proof}

    \section{Formal \BBname{} functors and representability}\label{sec:algebraization}

        In this section we prove Theorem~\ref{ref:algebraizationABB}.
        As before, we assume that $\Gbar$ is a
        linearly reductive monoid with zero and with connected unit group
        $\Group$, and that $\Gbar_n$ is the $n$-th infinitesimal
        neighbourhood of $0\in \Gbar$.
        As above, $\varX$ is a $\Group$-scheme.

        A $\Group$-equivariant morphism $\Gbar \times Z\to \varX$
        restricts to $\Gbar_n\times Z\to \varX$ for all $n$. Hence, we obtain a restriction
        $\Dfunctor{X}\to \Hfunctor{X}$ and
        Theorem~\ref{ref:algebraizationABB} asserts that it is an
        isomorphism of sheaves represented by schemes.
        The idea is to first prove the representability of $\Hfunctor{X}$ and then
        deduce the isomorphism of $\Dfunctor{X}$ and $\Hfunctor{X}$.

        The functor $\Hfunctor{X}$ depends only on the formal neighbourhood of
        $\varX^{\Group} \subset \varX$. Therefore below we consider formal
        $\Group$-schemes which abstract the properties of formal completion of
        $\varX$ along $\varX^{\Group}$.

        \newcommand{\Zhat}{\widehat{Z}}%
        \newcommand{\What}{\widehat{W}}%
        \newcommand{\Zformal}{\cZ}%
        \newcommand{\Wformal}{\cW}%
        \begin{definition}
            A \emph{formal $\Group$-scheme} is a sequence $\Zformal =
            \{Z_n\}_{n\in \NN}$ of $\Group$-schemes together with equivariant closed immersions
            \[
                \begin{tikzcd}
                    Z_0 \arrow[r, hook] & Z_1 \arrow[r, hook] &  \ldots
                    \arrow[r, hook] & Z_n \arrow[r, hook] & \ldots
                \end{tikzcd}
            \]
            \begin{enumerate}
                \item For every $n\in \NN$ we have $Z_0=Z_n^{\Group}$
                    scheme-theoretically,
                \item Let $\cI_n$ be an ideal of $\cO_{Z_n}$ defining $Z_0$.
                    Then for every $m\leq n$ the subscheme
                    $Z_m \subset Z_n$ is defined by
                    $\cI_n^{m+1}$.
            \end{enumerate}
            If each $Z_n$ is a $\Gbar$-scheme, then we say that $\Zformal$ is a
            \emph{formal $\Gbar$-scheme}. If each $Z_n$ is (locally)
            Noetherian, we say that \emph{$\Zformal$ is (locally) Noetherian}.
        \end{definition}

        \begin{example}[Formal schemes from algebraic ones]\label{algebraicformalgroupschemes}
            Let $Z$ be a $\Group$-scheme and $\cI$ be the ideal of
            $Z^{\Group}$. Then $Z_n=V(\cI^{n+1})$ is a closed $\Group$-stable
            subscheme of $Z$ for every $n\in \NN$ and this yields to a formal
            $\Group$-scheme $\Zformal=\{Z_n\}_{n\in \NN}$. We denote this formal
            $\Group$-scheme by $\Zhat$.
        \end{example}

The data of a formal $\Gbar$-scheme can be respelled algebraically, as
follows. Let $\Zformal = \{Z_n\}$ be a formal $\Gbar$-scheme locally of
finite type and fix $n\in \NN$.  The schemes $Z_0, Z_n \subset Z_{n+1}$ are
given by the ideals $\cI_{n+1}$ and $\cI_{n+1}^{n}$. Therefore, the ideal of 
$Z_0 \subset Z_n$ is nilpotent. In particular, points of $Z_0$ and $Z_n$ are the same. Hence, the $\Gbar$ action on $Z_n$ stabilizes all its open subschemes.  Since any affine cover of $Z_n$ is $\Gbar$-stable, the scheme $Z_n$ represents its \BBname{}
functor. Hence, there exists a retraction $\iinfty{Z_n}\colon Z_n \to
Z_n^{\Group} = Z_0$ with section $Z_0 \into Z_n$. As $Z_0 \subset Z_n$ is
given by a nilpotent ideal, the morphism $\iinfty{Z_n}$ is finite. Hence $Z_n
\simeq \Spec_{Z_0} \cA_n$, where
\begin{equation}\label{eq:Andefinition}
    \cA_n = (\iinfty{Z_n})_*\cO_{Z_n}
\end{equation}
is a sheaf of quasi-coherent
$\cO_{Z_0}$-algebras. Moreover, $\cA_n$ is a sheaf of $\Gbar$-algebras. The
$\Gbar$-action on each $Z_n$ is
topologically trivial, hence translates into the $\Gbar$-action on the
algebra $H^0(U, \cA_n)$ for each open $U$.
To avoid confusion, we formalize the structure on $\cA_n$.
\begin{definition}
    Let $Z$ be a $\Gbar$-scheme, such that $\Gbar$ acts trivially on the
    topological space $|Z|$. A \emph{quasi-coherent sheaf of $\Gbar$-modules
    (resp. $\Gbar$-algebras)} is a quasi-coherent sheaf $\cA$ of
    $\cO_{Z}$-modules (resp. $\cO_{Z}$-algebras) together with a $\Gbar$-action on
    each $H^0(U, \cA)$, for $U \subset Z$ open, which is $\cO_{Z}$-linear
    (resp.~by $\cO_Z$-linear automorphisms).
\end{definition}

The closed immersions $Z_{n} \into Z_{n+1}$ commute with projections to $Z_0$
and hence induce $\Gbar$-equivariant surjections $\cA_{n+1}
\onto \cA_{n}$ of $\Gbar$-algebras.

        Now we define morphisms of formal $\Group$-schemes.
        \begin{definition}
            Let $\Zformal =\{Z_n\}$ and $\Wformal =\{W_n\}$ be formal $\Group$-schemes.
            A \emph{morphism $\varphi:\Wformal\to \Zformal$ of formal
            $\Group$-schemes} is a family of $\Group$-equivariant morphisms
            $\varphi=\{\varphi_n:W_n\to Z_n\}$ such that for every $n\in \NN$
            we have a commutative square
            \begin{center}
                \begin{tikzpicture}
                    [description/.style={fill=white,inner sep=2pt}]
                    \matrix (m) [matrix of math nodes, row sep=3em, column sep=2em,text height=1.5ex, text depth=0.25ex] 
                    {W_{n+1}&  &    Z_{n+1}          \\
                W_n&   & Z_n         \\} ;
                \path[->,font=\scriptsize]  
                (m-1-1) edge node[above] {$\varphi_{n+1}$} (m-1-3)
                (m-2-1) edge node[below] {$\varphi_n$} (m-2-3);
                \path[right hook->,font=\scriptsize]
                (m-2-3) edge node[right] {$ $} (m-1-3)
                (m-2-1) edge node[right] {$ $} (m-1-1);
            \end{tikzpicture}
        \end{center}
    \end{definition}
    \begin{remark}[Morphisms of formal $\Gbar$-schemes are $\Gbar$-equivariant]
        Let $\Wformal$ and $\Zformal$ be formal $\Gbar$-schemes and consider
        their morphism $\varphi:\Wformal \to \Zformal$ (as formal
        $\Group$-schemes). Then for every $n\in \NN$ the morphism
        $\varphi_n\colon W_n\to Z_n$ is $\Gbar$-equivariant. To see this,
        consider Diagram~\eqref{eq:Gbarformaldiagram}.
        \begin{equation}\label{eq:Gbarformaldiagram}
            \begin{tikzpicture}
                [description/.style={fill=white,inner sep=2pt}]
                \matrix (m) [matrix of math nodes, row sep=3em, column sep=2em,text height=1.5ex, text depth=0.25ex]
                {W_{n}&  &  W_0\times_{Z_0}Z_n  & &Z_n          \\
                     &   & W_0           & &Z_0     \\} ;
                \path[->,font=\scriptsize] 
                (m-1-1) edge[bend left = 30] node[above] {$ \varphi_n $} (m-1-5) 
                (m-1-1) edge node[below] {$r_n $} (m-1-3)
                (m-1-3) edge node[right] {$p_n $} (m-2-3)
                (m-1-1) edge node[left = 3pt, below = 3pt] {$\iinfty{W_n} $} (m-2-3)
                (m-1-3) edge node[below] {$q_n $} (m-1-5)
                (m-2-3) edge node[below] {$\varphi_0 $} (m-2-5)
                (m-1-5) edge node[right] {$\iinfty{Z_n} $} (m-2-5);
            \end{tikzpicture}
        \end{equation}
        Since $W_0$ and $Z_0$ are equipped with trivial $\Gbar$-actions, also the
        pullback $W_0\times_{Z_0}Z_n$ is a $\Gbar$-scheme and $q_n$ is
        $\Gbar$-equivariant. Recall that $\iinfty{Z_n}$, $\iinfty{W_n}$ are
        affine morphisms. Therefore, $p_n$ is affine. Hence $r_n$ is a
        $\Group$-equivariant morphism between $\Gbar$-schemes separated (even
        affine) over $W_0$. Thus $r_n$ is $\Gbar$-equivariant.
    \end{remark}

    \begin{definition}\label{ref:locallylinear:def}
        A \emph{locally linear $\Gbar$-scheme} is a $\Gbar$-scheme which
        admits an open cover by affine $\Gbar$-stable subschemes. The category
        of locally linear $\Gbar$-schemes consists of those schemes and
        $\Gbar$-equivariant morphisms.
    \end{definition}
    Let $Z$ be a locally linear $\Gbar$-scheme. By Proposition~\ref{ref:isomorphism:prop}, the
    map $\Dfunctor{Z}\to Z$ is an isomorphism. In particular, there is a
    canonical morphism $\iinfty{Z}\colon Z\to Z^{\Group}$, which is the
    multiplication by zero. For an affine open $\Gbar$-stable cover
    $\{V_i\}_i$
    of $Z$, we have $V_i = \iinfty{Z}^{-1}(\iinfty{Z}(V_i))$ by
    Proposition~\ref{ref:openimmersionscartesian:prop}, hence the canonical morphism $\iinfty{Z}\colon Z\to Z^{\Group}$ is affine.

    \begin{definition}\label{ref:algebraization:def}
        Let $\Zformal$ be a formal $\Gbar$-scheme. An \emph{algebraization} of
        $\Zformal$ is a $\Gbar$-scheme $Z$ such that
        \begin{enumerate}
            \item\label{it:algebraization:one} $Z$ is a locally linear
                $\Gbar$-scheme.
            \item\label{it:algebraization:two} $\Zformal$ and $\Zhat$ are isomorphic formal $\Gbar$-schemes.
        \end{enumerate}
    \end{definition}
    By the above discussion, the morphism
    $\iinfty{Z}\colon Z\to Z^{\Group}$ is affine for any algebraization $Z$.

    \begin{theorem}[Algebraization of a formal $\Gbar$-scheme]\label{ref:algebraizationOfFormalSchemes:thm}
        Let $\Zformal =\{Z_n\}$ be a formal $\Gbar$-scheme. Then there exists
        a colimit
        \[
            Z = \colim_{n} Z_n
        \]
        in the category of locally linear $\Gbar$-schemes and $Z$ is the
        unique algebraization of $\Zformal$.
        If in addition $\Zformal$ is locally Noetherian, then $\iinfty{Z}$ is of finite type. If
        $\Zformal$ is locally Noetherian and $Z_0$ is of finite type, then also $Z$ is of
        finite type.
    \end{theorem}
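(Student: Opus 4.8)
The plan is to build $Z$ by assembling the sheaves $\cA_n = (\iinfty{Z_n})_*\cO_{Z_n}$ one $\Group$-isotypic component at a time. Since both ``locally linear'' and the formation of $\Zhat$ are local on $Z_0$, I may assume $Z_0 = \Spec R$ is affine; then each $Z_n = \Spec \cA_n$ for a finite $R$-algebra $\cA_n$ with a $\Gbar$-action, the transition maps $\cA_{n+1}\onto \cA_n$ are $\Gbar$-equivariant surjections, and, taking $m=n$ in axiom~(2) of a formal $\Gbar$-scheme, $\ker(\cA_{n+1}\to \cA_n) = \cI_{n+1}^{\,n+1}$. I fix once and for all a Kempf's torus $\Gmult \to \Group$; by Corollary~\ref{ref:KempfTorus:cor} it induces on each $\cA_n$ an $\NN$-grading whose degree-zero part is $R = \cA_n^{\Group}$ and whose augmentation ideal $\cI_n = (\cA_n)_+$ is exactly the positive part.

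The heart of the argument is a stabilization of isotypic components. Write $\cA_n = \bigoplus_\lambda \cA_n[\lambda]$, with $\lambda$ ranging over irreducible $\Group$-representations. Because the Kempf grading is just the restriction of the $\Group$-action to $\Gmult\subset\Group$, every copy of $V_\lambda$ inside $\cA_n$ occupies precisely the $\Gmult$-weights of $V_\lambda$; hence $\cA_n[\lambda]$ is concentrated in degrees at most $w(\lambda):=\max\{\,\Gmult\text{-weights of }V_\lambda\,\}$, a bound independent of $n$. On the other hand $\ker(\cA_{n+1}\to\cA_n)=\cI_{n+1}^{\,n+1}\subseteq(\cA_{n+1})_{\ge n+1}$ lies in degrees $\ge n+1$. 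Thus, setting $n_\lambda:=w(\lambda)$, for every $n\ge n_\lambda$ the surjection $\cA_{n+1}[\lambda]\to\cA_n[\lambda]$ has kernel simultaneously in degrees $\ge n+1$ and $\le n_\lambda$, so it is an isomorphism. I therefore define $\cA[\lambda]:=\cA_{n_\lambda}[\lambda]$ and $\cA:=\bigoplus_\lambda\cA[\lambda]$. The product of $a\in\cA[\lambda]$ and $b\in\cA[\mu]$ is computed in any $\cA_n$ with $n\ge n_\lambda,n_\mu$: the element $ab$ meets only finitely many isotypic components, each stabilized for large $n$, so the product is well defined and makes $\cA$ a $\Gbar$-algebra with no outsiders (Lemma~\ref{ref:mainRepr:lem}).

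Put $Z:=\Spec_{Z_0}\cA$. As $\iinfty{Z}$ is affine and $\cA$ a $\Gbar$-algebra, $Z$ is a locally linear $\Gbar$-scheme by Proposition~\ref{ref:isomorphism:prop}. To identify $\Zhat$ with $\Zformal$ I check $\cA/\cJ^{n+1}\simeq\cA_n$, where $\cJ=\cA_+$ is the ideal of $Z^{\Group}=Z_0$. Since the transition kernels lie in degrees $\ge m+1$, the maps $(\cA)_i\xrightarrow{\sim}(\cA_m)_i$ are isomorphisms for all $m\ge i$; in particular the projection $p_n\colon\cA\to\cA_n$ is an isomorphism in every degree $i\le n$, so $\ker p_n$ lives in degrees $\ge n+1$. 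That $\cJ^{n+1}$ equals $\ker p_n$ follows by the same bookkeeping: a degree-$i$ element of $\cJ^{n+1}$ is a sum of products of $n+1$ positive-degree factors, each of degree $\le i-n\le i$, where $p_i$ is an isomorphism onto $\cI_i$, so $(\cJ^{n+1})_i$ maps isomorphically onto $(\cI_i^{\,n+1})_i=(\ker p_n)_i$. Hence $\Zhat\simeq\Zformal$ and $Z$ is an algebraization. The colimit property $\Hom(Z,W)=\varprojlim_n\Hom(Z_n,W)$ in locally linear $\Gbar$-schemes, and the uniqueness of the algebraization, both follow from the stabilization, since a $\Gbar$-morphism into or out of $\cA$ is determined isotypic-componentwise and each component already appears in some $\cA_n$.

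For the finiteness claims, note $\cJ/\cJ^2\simeq(\cA_1)_+$. If $\Zformal$ is locally Noetherian then $\iinfty{Z_1}$ is finite, so $(\cA_1)_+$ is a finite $R$-module, and graded Nakayama shows that homogeneous lifts of its generators generate $\cA$ as an $R$-algebra; thus $\iinfty{Z}$ is of finite type, and if moreover $Z_0$ is of finite type over $\kk$ then so is $Z$. I expect the main obstacle to be the stabilization step: everything downstream is degree bookkeeping, but the crux is the tension between the $n$-independent bound $w(\lambda)$ on the Kempf-degrees of $\cA_n[\lambda]$ and the fact that the transition kernels $\cI_{n+1}^{\,n+1}$ are pushed into arbitrarily high degree. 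This is exactly where connectedness of $\Group$ and the existence of a zero of $\Gbar$ enter, through the Kempf's torus supplied by Corollary~\ref{ref:KempfTorus:cor}.
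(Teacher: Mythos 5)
Your proposal is correct and follows essentially the same route as the paper: stabilization of the $\Group$-isotypic components of $\cA_n=(\iinfty{Z_n})_*\cO_{Z_n}$ via the $\NN$-grading induced by a Kempf's torus (your bound $w(\lambda)$ is the paper's $n_\lambda$ from Lemma~\ref{ref:stability:lem}), assembly of $\cA$ componentwise, the identification $\cJ^{m+1}=\ker(\cA\to\cA_m)$, and graded Nakayama for the finiteness claims. The only point you elide is that a Kempf's torus is guaranteed only over a perfect field (Theorem~\ref{pointedness}), so the degree bookkeeping must be carried out after a faithfully flat base change to $\kkbar$ --- as the paper does at the start of the proof of Lemma~\ref{ref:stability:lem} and again for the uniqueness and Nakayama steps --- which is harmless since the isotypic decomposition and all the assertions being checked are compatible with such base change.
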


Now we spell out the main idea of the proof: the $\Gbar$-scheme $Z$
required in Theorem~\ref{ref:algebraizationOfFormalSchemes:thm} is equal to $\Spec_{Z_0} \cA$, where
$\cA$ is the limit of $\cA_n$ \emph{in the category of
$\Gbar$-algebras}; in other words each isotypic component of $\cA$ is the
limit of isotypic components of $\cA_n$.
Our first goal is to prove a stabilization result.
We denote by $\Irr(\Group)$ the set of isomorphism types of irreducible
$\Group$-representations and by $\Irr(\Gbar) \subset \Irr(\Group)$ the
subset of $\Gbar$-representations. For $\lambda\in \Irr(\Group)$ and
a quasi-coherent $\Gbar$-module $\cC$ on $Z_0$ we denote by $\cC[\lambda]
\subset \cC$ the $\Gbar$-submodule such that $H^0(U, \cC[\lambda]) \subset H^0(U, \cC)$
is the union of all $\Group$-subrepresentations of $H^0(U, \cC)$ isomorphic to
$\lambda$ (i.e., the isotypic component of $\lambda$).

\begin{lemma}[stabilization on an isotypic component]\label{ref:stability:lem}
Let $\lambda\in \Irr(\Gbar)$. Then there exists a number $n_{\lambda}\in \NN$
such that the following holds. Let $\Zformal=\{Z_n\}$ be a formal $\Gbar$-scheme
and $\{\cA_{n+1} \onto \cA_n\}$ be the associated sequence of quasi-coherent
$\Gbar$-algebras. Then for every $n > n_{\lambda}$ the surjection
\[
    \cA_{n}[\lambda] \onto \cA_{n-1}[\lambda]
\]
is an isomorphism. If $\lambda_0\in \Irr(\Gbar)$ is the
trivial representation, then we may take $n_{\lambda_0}=0$.
\end{lemma}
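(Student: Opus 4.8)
The plan is to identify the kernel of $\cA_{n}[\lambda] \onto \cA_{n-1}[\lambda]$ and to show that it vanishes as soon as $n$ exceeds the largest weight occurring in $V_{\lambda}$ for a fixed Kempf torus. First I would dispose of the outsiders: since each $\cA_{n}$ is a sheaf of $\Gbar$-algebras, Lemma~\ref{ref:mainRepr:lem} gives $\cA_{n}[\lambda]=0$ for every outsider $\lambda$, so the statement is vacuous unless $\lambda\in\Irr(\Gbar)$, which is exactly the hypothesis. Next, forming isotypic components and the displayed map both commute with the faithfully flat base change $\kk\to\kkbar$ (using Theorem~\ref{basechange}, and the fact that $\cA_{n}[\lambda]\otimes_{\kk}\kkbar$ is the sum of the isotypic components of $(\cA_{n})_{\kkbar}$ along the finitely many absolutely irreducible constituents of $V_{\lambda}\otimes_{\kk}\kkbar$); since a surjection is an isomorphism iff it is so after this base change, I would reduce to $\kk=\kkbar$ and, treating each constituent separately, to $\lambda$ absolutely irreducible, the eventual $n_{\lambda}$ being the maximum of the numbers obtained for the constituents. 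Note $\Gbar_{\kkbar}$ still has a zero by Lemma~\ref{ref:zerobasechange:lem}. The claim is local on $Z_{0}$, so I would also take $Z_{0}=\Spec R$ affine; then $Z_{n}=\Spec A_{n}$ with $A_{n}$ a $\Gbar$-algebra over $R$ (trivial action on $R$) and $A_{0}=R$. Writing $I_{n}\subset A_{n}$ for the ideal of $Z_{0}=Z_{n}^{\Group}$, the axioms of a formal $\Gbar$-scheme give $I_{n}^{n+1}=0$ and $A_{n-1}=A_{n}/I_{n}^{n}$, so the kernel of $A_{n}[\lambda]\onto A_{n-1}[\lambda]$ is precisely $(I_{n}^{n})[\lambda]$.

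The key tool is the grading by a Kempf torus. I would fix once and for all, independently of $\Zformal$, a Kempf torus $\Gmult\to\Group$ as in Theorem~\ref{pointedness}, with induced $\NN$-grading $A_{n}=\bigoplus_{i\geq 0}(A_{n})_{i}$. Applying Corollary~\ref{ref:KempfTorus:cor} to the affine $\Gbar$-scheme $Z_{n}$ yields $Z_{n}^{\Gmult}=Z_{n}^{\Group}$, so the ideal $I_{n}$ is exactly the irrelevant ideal $(A_{n})_{+}=\bigoplus_{i\geq 1}(A_{n})_{i}$. Hence $I_{n}^{n}\subseteq (A_{n})_{\geq n}:=\bigoplus_{i\geq n}(A_{n})_{i}$, as a product of $n$ elements of positive degree has degree at least $n$. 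A single Kempf torus serves all $n$ at once because the closed immersions $Z_{n}\into Z_{n+1}$ are $\Gbar$-equivariant, hence $\Gmult$-equivariant, so the gradings are compatible.

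On the other hand, $A_{n}[\lambda]\cong V_{\lambda}\otimes_{\kkbar}M$ for the multiplicity $R$-module $M=\mathrm{Hom}_{\Group}(V_{\lambda},A_{n})$, on which $\Gmult\subset\Group$ acts only through $V_{\lambda}$; thus the $\Gmult$-weights occurring in $A_{n}[\lambda]$ are exactly those of $V_{\lambda}$. Let $n_{\lambda}$ be the largest $\Gmult$-weight of $V_{\lambda}$, which is finite and depends only on $\lambda$ and the fixed Kempf torus. Then $A_{n}[\lambda]\subseteq (A_{n})_{\leq n_{\lambda}}$, so for every $n>n_{\lambda}$
\[
    (I_{n}^{n})[\lambda]\subseteq (A_{n})_{\geq n}\cap (A_{n})_{\leq n_{\lambda}}=0,
\]
which is exactly the assertion that $A_{n}[\lambda]\onto A_{n-1}[\lambda]$ is an isomorphism. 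For the trivial representation $\lambda_{0}$ the only weight is $0$, so $n_{\lambda_{0}}=0$. Finally I would observe that the whole argument is local on $Z_{0}$ and that the grading and isotypic decompositions are sheaf-theoretic, so the affine-local conclusion sheafifies to the claim for the $\cA_{n}$; and I would set the $n_{\lambda}$ for the original (possibly reducible over $\kkbar$) representation to be the maximum of the top Kempf weights over its constituents.

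I expect the main obstacle to be the bookkeeping around the passage to $\kkbar$: one must verify that the $\lambda$-isotypic component really base-changes to the sum of the isotypic components over the absolutely irreducible constituents of $V_{\lambda}\otimes_{\kk}\kkbar$, so that the reduction to perfect $\kk$ — where the Kempf torus of Theorem~\ref{pointedness} is available — is legitimate and the resulting $n_{\lambda}$ is still finite and uniform in $\Zformal$. By contrast, the geometric heart — that powers of the fixed-point ideal are precisely the high-degree pieces for the Kempf grading (Corollary~\ref{ref:KempfTorus:cor}), while a fixed irreducible occupies only a bounded band of degrees — is then a one-line degree count.
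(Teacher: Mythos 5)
Your proposal is correct and follows essentially the same route as the paper: reduce to $\kk=\kkbar$ to have a Kempf torus, use Corollary~\ref{ref:KempfTorus:cor} to identify the ideal of $Z_0$ in $Z_n$ with the positive part of the Kempf grading (so its $n$-th power sits in degrees $\geq n$), and observe that the $\lambda$-isotypic component occupies only the finitely many Kempf weights of $V_{\lambda}$, forcing $\cI_n^{\,n}[\lambda]=0$ for $n>n_{\lambda}$. The paper phrases $n_{\lambda}$ as the supremum of the weights occurring in $\kk[\Group]_{\lambda}$, which coincides with your supremum of the Kempf weights of $V_{\lambda}$; your extra care about base change to $\kkbar$ and about splitting $\lambda$ into absolutely irreducible constituents is a legitimate elaboration of the paper's one-line reduction.
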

\begin{proof}[Proof of Lemma~\ref{ref:stability:lem}]
    The claims are preserved under field extension, so we may assume our field
    is algebraically closed (hence perfect) so we may use the Kempf's torus.
    Fix a grading on
    $\kk[\Gbar]$ induced by a Kempf's torus for $\kk$ as in
    Corollary~\ref{ref:KempfTorus:cor}. Denote by $A_{\lambda}\subseteq \NN$
    the set of weights which appear in
    $\kk[\Group]_{\lambda}$. Since $\dim_{\kk}\kk[\Group]_{\lambda}$
    is finite by Proposition~\ref{ref:isotypiccomponents:prop}, the set
    $A_{\lambda}$ is finite. Put
    \[
        n_{\lambda}=\sup A_{\lambda}.
    \]
    Fix $n> n_{\lambda}$ and let
    $\cI_n = \ker(\cA_n\to \cA_0)$. Then we have a decomposition
    with respect to the chosen torus
    \[
        \cA_n=\bigoplus_{i\geq 0}(\cA_n)[i],
    \]
    By Corollary~\ref{ref:KempfTorus:cor}, we have $\cI_n =
    \bigoplus_{i\geq 1}(\cA_n)[i]$. Since $n > n_{\lambda}$ we have
    \[
        \cI^{n}_n \subset \bigoplus_{i\geq
        n}(\cA_n)[i]\subseteq \bigoplus_{i \not \in
            A_{\lambda}}(\cA_n)[i]
    \]
Hence, $\cI^{n}_n[\lambda] = 0$. But
$\cI^{n}_n[\lambda] = \ker(\cA_{n}[\lambda] \to
\cA_{n-1}[\lambda])$, thus $\cA_{n}[\lambda] \to \cA_{n-1}[\lambda]$ is an
isomorphism.
Finally note that $A_{\lambda_0}=\{0\}$. This implies that $n_{\lambda_0}=0$.
\end{proof}

\begin{proof}[Proof of Theorem~\ref{ref:algebraizationOfFormalSchemes:thm}]
    Let $\cA_n$ be the quasi-coherent $\Gbar$-algebras as
    in~\eqref{eq:Andefinition}. For $\lambda\in \Irr(\Gbar)$ we define
    $\cA[\lambda] := \cA_n[\lambda]$, where $n\geq n_{\lambda}$ as in
    Lemma~\ref{ref:stability:lem}.
    \[
        \cA=\bigoplus_{\lambda\in
            \Irr(\Gbar)}\cA[\lambda]=\bigoplus_{\lambda\in
                \Irr(\Gbar)}\cA_{n_{\lambda}}[\lambda].
    \]
    Clearly $\cA[\lambda_0] = \cA_0 = \cO_{Z_0}$ canonically (where
    $\lambda_0$ is the trivial representation), hence $\cA$ is an
    $\cO_{Z_0}$-module.
    Actually $\cA=\lim_{n}\cA_n$ in the category of quasi-coherent
    $\Gbar$-modules on $Z_0$.
    We construct the algebra structure on $\cA$. For this
    pick $\eta_1, \eta_2\in \Irr(\Gbar)$. Fix the finite set
    $\{\lambda_1, \ldots ,\lambda_s\}\subseteq \Irr(\Gbar)$ of representations
    which appear in $\kk[\Gbar]_{\eta_1}\otimes_{\kk}\kk[\Gbar]_{\eta_2}$.
    Then, for every $n\in \NN$, we have the multiplication
$$\cA_n[\eta_1]\otimes_{\kk} \cA_n[\eta_2]\to \cA_n[\eta_1]\cdot \cA_n[\eta_2]\subseteq \bigoplus_{i=1}^s\cA_n[\lambda_i]$$
and by Lemma \ref{ref:stability:lem} these morphisms can be identified for $n\geq \sup \{n_{\eta_1},n_{\eta_2},n_{\lambda_1},...,n_{\lambda_s}\}$. We define
$$\cA[\eta_1]\otimes_{\kk} \cA[\eta_2]\to  \bigoplus_{i=1}^s\cA[\lambda_i]\subseteq \cA$$
as a morphism induced by the multiplication morphism for any $n\geq \sup
\{n_{\eta_1},n_{\eta_2},n_{\lambda_1}, \ldots ,n_{\lambda_s}\}$. This gives an
$\cO_{Z_0}$-algebra structure on $\cA$, so $\cA$ is in fact the limit of
$\cA_n$ is the category of $\Gbar$-algebras. Note that from the description of
$\cA$ it follows that for every $n\in \NN$ we have a surjective morphism
$p_n:\cA\onto \cA_n$ of $\Gbar$-algebras. We denote its kernel
by $\cJ_n$ and we put $\cJ:=\cJ_0$. The natural injection $\cO_{Z_0} = \cA_0 \to \cA$ is a section
of $p_0$, so that we have
\[
    \cJ=\bigoplus_{\lambda \in \Irr(\Gbar)\setminus
        \{\lambda_0\}}\cA[\lambda].
\]
We also denote by $\cI_n$ the kernel of $\cA_n\twoheadrightarrow
\cA_0=\cO_{Z_0}$ for $n\in \NN$. Then $\cI_n=\cJ/\cJ_n$.
Fix $m\in \NN$ and consider $n\in \NN$
such that $n\geq m$. Since $\Zformal$ is a formal $\Gbar$-scheme, the sheaf
$\cI_n^{m+1}$ is the kernel of the morphism $\cA_n\twoheadrightarrow \cA_m$.
Thus
\[
\cJ_m/\cJ_n=\cI_n^{m+1}=(\cJ^{m+1}+\cJ_n)/\cJ_n.
\]
Both $\cJ_m$ and $\cJ^{m+1}$ are $\Irr(\Gbar)$-graded and for given
$\lambda\in \Irr(\Gbar)$ and $n\gg 0$ the isotypic component
$\cJ_n[\lambda]$ is zero by Lemma~\ref{ref:stability:lem}. Hence $\cJ_m=\cJ^{m+1}$ for every $m \in \NN$.
We define
\[
    Z=\Spec_{Z_0}(\cA)
\]
and we denote by $\pi:Z\to Z_0$ the structural morphism. The scheme $Z$
inherits a $\Gbar$-action from $\cA$. For
every $n\in \NN$ the zero-set of $\cJ^{n+1}\subseteq \cA$ is a $\Gbar$-scheme
isomorphic to $Z_n$. Hence $\Zformal$ is isomorphic to $\Zhat$.
Thus $Z$ is an algebraization of $\Zformal$. Since $\cA=\lim \cA_n$, we
have $Z = \colim Z_n$ in the category of locally linear $\Gbar$-schemes.

It remains to prove uniqueness of algebraization. Let $Z' = \Spec_{Z_0} \cA'$
be an algebraization of $\Zformal = \{Z_n\}$. Then $Z_n \into Z'$, so by the
universal property of colimit, we obtain a $\Gbar$-morphism $Z\to Z'$,
corresponding to $\cA' \to \cA$. It induces epimorphisms $\cA' \onto \cA_n$
for all $n$. For each $\lambda\in \Irr(\Gbar)$, the composition
\[
    \cA'[\lambda]\to \cA[\lambda]  \simeq \cA_{n_\lambda}[\lambda]
\]
is an epimorphism, hence $\cA'\to \cA$ is an epimorphism. The kernel of
$\cA'\to \cA$ is equal to
\[
    \bigcap_n \ker(\cA'\to \cA_n) = \bigcap_n \ker(\cA' \to \cA_0)^n.
\]
To prove that this kernel is zero, we may enlarge the field to an
algebraically closed field, so the result follows from
Corollary~\ref{ref:KempfTorus:cor}.

Assume that each scheme $Z_n$ is locally Noetherian over $\kk$. Then $\cI_n$
is a coherent $\cA_n$-module, thus $\cI_n^i/\cI^{i+1}$ is a coherent
$\cA_0$-module for all $i$. The series
\[
    0 = \cI_n^{n+1} \subset \cI^n \subset  \ldots \subset \cI \subset \cA_n
\]
has coherent subquotients, hence $\cA_n$ is a coherent $\cO_{Z_n}$-algebra.
Thus $\cA[\lambda]$ is a coherent $\cO_{Z_0}$-module for every
$\lambda\in \Irr(\Gbar)$. The claim that $\pi$ is of finite type is local on
$Z^{\Group}$, hence we may
assume that $Z^{\Group}$ is quasi-compact.
The sheaf $\cJ/\cJ^2\subseteq \cA_1$ is coherent so there exists a finite set
$\lambda_1, \ldots, \lambda_r\in \Irr(\Gbar)\setminus \{\lambda_0\}$ such that the morphism
\[
    \bigoplus_{i=1}^r\cA[\lambda_i]\to \cJ/\cJ^2
\]
induced by $\cA\twoheadrightarrow \cA_2$ is surjective. Let $\cB \subset \cA$
be the quasi-coherent $\cO_{Z_0}$-subalgebra generated by the coherent
subsheaf $\cM := \bigoplus_{i=1}^r\cA[\lambda_i]\subseteq \cA$.
Let $\kkbar$ be an algebraic closure of $\kk$ and let $\cA' = \cA \otimes
\kkbar$. Fix a Kempf's torus over
$\kkbar$ and the associated grading $\cA' = \bigoplus_{i\geq 0}
\cA'[i]$ as in
Corollary~\ref{ref:KempfTorus:cor}.
Then $\cJ = \bigoplus_{i\geq 1} \cA'[i]$ is a graded ideal and $\cJ/\cJ^2$ is
generated by the graded (coherent) subsheaf $\cM' = \bigoplus_{i=1}^r\cA'[\lambda_i]$. By
graded Nakayama's lemma, the ideal $\cJ$ itself is generated by (the elements
of) $\cM'$. Then by induction on the degree, $\cA'$ is generated by $\cM'$ as
an algebra. In other words, $\cA' = \cB\otimes \kkbar$. Thus also $\cA = \cB$ and so $\cA$ is of
finite type over $\cO_{Z_0}$.
\end{proof}

\newcommand{\varphihat}{\widehat{\varphi}}%
With the proof of Theorem~\ref{ref:algebraizationOfFormalSchemes:thm} in hand,
we can easily algebraize also equivariant mappings between formal schemes.

\begin{proposition}[Algebraization of morphisms of formal
    $\Gbar$-schemes]\label{ref:algebraizationOfMaps:prop}
    Let $\Wformal = \{W_n\}$ and $\Zformal = \{Z_n\}$ be formal $\Gbar$-schemes. Let $W$ and $Z$ be
    algebraizations of $\Wformal$ and $\Zformal$ respectively (see
    Theorem~\ref{ref:algebraizationOfFormalSchemes:thm}). Then every
    $\Gbar$-morphism $\varphihat\colon\Wformal\to \Zformal$  is the formalization of a unique
    $\Gbar$-equivariant morphism $\varphi\colon W\to Z$.
\end{proposition}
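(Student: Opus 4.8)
The plan is to deduce this immediately from the universal property of the colimit established in Theorem~\ref{ref:algebraizationOfFormalSchemes:thm}, with the real work being the verification that the morphism so produced formalizes back to $\varphihat$.

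First I would recall that, by Theorem~\ref{ref:algebraizationOfFormalSchemes:thm}, one has $W = \colim_n W_n$ and $Z = \colim_n Z_n$ in the category of locally linear $\Gbar$-schemes; write $\iota_n^W\colon W_n \into W$ and $\iota_n^Z\colon Z_n \into Z$ for the (closed immersion) structure maps, which satisfy $\iota_{n+1}^Z \circ (Z_n \into Z_{n+1}) = \iota_n^Z$ and likewise for $W$. Each component $\varphi_n\colon W_n \to Z_n$ of $\varphihat$ is $\Gbar$-equivariant by the Remark preceding Definition~\ref{ref:locallylinear:def}, and each $W_n$ is a locally linear $\Gbar$-scheme, since its $\Gbar$-action is topologically trivial and hence any affine cover is $\Gbar$-stable. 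Thus the composites $\iota_n^Z \circ \varphi_n\colon W_n \to Z$ are morphisms in the category of locally linear $\Gbar$-schemes.

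Next I would verify that $\{\iota_n^Z \circ \varphi_n\}_n$ is a cocone under the diagram $\{W_n\}_n$. This uses the defining commuting square of $\varphihat$, namely $\varphi_{n+1}|_{W_n} = (Z_n \into Z_{n+1}) \circ \varphi_n$, together with $\iota_{n+1}^Z \circ (Z_n \into Z_{n+1}) = \iota_n^Z$, which combine to give $(\iota_{n+1}^Z \circ \varphi_{n+1}) \circ (W_n \into W_{n+1}) = \iota_n^Z \circ \varphi_n$. The universal property of $W = \colim_n W_n$ then yields a unique $\Gbar$-equivariant morphism $\varphi\colon W \to Z$ with $\varphi \circ \iota_n^W = \iota_n^Z \circ \varphi_n$ for all $n$. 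It remains to show that $\varphi$ formalizes to $\varphihat$ and that this characterizes $\varphi$. Since $\varphi$ is $\Gbar$-equivariant it carries $W^{\Group} = W_0$ into $Z^{\Group} = Z_0$, so on the ideals of the fixed loci one has $\varphi^{\#}(\cI_Z) \subseteq \cI_W$, hence $\varphi^{\#}(\cI_Z^{n+1}) \subseteq \cI_W^{n+1}$; this forces $\varphi(W_n) \subseteq Z_n$ scheme-theoretically, so $\varphi$ possesses a well-defined formalization $\{W_n \to Z_n\}_n$. As $\iota_n^Z$ is a monomorphism and $\varphi \circ \iota_n^W$ factors through it, the induced map $W_n \to Z_n$ is precisely $\varphi_n$, so the formalization of $\varphi$ is $\varphihat$. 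For uniqueness, any $\Gbar$-morphism $\psi\colon W \to Z$ with formalization $\varphihat$ satisfies $\psi \circ \iota_n^W = \iota_n^Z \circ \varphi_n$ for every $n$, whence $\psi = \varphi$ by the uniqueness clause of the colimit universal property.

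I expect the only genuinely delicate point to be the identification of the formalization of the constructed $\varphi$ with $\varphihat$ on the nose: one must check that $\varphi$ sends the $n$-th infinitesimal neighbourhood \emph{into the $n$-th neighbourhood} (not merely into $Z$) and that the factorization recovers $\varphi_n$, which is exactly where the inclusion $\varphi^{\#}(\cI_Z) \subseteq \cI_W$ of fixed-locus ideals and the monomorphy of the closed immersions $\iota_n^Z\colon Z_n \into Z$ enter. Everything else is a formal consequence of Theorem~\ref{ref:algebraizationOfFormalSchemes:thm}.
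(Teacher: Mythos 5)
Your proof is correct and follows essentially the same route as the paper's: the paper also deduces the morphism $\varphi\colon W\to Z$ from the universal property of the colimit $W=\colim_n W_n$ in locally linear $\Gbar$-schemes applied to the cocone $W_n\to Z_n\into Z$. You additionally spell out the verification that the formalization of $\varphi$ recovers $\varphihat$ (via $\varphi^{\#}(\cI_Z)\subseteq\cI_W$ and monomorphy of $Z_n\into Z$), which the paper leaves implicit; this elaboration is accurate.
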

\begin{proof}
    The map $\varphihat$ induces maps $W_n \to Z_n \into Z$. By
    Theorem~\ref{ref:algebraizationOfFormalSchemes:thm}, the scheme $W$ is a
    colimit of $W_n$ in the category of locally linear $\Gbar$-schemes. By the universal
    property of the colimit, we obtain a unique $\Gbar$-equivariant morphism $W\to Z$.
\end{proof}

        \begin{example}\label{ex:Gbarnalgebraization}
            Clearly $\{\Gbar_n\}$ is a formal $\Gbar$-scheme. Applying
            Theorem~\ref{ref:algebraizationOfFormalSchemes:thm} we obtain  an
            algebraization which is isomorphic to $\Gbar$. This shows
            that the reductive monoid $\Gbar$ is uniquely determined by
            $\{\Gbar_n\}$ and $\Group$.
            More generally, the algebraization of the formal $\Gbar$-scheme $\{\Gbar_n
            \times S\}$ is $\Gbar \times S$ (here the $\Gbar$-action on $S$ is
            trivial).
        \end{example}

    We will now present the proof of representability of the formal
        \BBname{} functor $\Hfunctor{\varX}$. It is natural to do it for
        algebraic space $\varX$, hence in \'etale topology. The reader interested
        only in the case when $\varX$ is a scheme, can avoid \'etaleness by
        replacing each occurrence of ``\'etale morphism'' with ``open
        immersion''. The following lemma shows that the \'etale site has
        plenty of equivariant objects: the $\Group$-action on each
        infinitesimal neighbourhood of $\varX^{\Group} \subset \varX$ lifts to
        any of its \'etale neighbourhoods.
        \begin{lemma}\label{ref:liftingOfEtaleActions:lem}
            Let $Y$ be a scheme (resp. algebraic space) with a $\Group$-action
            $\sigma\colon\Group \times Y \to Y$ and assume that the ideal of
            $Y^{\Group} \subset Y$ is locally nilpotent.
            Let $\pi\colon U\to Y$ be an \'etale morphism. Then there is a unique
            $\Group$-action on $U$ such that $\pi$ is equivariant.
        \end{lemma}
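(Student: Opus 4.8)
The plan is to build the lifted action by the unique infinitesimal lifting property of the \'etale (hence formally \'etale) morphism $\pi$. The naive attempt---lifting the identity along the unit section $\{1_{\Group}\}\times U\hookrightarrow \Group\times U$---fails because this is not a nilpotent thickening. The key idea is to replace it by the fixed locus. Set $U_0:=U\times_Y Y^{\Group}=\pi^{-1}(Y^{\Group})$; since the ideal of $Y^{\Group}\subset Y$ is locally nilpotent, so is its pullback, and therefore $U_0\hookrightarrow U$ and $\Group\times U_0\hookrightarrow \Group\times U$ are locally nilpotent closed immersions (with the same underlying spaces as $U$ and $\Group\times U$).

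For existence, put $\tau:=\sigma\circ(\id_{\Group}\times\pi)\colon \Group\times U\to Y$, equip $U_0$ with the trivial $\Group$-action, and consider the square
\[
    \begin{tikzcd}
        \Group\times U_0 \arrow[r]\arrow[d, hook] & U\arrow[d,"\pi"]\\
        \Group\times U \arrow[r,"\tau"] & Y
    \end{tikzcd}
\]
whose top arrow is the projection $\Group\times U_0\to U_0\hookrightarrow U$. I would check that it commutes: both composites send $(g,u_0)$ to $\pi(u_0)\in Y^{\Group}$, using that $\sigma$ fixes $Y^{\Group}$ pointwise. As $\pi$ is formally \'etale and the left vertical map is locally nilpotent (filter by powers of the ideal, lift uniquely at each square-zero step, and glue by uniqueness), there is a unique lift $\sigma_U\colon \Group\times U\to U$ of $\tau$ restricting to the trivial action on $U_0$. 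Then $\pi\circ\sigma_U=\tau$, which is exactly equivariance of $\pi$.

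The main work is to see that $\sigma_U$ is an action, and here I would repeatedly use the \emph{uniqueness} half of formal \'etaleness. For unitality, $\sigma_U|_{\{1\}\times U}$ and $\id_U$ are two lifts of $\pi$ along $\pi$ that agree on $U_0$, hence coincide. For associativity, writing $m$ for the multiplication of $\Group$, the morphisms $\sigma_U\circ(\id_{\Group}\times\sigma_U)$ and $\sigma_U\circ(m\times\id_U)$ from $\Group\times\Group\times U$ to $U$ both lift the same morphism to $Y$ (because $\sigma$ is an action on $Y$) and both restrict to the trivial action on $\Group\times\Group\times U_0$, so they are equal. Thus each axiom becomes a uniqueness-of-lift statement on a locally nilpotent thickening.

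For uniqueness of the action, let $\sigma_U'$ be any $\Group$-action for which $\pi$ is equivariant. For $u_0\in U_0$ we have $\pi(\sigma_U'(g,u_0))=\sigma(g,\pi(u_0))=\pi(u_0)$, so $U_0$ is $\sigma_U'$-stable and $\pi_0\colon U_0\to Y^{\Group}$ is an \'etale equivariant morphism onto a space with trivial action; since $\Group$ is connected, Lemma~\ref{ref:fixedpointsandetale:lem} forces $\sigma_U'|_{U_0}$ to be trivial. Hence $\sigma_U'$ and $\sigma_U$ are lifts of $\tau$ agreeing on $\Group\times U_0$, so $\sigma_U'=\sigma_U$. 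The argument applies verbatim when $Y$ is an algebraic space, the infinitesimal lifting property of \'etale morphisms being available there as well (alternatively one reduces to the scheme case through an \'etale atlas). The only real obstacle, as noted, is the non-nilpotence of the unit section, which working over the fixed locus $U_0$ removes.
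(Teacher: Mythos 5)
Your proof is correct and follows essentially the same route as the paper's: both take the trivial action on the pullback $U_0 = U\times_Y Y^{\Group}$ of the fixed locus as the initial datum, extend it across the nilpotent thickening using the unique lifting property of the \'etale morphism $\pi$, verify unitality and associativity by uniqueness of lifts, and deduce uniqueness of the action from connectedness of $\Group$ (via Lemma~\ref{ref:fixedpointsandetale:lem}, exactly as the paper argues inline). The only cosmetic difference is that the paper packages the lifting step as descent of morphisms with \'etale target along a universal homeomorphism (topological invariance of the \'etale site, \cite[Tag~04DY]{stacks_project}), which is also the cleaner reference when the ideal is merely \emph{locally} nilpotent, where your ``filter by powers of the ideal'' gloss does not literally terminate.
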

        Note that without the assumption that $Y^{\Group} \subset Y$ is given
        by a locally nilpotent ideal, the claim of
        Lemma~\ref{ref:liftingOfEtaleActions:lem} is false, for example take
        an \'etale double  cover $E' = E\ni e \mapsto 2e\in E$ of an elliptic
        curve in characteristic zero.
        Then the action of $E$ on itself does not lift to $E'$.

        We also note that if one restricts to schemes (ignoring algebraic
        spaces), then the only case used below is when $\pi$ an open embedding.
        In this special case,
        Lemma~\ref{ref:liftingOfEtaleActions:lem} boils down to $\sigma(\Group
        \times U) \subset U$, which is obvious as $\sigma(g, u) = u$. The
        proof for \'etale $\pi$ follows the same idea, but in the language of
        the \'etale topos instead of Zariski topology.

        \begin{proof}[Proof of Lemma~\ref{ref:liftingOfEtaleActions:lem}]
            Let $Y' = Y^{\Group}$. Since $I(Y' \subset Y)$ is
            locally nilpotent, the morphism $Y'\to Y$ is a universal
            homeomorphism (i.e. for every $T$ the morphism $Y'\times T \to Y
            \times T$ is a homeomorphism).
            Let $U' = Y' \times_{Y} U$ and $\pi'\colon U'\to Y'$ and $i\colon U'\to U$ be the
            projections. Then $\pi'$ is \'etale and $i$ is a universal
            homeomorphism. Let $\sigma_{U'}\colon \Group \times U'
            \to U'$ be the second projection. Since the $\Group$-action on
            $Y'$ is trivial, we have
            \[
                \begin{tikzcd}
                    \Group \times U' \arrow[r, "\sigma_{U'}"]\arrow[d,
                    "{(\id_{\Group}, \pi')}"] & U'\arrow[d, "\pi'"]\\
                    \Group \times Y' \arrow[r, "\sigma_{|Y'}"] & Y'
                \end{tikzcd}
            \]
            Hence $\sigma_{U'}\in \Mor_{Y'}(\Group \times U', U')$.  By descent of morphisms with \'etale
            target ({\cite[Expos{\'e} IX, Proposition 3.2]{SGAI2}},
            {\cite[Tag~04DY]{stacks_project}}), there exists a unique map
            $\sigma_U\in \Mor_{Y}(\Group \times U, U)$ which pulls back to
            $\sigma_{U'}$ under $U' \to U$. Saying that $\sigma_U$ is a map of
            schemes over $Y$ amounts to the commutativity of the following
            diagram
            \[
                \begin{tikzcd}
                    \Group \times U \arrow[r, "\sigma_{U}"]\arrow[d,
                    "{(\id_{\Group}, \pi)}"] & U\arrow[d, "\pi"]\\
                    \Group \times Y \arrow[r, "\sigma_Y"] & Y
                \end{tikzcd}
            \]
            The morphism $\sigma_U$ is the required group action. Indeed, let
            $\mu\colon \Group \times \Group \to \Group$ be the multiplication,
            then $\sigma_U \circ (\mu \times \id_U), \sigma_U \circ
            (\id_{\Group} \times \sigma_U)\in \Mor(\Group \times \Group \times
            U, U)$ both pull back via $U'\to U$ to $\sigma_{U'} \circ (\mu \times \id_{U'})
            = \sigma_{U'} \circ (\id_{\Group} \times \sigma_{U'})$, hence are
            equal by uniqueness of the descent of morphisms with \'etale
            target. To prove that $\sigma_U$ is unique, fix any lift of
            $\Group$-action to $U$. The group $\Group$ is connected and acts
            on the fibers of $U'\to Y'$, hence
            it acts trivially on $U'$. Then the uniqueness of $\Group$-action
            follows from the uniqueness of the descent argument above.
        \end{proof}
        Now we prove that the restriction $\Dfunctor{X}\to\Hfunctor{X}$ is an
        isomorphism, i.e., formal actions algebraize. For this we need to
        consider some geometry of $\Dfunctor{X}$, $\Hfunctor{X}$. Recall that
        $\Maps(-, \varX)$ is an fpqc sheaf. Hence also $\Dfunctor{X}$,
        $\Hfunctor{X}$ are fpqc sheaves. This will be used to prove
        surjectivity of the restriction: we will first prove that the
        restriction is surjective on an fpqc cover and then conclude that it is
        surjective. In contrast, injectivity is straightforward, as shown
        in Lemma~\ref{ref:injectivityOfFormal:lem}.
        \begin{lemma}\label{ref:injectivityOfFormal:lem}
            For every $\Group$-algebraic space $X$ and every $\kk$-scheme $S$ the restriction
            $\Dfunctor{X}(S) \to \Hfunctor{X}(S)$ is injective. Hence,
            the morphism $\Dfunctor{X}\to \Hfunctor{X}$ of fpqc sheaves is injective.
        \end{lemma}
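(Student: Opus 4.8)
The plan is to reduce to an affine situation and then exploit that the formal neighbourhood of $0_{\Gbar}\times S$ is schematically dense in $\Gbar\times S$, a density supplied by Corollary~\ref{ref:KempfTorus:cor}. First I would note that both the hypothesis and the desired conclusion are local on $S$ in the Zariski topology, and that $\Dfunctor{X}$ is a Zariski sheaf; hence I may assume $S$ is affine, and then $Y:=\Gbar\times S$ is an affine scheme, since $\Gbar$ is affine. Fix $\varphi_1,\varphi_2\in\Dfunctor{X}(S)$ having the same image in $\Hfunctor{X}(S)$; concretely this says that $\varphi_1$ and $\varphi_2$ restrict to the same morphism on every thickening $\Gbar_n\times S\subset Y$.

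Next I would form the equalizer $E:=Y\times_{(\varphi_1,\varphi_2),\,X\times X,\,\Delta_X}X$. Since $X$ is an algebraic space, its diagonal $\Delta_X$ is representable and a monomorphism, so $E\to Y$ is a monomorphism; when $X$ is a scheme (or has a diagonal that is an immersion) $E\hookrightarrow Y$ is a locally closed subscheme. By the hypothesis, each $\Gbar_n\times S$ factors through $E$, so $E$ contains the formal neighbourhood of $Y^{\Group}=0_{\Gbar}\times S$.

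The heart of the matter is then the identity $\bigcap_{n}\cI^{\,n+1}=0$, where $\cI\subseteq\cO_Y$ is the ideal of $Y^{\Group}$, so that $\Gbar_n\times S=V(\cI^{\,n+1})$. This is exactly Corollary~\ref{ref:KempfTorus:cor} applied to the affine $\Gbar$-scheme $Y$ (with the trivial action on the $S$-factor); since the vanishing of this intersection is insensitive to a faithfully flat field extension, I may pass to $\kkbar$ by Theorem~\ref{basechange} and use a Kempf's torus, whose induced $\NN$-grading on $\cO(\Gbar)$ places $\cI$ in strictly positive degrees and forces any element lying in all $\cI^{\,n+1}$ to have unbounded degree, hence to vanish. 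Granting this, if $X$ is separated (so that $E\hookrightarrow Y$ is a closed subscheme with ideal $\cJ$), then $\cJ\subseteq\cI^{\,n+1}$ for every $n$, whence $\cJ\subseteq\bigcap_n\cI^{\,n+1}=0$ and $E=Y$; that is, $\varphi_1=\varphi_2$.

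Finally, for a general algebraic space $X$ I would deduce $E=Y$ by localization: being an isomorphism is \'etale-local on $Y$ for the representable morphism $E\to Y$, so I would cover $Y$ by the \'etale pieces obtained by pulling back an affine \'etale cover $\{U_\alpha\to X\}$ along $(\varphi_1,\varphi_2)$, and on each piece reduce the comparison to morphisms into the separated (affine) charts, where the previous paragraph applies verbatim. I expect this last, purely formal localization to be the only delicate point, the whole substance of the lemma being the schematic density of the formal neighbourhood recorded in Corollary~\ref{ref:KempfTorus:cor}; this is precisely why injectivity is \emph{straightforward}, in contrast with the surjectivity of $\Dfunctor{X}\to\Hfunctor{X}$, which requires fpqc descent and the result of Alper--Hall--Rydh.
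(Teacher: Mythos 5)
Your treatment of the separated (in particular affine) case is correct and coincides in substance with the paper's: both form the equalizer $E\into \Gbar\times S$, observe that it contains every thickening $\Gbar_n\times S=V(\cI^{n+1})$, and conclude from $\bigcap_n\cI^{n+1}=0$ that $E=\Gbar\times S$. Your justification of the vanishing intersection via Corollary~\ref{ref:KempfTorus:cor} (after passing to $\kkbar$) is legitimate; the paper invokes instead that $\Gbar$ is integral of finite type, but this is a cosmetic difference.

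The gap is in your last paragraph, i.e., precisely in the case the lemma is actually stated for: non-separated schemes and general algebraic spaces $X$. Pulling back an affine \'etale cover $\{U_\alpha\to X\}$ along $(\varphi_1,\varphi_2)$ does not reduce the comparison to maps into a separated chart: if you pull back along $\varphi_1$, then $\varphi_1$ lifts to $U_\alpha$ on the resulting piece of $Y$, but there is no reason $\varphi_2$ should factor through the \emph{same} chart $U_\alpha$ (or through any chart at all on that piece), so "the previous paragraph applies verbatim" does not go through --- and for a non-separated target, two morphisms agreeing on a schematically dense subscheme need not be equal, which is the whole difficulty. What is missing is the paper's second, group-theoretic step: since the equalizer $E$ is a \emph{locally closed} subscheme containing the schematically dense $\Spec\cO_{\Gbar,0}\times S$, it must be open; its complement is then closed, $\Group$-stable, and disjoint from $0_{\Gbar}\times S$, hence empty fiberwise by the uniqueness of the closed orbit (Lemma~\ref{ref:smallestorbit:lem}, equivalently Proposition~\ref{ref:neighbourhoodsofzero:prop}). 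Without this openness-plus-closed-orbit argument your proof establishes the lemma only for separated $X$.
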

        \begin{proof}
            Taking a Zariski-open cover of $S$ by affine schemes we reduce to the case of affine $S$.
            Consider two $\Group$-equivariant morphisms $\varphi,\eta:\Gbar\times S\to X$ such that the corresponding formally $\Group$-equivariant families $\{\varphi_n\}_{n\geq 1}$ and $\{\eta_n\}_{n\geq 1}$ are equal. Let $h:K\to \Gbar\times S$ be the equalizer of the pair $\varphi,\,\eta$. Then $K$ is a $\Group$-scheme and $h$ is a $\Group$-equivariant locally closed immersion. Since $\varphi_n=\eta_n$ for every $n\geq 1$, we derive that for every $n\geq 1$ there is a factorization
            \begin{center}
                \begin{tikzpicture}
                    [description/.style={fill=white,inner sep=2pt}]
                    \matrix (m) [matrix of math nodes, row sep=3em, column sep=2em,text height=1.5ex, text depth=0.25ex] 
                    { K &  &    \Gbar\times S           \\
                    \Gbar_n\times S&   &  \\} ;
                    \path[->,font=\scriptsize]
                    (m-1-1) edge node[above] {$h $} (m-1-3);
                    \path[right hook->,font=\scriptsize]
                    (m-2-1) edge node[right] {$ $} (m-1-1)  
                    (m-2-1) edge node[left] {$  $} (m-1-3);
                \end{tikzpicture}
            \end{center}
            Let $o = 0_{\Gbar}$.
            The ideal $I(\Gbar_n \times S) \subset \cO_{\Gbar \times S}$ is
            equal to $I(o)^n \otimes
            \cO_S$. Since $\Gbar$ is of finite type, the intersection of these ideals is zero in
            $\cO_{\Gbar \times S}$. Thus $K$ contains the subscheme $\Spec
            \cO_{\Gbar, o} \times S$. But $\Spec \cO_{\Gbar, o} \times S\to
            \Gbar \times S$ is dominant, so there are no functions vanishing
            on $K$, in other words $K$ is an open subscheme. Then its
            complement is closed, $\Group$-stable and does not intersect $o \times S$, hence is
            empty by Lemma~\ref{ref:smallestorbit:lem} applied to the fibers
            of $\Gbar \times S\to S$. Thus $K = \Gbar \times S$ and so
            $\varphi=\eta$.
        \end{proof}
        Before we prove the representability in full generality we need an
        important special case: affine schemes.
        We note that the functors $\Dfunctor{X}$, $\Hfunctor{X}$ are sheaves
        in any sub-canonical topology, in particular in the fpqc topology.

        \begin{lemma}\label{ref:agreementForAffine:lem:newversion}
            Let $W$ be an affine scheme over $\kk$ equipped with a
            $\Group$-action. Then the restriction $\Dfunctor{W}\to \Hfunctor{W}$ is an
            isomorphism. If $W$ is a $\Gbar$-scheme, then both functors are represented by $W$.
        \end{lemma}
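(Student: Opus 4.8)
The plan is to deduce both assertions from machinery already in place: the injectivity of the formalization map (Lemma~\ref{ref:injectivityOfFormal:lem}) and the algebraization of morphisms of formal $\Gbar$-schemes (Proposition~\ref{ref:algebraizationOfMaps:prop}). Write $W=\Spec A$. Since $\Dfunctor{W}$ and $\Hfunctor{W}$ are fpqc sheaves and the restriction $\Dfunctor{W}(S)\to\Hfunctor{W}(S)$ is injective for every $S$ by Lemma~\ref{ref:injectivityOfFormal:lem}, it suffices to prove surjectivity for $S=\Spec C$ affine; the case of general $S$ then follows by covering $S$ with affine opens, algebraizing over each, and gluing, the gluing being unambiguous because on overlaps any two algebraic families restrict to the same formal family and so agree by injectivity.

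First I would treat the case where $W$ is an affine $\Gbar$-scheme, which also settles the second assertion. Here $\Dfunctor{W}=W$ by Proposition~\ref{ref:isomorphism:prop}. Given a formal family $(\varphi_n)\in\Hfunctor{W}(S)$, each $\varphi_n\colon\Gbar_n\times S\to W$ is $\Group$-equivariant and its scheme-theoretic image lies in the $n$-th infinitesimal neighbourhood $W_n$ of $W^{\Group}$: since $\Gbar_0=\Spec\kk$ carries only the trivial representation, every nontrivial isotypic component of $A$ is sent into the ideal of $0\in\Gbar_n$, so $\varphi_n^{\#}$ annihilates $I(W^{\Group})^{n+1}$. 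As $W$ is an affine, hence locally linear, $\Gbar$-scheme and $W^{\Group}$ is $\Gbar$-stable, the thickenings $\{W_n\}$ form a formal $\Gbar$-scheme with algebraization $W$, while $\{\Gbar_n\times S\}$ has algebraization $\Gbar\times S$ by Example~\ref{ex:Gbarnalgebraization}. The compatible maps $\varphi_n\colon\Gbar_n\times S\to W_n$ therefore assemble into a morphism of formal $\Gbar$-schemes — automatically $\Gbar$-equivariant by the remark that $\Group$-equivariant morphisms of formal $\Gbar$-schemes are $\Gbar$-equivariant — so by Proposition~\ref{ref:algebraizationOfMaps:prop} it is the formalization of a unique $\Gbar$-morphism $\varphi\colon\Gbar\times S\to W$. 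This $\varphi$ restricts to $(\varphi_n)$, giving the desired preimage, and hence $\Hfunctor{W}=W$.

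Finally I would reduce the general affine case to the previous one. By Proposition~\ref{ref:representabilityForAffine:prop} the functor $\Dfunctor{W}$ is represented by the closed subscheme $j\colon Z\into W$ cut out by the outsider representations in $A$, and by Lemma~\ref{ref:mainRepr:lem} the $\Group$-action on $Z$ extends to a $\Gbar$-action, so $Z$ is an affine $\Gbar$-scheme. I claim that $\Hfunctor{j}\colon\Hfunctor{Z}\to\Hfunctor{W}$ is an isomorphism: it is injective because $j$ is a closed immersion, hence a monomorphism, and it is surjective because for an outsider $\lambda$ one has $H^0(\Gbar,\cO_{\Gbar})[\lambda]=0$, whence $H^0(\Gbar_n,\cO_{\Gbar_n})[\lambda]=0$ by exactness of the isotypic functor, so every $\varphi_n^{\#}$ kills the outsider representations and $(\varphi_n)$ factors through $Z$. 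Combining this with the $\Gbar$-scheme case applied to $Z$ identifies the formalization map for $W$ with that for $Z$, which is an isomorphism. The step I expect to be most delicate is the bookkeeping that realizes a formal family as a genuine morphism of formal $\Gbar$-schemes landing term by term in the correct thickening $W_n$ (equivalently $Z_n$); once this and the automatic $\Gbar$-equivariance are secured, Proposition~\ref{ref:algebraizationOfMaps:prop} does the real work, and no perfectness hypothesis on $\kk$ is required, since all stabilization is imported through that proposition.
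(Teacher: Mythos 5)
Your proposal is correct and follows essentially the same route as the paper: reduce to the case of a $\Gbar$-scheme via the closed subscheme cut out by the outsider representations (using Proposition~\ref{ref:representabilityForAffine:prop} on the $\Dfunctor{}$ side and the fact that $H^0(\Gbar_n,\cO_{\Gbar_n})\otimes C$ contains no outsiders on the $\Hfunctor{}$ side), then get injectivity from Lemma~\ref{ref:injectivityOfFormal:lem} and surjectivity from Proposition~\ref{ref:algebraizationOfMaps:prop} together with Example~\ref{ex:Gbarnalgebraization}. The only differences are the order of the two reductions and that you spell out explicitly why each $\varphi_n$ lands in the thickening $W_n$, a point the paper records in the introduction and uses tacitly.
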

        \begin{proof}
        \def\Wplus{W^+}%
        Let $A = H^0(W, \cO_W)$. By
        Proposition~\ref{ref:representabilityForAffine:prop} we know that
        $\Dfunctor{W}$ is represented by a closed subscheme $\Wplus =
        \Spec(A/I)$ of $W$, where $I$ is the ideal generated by all outsider
        representations in $A$. By the same
        proposition (or by Proposition~\ref{ref:isomorphism:prop}) also
        $\Dfunctor{\Wplus}$ is represented by $\Wplus$. In particular,
        the natural map $\Dfunctor{\Wplus}\to \Dfunctor{W}$ is an isomorphism.

        For any $n\geq 0$ and any scheme $S$, a $\Group$-equivariant map
        $\varphi_n\colon\Gbar_n \times S\to W$ corresponds to a map of
        representations $\varphi^{\#}\colon A\to
        H^0(\Gbar_{n}, \cO_{\Gbar_n})\otimes H^0(S, \cO_S)$. The
        right hand side is a $\Gbar$-representation, hence $\ker
        \varphi^{\#}$ contains all outsider representations of $A$, see
        Lemma~\ref{ref:mainRepr:lem}. The map
        $\varphi^{\#}$ is a homomorphism, hence $\ker \varphi^{\#}$ contains
        $I$. Thus, $\varphi_n$ factors through
        $\Wplus\into W$. Therefore, the natural map $\Hfunctor{\Wplus}\to
        \Hfunctor{W}$ is an isomorphism.

        Consider the Commutative Square~\eqref{eq:restrictionsSquare}, whose
        vertical arrows come functorially from $W^+\to W$ and horizontal
        arrows come from the restriction $\Dfunctor{}\to \Hfunctor{}$. By
        the above discussion, the vertical maps are isomorphisms.
        \begin{equation}\label{eq:restrictionsSquare}
                \begin{tikzpicture}
                    [description/.style={fill=white,inner sep=2pt}]
                    \matrix (m) [matrix of math nodes, row sep=3em, column sep=3em,text height=1.5ex, text depth=0.25ex] 
                    { \Dfunctor{W^+} &     \Hfunctor{W^+}           \\
                      \Dfunctor{W} &   \Hfunctor{W}  \\} ;
                    \path[->,font=\scriptsize]
                    (m-1-1) edge node[above] {$ $} (m-1-2)
                    (m-1-1) edge node[left] {$\cong $} (m-2-1)  
                    (m-1-2) edge node[right]  {$\cong $} (m-2-2)
                    (m-2-1) edge node[left]  {$ $} (m-2-2);
                \end{tikzpicture}
            \end{equation}
        To conclude that $\Dfunctor{W}\to \Hfunctor{W}$ is an
        isomorphism, it suffices to prove that $\Dfunctor{W^+}\to
        \Hfunctor{W^+}$ is an isomorphism. Hence, we may and do restrict attention to
        the case when $W$ is a $\Gbar$-scheme.
            Fix a scheme $S$.
            By Lemma~\ref{ref:injectivityOfFormal:lem}, the map
            $\Dfunctor{W}(S)\to \Hfunctor{W}(S)$ is injective. It remains to prove
            that $\Dfunctor{W}(S)\to \Hfunctor{W}(S)$ is surjective. Fix an
            element of $[\gamma]\in \Hfunctor{W}(S)$. It corresponds to a coherent
            sequence of families $\gamma_n\colon \Gbar_n \times S\to W$, hence to a morphism
            $\widehat{\gamma}\colon \{\Gbar_n \times S\} \to \What$ of formal
            $\Gbar$-schemes.  As $W$ is affine, it is locally linear, hence it
            is an algebraization of
            $\What$ (see~Definition~\ref{ref:algebraization:def}). By
            Proposition~\ref{ref:algebraizationOfMaps:prop} (see
            Example~\ref{ex:Gbarnalgebraization}) the morphism
            $\gamma$ comes from a unique $\Gbar$-equivariant morphism
            $\gamma\colon\Gbar \times S\to W$. Thus $[\gamma]$ lies in the
            image of $\Dfunctor{W}(S)\to \Hfunctor{W}(S)$.
        \end{proof}
        \begin{corollary}\label{ref:agreementForLocallyLinear:cor}
            Let $Z$ be a locally linear $\Gbar$-scheme. Then the restriction
            map $\Dfunctor{Z}\to \Hfunctor{Z}$ is an isomorphism.
        \end{corollary}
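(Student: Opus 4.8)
The plan is to reduce to the affine case of Lemma~\ref{ref:agreementForAffine:lem:newversion} by localizing over the fixed-point scheme $Z^{\Group}$ and then gluing. Since $Z$ is locally linear I would fix an affine $\Gbar$-stable open cover $\{V_i\}_i$ of $Z$. Recall that $\iinfty{Z}\colon Z\to Z^{\Group}$ is affine, and by Proposition~\ref{ref:openimmersionscartesian:prop} (together with $\Dfunctor{Z}\simeq Z$ from Proposition~\ref{ref:isomorphism:prop}) each $V_i = \iinfty{Z}^{-1}(V_i^{\Group})$; as the $V_i$ cover $Z$, the opens $\{V_i^{\Group}\}_i$ cover $Z^{\Group}$.

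Injectivity of $\Dfunctor{Z}(S)\to \Hfunctor{Z}(S)$ for every $S$ is already supplied by Lemma~\ref{ref:injectivityOfFormal:lem}, which holds for arbitrary $\Group$-algebraic spaces, so I only need to prove surjectivity. The mechanism is that both $\Dfunctor{Z}$ and $\Hfunctor{Z}$ sit over $Z^{\Group}$ via $\iinfty{Z}$ and the restriction map commutes with these projections, and moreover these projections are compatible with the cover: Proposition~\ref{ref:openimmersionscartesian:prop} applied to $V_i\into Z$ gives a cartesian square identifying $\Dfunctor{V_i}=\Dfunctor{Z}\times_{Z^{\Group}}V_i^{\Group}$, while Theorem~\ref{ref:etale:thm} applied to the open immersion $V_i\into Z$ gives the analogous $\Hfunctor{V_i}=\Hfunctor{Z}\times_{Z^{\Group}}V_i^{\Group}$.

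For surjectivity I would fix a $\kk$-scheme $S$ and $[\gamma]\in \Hfunctor{Z}(S)$ with limit map $\gamma_0 = \iinfty{Z}([\gamma])\colon S\to Z^{\Group}$, and set $S_i = \gamma_0^{-1}(V_i^{\Group})$, an open cover of $S$. Over $S_i$ the map $\gamma_0$ factors through $V_i^{\Group}$, so the cartesian square for $\Hfunctor{}$ lifts $[\gamma]|_{S_i}$ to a unique element of $\Hfunctor{V_i}(S_i)$; by Lemma~\ref{ref:agreementForAffine:lem:newversion} this comes from a unique $\eta_i\in \Dfunctor{V_i}(S_i)$, whose image $\widetilde\eta_i\in \Dfunctor{Z}(S_i)$ restricts to $[\gamma]|_{S_i}$. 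I would then glue the $\widetilde\eta_i$: on each overlap $S_i\cap S_j$ both $\widetilde\eta_i$ and $\widetilde\eta_j$ map to $[\gamma]|_{S_i\cap S_j}$ in $\Hfunctor{Z}$, so by injectivity they agree, and since $\Dfunctor{Z}$ is a Zariski sheaf they glue to a global $\eta\in \Dfunctor{Z}(S)$ mapping to $[\gamma]$.

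The individual inputs are all available, so the step needing the most care is the gluing: the agreement of the local lifts $\widetilde\eta_i$ on overlaps is not visible directly from the construction, and I expect it to be the main point. It works precisely because injectivity of $\Dfunctor{Z}\to \Hfunctor{Z}$ (Lemma~\ref{ref:injectivityOfFormal:lem}) lets one verify equality after passing to $\Hfunctor{Z}$, where both lifts restrict to the same $[\gamma]$. Conceptually this whole argument is just the assertion that a morphism of sheaves over $Z^{\Group}$ that becomes an isomorphism over each member of an open cover of $Z^{\Group}$ is itself an isomorphism; the explicit gluing makes this rigorous without appealing to relative representability.
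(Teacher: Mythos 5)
Your proposal is correct and follows essentially the same route as the paper: the same affine $\Gbar$-stable cover, the same cartesian squares over $Z^{\Group}$ for both $\Dfunctor{}$ and $\Hfunctor{}$, and the reduction to Lemma~\ref{ref:agreementForAffine:lem:newversion}. The only difference is cosmetic: where the paper concludes ``by descent'' from the cover $\{\Dfunctor{V_i}\}$ of $\Dfunctor{Z}$ and $\{\Hfunctor{V_i}\}$ of $\Hfunctor{Z}$, you spell out the Zariski gluing explicitly, using injectivity from Lemma~\ref{ref:injectivityOfFormal:lem} to match the local lifts on overlaps.
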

        \begin{proof}
            Fix an affine open $\Gbar$-stable cover $\{V_i\}_i$ of $Z$. By the
            discussion after Definition~\ref{ref:locallylinear:def}, we have
            $\iinfty{Z}^{-1}(\iinfty{Z}(V_i)) = V_i$ for all $i$.
            By Proposition~\ref{ref:openimmersionscartesian:prop} and its
            direct analogue for $\Hfunctor{Z}$, we have a cartesian diagram
            \[
                \begin{tikzcd}
                    \Dfunctor{V_i} \arrow[r]\arrow[d, hook] & \Hfunctor{V_i} \arrow[r,
                    ]\arrow[d, hook] & V_i^{\Group}\arrow[d, hook]\\
                    \Dfunctor{Z} \arrow[r] & \Hfunctor{Z} \arrow[r,
                    ] & Z^{\Group}
                \end{tikzcd}
            \]
            Moreover, by Point~\ref{it:opentwo} of
            Proposition~\ref{ref:openimmersions:prop},
            the functor
            $\Dfunctor{Z}$ is covered by $\{\Dfunctor{V_i}\}_i$. By a direct
            analogue of this proposition, also $\Hfunctor{Z}$ is covered by
            $\{\Hfunctor{V_i}\}_i$. The natural restriction map $\coprod
            \Dfunctor{V_i}\to \coprod \Hfunctor{V_i}$ is an isomorphism by
            Lemma~\ref{ref:agreementForAffine:lem:newversion}, hence also
            $\Dfunctor{Z}\to \Hfunctor{Z}$ is an isomorphism by descent.
        \end{proof}

        \begin{theorem}\label{ref:representabilityformal:thm}
            Let $X$ be an algebraic space over $\kk$ together with a $\Group$-action.
            Then the functor $\Hfunctor{X}$ is representable by an
            algebraic space and the morphism $\Hfunctor{X}\to \Ffunctor{X}$ is affine.
            Moreover,
            \begin{enumerate}
                \item if $\varX$ is a scheme, then $\Hfunctor{X}$ is represented by a
                    scheme,
                \item if $\varX$ is locally Noetherian, then $\pi$ is of finite type.
                    Hence $\Hfunctor{X}$ is also locally Noetherian. If moreover
                    $\varX^{\Group}$ is of finite type, then $\Hfunctor{X}$ is of finite
                    type.
            \end{enumerate}
        \end{theorem}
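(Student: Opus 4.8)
The plan is to exhibit $\Hfunctor{X}$ as the algebraization, in the sense of Theorem~\ref{ref:algebraizationOfFormalSchemes:thm}, of a formal $\Gbar$-scheme assembled from the infinitesimal neighbourhoods of $X^{\Group}$ in $X$, and then to match its functor of points with $\Hfunctor{X}$. Write $\cI$ for the ideal of $X^{\Group}\subset X$ and set $X_n=V(\cI^{n+1})$, so that $\{X_n\}$ is the formal scheme $\widehat{X}$ of Example~\ref{algebraicformalgroupschemes} and $X_0=X^{\Group}$. The first observation is that $\Hfunctor{X}$ sees only $\{X_n\}$: since $0_{\Gbar}$ is $\Group$-fixed and $\Gbar_n$ is its $n$-th neighbourhood, every equivariant $\Gbar_n\times S\to X$ has image in the $n$-th neighbourhood $X_n$ of $X^{\Group}$, whence $\Hfunctor{X}(S)$ is precisely the set of coherent systems of equivariant morphisms $\Gbar_n\times S\to X_n$. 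Because $\Group$ is connected and $|X_n|=|X^{\Group}|$, the $\Group$-action on $X_n$ is topologically trivial, so every open is $\Group$-stable and $X_n$ is covered by affine $\Group$-stable opens (for an algebraic space, Lemma~\ref{ref:liftingOfEtaleActions:lem} instead supplies $\Group$-equivariant affine \'etale charts). By Proposition~\ref{ref:representabilityForAffine:prop} and Proposition~\ref{ref:openimmersions:prop} the functor $\Dfunctor{X_n}$ is then represented by a closed $\Gbar$-subscheme $Z_n\into X_n$, affine over $X_0$.

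Next I would promote $\{Z_n\}$ to a formal $\Gbar$-scheme. The closed immersions $X_n\into X_{n+1}$ induce, by the cartesian square of Theorem~\ref{ref:closedImmersions:prop}, closed immersions $Z_n=\Dfunctor{X_n}\into\Dfunctor{X_{n+1}}=Z_{n+1}$ with $Z_n=Z_{n+1}\times_{X_{n+1}}X_n$. The same cartesian squares transport the defining relations of $\widehat{X}$ to $\{Z_n\}$: from $Z_0=Z_n\times_{X_n}X_0$ one gets $Z_0=Z_n^{\Group}$, and the pullback of $X_m=V(\cI_n^{m+1})\subset X_n$ along $Z_n\into X_n$ is $Z_m=V(\cI_n^{m+1})\subset Z_n$, so $\{Z_n\}$ is a formal $\Gbar$-scheme over $Z_0=X^{\Group}$. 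Applying Theorem~\ref{ref:algebraizationOfFormalSchemes:thm} produces its unique algebraization $Z=\colim_n Z_n$, a locally linear $\Gbar$-scheme whose structural map $\iinfty{Z}\colon Z\to X^{\Group}$ is affine and in which the $n$-th neighbourhood of $Z^{\Group}=X^{\Group}$ is recovered as $Z_n$.

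It then remains to check that $Z$ represents $\Hfunctor{X}$. Since $Z$ is a locally linear $\Gbar$-scheme, Proposition~\ref{ref:isomorphism:prop} and Corollary~\ref{ref:agreementForLocallyLinear:cor} give $\Mor(S,Z)=\Dfunctor{Z}(S)=\Hfunctor{Z}(S)$, and by the image-in-the-thickening principle this is the set of coherent systems of equivariant morphisms $\Gbar_n\times S\to Z_n$. Comparing with $\Hfunctor{X}(S)$, I would show that the closed immersion $Z_n\into X_n$ induces a bijection on equivariant $\Gbar_n$-families. Injectivity is clear; for surjectivity, on an affine $\Group$-stable chart $W$ of $X_n$ an equivariant $\Gbar_n\times S\to W$ corresponds to a map $\cO(W)\to H^0(\Gbar_n,\cO_{\Gbar_n})\otimes\cO(S)$ whose target contains only $\Gbar$-representations, because $H^0(\Gbar_n,\cO_{\Gbar_n})$ is a subquotient of $H^0(\Gbar,\cO_{\Gbar})$ (Corollary~\ref{ref:decompositionOfGlobalSectionsGbar:cor}); hence its kernel contains the outsider ideal cutting out $Z_n\cap W$ (Proposition~\ref{ref:representabilityForAffine:prop}), so the family factors through $Z_n$. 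This is exactly the factoring argument already used in Lemma~\ref{ref:agreementForAffine:lem:newversion}. Passing to the limit over $n$ yields $\Mor(S,Z)=\Hfunctor{X}(S)$ functorially, so $\Hfunctor{X}\simeq Z$ and $\iinfty{X}=\iinfty{Z}$ is affine.

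For $X$ a scheme the $Z_n$ and $Z$ are schemes, which gives assertion (1); for an algebraic space I would run the construction \'etale-locally over $X^{\Group}$, using Lemma~\ref{ref:liftingOfEtaleActions:lem} to make the charts equivariant and Theorem~\ref{ref:etale:thm} (\'etale base change for $\Hfunctor{-}$) to glue the scheme-level algebraizations into an algebraic space $Z$ with affine $\iinfty{X}$. The finite-type statements (2) are read off from Theorem~\ref{ref:algebraizationOfFormalSchemes:thm}: if $X$ is locally Noetherian then so is each $Z_n\subset X_n$, whence $\pi=\iinfty{X}$ is of finite type and $\Hfunctor{X}$ is locally Noetherian, and if moreover $X^{\Group}=Z_0$ is of finite type then so is $Z$. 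The step I expect to be the real obstacle is the identification of the third paragraph: one must reconcile the fact that points of the algebraization $Z$ are \emph{genuine} $\Gbar$-families with the a priori purely formal nature of $\Hfunctor{X}$, and the bijection on $\Gbar_n$-families rests delicately on the outsider-freeness of $H^0(\Gbar_n,\cO_{\Gbar_n})$ and must be verified compatibly across all $n$. The secondary, more bookkeeping-heavy, difficulty is ensuring that in the algebraic-space case the \'etale-local algebraizations descend to a single algebraic space with affine projection to $X^{\Group}$.
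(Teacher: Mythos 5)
Your proposal is correct and follows essentially the same route as the paper's proof: pass to the thickenings $X_n$ where the action is topologically trivial, represent each $\Dfunctor{X_n}$ by a closed subscheme $Z_n$ using Propositions~\ref{ref:representabilityForAffine:prop} and~\ref{ref:openimmersions:prop}, verify via the cartesian squares of Theorem~\ref{ref:closedImmersions:prop} that $\{Z_n\}$ is a formal $\Gbar$-scheme, algebraize by Theorem~\ref{ref:algebraizationOfFormalSchemes:thm}, and identify the result with $\Hfunctor{X}$ through Corollary~\ref{ref:agreementForLocallyLinear:cor}, handling algebraic spaces by an \'etale atlas equipped with the lifted action from Lemma~\ref{ref:liftingOfEtaleActions:lem}. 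The only cosmetic difference is that you match $\Hfunctor{X}$ with $\Hfunctor{Z}$ by the levelwise outsider-ideal factoring argument, where the paper instead invokes the \'etale cartesian square of Theorem~\ref{ref:etale:thm}; both rest on the same facts and are equally valid.
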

\begin{proof}
Let $p\colon U\to X$ be an {\'e}tale surjective morphism from a scheme (if
$\varX$ is locally Noetherian over $\kk$ then we choose $U$ locally
Noetherian). Restricting $U$ to infinitesimal neighbourhoods of
$\varX^{\Group}$ we obtain the following commutative diagram
\begin{center}
\begin{tikzpicture}
[description/.style={fill=white,inner sep=2pt}]
\matrix (m) [matrix of math nodes, row sep=3em, column sep=2em,text height=1.5ex, text depth=0.25ex] 
{U_0 & & ... & & U_n & & U_{n+1} & & ... & & U       \\
 X_0 & & ... & & X_n & & X_{n+1} & & ... & & X\\} ;
\path[right hook->,font=\scriptsize] 
(m-1-1) edge node[above] {$ $} (m-1-3)
(m-1-3) edge node[above] {$ $} (m-1-5)
(m-1-5) edge node[above] {$ $} (m-1-7)
(m-1-7) edge node[above] {$ $} (m-1-9)
(m-1-9) edge node[above] {$ $} (m-1-11)
(m-2-1) edge node[above] {$ $} (m-2-3)
(m-2-3) edge node[above] {$ $} (m-2-5)
(m-2-5) edge node[above] {$ $} (m-2-7)
(m-2-7) edge node[above] {$ $} (m-2-9)
(m-2-9) edge node[above] {$ $} (m-2-11);
\path[->,font=\scriptsize]
(m-1-1) edge node[left] {$ p_0$} (m-2-1)
(m-1-5) edge node[left] {$ p_n$} (m-2-5)
(m-1-7) edge node[left] {$ p_{n+1}$} (m-2-7)
(m-1-11) edge node[right] {$ p$} (m-2-11);
\end{tikzpicture}
\end{center}
Using  Lemma~\ref{ref:liftingOfEtaleActions:lem} we obtain a unique $\Group$-action on
each $U_n$, so that the maps in Diagram~\eqref{eq:diagramFormalABBEtale} below
become equivariant
\begin{equation}\label{eq:diagramFormalABBEtale}
\begin{tikzpicture}
[description/.style={fill=white,inner sep=2pt}]
\matrix (m) [matrix of math nodes, row sep=3em, column sep=2em,text height=1.5ex, text depth=0.25ex] 
{U_0 & & ... & & U_n & & U_{n+1} & & ... & &        \\
 X_0 & & ... & & X_n & & X_{n+1} & & ... & & X\\} ;
\path[right hook->,font=\scriptsize] 
(m-1-1) edge node[above] {$ $} (m-1-3)
(m-1-3) edge node[above] {$ $} (m-1-5)
(m-1-5) edge node[above] {$ $} (m-1-7)
(m-1-7) edge node[above] {$ $} (m-1-9)
(m-2-1) edge node[above] {$ $} (m-2-3)
(m-2-3) edge node[above] {$ $} (m-2-5)
(m-2-5) edge node[above] {$ $} (m-2-7)
(m-2-7) edge node[above] {$ $} (m-2-9)
(m-2-9) edge node[above] {$ $} (m-2-11);
\path[->,font=\scriptsize]
(m-1-1) edge node[left] {$ p_0$} (m-2-1)
(m-1-5) edge node[left] {$ p_n$} (m-2-5)
(m-1-7) edge node[left] {$ p_{n+1}$} (m-2-7);
\end{tikzpicture}
\end{equation}
In particular, $\widehat{U}=\{U_n\}_{n\in \NN}$ is a formal
$\Group$-scheme. For every $n$, every Zariski-open subset of $U_n$ is
$\Group$-stable, hence $\Dfunctor{U_n}$ is represented by a closed subscheme
$Z_n \subset U_n$, see Proposition~\ref{ref:openimmersions:prop}.
By Proposition~\ref{ref:closedImmersions:prop}, for each $n$, $m$ the
Diagram~\eqref{eq:diagramABBFormalClosed} below is cartesian.
\begin{equation}\label{eq:diagramABBFormalClosed}
\begin{tikzpicture}
[description/.style={fill=white,inner sep=2pt}]
\matrix (m) [matrix of math nodes, row sep=3em, column sep=2em,text height=1.5ex, text depth=0.25ex] 
{Z_m&  &    Z_{n}         \\
U_{m}&   & U_{n}       \\} ;
\path[right hook->,font=\scriptsize] 
(m-1-3) edge node[right] {$ $} (m-2-3)
(m-1-1) edge node[left] {$ $} (m-2-1)
(m-1-1) edge node[above] {$ $} (m-1-3)
(m-2-1) edge node[below] {$ $} (m-2-3);
\end{tikzpicture}
\end{equation}
Let $\cI_n$ be a quasi-coherent ideal defining $U_0$ in $U_n$ and $\cJ_n$ be
an ideal defining $Z_0=U_0$ in $Z_n$. Since
Diagram~\eqref{eq:diagramABBFormalClosed} is cartesian and the kernel of the
surjective morphism $\cO_{U_n}\twoheadrightarrow \cO_{U_m}$ is $\cI_n^{m+1}$,
we derive that the kernel of the surjective morphism $\cO_{Z_n}\to \cO_{Z_m}$ is
$\cJ_n^{m+1}$. Hence $\Zformal=\{Z_n\}_{n\in \NN}$ is a formal $\Gbar$-scheme.
Moreover, $\Zformal$ is locally Noetherian if $\varX$ is locally Noetherian. By
Theorem \ref{ref:algebraizationOfFormalSchemes:thm}, we deduce that there exists a $\Gbar$-scheme
$Z$ such that
\begin{enumerate}
    \item $\Zformal$ and $\Zhat$ are isomorphic formal $\Gbar$-schemes.
    \item The canonical morphism $\iinfty{Z}:Z\to Z^{\Group}$ is affine.
    \item If $\varX$ is locally Noetherian then $\iinfty{Z}$ is of finite type.
\end{enumerate}
Arguing as in Theorem~\ref{ref:etale:thm} we have a cartesian diagram of {\'e}tale sheaves
\[
    \begin{tikzpicture}
        [description/.style={fill=white,inner sep=2pt}]
        \matrix (m) [matrix of math nodes, row sep=3em, column sep=2em,text height=1.5ex, text depth=0.25ex] 
        {\Hfunctor{Z}&  &      \Hfunctor{X}       \\
        Z^{\Group}&   & \Ffunctor{X}       \\} ;
        \path[->,font=\scriptsize]
        (m-1-3) edge node[right] {$ $} (m-2-3)
        (m-1-1) edge node[left] {$\iinfty{Z}$} (m-2-1)
        (m-1-1) edge node[above] {$ $} (m-1-3)
        (m-2-1) edge node[below] {$ $} (m-2-3);
    \end{tikzpicture}
\]
Since $\iinfty{Z}$ is affine, $Z$ is a locally linear $\Gbar$-scheme. The morphism $\Hfunctor{Z}\to Z$ is an isomorphism by
Corollary~\ref{ref:agreementForLocallyLinear:cor}.
By~\cite[Exercise~5.G]{Olsson}, $\Hfunctor{\varX}$ is an algebraic space. If
$\varX$ is a scheme, we can take $U = \varX$ and deduce that
$\Hfunctor{\varX}$ is represented by a scheme $Z$. Finally, if $\varX$ is
(locally) Noetherian, then $\iinfty{Z}$ is of finite type hence $\Hfunctor{X} \to
\varX^{\Group}$ is of finite type. This completes the proof.
\end{proof}

        By Lemma~\ref{ref:agreementForAffine:lem:newversion}, the functors
        $\Dfunctor{\varX}$, $\Hfunctor{\varX}$ agree for affine $\varX$.
        To prove that they agree everywhere,  we apply
        Theorem~\ref{ref:etalestableneighbourhoods:thm} to find a surjective
        equivariant morphism $W\to \varX$ from $W$ isomorphic to a disjoint union
        of affine schemes.

        \begin{theorem}[Existence of \BBname{} for algebraic
            spaces]\label{ref:Representability:thm}
            Let $\varX$ be a quasi-separated algebraic space locally of finite
            type over $\kk$. Then, the restriction $\Dfunctor{X} \to
            \Hfunctor{X}$ is an isomorphism and both functors are represented by
            an algebraic space $\Xplus$
            locally of finite type over $\kk$. The map $\iinftyX\colon \Xplus\to
            \varX^{\Group}$ is affine of finite type.
        \end{theorem}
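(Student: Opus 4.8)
The representability of $\Hfunctor{\varX}$ by an algebraic space, the affineness of $\iinftyX\colon \Hfunctor{\varX}\to \varX^{\Group}$, and---since $\varX$ is locally of finite type over the field $\kk$, hence locally Noetherian---the fact that $\iinftyX$ is of finite type are all supplied by Theorem~\ref{ref:representabilityformal:thm}. Consequently, once I prove that the restriction $\Dfunctor{\varX}\to \Hfunctor{\varX}$ is an isomorphism, both functors are represented by the same algebraic space $\Xplus$, which is locally of finite type over $\kk$ because $\iinftyX$ is of finite type and $\varX^{\Group}$ is locally of finite type over $\kk$. The entire content of the theorem thus reduces to the claim that $\Dfunctor{\varX}\to \Hfunctor{\varX}$ is an isomorphism. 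Injectivity is already Lemma~\ref{ref:injectivityOfFormal:lem}, so, viewing both as fpqc sheaves, it remains to prove surjectivity; since a monomorphism of sheaves that is also an epimorphism is an isomorphism, this will finish the proof.

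The plan is to first establish the isomorphism after base change to $\kkbar$ and then descend. Over $\kkbar$ I would build an affine \'etale chart of $\varX_{\kkbar}$ around the fixed locus as follows. Every $\kkbar$-point $x$ of $\varX_{\kkbar}^{\Group}$ has stabilizer equal to the linearly reductive group $\Group_{\kkbar}$, so Theorem~\ref{ref:etalestableneighbourhoods:thm} produces an affine scheme $W_x$ with a $\Group_{\kkbar}$-action together with a $\Group_{\kkbar}$-equivariant \'etale neighbourhood $W_x\to \varX_{\kkbar}$ of $x$. Setting $W=\coprod_x W_x$ over all such $x$ yields a $\Group_{\kkbar}$-equivariant \'etale morphism $f\colon W\to \varX_{\kkbar}$ from a disjoint union of affine schemes whose image contains all of $\varX_{\kkbar}^{\Group}$. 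By Lemma~\ref{ref:fixedpointsandetale:lem} (exactly as in the proof of Theorem~\ref{ref:etale:thm}) one has $W^{\Group}=\varX_{\kkbar}^{\Group}\times_{\varX_{\kkbar}}W$, so $W^{\Group}\to \varX_{\kkbar}^{\Group}$ is \'etale and surjective.

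Next I would lift sections \'etale-locally. Fixing a $\kkbar$-scheme $S$ and $s\in \Hfunctor{\varX_{\kkbar}}(S)$ with image $\sigma=\iinfty{\varX_{\kkbar}}(s)\in \varX_{\kkbar}^{\Group}(S)$, the \'etale surjectivity of $W^{\Group}\to \varX_{\kkbar}^{\Group}$ lets me lift $\sigma$ to $\widetilde{\sigma}\colon S'\to W^{\Group}$ after an \'etale cover $S'\to S$. The cartesian square $\Hfunctor{W}=\Hfunctor{\varX_{\kkbar}}\times_{\varX_{\kkbar}^{\Group}}W^{\Group}$ of Theorem~\ref{ref:etale:thm} then turns $(s|_{S'},\widetilde{\sigma})$ into an element of $\Hfunctor{W}(S')$, which, $W$ being affine, lies in $\Dfunctor{W}(S')$ by Lemma~\ref{ref:agreementForAffine:lem:newversion}. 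Pushing forward along $\Dfunctor{f}\colon \Dfunctor{W}\to \Dfunctor{\varX_{\kkbar}}$ and using the commutativity of the restriction maps with $f$, the resulting element of $\Dfunctor{\varX_{\kkbar}}(S')$ restricts to $s|_{S'}$. Hence $\Dfunctor{\varX_{\kkbar}}\to \Hfunctor{\varX_{\kkbar}}$ is an epimorphism of fpqc sheaves, and with injectivity it is an isomorphism over $\kkbar$.

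Finally I would descend to $\kk$. For a $\kk$-scheme $S$, the base change $S_{\kkbar}\to S$ is faithfully flat and affine, hence an fpqc cover, and using Lemma~\ref{ref:zerobasechange:lem} to see that the neighbourhoods $\Gbar_n$ base change correctly one identifies $\Hfunctor{\varX}(S_{\kkbar})=\Hfunctor{\varX_{\kkbar}}(S_{\kkbar})$ and likewise for $\Dfunctor{}$. Thus any $s\in \Hfunctor{\varX}(S)$ becomes liftable after the fpqc cover $S_{\kkbar}\to S$, so $\Dfunctor{\varX}\to \Hfunctor{\varX}$ is an epimorphism of fpqc sheaves; combined with Lemma~\ref{ref:injectivityOfFormal:lem} it is an isomorphism, and the representability and finiteness assertions then follow from Theorem~\ref{ref:representabilityformal:thm} as explained above. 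I expect the main obstacle to be this surjectivity step: manufacturing a genuine equivariant family $\Gbar\times S\to \varX$ out of a merely formal one forces the use of the affine \'etale charts of Theorem~\ref{ref:etalestableneighbourhoods:thm}, and the care lies in assembling those charts and then descending simultaneously along the \'etale cover $S'\to S$ and the field extension $\kkbar/\kk$, all while tracking the cartesian squares that match $\Hfunctor{}$ and $\Dfunctor{}$ with their affine-chart counterparts.
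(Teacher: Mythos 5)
Your proposal is correct and follows essentially the same route as the paper: injectivity via Lemma~\ref{ref:injectivityOfFormal:lem}, surjectivity over $\kkbar$ by combining the affine equivariant \'etale charts of Theorem~\ref{ref:etalestableneighbourhoods:thm} with the cartesian square of Theorem~\ref{ref:etale:thm} and the affine case Lemma~\ref{ref:agreementForAffine:lem:newversion}, then fpqc descent along $S_{\kkbar}\to S$, with representability supplied by Theorem~\ref{ref:representabilityformal:thm}. The only difference is presentational: you unwind the surjectivity of sheaves into an explicit \'etale-local lifting of sections, where the paper argues directly with the commutative square of fpqc sheaves.
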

        We remark that both Theorem~\ref{ref:introRepresentability:thm} and
        Theorem~\ref{ref:algebraizationABB} follow from
        Theorem~\ref{ref:Representability:thm} in the case $\varX$ is a
        scheme.

        \begin{proof}[Proof of~Theorem~\ref{ref:Representability:thm}]
            By Lemma~\ref{ref:injectivityOfFormal:lem} the transformation
            $\Dfunctor{X}\to \Hfunctor{X}$ is injective. Let us prove that
            $\Dfunctor{X} \to \Hfunctor{X}$ is surjective.
            Let $\kkbar$ be an algebraic closure of $\kk$. According to
            Theorem~\ref{ref:etalestableneighbourhoods:thm} there exists
            a $\Group_{\kkbar}$-equivariant
            {\'e}tale morphism $f: W \to X_{\kkbar}$ such that $W$ is a
            disjoint union of affine
            $\Group_{\kkbar}$-schemes and the image of $f$ contains the set of fixed
            points of $\Group_{\kkbar}$ on $X_{\kkbar}$.
            Consider Diagram~\eqref{eq:Trautmancover}, which is a commutative
            square of fpqc sheaves.
            According to Theorem~\ref{ref:etale:thm}
            and Lemma~\ref{ref:agreementForAffine:lem:newversion}, the top
            horizontal arrow and right vertical arrow are surjections.
            \begin{equation}\label{eq:Trautmancover}
                \begin{tikzpicture}
                    [description/.style={fill=white,inner sep=2pt}]
                    \matrix (m) [matrix of math nodes, row sep=3em, column sep=2em,text height=1.5ex, text depth=0.25ex] 
                    {\Dfunctor{W} &  &    \Hfunctor{W}           \\
                    \Dfunctor{X_{\kkbar}}&   &   \Hfunctor{X_{\kkbar}} \\} ;
                    \path[->,font=\scriptsize]
                    (m-2-1) edge node[below] {$ $} (m-2-3)
                    (m-1-1) edge node[left] {$ \Dfunctor{f} $} (m-2-1);
                    \path[->>,font=\scriptsize]
                    (m-1-1) edge node[above] {$ $} (m-1-3)
                    (m-1-3) edge node[right] {$ \Hfunctor{f}$} (m-2-3);
                \end{tikzpicture}
            \end{equation}
            Thus also $\Dfunctor{X_{\kkbar}}\to \Hfunctor{X_{\kkbar}}$ is a surjection of fpqc sheaves. This implies that
            $\Dfunctor{X}\to \Hfunctor{X}$
            is a surjective morphism of fpqc sheaves after restriction to schemes over
            $\kkbar$. For every $\kk$-scheme $Z$ the projection $Z_{\kkbar}\to Z$ is an
            fpqc cover. Hence the morphism
            $\Dfunctor{X}\to \Hfunctor{X}$
            is a surjection of fpqc sheaves. As it is also injective, it is an
            isomorphism. By Theorem~\ref{ref:representabilityformal:thm}, the
            functor $\Hfunctor{X}$ is representable and affine of finite type over
            $\varX^{\Group}$.
        \end{proof}

        \begin{proof}[Proof of Proposition~\ref{ref:comparison:intro:prop}]
            \def\mm{\mathfrak{m}}%
            \def\Uplus{U^{+}}%
            First, we prove that $\ioneX^{\#}\colon \hat{\cO}_{\varX, x} \to
            \hat{\cO}_{\Xplus, x'}$ is an isomorphism. Let $\mm = \mm_x$. For all $i > 0$,
            the $\Group$-representation $\hat{\cO}_{\varX, x}/\mm^i$ has a
            composition series with subquotients $\mm^j/\mm^{j+1}$, for $j=0,
            \ldots, i-1$. For all such $j$ we have surjections
            $\Sym^j(\mm/\mm^2) \onto \mm^j/\mm^{j+1}$. By assumption, there
            are  no outsider representations in $\mm/\mm^2$, hence there are
            no outsider representations in $\Sym^j(\mm/\mm^2)$ and in
            $\mm^j/\mm^{j+1}$, thus there are no outsider representations in
            $\hat{\cO}_{\varX, x}/\mm^i$. Then $\Spec(\hat{\cO}_{\varX,
            x}/\mm^i)$ is a $\Gbar$-scheme for all $i$, hence $\hat{\cO}_{\varX,
            x}/\mm^i \to \hat{\cO}_{\Xplus, x'}/\mm_{x'}^i$ is an isomorphism
            for all $i$, see Proposition~\ref{ref:isomorphism:prop}. Then $\ioneX^{\#}\colon \hat{\cO}_{\varX, x} \to
            \hat{\cO}_{\Xplus, x'}$ is an isomorphism, so $\ioneX$ is
            \'etale at $x'$ by~\cite[Prop~17.11.2]{EGA4-4}. The \'etale locus
            of $\ioneX$ is an open and $\Group$-stable subscheme of $\Xplus$. Denote it by $U$. The map
            $(\ioneX)_{|U}$ is \'etale and a
            monomorphism by Lemma~\ref{ref:monomorphism:lem} hence it is an
            open immersion~\cite[Tag~025G]{stacks_project}.

            Let $V = \isectionX^{-1}(U) \subset \varX^{\Group}$ be the open
            subset of
            $\Group$-fixed points in $U$. We have $x'\in
            \isectionX(\varX^{\Group}) \cap U$, hence $V$ is
            non-empty.  The subscheme $U\cap \iinftyX^{-1}(V)$ is open and
            contains $\isectionX(V)$. Applying
            Proposition~\ref{ref:neighbourhoodsofzero:prop} to fibers of
            $\iinftyX$, we see that $U \supset \iinftyX^{-1}(V)$.
            Choose any affine open $V' \subset V$. Then $U' = \iinftyX^{-1}(V')$ is
            affine, open, $\Gbar$-stable and contained in $U$.
            Then $(\ioneX)_{|U'}\colon U' \to \varX$ is an open immersion, whose
            image is an affine, $\Gbar$-stable neighbourhood of $x$.

            Conversely, suppose that $x$ has an affine $\Gbar$-stable
            neighbourhood $Z$. Then $Z$ is a $\Gbar$-scheme, hence
            $Z^{+} = Z$ by Proposition~\ref{ref:isomorphism:prop}. By Proposition~\ref{ref:affineRepresentability:prop}
            the ideal $I(Z^+ \subset Z)$ is generated by outsider
            representations in $H^0(Z, \cO_Z)$. Putting this together, we see
            that there are no outsider representations in $H^0(Z, \cO_Z)$,
            hence also no outsider representations in its sub-quotient
            $T_{\varX, x}^{\vee} = \mm_x/\mm_x^2$.
        \end{proof}

\section{Smoothness and relative \BBname{} decompositions}\label{sec:smoothnessAndRelative}
    In this section we deal with the smooth case. Let $\Gbar$ and $\Group$ be
    as before (see the beginning of Section~\ref{sec:BBproperties}).

    \begin{theorem}\label{ref:relativesmoothness:thm}
        Let $\Gbar$ be a linearly reductive monoid with a zero and $\Group$ be its unit
        group.  Assume that $X$ is a $\Group$-scheme locally of finite type over $\kk$. Let $f\colon X\to Y$ be a $\Group$-equivariant and smooth
        morphism.  Then the morphism $\Dfunctor{f}\colon \Dfunctor{X}\to
        \Dfunctor{Y}$ is also smooth.
    \end{theorem}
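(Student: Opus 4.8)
The plan is to check the two conditions that jointly characterize a smooth morphism: that $\Dfunctor{f}$ is locally of finite presentation and that it is formally smooth. The first is routine given the representability results. After replacing $Y$ by the open image $f(X)$ (legitimate since $f$ is equivariant and smooth, so $f(X)$ is an open $\Group$-stable subscheme, and open immersions induce open immersions of \BBname{} decompositions by Proposition~\ref{ref:openimmersions:prop}), I may assume $Y$ is locally of finite type over $\kk$; indeed $X\to f(X)$ is smooth surjective, so this descends. Then Theorem~\ref{ref:introRepresentability:thm} makes both $\Xplus$ and $\Yplus$ locally of finite type, hence locally Noetherian, over $\kk$, and a locally-of-finite-type morphism between locally Noetherian $\kk$-schemes is locally of finite presentation. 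The real work is therefore the infinitesimal lifting criterion.

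I would run this criterion exactly as in the proof of Proposition~\ref{ref:etaleFormally:prop}. Take an affine $Z = \Spec C$ with an ideal $I$ satisfying $I^2 = 0$, let $i\colon Z_I \into Z$ be the associated closed immersion, and suppose given $\varphi\in \Dfunctor{Y}(Z)$ and $\eta\in \Dfunctor{X}(Z_I)$ inducing the same element of $\Dfunctor{Y}(Z_I)$. Unwinding the definitions, this is the lifting square of Diagram~\eqref{eq:etaleness}: I must produce a $\Group$-equivariant $\tau\colon \Gbar\times Z\to X$ with $f\circ\tau = \varphi$ and $\tau\circ(1_{\Gbar}\times i) = \eta$, where $1_{\Gbar}\times i\colon \Gbar\times Z_I \into \Gbar\times Z$ is the square-zero closed immersion defined by the ideal $H^0(\Gbar,\cO_{\Gbar})\otimes I$. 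Because $f$ is smooth it is formally smooth, so a lift $\tau_0\colon \Gbar\times Z\to X$ over $Y$ extending $\eta$ certainly exists \emph{as a morphism of schemes}; the entire difficulty, and the point where this theorem diverges from the \'etale case, is that $\tau_0$ need not be $\Group$-equivariant and, unlike in Proposition~\ref{ref:etaleFormally:prop}, is far from unique.

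The hard part is thus to upgrade $\tau_0$ to an equivariant lift, and this is where I would use linear reductivity of $\Group$. The lifts of $\varphi$ along $f$ extending $\eta$ form a nonempty torsor under the $\kk$-module $T := \Hom_{\cO_{\Gbar\times Z_I}}\bigl(\eta^*\Omega_{X/Y},\, H^0(\Gbar,\cO_{\Gbar})\otimes I\bigr)$, which is a \emph{rational} $\Group$-representation, being assembled from the equivariant data $\eta$, $\Omega_{X/Y}$ and $\Gbar$. The group $\Group$ acts on this torsor compatibly with its action on $T$ by conjugation, $g*\tau := g_X^{-1}\circ\tau\circ g_{\Gbar\times Z}$, and a lift is equivariant exactly when it is fixed. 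I would then observe that $g\mapsto (g*\tau_0)-\tau_0$ is a $1$-cocycle valued in $T$, whose class in $H^1(\Group, T)$ is the obstruction to correcting $\tau_0$; since $\Group$ is linearly reductive the invariants functor is exact, so $H^1(\Group, T) = 0$, the cocycle is a coboundary $g\mapsto g\cdot t - t$, and translating $\tau_0$ by $t$ in the torsor produces the sought equivariant lift $\tau$. This verifies formal smoothness and, combined with local finite presentation, gives smoothness of $\Dfunctor{f}$. I expect the main obstacle to be precisely this equivariance step — in particular checking that $T$ is genuinely a rational representation so that the cohomological vanishing applies — which plays here the role that uniqueness of lifts played in the \'etale case of Proposition~\ref{ref:etaleFormally:prop}.
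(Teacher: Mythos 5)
Your argument is correct in outline but takes a genuinely different route from the paper's. The paper does not verify the infinitesimal lifting criterion directly: it first reduces to a point $x$ with limit $x_0$, uses Theorem~\ref{ref:etalestableneighbourhoods:thm} to replace $X$ by an affine \'etale equivariant neighbourhood of $x_0$, and then uses linear reductivity only to split the surjection $\mathfrak{m}\onto \mathfrak{m}/(\mathfrak{m}^2+\mathfrak{n})$ onto the relative cotangent space equivariantly. This produces an equivariant factorization $U\to \mathbb{A}\times Y\to Y$ of $f$ with the first arrow \'etale, and applying $\Dfunctor{-}$ together with Theorem~\ref{ref:etale:thm} and the explicit description of $\Dfunctor{\mathbb{A}}$ from Proposition~\ref{ref:representabilityForAffine:prop} exhibits $\Dfunctor{f}$ locally as an \'etale morphism followed by a projection with affine-space fibers. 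That route buys a concrete local model for $\Dfunctor{f}$ (reused for Corollary~\ref{ref:affineBundle:cor}) at the cost of invoking the AHR theorem again. Your route makes linear reductivity act at the level of deformation theory instead: the lifts form a nonempty torsor under $T=\Hom_{\cO_{\Gbar\times Z_I}}\bigl(\eta^*\Omega_{X/Y}, H^0(\Gbar,\cO_{\Gbar})\otimes I\bigr)$ (nonempty because $\Gbar\times Z$ is affine), and the obstruction to equivariance dies because rational $H^1$ of a linearly reductive group vanishes. The point you flag is the right one to nail down, and it does work: the cocycle $g\mapsto g*\tau_0-\tau_0$ is rational because $(g,h,z)\mapsto g^{-1}\tau_0(gh,z)$ is a morphism of schemes and $\Omega_{X/Y}$ is finite locally free, so the cocycle lies in $H^0(\Group,\cO_{\Group})\otimes T$; the cocycle identity then forces its values to span a finite-dimensional rational subrepresentation of $T$, and applying the bi-invariant Reynolds operator $H^0(\Group,\cO_{\Group})\to\kk$ produces the coboundary, so one never needs rationality of all of $T$. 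The remaining steps (descending local finite type from $X$ to the open image $f(X)$ along the surjective, flat, locally finitely presented map $X\to f(X)$, and deducing local finite presentation in the Noetherian setting) are routine as you say. One stylistic difference worth noting: your proof is pointwise-free and avoids base-changing to $\kkbar$, whereas the paper's argument, being local at a chosen $K$-point, needs that reduction to apply Theorem~\ref{ref:etalestableneighbourhoods:thm}.
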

    \begin{proof}
        \def\mm{\mathfrak{m}}%
        \def\nn{\mathfrak{n}}%
        Note that for every field $L \supset \kk$ we have $\Dfunctor{Z_{L}}
        =\Dfunctor{Z} \times_{\kk} \Spec(L)$.
        It is enough to prove that $\Dfunctor{f}$ is smooth near each point
        $x\in \Dfunctor{\varX}(K)$ for any field $K$. Choose $x$. After a base
        change we may assume that $\kk = \kkbar$ and that $x$ is a
        $\kk$-point. Let $x_0 = \iinfty{X}(x)$ be its limit point. By
        Theorem~\ref{ref:etalestableneighbourhoods:thm}, there exists an
        \'etale $\Group$-equivariant
        morphism $g\colon W\to \varX$ such that $W$ is affine and $x_0\in g(W)$. Since $g(W)$ is
        $\Group$-stable and $x_0\in g(W)$, we have also $x\in
        \Dfunctor{W}(\kk)$.
        The map $\Dfunctor{g}\colon \Dfunctor{W}\to \Dfunctor{\varX}$ is
        \'etale by Theorem~\ref{ref:etale:thm} and
        Theorem~\ref{ref:algebraizationABB}.
        Thus, we may replace $\varX$ by $W$ and hence further we assume that
        $\varX = \Spec(B)$ is affine.

        Now we prove that there exists an open $\Group$-stable neighbourhood $x\in U\subset
        \varX$ and a decomposition of $f_{|U}$ into \emph{equivariant maps}
        $f_{|U}\colon U \to Y \times \mathbb{A}^n \to Y$, where the first map
        is \'etale and the second is the first projection.

        Let $\mm \subset B$ be the maximal ideal corresponding to $\kk$-point
        $x_0$. Since $x_0$ is fixed, the ideal $\mm$ is $\Group$-stable.
        Let $y_0 = f(x_0)$. Then $y_0$ is also $\Group$-fixed. Let $\nn
        \subset B$ be the pullback of the ideal of $y_0\in Y$ and let
        \[
            V = \frac{\mm}{\mm^2 + \nn}
        \]
        be the relative cotangent space at $x_0$.
        Since $\nn
        \subset \mm$ is $\Group$-stable, the linear space $V$ has a natural
        $\Group$-action. Since $\Group$ is linearly reductive, the surjection $\mm \to
        V$ of $\kk$-linear spaces has a $\Group$-equivariant
        section
        \[
            j\colon V\to \mm \subset B.
        \]
        Let $\mathbb{A} := \Spec\Sym_{\kk}(V)$ be an affine space.
        Then $j$ induces a morphism $j'\colon \varX\to \mathbb{A}$. Since $j$ is
        equivariant, this morphism is also equivariant. Hence we obtain
        \[
            \begin{tikzcd}
                \varX \arrow[r, "j' \times f"]\arrow[d, "f"]  &\mathbb{A} \times
                Y\arrow[ld, "pr_2"]\\
                Y
            \end{tikzcd}
        \]
        By construction, the tangent map of $j' \times f$ is an isomorphism at
        $x_0$. Both $\varX$ and $\mathbb{A} \times Y$ are smooth over $Y$,
        hence $j' \times f$ is \'etale at $x_0$
        by~\cite[Prop~17.11.2]{EGA4-4}. Hence this map is
        \'etale on an open subset $U$ containing $x_0$. Since it is
        equivariant, this open subset may be chosen equivariant (i.e. by
        taking the union of all translates). Restricting to $U$ we obtain the
        desired decomposition. Note that $x_0\in U$ and $U$ is stable, hence
        $x\in \Dfunctor{U}$, see Proposition~\ref{ref:openimmersions:prop}.
        Applying $\Dfunctor{-}$ to the diagram, we have
        \[
            \begin{tikzcd}
                \Dfunctor{U} \arrow[r, "\Dfunctor{j' \times
                f}"]\arrow[d, "\Dfunctor{f}"]
                &\Dfunctor{\mathbb{A}} \times
                \Dfunctor{Y}\arrow[ld, "pr_2 = \Dfunctor{pr_2}"]\\
                \Dfunctor{Y}
            \end{tikzcd}
        \]
        Now, by Proposition~\ref{ref:representabilityForAffine:prop},
        $\Dfunctor{\mathbb{A}}$ is an affine subspace of $\mathbb{A}$, cut out by all
        outsider representations in $\mm/\mm^2$. Hence $\Dfunctor{\mathbb{A}}
        \times \Dfunctor{Y}\to \Dfunctor{Y}$ is smooth. Moreover,
        $\Dfunctor{j' \times f}$ is \'etale by
        Theorem~\ref{ref:etale:thm},
        since $j'\times f$ is \'etale and equivariant. Finally
        $\Dfunctor{f_{|U}}\colon \Dfunctor{U}\to \Dfunctor{Y}$ is smooth as
        composition of smooth morphisms.
        But $\Dfunctor{U} \subset \Dfunctor{X}$ is open
        (Proposition~\ref{ref:openimmersions:prop}) and contains $x$, so $\Dfunctor{X}\to
        \Dfunctor{Y}$ is smooth at a neighbourhood of $x$.
    \end{proof}

    To prove the classical \BBname{} decomposition, recall that we have
    morphisms of functors
    \[
        \begin{tikzcd}
            \Dfunctor{\varX} \arrow[r, "\ioneX"]\arrow[d, "\iinftyX"] & \varX\\
            \varX^{\Group}\arrow[u, "\isectionX", bend left]
        \end{tikzcd}
    \]
    where $\ioneX$, as defined in the introduction, is the restriction
    $\varphi\colon \Gbar
    \times S \to \varX$ to $\varphi_{|1 \times S}\colon S\to \varX$,
    $\iinftyX$ is the restriction $\varphi_{|0 \times S}\colon S\to
    \varX^{\Group}$ and $\isectionX$ is the section of $\iinftyX$ which takes
    a family $f\colon S\to \varX^{\Group}$ to the constant family $g\colon \Gbar \times S \to S \to
    \varX^{\Group}$. Note that $g$ is equivariant.

    \begin{lemma}\label{ref:conesWhichAreSmoothAreAffineSpaces:lem}
        Let $Z$ be a scheme and $\cB = \bigoplus_{i\geq 0} \cB_i$ with $\cB =
        \cO_{Z}$ be a sheaf of algebras, such that $\varphi\colon \Spec(\cB)\to Z$ is
        smooth. Then $\varphi\colon \Spec(\cB)\to Z$ is an affine fiber bundle.
    \end{lemma}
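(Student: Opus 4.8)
The plan is to exhibit $\Spec(\cB)$ as the total space of a vector bundle over $Z$, i.e.\ to identify $\cB$ with $\Sym_{\cO_Z}$ of a locally free sheaf. The grading turns $\varphi$ into a \emph{cone} over $Z$: the augmentation $\cB\onto\cB_0=\cO_Z$ gives a zero section $s\colon Z\into\Spec(\cB)$ cut out by the irrelevant ideal $\cI:=\bigoplus_{i\geq1}\cB_i$, and since the grading is an $\mathbb{N}$-grading the induced $\mathbb{A}^1$-action contracts every fibre to $s$. As $\varphi$ is smooth it is flat and locally of finite presentation, and each geometric fibre $\cB\otimes_{\cO_Z}\kappa(z)$ is a smooth non-negatively graded $\kappa(z)$-algebra with degree-zero part $\kappa(z)$; by graded Nakayama and a dimension (Hilbert series) count such an algebra is a weighted polynomial ring, so the fibre is an affine space. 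The content of the lemma is to make this identification uniform in $z$.

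First I would record the local structure along the zero section. Being a section of the smooth morphism $\varphi$, the map $s$ is a regular immersion and its conormal sheaf $\cN:=\cI/\cI^2\cong s^*\Omega^1_{\Spec(\cB)/Z}$ is locally free of finite rank $n$ equal to the relative dimension. The grading on $\cI$ descends to $\cN=\bigoplus_{i\geq1}\cN_i$, and each graded piece $\cN_i$ is an $\cO_Z$-direct summand, hence locally free of finite rank $r_i$, with $\sum_i r_i=n$ and only finitely many $r_i$ nonzero. Working locally on $Z$ I would split the graded surjection $\cI\onto\cN$ by a graded $\cO_Z$-linear section (possible degreewise since each $\cN_i$ is projective), obtaining a homomorphism of graded $\cO_Z$-algebras $\Psi\colon\Sym_{\cO_Z}(\cN)\to\cB$. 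Here $\Spec(\Sym_{\cO_Z}\cN)\to Z$ is exactly the vector bundle attached to $\cN$, hence an affine fibre bundle, so it suffices to prove that $\Psi$ is an isomorphism.

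Surjectivity of $\Psi$ is a graded induction on the internal degree: in degree $d$ one has $\cB_d=(\cI^2)_d+\langle\text{chosen lifts of degree }d\rangle$, and $(\cI^2)_d$ is a sum of products of strictly lower-degree pieces, already in the image of $\Psi$ by induction. For the fibres, both $\Sym_{\cO_Z}(\cN)\otimes\kappa(z)$ and $\cB\otimes\kappa(z)$ are graded polynomial $\kappa(z)$-algebras with $r_i=\dim_{\kappa(z)}\cN_i\otimes\kappa(z)$ generators of weight $i$; having the same Hilbert series, the graded surjection $\Psi\otimes\kappa(z)$ between them is an isomorphism for every $z\in Z$.

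It remains to upgrade this fibrewise isomorphism to a genuine one, which is the main obstacle. I would argue by flatness and finite presentation. Since $\Sym_{\cO_Z}(\cN)$ and $\cB$ are finitely presented $\cO_Z$-algebras, the surjection $\Psi$ has a locally finitely generated kernel, so it defines a closed immersion of finite presentation $\iota\colon\Spec(\cB)\into\Spec(\Sym_{\cO_Z}\cN)$ of two $Z$-schemes, both flat and locally of finite presentation. The fibres of $\iota$ are the isomorphisms $\Psi\otimes\kappa(z)$; by the fibrewise flatness criterion $\iota$ is therefore flat, a flat closed immersion of finite presentation is an open immersion, and an open-and-closed immersion that is surjective on points—which $\iota$ is, being fibrewise surjective—is an isomorphism. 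Hence $\Psi$ is an isomorphism and $\varphi$ is an affine fibre bundle. The delicate point is precisely this last comparison; note that it is the grading that forces $\cN$, and with it the candidate bundle, to have finite rank, so that the fibrewise and finite-presentation criteria apply.
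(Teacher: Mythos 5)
Your proof is correct and follows essentially the same route as the paper's: lift homogeneous generators of the irrelevant ideal to a graded surjection from a weighted polynomial $\cO_Z$-algebra, then use smoothness of the fibres together with a dimension count (a proper quotient of the $r$-variable polynomial ring, a domain of dimension $r$, has dimension $<r$) to see that this surjection is an isomorphism on every fibre. Your packaging via the conormal sheaf $\cI/\cI^2$ of the zero section and the fibrewise flatness criterion is a more systematic way of carrying out the final globalization, which the paper handles tersely by shrinking $Z$ and killing $\ker p$ via flatness.
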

    \begin{proof}
        The claim is local on $Z$. Choose a point $z\in Z$ with residue field
        $\kappa(z)$ and a minimal
        system $f_1, \ldots ,f_r$ of homogeneous generators of the ideal $\cB_{>0}\otimes \kappa(z)$
        of the algebra $\cB\otimes \kappa(z)$.

        Shrink $Z$ to an affine neighbourhood
        $z\in \Spec(A) \subset Z$ so that those generators lift to
        generators $F_1, \ldots ,F_r$ of $\cB_{>0}$. On the one hand, we have
        a natural morphism $p\colon \cB_0[F_1, \ldots ,F_r]\to \cB$. Since $F_1,
        \ldots ,F_r$ are homogeneous and generate the ideal $\cB_{>0}$, the
        morphism $p$ is surjective by induction on the degrees.

        On the other hand, note that $f_1, \ldots ,f_r$ are linearly
        independent in the relative cotangent space of $\varphi$. As the
        fibers are smooth, we have $r\leq \dim \varphi^{-1}(z) = \dim
        \Spec(\cB\otimes \kappa(z))$. Since $\varphi$ is flat, we have
        \[
            \cB\otimes \kappa(z) = \frac{\kappa(z)[F_1, \ldots ,F_r]}{(\ker p)\otimes
            \kappa(z)}.
        \]
        The right hand side has dimension $r$ only if $(\ker p)\otimes
        \kappa(z) = 0$. Thus (after possibly shrinking $Z$ again), we have
        $\ker p =0$ and $\cB  \simeq \cB_0[F_1, \ldots ,F_r]$, so $\Spec(\cB)
         \simeq \mathbb{A}^r \times Z$.
    \end{proof}

    Since the result itself is classical, we formulate it classically, in
    terms of $\Xplus$ representing $\DX$ (this representing scheme exists by
    Theorem~\ref{ref:algebraizationABB}, which was proven in the previous
    section).
    \begin{corollary}\label{ref:affineBundle:cor}
        Suppose that $\varX$ is a smooth variety over $\kk$. Then the scheme
        $\Xplus$, representing $\DX$, is an affine fiber bundle over
        $\varX^{\Group}$. Moreover, both $\varX^{\Group}$ and $\Xplus$ are
        smooth.
    \end{corollary}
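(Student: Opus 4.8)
The plan is to deduce everything from the relative smoothness of the BB functor (Theorem~\ref{ref:relativesmoothness:thm}) together with the contracting cone structure supplied by a Kempf torus, and then to invoke Lemma~\ref{ref:conesWhichAreSmoothAreAffineSpaces:lem}. First I would prove that the total space $\Xplus$ is smooth over $\kk$. Apply Theorem~\ref{ref:relativesmoothness:thm} to the structure morphism $f\colon \varX\to \Spec\kk$, where $\Spec\kk$ carries the trivial $\Group$-action: this $f$ is smooth and (trivially) equivariant. Since there is a unique, automatically equivariant, morphism $\Gbar\times S\to \Spec\kk$ for every $S$, we have $\Dfunctor{\Spec\kk}=\Spec\kk$, so $\Dfunctor{f}\colon \Xplus\to \Spec\kk$ is smooth. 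Hence $\Xplus$ is smooth over $\kk$.

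Next I would record the graded cone structure of $\iinftyX\colon \Xplus\to \varX^{\Group}$. By Theorem~\ref{ref:introRepresentability:thm} this morphism is affine and $\Gbar$-equivariant (with trivial action on the target), so $\cA:=(\iinftyX)_*\cO_{\Xplus}$ is a sheaf of $\Gbar$-algebras on $\varX^{\Group}$ with $\Xplus=\Spec_{\varX^{\Group}}\cA$. Fixing a Kempf torus $\Gmult\to\Group$ (available over any perfect field, in particular automatically in characteristic zero; in general after base change to $\kkbar$), Corollary~\ref{ref:KempfTorus:cor} equips $\cA$ with a non-negative grading $\cA=\bigoplus_{i\geq 0}\cA_i$ whose zeroth piece is $\cA_0=\cO_{\varX^{\Group}}$. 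Equivalently, $\varX^{\Group}$ is identified, via the zero section $\isectionX$, with the $\Gmult$-fixed locus of $\Xplus$. Since the fixed locus of a torus acting on a smooth scheme is smooth, $\varX^{\Group}$ is smooth over $\kk$.

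The heart of the argument is to upgrade smoothness of the total space to smoothness of the contraction $\iinftyX$. Let $U\subseteq\Xplus$ be the open locus on which $\iinftyX$ is smooth; because $\iinftyX$ is equivariant, $U$ is $\Gmult$-stable. Along the zero section $\isectionX(\varX^{\Group})$ the morphism $\iinftyX$ has a section, so at each such point the tangent map $T_{\Xplus}\to T_{\varX^{\Group}}$ is surjective; as both $\Xplus$ and $\varX^{\Group}$ are smooth over $\kk$, the submersion criterion shows $\iinftyX$ is smooth there, whence $U$ contains the whole zero section. Non-negativity of the grading (Corollary~\ref{ref:KempfTorusInvariants:cor}) provides an $\mathbb{A}^1$-action contracting every point of $\Xplus$ to the zero section; thus the closed $\Gmult$-stable complement $\Xplus\setminus U$, if nonempty, would contain the limit of any of its points and hence meet the zero section, a contradiction. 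Therefore $U=\Xplus$ and $\iinftyX$ is smooth.

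Finally, with $\cA=\bigoplus_{i\geq 0}\cA_i$, $\cA_0=\cO_{\varX^{\Group}}$, and $\iinftyX\colon \Spec_{\varX^{\Group}}\cA\to \varX^{\Group}$ now known to be smooth, Lemma~\ref{ref:conesWhichAreSmoothAreAffineSpaces:lem} applies verbatim and shows $\iinftyX$ is an affine fiber bundle; combined with the smoothness of $\varX^{\Group}$ and $\Xplus$ established above, this finishes the proof. The main obstacle is precisely the third step: passing from smoothness of the total space $\Xplus$ to smoothness of the contraction $\iinftyX$, where the contracting $\mathbb{A}^1$-action furnished by the Kempf grading is indispensable. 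A secondary technical point is the availability of the Kempf torus over imperfect $\kk$; since smoothness of $\Xplus$, of $\varX^{\Group}$ and of $\iinftyX$ is insensitive to the field extension $\kk\subset\kkbar$, I would carry out the grading-dependent steps over $\kkbar$ and descend the smoothness statements, applying Lemma~\ref{ref:conesWhichAreSmoothAreAffineSpaces:lem} once the graded structure is in place.
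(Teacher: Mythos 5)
Your proposal is correct and follows essentially the same route as the paper: smoothness of $\Xplus$ via Theorem~\ref{ref:relativesmoothness:thm} applied to $\varX\to\Spec\kk$, smoothness of $\iinftyX$ by combining surjectivity on tangent spaces along the section $\isectionX$ with the observation that the (closed, invariant) non-smooth locus would have to meet the section under the contracting action, and finally Lemma~\ref{ref:conesWhichAreSmoothAreAffineSpaces:lem} applied to the $\NN$-graded algebra coming from a Kempf torus over $\kkbar$. The only cosmetic differences are that you contract via the Kempf $\mathbb{A}^1$-action where the paper multiplies by $0_{\Gbar}$, and that you deduce smoothness of $\varX^{\Group}$ from its identification with $(\Xplus)^{\Gmult}$ rather than citing the classical fact directly; both are equivalent.
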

    \begin{proof}
         By Theorem~\ref{ref:relativesmoothness:thm} applied to $\varX\to
         \Spec(\kk)$, we see that $\Xplus$ is smooth over $\Spec(\kk)^{+} =
         \Spec(\kk)$. Classically, $\varX^{\Group}$ is smooth.
         Consider
         \[
             \iinftyX\colon \Xplus \to \varX^{\Group}.
         \]
         By Theorem~\ref{ref:algebraizationABB}, this map is affine
         of finite type, so $\Xplus  \simeq \Spec \cB$ for a finite type
         $\cO_{\varX^{\Group}}$-algebra.  We have a section $\isectionX$ of
         $\iinftyX$, so $\iinftyX$ is surjective on tangent spaces for every
         point of this section and hence $\iinftyX$ is smooth at every point
         of $\isectionX$. The non-smooth locus of $\iinftyX$ is a closed
         subscheme $Z \subset \Xplus$ invariant under the $\Gbar$-action. For
         every geometric point $z\in Z(\kkbar)$, we would have $0\cdot z\in
         \isectionX(\varX^{\Group})$ and $0\cdot z\in Z$, hence a contradiction. Thus $Z$
         is empty and so $\iinftyX$ is smooth.

         To prove that $\Xplus \to
         \varX^{\Group}$ is an affine fiber bundle, we pass to the
         algebraic closure of $\kk$.
         Then we fix a Kempf's torus
         $\mathbb{A}^1\to \Gbar$ and restrict the $\Gbar$-action on $\Xplus$
         to $\mathbb{A}^1 \times \Xplus \to \Xplus$. This action gives $\cB$
         an $\mathbb{N}$-grading: $\cB = \bigoplus_{i\geq 0} \cB_i$. The claim
         follows from Lemma~\ref{ref:conesWhichAreSmoothAreAffineSpaces:lem}.
    \end{proof}

    \begin{example}
        Let $\varX$ be as in Corollary~\ref{ref:affineBundle:cor}.
        It is well-known, though non-trivial, that $\Xplus\to \varX^{\Group}$ may fail to be a
        vector bundle, see~\cite{Iarrobino__fibre_bundle}.
    \end{example}

    \begin{remark}\label{ref:locallyLinearSchemes:rmk}
        Theorem~\ref{ref:relativesmoothness:thm} and
        Corollary~\ref{ref:affineBundle:cor} are may be proven without using
        the full power Theorem~\ref{ref:algebraizationABB}, in particular without
        mentions of \'etale covers etc. This is because $\varX$ is assumed to
        be normal, hence every its $\Group$-fixed point has an affine
        Zariski-open $\Group$-neighbourhood by~\cite[Theorem~4.1]{Trautman}.
    \end{remark}

    \begin{proposition}\label{ref:ioneLocallyClosed:prop}
        Let $\varX$ be a geometrically normal variety over $\kk$ and $\Gbar$
        be a monoid with zero.
        Let $\Xplus$ represent $\Dfunctor{\varX}$ and let $Z$ be one of its
        irreducible components, with reduced structure. Then the composition $Z \into \Xplus
        \to \varX$ is a locally closed immersion.
    \end{proposition}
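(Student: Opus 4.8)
The plan is to reduce to a local statement near the limit locus and then exploit the affine fiber bundle structure from Corollary~\ref{ref:affineBundle:cor}, combined with the monomorphism property from Lemma~\ref{ref:monomorphism:lem}. Let me think about what tools I have available.

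I have: $\Xplus$ represents $\Dfunctor{\varX}$; the map $\iinftyX \colon \Xplus \to \varX^{\Group}$ is affine (Theorem~\ref{ref:Representability:thm}); for $\varX$ normal the classical Sumihiro-type result (Remark~\ref{ref:locallyLinearSchemes:rmk}, citing \cite{Trautman}) gives affine $\Group$-stable Zariski neighborhoods of fixed points; $\ioneX$ is a monomorphism for separated $\varX$ (Lemma~\ref{ref:monomorphism:lem}); and over the affine charts $\ioneX$ is a closed immersion (Proposition~\ref{ref:representabilityForAffine:prop}). The subtlety is that $\ioneX$ is only a monomorphism globally, and need not be an immersion on all of $\Xplus$ (the $\mathbb{P}^1$ example), so I must use irreducibility of $Z$ crucially.

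Let me think. Since $\varX$ is geometrically normal, by Remark~\ref{ref:locallyLinearSchemes:rmk} it has an open cover $\{U_j\}$ by affine $\Group$-stable subschemes whose union contains $\varX^{\Group}$. By Proposition~\ref{ref:openimmersions:prop}\eqref{it:openone},\eqref{it:opentwo}, the $\Dfunctor{U_j} \subset \Xplus$ are open and cover $\Xplus$, and each $\Dfunctor{U_j} \to U_j$ is a closed immersion by Proposition~\ref{ref:representabilityForAffine:prop}. So $\ioneX$ is already Zariski-locally a closed immersion; the only failure of being a global immersion is that two different points of $\Xplus$ lying in different charts could map to the same point of $\varX$ — but $\ioneX$ is a monomorphism, so that cannot happen set-theoretically on field-valued points. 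The real issue is the topology: the immersion condition requires that $\ioneX(Z)$ be locally closed and that $Z \to \overline{\ioneX(Z)}$ be a homeomorphism onto a closed subset of the locally closed image. This is where irreducibility enters. The key structural input I would use is that, because $Z$ is irreducible, its image under $\iinftyX$ lands in a single connected component of $\varX^{\Group}$, and—using the $\Gbar$-action contracting $Z$ onto its limit locus $Z^{\Group} = Z \cap \isectionX(\varX^{\Group})$—the component $F_i$ of $\varX^{\Group}$ that $Z$ limits into is the same for the whole of $Z$.

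Here is the main line of argument I would carry out. After base-changing to $\kkbar$ (the statement is geometric and invariant under field extension since $\Dfunctor{\varX_L} = \Dfunctor{\varX}\times_\kk \Spec L$), I would let $F = \iinftyX(Z)$, an irreducible locally closed subset of $\varX^{\Group}$; replacing $\varX$ by the $\Group$-stable open $\iinftyX^{-1}$-saturation I can assume $F$ is closed in $\varX^{\Group}$ and contained in a single affine $\Group$-stable chart. Cover $F$ by finitely many of the affine charts $U_j$ above. On each $\Dfunctor{U_j}$, the map $\ioneX$ is a closed immersion into $U_j$, so $Z \cap \Dfunctor{U_j} \to U_j$ is an immersion with image a closed subscheme of the open $U_j \subset \varX$, i.e.\ a locally closed subscheme of $\varX$. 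The claim then reduces to showing that these local immersions glue to a global immersion on $Z$, equivalently that $\ioneX(Z)$ is locally closed in $\varX$ and $Z \to \ioneX(Z)$ is a homeomorphism. The monomorphism property (Lemma~\ref{ref:monomorphism:lem}) gives injectivity of $\ioneX$ on points, so $Z \to \ioneX(Z)$ is a continuous bijection; I would upgrade this to a homeomorphism by checking it is locally a closed immersion (hence proper onto its image on each chart) — which the charts already provide. The point where irreducibility is indispensable is ruling out the $\mathbb{P}^1$-type pathology: there, two cells attach to the \emph{same} fixed point and the nonimmersed image arises from gluing two closed cells along a boundary; restricting to a single irreducible component eliminates this, since an irreducible $Z$ meets the attracting locus of only one "stratum" and its image cannot fold back on itself.

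The step I expect to be the main obstacle is precisely proving that the image $\ioneX(Z)$ is \emph{locally closed} in $\varX$, not merely that $\ioneX$ is a local immersion on each chart. Local immersions into overlapping charts do not automatically assemble into a globally locally closed image; one must control how $\overline{\ioneX(Z)} \setminus \ioneX(Z)$ sits relative to the charts. I anticipate handling this by a $\Gbar$-equivariance/limit argument: since $Z$ is $\Gbar$-stable and $\Gbar$ has a zero, the closure $\overline{\ioneX(Z)}$ in $\varX$ is $\Gbar$-stable, and any boundary point must itself have a limit under $0_{\Gbar}$ lying in $F = \iinftyX(Z)$; normality of $\varX$ together with the valuative/limit uniqueness then forces the boundary to be accounted for within the chosen affine charts, so that $\ioneX(Z)$ is closed inside the $\Group$-stable open $\bigcup_j U_j$ and hence locally closed in $\varX$. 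The affineness and finite type of $\iinftyX$ keep all the relevant loci quasi-compact, which is what makes the finite-chart reduction legitimate.
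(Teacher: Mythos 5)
Your setup matches the paper's: you localize using Trautman's Sumihiro-type affine $\Group$-stable charts around limit points, where Proposition~\ref{ref:representabilityForAffine:prop} makes $\ioneX$ a closed immersion, and you correctly isolate the real difficulty --- showing that $\ioneX(Z)$ is locally closed, equivalently that a point $z'\in Z$ with $\ioneX(z')$ in a chart $U$ actually lies in $\Dfunctor{U}$ (i.e.\ has its limit $\iinftyX(z')$ in $U$). But your proposed resolution of that difficulty does not work. You argue that $\overline{\ioneX(Z)}$ is $\Gbar$-stable and that every boundary point has a limit under $0_{\Gbar}$ lying in $F=\iinftyX(Z)$. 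There is no $\Gbar$-action on $\varX$ (only on $\Xplus$), so the closure of $\ioneX(Z)$ in $\varX$ is merely $\Group$-stable; points of that closure need not lie in $\ioneX(\Xplus)$ at all, and when they do there is no reason their limits should lie in $F$. So ``the boundary is accounted for within the chosen charts'' is precisely the statement to be proved, and no mechanism for proving it is supplied. (A smaller point: the appeal to Corollary~\ref{ref:affineBundle:cor} is out of place, since that result requires $\varX$ smooth, whereas here $\varX$ is only normal.)

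The paper closes this gap with a short topological argument that your proposal lacks. Fix $x=\ioneX(z)$ and take $U$ an affine $\Group$-stable neighbourhood of the limit point $z_0=\iinftyX(z)$; then $x\in U$ by Proposition~\ref{ref:neighbourhoodsofzero:prop}. Now compare $\Dfunctor{U}\cap Z$ with $\ioneX^{-1}(U)\cap Z$. The former is open in the latter, since $\Dfunctor{U}\subset\Xplus$ is open; and it is also \emph{closed} in it, because $\Dfunctor{U}\cap Z\to U$ is a closed immersion while $\ioneX^{-1}(U)\cap Z\to U$ is separated, so cancellation makes the open immersion $\Dfunctor{U}\cap Z\into\ioneX^{-1}(U)\cap Z$ a closed immersion as well. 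Since $\ioneX^{-1}(U)\cap Z$ is a nonempty open subscheme of the irreducible $Z$, it is irreducible, hence equals its nonempty open-and-closed subscheme $\Dfunctor{U}\cap Z$; therefore $\ioneX^{-1}(U)\cap Z\to U$ is a closed immersion. This is the precise point where irreducibility enters, replacing your heuristic about the image not ``folding back on itself'' with an actual argument.
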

    \begin{proof}
        \def\Uplus{U^{+}}%
        The claim is local on $\varX$. Pick a geometric point $x\in
        \varX(\kkbar)$ lying in $\ioneX(Z)$. We will prove the claim after
        restriction to a certain open neighbourhood of $x\in \varX$.

        Let $z\in Z(\kkbar)$ be such
        that $\ioneX(z) = x$ and let $z_0 = \iinftyX(z)\in
        \varX^{\Group}(\kkbar)$
        be its limit point. By Trautman's Sumihiro-type
        theorem~\cite[Theorem~4.1]{Trautman}, any point of $\varX_{\kkbar}$
        lying over $z_0$ has an open
        affine $\Group$-stable neighbourhood $U' \subset \varX_{\kkbar}$. Let
        $U$ be the image of $U'$ under the projection $\varX_{\kkbar}\to
        \varX$. Then $U$ is open, $\Group$-stable,
        affine~\cite[Tag~05YU]{stacks_project} and contains $z_0$.
        By Proposition~\ref{ref:neighbourhoodsofzero:prop} we have $x\in U$.
        It is enough to prove that $\ioneX^{-1}(U)\cap Z \to U$ is a locally closed immersion. In fact it is a
        closed immersion, as we prove below.

        Let $\Uplus$ represent $\Dfunctor{U}$.
        Since $x\in U$, we have $z\in \Uplus$, so
        $\Uplus \cap Z \neq\emptyset$.
        The morphism $\Uplus \to U$ is a
        closed immersion
        (Proposition~\ref{ref:representabilityForAffine:prop}) and
        trivially $\Uplus \cap Z\to \Uplus$ is a closed immersion, hence
        $\Uplus \cap Z\to \Uplus \to U$ is a closed immersion.
        The morphism $\Uplus \to \Xplus$ is an
        open immersion (Proposition~\ref{ref:openimmersions:prop}) and $\Uplus
        \subset \ioneX^{-1}(U)$, hence $\Uplus\to \ioneX^{-1}(U)$ is an open
        immersion. Thus,
        $\Uplus \cap Z \to \ioneX^{-1}(U)\cap Z$ is an open immersion.
        Summing up, we have the following Diagram~\eqref{eq:cancellation}.
        \begin{equation}\label{eq:cancellation}
            \begin{tikzcd}
                \Uplus \cap Z \arrow[r, "\mathrm{open}"]\arrow[d,
                "\mathrm{cl}"'] &\ioneX^{-1}(U)\cap Z\arrow[d]\\
                \Uplus\arrow[r, "\mathrm{cl}"'] & U
            \end{tikzcd}
        \end{equation}
        The map
        $\ioneX^{-1}(U)\cap Z\to U$ is separated as both these spaces are
        separated. Hence $\Uplus \cap Z\to \ioneX^{-1}(U)\cap Z$ is a closed immersion by
        cancellation. Then it is an open and closed immersion.
        The subset $\ioneX^{-1}(U)\cap Z \subset Z$ is
        an open subset of irreducible scheme, hence is irreducible.
        Thus, $\Uplus\cap Z\to \ioneX^{-1}(U) \cap Z$ is an isomorphism.
        Therefore, $\ioneX^{-1}(U) \cap Z\to U$ coincides with
        $\Uplus \cap Z \to U$ which is a closed immersion.
    \end{proof}
    \begin{remark}
        In contrast to Proposition~\ref{ref:ioneLocallyClosed:prop}, the question whether $\ioneX$ is
        an immersion on each \emph{connected} component is quite subtle,
        see~\cite[Appendix~B]{Drinfeld} for some counter-examples.
    \end{remark}

    We are now ready to prove the version of classical \BBname{}
    decomposition, as announced in the introduction.
    \begin{proof}[Proof of Theorem~\ref{ref:ABBdecomposition:thm:intro}]
        Smoothness of $\Xplus$ and the properties of $\iinftyX$ were proven in
        Corollary~\ref{ref:affineBundle:cor}. The multiplication by
        $0_{\Gbar}\in \Gbar$ is a morphism $r\colon\Xplus \to \Xplus$ which is
        retraction to $\isectionX(\varX^{\Group})$. Precisely, speaking, we
        have $r \circ \isectionX =\id_{\varX^{\Group}}$ and $r(\Xplus) =
        \isectionX(\varX^{\Group})$. Thus Point~\ref{item:ABBfirst} is
        proven. The remaining part of Point~\ref{item:ABBsecond} --- the
        properties of $\ioneX$ --- follow from
        Proposition~\ref{ref:ioneLocallyClosed:prop}, because the connected
        and irreducible components of every smooth variety coincide.
    \end{proof}

    \begin{remark}
        The classical \BBname{} decomposition has far reaching consequences
        for the homology of the underlying variety, see~\cite{Brosnan,
        Karpenko, Weber}. It would be interesting to see whether there are
        analogues taking into account the $\Gbar$-action on the cells.
    \end{remark}

    \begin{example}[Intersection of \BBname{} cells on the Hilbert scheme of
        plane]\label{ex:BBdecompositionForHilbA2}
        \def\Hilb{\mathcal{H}}%
        \def\Hilbplus{\Hilb^+}%
        \def\init#1{\operatorname{in}_{#1}}%
        \def\ordone{<_1}%
        \def\ordtwo{<_2}%
        Consider the Hilbert scheme $\Hilb$ of $d$ points on
        $\mathbb{A}^2=\Spec(S)$.
        As proven by Fogarty~\cite{fogarty}, it is a smooth, quasi-projective
        variety. For a monomial ordering $<$ there is an integral weight vector
        $w \in \mathbb{Z}^2$ such that $\init{<}(I) = \init{w}(I)$ for all $[I]\in
        \Hilb$. This weight vector induces an action
        \[
            \mu = \mu_{w}\colon \Gmult \times \Hilb \to \Hilb.
        \]
            Moreover, $\Hilb^{\Gmult}$ is the set of monomial ideals in
            $\Hilb$ and in particular it is finite.
        Hence the \BBname{} decomposition of $\Hilb$
        with respect to $\mu_{w}$ is a union of finitely many cells
        $Z_{[M]}^{<}$ isomorphic to affine spaces, labeled by monomial ideals
        $[M]\in \Hilb$, see~\cite{ellingsrud_stromme_1, ellingsrud_stromme_2}.
        Moreover, each of those cells embeds into $\Hilb$ as a locally closed
        subset. The above is classical.
        There is an ongoing work for concrete realization of these cells, for
        various classical orderings, see~\cite{CV08, Con11}.

        The aim of the present example is to prove that the intersection of
        two such cells corresponding to the same monomial ideal with
        respect to different orderings is isomorphic to an affine space.
        Fix two monomial orderings $\ordone$, $\ordtwo$. Fix corresponding weight
        vectors $w_1, w_2\in \mathbb{Z}^2$ and torus actions $\mu_1, \mu_2\colon \Gmult
        \times\Hilb \to \Hilb$.  Fix a monomial ideal $[M]\in \Hilb$ and let
        \[
            Z_1 = Z_{[M]}^{w_1}\subset \Hilb,\quad Z_2 = Z_{[M]}^{w_2} \subset \Hilb
        \]
        be the corresponding cells in the \BBname{} decompositions
        related to $w_1$, $w_2$. We will show that $Z_1
        \times_{\Hilb} Z_2 = Z_1 \cap Z_2$ is isomorphic to an affine space.
        Let $\mu = (\mu_1, \mu_2)\colon
        \Gmult^{\times 2} \times \Hilb \to \Hilb$. Note that
        \begin{equation}\label{eq:fixedPointsHilbert}
            \Hilb^{\Gmult, \mu_1} = \Hilb^{\Gmult, \mu_2} = \Hilb^{\Gmult
            \times \Gmult}
        \end{equation}
        as all three give the set of monomial ideals. Let $\mathbb{A}^2$ be
        a compacitification of $\Gmult^{\times 2}$ as in Example~\ref{ex:toric}.
        Let $\Hilbplus$ represent $\Dfunctor{\Hilb, \mathbb{A}^2}$. By
        Theorem~\ref{ref:ABBdecomposition:thm:intro}
        and by~\eqref{eq:fixedPointsHilbert}, the scheme $\Hilbplus$ is a
        disjoint union of affine spaces labeled by monomial ideals.
        Let $Z = Z_{[M]} \subset \Hilbplus$ be the cell corresponding to
        $[M]$. We claim that $Z  \simeq  Z_1 \times_{\Hilb} Z_2$ canonically.
        Indeed, the space $Z_1 \times_{\Hilb} Z_2$ represents the functor
        \[
            D(S) = \left\{ \varphi\colon \left(\mathbb{A}^2
            \setminus\{0\}\right) \times S\to \Hilb\
            |\ \varphi\mbox{ is }\Gmult\times \Gmult \mbox{ equivariant,
            }\varphi((1, 0) \times S) = \varphi((0, 1) \times S) = [M]\right\}
        \]
        whereas the space $Z$ represents the functor
        \[
            D'(S) = \left\{ \varphi'\colon \mathbb{A}^2 \times S\to \Hilb\
            |\ \varphi\mbox{ is }\Gmult\times \Gmult \mbox{equivariant,
            }\varphi'((0, 0) \times S) = [M]\right\}.
        \]
        The space $\Hilb$ is smooth, so by Remark~\ref{ref:locallyLinearSchemes:rmk}
        its point $[M]$ has an affine $(\Gmult\times\Gmult)$-stable
        neighbourhood $U$. Consider any family $\varphi\in D(S)$. Its equivariance
        and the conditions~$\varphi((*,0) \times S) = \varphi((0, *) \times S) = [M]$
        imply that $\varphi$ factors as $\left(\mathbb{A}^2
            \setminus\{0\}\right) \times S\to U \to \Hilb$, see
            Proposition~\ref{ref:neighbourhoodsofzero:prop}. Then
            the family $\varphi$ extends to $\varphi\colon
            \mathbb{A}^2 \times S\to U$ by the same argument as in
            Proposition~\ref{ref:compactificationImpliesMonoid:prop}.
            Therefore, $D(S) \subset D'(S)$. By~\eqref{eq:fixedPointsHilbert}
            we have $D'(S) \subset D(S)$. Thus $D = D'$ canonically and so
            $Z_1 \times_{\Hilb} Z_2 = Z$ is an affine space. Its dimension is
            the dimension of the positive orthant of
            $\mathbb{Z}^2$-graded tangent space to $\Hilb$, i.e.,
            $\dim Z = \dim_{\kk}\Hom(M, S/M)_{\geq 0,\geq 0}$.
            The same argument applies to show that the intersection of
            arbitrarily (but finitely) many Gr\"obner cells \emph{with the
			same initial ideal $M$} is isomorphic to an affine space. We do
            not know, whether a similar result holds when varying both the
            ordering and the initial ideal.
    \end{example}

    \section*{Acknowledgements}
    We thank Piotr Achinger, Jarod Alper, Andrzej \BBname{},  Michael Brion,
    Jaros{\l}aw Buczy{\'n}ski, Maksymilian Grab, David Rydh, Andrzej Weber, Torsten
    Wedhorn, and Jaros{\l}aw
    Wi{\'s}niewski for helpful comments.
    We also warmly thank the anonymous referee for careful reading and
    important comments, in particular pointing out missing assumptions on
    connectedness of $\Group$.
    We thank the caf{\'e}
    ``Po{\.z}egnanie z Afryk{\k{a}}'' in Bia{\l}ystok, where a significant
    part of the
    conceptual work was done, for the great atmosphere and kindness.

\end{document}